\tikzset{node distance=2cm, auto}
\DeclareMathOperator{\colim}{colim}
\newcommand{\conj}[1]{\quad\textnormal{ #1 }\quad}
\newcommand{\inp}[1]{\ensuremath{\langle #1 \rangle}}
\newcommand{\module}[0]{\operatorname{-mod}}
\newcommand{\normaltext}[1]{\textnormal{#1}}
\def\imod#1{\allowbreak\mkern2.5mu({\operator@font mod}\,#1)}
\renewcommand{\a}{\alpha}
\renewcommand{\b}{\beta}
\newcommand{\ott}{\otimes}
\newcommand{\s}{\sigma}
\newcommand{\Ga}{\Gamma}
\newcommand{\ga}{\gamma}
\renewcommand{\d}{\delta}
\newcommand{\CPPic}[1]{
\begin{minipage}{.8in}
\includegraphics[scale=1.1]{#1}
\end{minipage}
}
\newcommand{\aA}{\mathcal{A}}
\newcommand{\aC}{\mathcal{C}}
\newcommand{\aD}{\mathcal{D}}
\newcommand{\aF}{\mathcal{F}}
\newcommand{\aM}{\mathcal{M}}
\newcommand{\aS}{\mathcal{S}}
\newcommand{\aT}{\mathcal{T}}
\newcommand{\bn}{\mathbf{n}}
\newcommand{\fo}{\mathfrak{o}}
\newcommand{\CC}{\mathbb{C}}
\newcommand{\FF}{\mathbb{F}}
\newcommand{\NN}{\mathbb{N}}
\newcommand{\PP}{\mathbb{P}}
\newcommand{\QQ}{\mathbb{Q}}
\newcommand{\ZZ}{\mathbb{Z}}
\newcommand{\kk}{\mathbb{k}}
\theoremstyle{plain}
\newtheorem{thm}{Theorem}[section]
\newtheorem*{introthm}{Theorem}
\newtheorem{theorem}[thm]{Theorem}
\newtheorem{conjecture}[thm]{Conjecture}
\newtheorem*{introconjecture}{Conjecture}
\newtheorem{proposition}[thm]{Proposition}
\newtheorem{prop}[thm]{Proposition}
\newtheorem{corollary}[thm]{Corollary}
\newtheorem{cor}[thm]{Corollary}
\newtheorem{lemma}[thm]{Lemma}
\theoremstyle{remark}
\theoremstyle{definition}
\newtheorem{example}[thm]{Example}
\newtheorem{defn}[thm]{Definition}
\newtheorem*{introdefn}{Definition}
\newtheorem{definition}[thm]{Definition}
\newtheorem{notation}[thm]{Notation}
\newtheorem{rmk}[thm]{Remark}
\newtheorem{remark}[thm]{Remark}
\newtheorem{terminology}[thm]{Terminology}
\numberwithin{equation}{section}
\def\imod#1{\allowbreak\mkern2.5mu({\operator@font mod}\,#1)}
\def\namedlabel#1#2{\begingroup
   \def\@currentlabel{#2}%
   \label{#1}\endgroup
}
\newcommand{\aprod}{\varoast}
\newcommand{\M}{\aM}
\newcommand{\Ch}{Ch}
\newcommand{\Ainf}{A_\infty}
\newcommand{\z}{z}
\newcommand{\h}{h}
\newcommand{\A}{\Lambda} 
\renewcommand{\kk}{k}
\newcommand{\lab}[1]{\normaltext{#1}}
\newcommand{\vnp}[1]{\lvert #1 \rvert}
\newcommand{\xto}[1]{\xrightarrow{#1}}
\newcommand{\from}{\leftarrow}
\theoremstyle{plain}
\newcommand{\NAlg}{\bn}
\newcommand{\Ho}{H^0}
\newcommand{\End}{End}
\newcommand{\Aut}{Aut}
\newcommand{\Endo}{End}
\newcommand{\Obj}{Ob}
\newcommand{\Ob}{\Obj}
\renewcommand{\s}{\sigma}
\newcommand{\Hom}{Hom}
\newcommand{\Ext}{Ext}
\newcommand{\Alg}{{\bf f}}
\newcommand{\SAlg}{{\bf F}}
\newcommand{\DHa}{\mathcal{D}\!\mathcal{H}}
\newcommand{\tDHa}{D\!H\!a}
\newcommand{\Ha}{H\!a}
\renewcommand{\L}{L}
\newcommand{\Rep}{Rep}
\newcommand{\E}{E}
\newcommand{\zim}{z_{i,n}}
\newcommand{\zjm}{z_{j,n}}
\newcommand{\zjmp}{z_{j,n+1}}
\newcommand{\zjmk}{z_{j,n+k}}
\newcommand{\dqt}{D^b(\Rep_\kk(A_2))}
\newcommand{\Tw}{T\!w}
\newcommand{\Si}{\Sigma}
\newcommand{\F}{\aF}
\newcommand{\FS}{\aF\!\aS}
\newcommand{\da}{(D^2,\A)}
\newcommand{\dfda}{D^\pi\F\da}
\newcommand{\fol}[3]{\inp{#1, #2}_{#3}}
\newcommand{\U}{\mathcal {U}}
\newcommand{\g}{\mathfrak{g}}
\begin{document}

\title[Hall algebras of surfaces]{The Hall algebras of surfaces I}
\author[Benjamin Cooper]{Benjamin Cooper}
\author[Peter Samuelson]{Peter Samuelson}
\address{University of Iowa, Department of Mathematics, 14 MacLean Hall, Iowa City, IA 52242-1419 USA}
\email{ben-cooper\char 64 uiowa.edu}
\address{University of Edinburgh, Department of Mathematics, 4F4 185 Causewayside Edinburgh, EH9 1PH, UK}
\email{peter.samuelson\char 64 ed.ac.uk}

\begin{abstract}
We study the derived Hall algebra of the partially wrapped Fukaya category
of a surface. We give an explicit description of the Hall algebra for the
disk with $m$ marked intervals
and we give a conjectural description of the Hall algebras of all surfaces with enough marked intervals. Then we use a functoriality result to show
that a graded version of the HOMFLY-PT skein relation holds among certain 
arcs in the Hall algebras of general surfaces.
\end{abstract}
\vspace{.5in}
\dedicatory{To Charlie Frohman, for his 61\raisebox{0.2em}{st} birthday.}

\maketitle
\setcounter{tocdepth}{1}
\setcounter{secnumdepth}{3}
\vspace{.5in}
\tableofcontents

\section{Introduction}\label{introsec}
The Hall algebra construction of quantum groups has led to many insights in
representation theory. Fukaya categories are central to the study of
symplectic geometry and topological field theory. In this section we
introduce and motivate our study of the Hall algebras of Fukaya categories of surfaces
and their relationship to the HOMFLY-PT skein algebras.

\pagebreak

\subsection{Hall algebras}
The Hall algebra $\Ha(\aA)$ is an invariant of abelian categories
that captures information about the extensions between objects. As a vector space, $\Ha(\aA)$ has a basis given by isomorphism classes of objects in $\aA$.  If $A,C\in\Obj(\aA)/iso$ then
their product in $\Ha(\aA)$ is given by
\begin{equation}\label{halleq}A\cdot C = \sum_{B}\frac{\vnp{\Ext^1(A,C)_B}}{\vnp{\Hom(A,C)}} B
\end{equation}
where $Ext^1(A,C)_B \subset Ext^1(A,C)$ is the subset corresponding to short exact sequences of the form $0\to C\to B \to A\to 0$, see \cite[\S 2.3]{Bri13}.

  Ringel proved that the positive half of the quantum group
  $U^+_q(\mathfrak{g})$ associated to a simply laced Dynkin quiver $Q$ is isomorphic to 
  the Hall algebra of the corresponding abelian category of quiver
  representations
  \[
  U^+_q(\mathfrak{g}) \cong \Ha(\Rep_{\FF_q}(Q))
  \] 
see
  \cite{Ringel}. 
  Lusztig found a geometric interpretation
  for this construction which led to many important results in representation
  theory \cite{LQG, Lus90}.  The higher
  categorical structure present in Lusztig's work inspired Crane and Frenkel
  to conjecture the existence of corresponding 4-dimensional topological
  field theories \cite{CraneFrenkel}. These conjectures represent one of the
  driving forces behind present interest in categorified quantum groups
  \cite{KL09,Rou08,VV}.

  Abelian categories of coherent sheaves on curves are similar to categories of
  quiver representations because both have homological dimension one.  By
  virtue of Ringel's theorem, the Hall algebras of coherent sheaves on curves are analogues of quantum groups.
  Although the Hall algebra of the category 
  $Coh(\mathbb{P}^1)$ can be understood terms of the affine algebra
  $U_q(\widehat{\mathfrak{sl}}_2)$, the elliptic Hall algebra, obtained from
  coherent sheaves on curves of genus one, has new features and has been
  the subject of much recent study. 
  Burban and Schiffmann gave an explicit description of this algebra
  \cite{BS} and showed the relations could be written as Laurent
  polynomials in parameters $q$ and $t$, which are the eigenvalues of the Frobenius
  operator acting on the first cohomology of the curve. Subsequent work has related
  this algebra to the equivariant $K$-theory of Hilbert schemes of points in
  $\CC^2$ \cite{SV13, FT11, Neg14}, triply graded knot homology \cite{GN15,
    Che13}, positivity conjectures in algebraic combinatorics \cite{BGSX16} and
  the AGT conjecture in mathematical physics \cite{SV13agt}. Recently,
  Schiffmann used the Hall algebras associated to curves of higher genus to
  compute the Betti numbers of moduli spaces of Higgs bundles \cite{Sch16}.
  
\subsection{Fukaya categories}
Homological mirror symmetry postulates a relationship between the algebraic
geometry and the symplectic geometry of a space $X$ and a potential
function $W : X^\vee\to \CC$ on a mirror space. 
Each object determines two
triangulated categories: a B-model, associated to a category of coherent
sheaves, and an A-model, associated to a Fukaya category \cite{Seidel}. 
The algebraic geometry underlying the B-model can be understood in terms of vector bundles
because a coherent sheaf is the cokernel of a map $f : E_1 \to E_2$ between
algebraic vector bundles.  On the other hand, Fukaya categories are defined
in terms of symplectic geometry. The objects are Lagrangian submanifolds and
a map $p : L_1 \to L_2$ is a point $p$ in the  intersection $L_1\cap L_2$. The
composition of maps is associative up to a collection of coherent
homotopies, called an $\Ainf$-structure, which is defined in terms of moduli
spaces of pseudo-holomorphic curves.  For some surfaces, a collection of
Lagrangians, which decompose the surface into disks, determine a
presentation called the topological Fukaya category. Many authors have studied Fukaya categories of surfaces 
\cite{DK, Bok, Nadler, Abo08, STZ14}, 
in this paper we follow the exposition in \cite{HKK}.

When homological mirror symmetry holds, there are isomorphisms between pairs
of categories:
\begin{equation}\label{hmseqn}
  D^b Coh(X) \xto{\sim} D^\pi \FS(X^\vee,W) \conj{ and } D^\pi\F(X) \xto{\sim} D^b_{sing}Coh(X^\vee,W)
\end{equation}  
where the categories on the right-hand side are constructions which account for singularities \cite{Kapustin}.
Since these categories are defined in terms of different geometries on different spaces, the isomorphisms above determine a dictionary between the subjects which underly their constructions.

From the perspective of homological mirror symmetry, the Hall algebras of
coherent sheaves on curves are invariants of the B-model in dimension 1. In
this paper, we study invariants of the A-model in dimension 1: the Hall
algebras of Fukaya categories of surfaces. We relate these algebras to the
HOMFLY-PT skein algebras.

\subsection{Skein algebras}\label{skintrosec}
The HOMFLY-PT skein algebras $H_q(S)$ are constructed topologically and 
have deep connections to moduli spaces of flat connections on surfaces and
quantum invariants of 3-manifolds.  The Goldman Lie
algebra of a surface $S$ is the universal Lie algebra which maps to the Poisson Lie algebras
of functions on the $GL_n(\CC)$-representation varieties of $S$
\cite{Goldman}.  Turaev showed that the algebra $H_q(S)$ is
a deformation of the universal enveloping algebra of the Goldman Lie algebra
\cite{Turaevskein}.  On the other hand, after setting the variables in
the ground ring to roots of unity, the skein algebras are known to be intimately related to 
the $SU(k)_{\ell}$ Witten-Reshetikhin-Turaev invariants of 3-manifolds \cite{Turaevbook, BHMV, Witten}.
Although skein algebras and skein modules contain a wealth of
structure, their definition is relatively short.

\begin{introdefn}
If $M$ is an oriented 3-manifold then the {\em HOMFLY-PT skein
  module} $H_q(M)$ consists of $R$-linear combinations of isotopy classes of
framed oriented links in $M$ subject to the HOMFLY-PT skein relation:
\begin{equation}\label{homflypteq}
\CPPic{ncross} -  \CPPic{pcross} = (q-q^{-1})\CPPic{orres}
\end{equation}
and the framing relations
$$\CPPic{borres2} = \quad v \CPPic{corres2} \conj{ and }\quad \CPPic{borres1} = \quad v^{-1} \CPPic{corres1}$$
where $R=k[v^{\pm 1}, q^{\pm 1}, (q-q^{-1})^{-1}]$ is the coefficient ring. If $S$ is an
orientable surface then the skein module $H_q(S)= H_q(S\times [0,1])$ is an
algebra with a product $H_q(S)\otimes_R H_q(S)\to H_q(S)$ determined by the stacking map
$S\times [0,1] \sqcup S\times [1,2]\xto{\sim} S\times [0,1]$.
\end{introdefn}

One goal of this paper is to establish a relationship between these
skein algebras and the Hall algebras of the Fukaya categories of
surfaces. The remainder of this section contains motivation and context for
the main construction.

Morton and the second author constructed an isomorphism between the HOMFLY-PT skein algebra of the torus $T^2$ and the $t=1/q$ specialization of the elliptic Hall algebra  \cite{MortonSamuelson}.  
The proof of this isomorphism was by brute force; however, it seems that 
it is a rough analogue of an instance of Kontsevich's homological mirror symmetry conjecture. In this particular case, 
mirror symmetry states
that the derived category of coherent sheaves on a complex elliptic curve $X$ is equivalent to the Fukaya category of a symplectic torus \cite{PZ}. This gives a hint that the Hall algebra of the Fukaya category may be related to the skein algebra, at least for the torus.

There is a second way to guess that the Hall algebras of Fukaya
categories should be related to skein algebras.  If all of the extensions
between two Lagrangians $L_1$ and $L_2$ in the Fukaya category can be realized by Lagrangian surgeries then the commutator $L_1\cdot L_2-L_2\cdot L_1$ in the Hall
algebra 
is given by a sum over all possible Lagrangians that can be obtained by surgery at the points
of intersection $L_1\cap L_2$. For surfaces, the terms in this sum are the same as those in the formula defining the bracket $[L_1,L_2]$ in the Goldman Lie algebra.
This suggests that the Hall algebra of a surface is a deformation of
the universal enveloping algebra of the Goldman Lie algebra. By Turaev
\cite{Turaevskein}, a candidate for such an algebra is the skein algebra.

There are many technical obstacles to making the intuitions expressed in the
last two paragraphs mathematically meaningful.  The next section contains
precise statements of results.

\subsection{Statements of results}\label{introcontentsec}
Two issues  complicate defining the Hall algebra of the
Fukaya category.  The Fukaya categories are situated within the context of
homotopy associative categories and the standard Hall algebra construction
\eqref{halleq} is not homotopy invariant.  Moreover, the Hall algebra is
only defined for categories which satisfy several finiteness properties.
For these reasons, we apply To\"{e}n's derived Hall algebra construction 
to the derived categories of topological Fukaya categories, see
\cite{Toen,HKK}. Sections \ref{dhasec} and \ref{fuksec} contain detailed
discussions of these definitions.

In Proposition \ref{finiteimpliesfiniteprop}, the topological Fukaya
category $\F(S,A)$ of a marked surface $(S,M)$ is shown to satisfy the
finiteness conditions required by To\"{e}n's construction when $S$ is a {\em
  finitary surface}: a compact surface with boundary where each boundary component
contains at least one marked interval.

Although the categories $\F(S,A)$ are defined in terms of a system of arcs $A$ which cut $S$ into a union of 
disks, the derived categories $D^\pi\F(S,A)$ are all equivalent.
We define algebras $\Alg(S,A)$, naturally associated to each arc system, which are
related by the isomorphisms induced by the equivalences between derived
categories, see Cor. \ref{sheafpropcor}. 
To eliminate the dependence on arcs, we define the algebra $\SAlg(S,M)$ as the colimit of the functor determined by the  assignment $A\mapsto \Alg(S,A)$.
  $$
  \SAlg(S,M) := \underset{(S,A)\in \M(S,M)}{\colim} \Alg(S,A)
  $$
For brevity, we call this the Fukaya Hall algebra.

Within this context the Fukaya Hall algebra of the disk $D^2$ with $m$ marked
intervals is studied in detail.  
The relationship between the Fukaya category $\F(D^2,\A_m) $ of the disk equipped with minimal arc system $\A_m$ and the category of $A_{m-1}$-quiver representations determines a presentation for the Hall algebra.

\begin{introthm}{(\ref{minarcthm})}
Suppose that the disk with $m$ marked intervals is equipped with a minimal arc system $\A_m$ and foliation data $\h : \A_m \to \ZZ$.  Then the algebra $\Alg(D^2,\A_m)$ is the 
generated by the suspensions of the arcs
$$\ZZ \A_m = \{ \E_{i,n} : n\in\ZZ, \E_{i}\in \A_m\}\conj{ where } \E_{i,n} = \s^n\E_{i}$$
subject to the relations listed below.\\
\vspace{-.2in}
\begin{description}
  \item[(R1)] Self-extension:
\begin{align*}
[\E_{i,0},\E_{i,k}]_{q^{2(-1)^k}} &=\d_{k,1}\frac{q^{-1}}{q^2-1}\conj{ for } k \geq 1,
\end{align*}

  \item[(R2)] Adjacent commutativity and convolution:
\begin{align*}[\E_{i+1,k},\E_{i,h(i)}]_{q^{(-1)^{k+1}}} &= 0  \conj{ for } k > 1,\\
[\E_{i+1,k},\E_{i,h(i)}]_{q^{(-1)^{k}}} &= 0 \conj{ for } k<1\notag, 
\end{align*}
\begin{equation*}
\E_{i,h(i)} = [\E_{i+m-1,\tau^i\inp{m-1}}, \ldots, \E_{i+2,\tau^i\inp{2}}, \E_{i+1,\tau^i\inp{1}}]_q,
\end{equation*}

 \item[(R3)] Far-commutativity: 
\begin{align*}[\E_{i,0},\E_{j,k}]_1 &= 0\conj{ for } k\in\ZZ \quad\normaltext{ and }\quad \vnp{i-j} \geq 2.
\end{align*}
\end{description}
\end{introthm}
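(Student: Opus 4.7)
The plan is to identify the derived Fukaya category $D^\pi\F(D^2,\A_m)$ equipped with a minimal arc system $\A_m$ with the bounded derived category $D^b(\Rep_k(A_{m-1}))$ of finite-dimensional representations of a linearly oriented type $A_{m-1}$ quiver (with orientation determined by the foliation data $\h$). Under this identification the generating arcs $\E_i$ correspond to the simple modules $S_i$, the suspension $\s$ corresponds to the cohomological shift $[1]$ up to a foliation twist, and hence $\E_{i,n}$ corresponds to $S_i[n]$. The algebra $\Alg(D^2,\A_m)$ is then precisely the derived Hall algebra $\tDHa(D^b(\Rep_k(A_{m-1})))$ of this hereditary category, which is exactly the context in which To\"en's product formula and the structural results of Cramer apply.

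The first step is to show that the family $\{\E_{i,n}\}$ generates. Every object of $D^b(\Rep_k(A_{m-1}))$ decomposes as a direct sum of shifts of indecomposable representations, and every indecomposable admits a composition series with simple factors $S_j$. Iterated derived Hall products of the generators $\E_{i,n}$ therefore realize every indecomposable as an iterated extension of shifts of simples, giving generation of $\Alg(D^2,\A_m)$.

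The second step is to verify each relation by direct computation from the derived Hall product. Relation (R3) is immediate: for $|i-j|\geq 2$, the vertices $i$ and $j$ are non-adjacent in the $A_{m-1}$ Dynkin diagram, so $\Ext^*(S_i,S_j)$ vanishes in every degree, forcing $\E_{i,0}\cdot\E_{j,k}=\E_{j,k}\cdot\E_{i,0}$. Relation (R1) follows from the fact that the endomorphism algebra of $S_i$ is one-dimensional in degree zero and there are no higher self-extensions in a loop-free hereditary category; the prefactor $q^{-1}/(q^2-1)$ is the standard Heisenberg contribution at a single vertex in the twisted derived Hall algebra. For Relation (R2), adjacency of $i$ and $i+1$ gives a one-dimensional $\Ext^1(S_{i+1},S_i)$ (with the sign of the $q$-commutator determined by whether the nontrivial extension sits in positive or negative shift), and the convolution identity is the statement that $\E_{i,h(i)}$ corresponds to the unique uniserial module with composition factors $S_{i+1},\ldots,S_{i+m-1},S_i$ read in order; the nested $q$-bracket is then the standard encoding of iterated extensions in the twisted Hall product.

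Finally, to show that the listed relations form a complete presentation, rather than a valid but possibly incomplete list, I would compare both sides against a PBW-type basis of the derived Hall algebra: the derived Hall algebra of a hereditary category has a basis indexed by isomorphism classes of objects of the triangulated category, and the relations (R1)--(R3) should suffice to reduce any monomial in the $\E_{i,n}$ to a normal form of this type, which can then be matched dimension-by-dimension with the known basis. The main obstacle I anticipate is matching the foliation-dependent indices $\tau^i\inp{j}$ in (R2) with the homological shifts produced by iterated extensions on the quiver side; this requires a careful dictionary between the geometric grading on $\F(D^2,\A_m)$ determined by $\h$ and the cohomological grading on $D^b(\Rep_k(A_{m-1}))$, and it is here that the technical heart of the proof will lie.
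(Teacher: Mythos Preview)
Your overall strategy is correct and shares its starting point with the paper: both use the equivalence $D^\pi\F(D^2,\A_m)\simeq D^b(\Rep_k(A_{m-1}))$ from Prop.~\ref{dereqprop} to transport the problem to the quiver side. Where you diverge is in how completeness of the relations is established.

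The paper does not verify (R1)--(R3) directly from To\"en's product formula, nor does it attempt a PBW normal-form argument. Instead it imports the Hernandez--Leclerc presentation of $\tDHa(A_{m-1})$ (Prop.~\ref{dhaanprop}, relations (H1)--(H3)) as a \emph{known} complete presentation, defines the abstract algebra $\Alg(D^2,\A_m)$ by (R1)--(R3), and then constructs explicit homomorphisms
\[
\varphi:\tDHa(A_{m-1})\to \Alg(D^2,\A_m),\qquad \psi:\Alg(D^2,\A_m)\to \tDHa(A_{m-1})
\]
and checks they are mutually inverse on generators. All the work is algebraic manipulation of $q$-commutators: showing $\varphi$ respects (H1)--(H3) using (R1)--(R3), and showing $\psi$ respects (R1)--(R3) using the relations (S0)--(S4) among the elements $z_{(a,b),n}$ established in \S\ref{algsec}. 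Completeness of (R1)--(R3) is then automatic from the existence of an inverse, with no PBW or dimension count needed.

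Your route is valid in principle but the completeness step is where it becomes costly: reducing arbitrary monomials in the $\E_{i,n}$ to a normal form using only (R1)--(R3) and matching against the object basis of the derived Hall algebra amounts to reproving the Hernandez--Leclerc result from scratch. The paper's approach buys completeness for free by piggybacking on their theorem. You correctly flag the foliation-index bookkeeping as the technical core; in the paper this is exactly what the computations in Lemmas~\ref{lemma:phihom} and~\ref{lemma:psihom} handle, and the auxiliary identities \eqref{eq:shiftid1}--\eqref{eq:shiftid3} together with the cyclic relations of Lemma~\ref{cyclicprop2} are the tools that make it tractable.
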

(See Notation \ref{disknotation} for the necessary definitions.)

The proof of this theorem uses the work of Hernandez and Leclerc, who gave a description of the derived Hall algebras of categories of quiver representations \cite[\S 8]{HL}.
Theorem \ref{gluethm} constructs presentations for the Hall algebras
$\Alg(D^2,A)$ associated to Fukaya categories of arbitrary arc systems  
by gluing the algebras associated to minimal arc systems.

In order to study the behavior of the Fukaya Hall algebras of surfaces using the Hall algebra of the disk, we establish conditions under which an embedding of marked surfaces induces a monomorphism of algebras.

\begin{introthm}{(\S \ref{hallsurfsec})}
Suppose that $f : (S,M) \to (S',M')$ is an embedding of marked surfaces 
which induces an injection $\pi_0(M) \to \pi_0(M')$ between the sets of components of marked intervals. Then $f$ induces a monomorphism $f_* : \SAlg(S,M) \to \SAlg(S',M')$ of Fukaya Hall algebras.
\end{introthm}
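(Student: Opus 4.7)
The plan is to build the monomorphism in three steps: for each arc system $A$ on $(S,M)$ choose a compatible arc system $A'$ on $(S',M')$; then produce a fully faithful embedding of topological Fukaya categories $\F(S,A) \hookrightarrow \F(S',A')$; finally apply the functoriality of To\"en's derived Hall algebra construction and pass to the colimit defining $\SAlg$.

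First, given an arc system $A$ on $(S,M)$ cutting $S$ into disks, I would extend it to an arc system $A'$ on $(S',M')$ by adding arcs in the complement $S' \setminus f(S)$ that cut this complement into disks. The hypothesis that $\pi_0(M) \to \pi_0(M')$ is injective ensures that the marked intervals of $(S,M)$ remain distinct in $(S',M')$, so the complement $S' \setminus f(S)$ is a finitary surface (each of its boundary components inherits at least one marked interval from $M'$ or from $\partial f(S)$), and the required extension exists. By Proposition \ref{finiteimpliesfiniteprop}, both $\F(S,A)$ and $\F(S',A')$ then satisfy the finiteness conditions required by To\"en's construction.

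Next, the inclusion $A \subseteq A'$ determines an $A_\infty$-functor $F\co \F(S,A) \to \F(S',A')$ sending each arc of $A$ to its image in $A'$. I would argue $F$ is fully faithful by noting that morphism spaces and higher compositions in the topological Fukaya category are computed by immersed polygons bounded by arcs, and since $f$ is an embedding every such polygon in $(S,A)$ is a polygon in $(S',A')$, while no new polygons between arcs of $A$ are introduced because the additional arcs of $A'\setminus A$ lie entirely in $S'\setminus f(S)$. Passage to the derived perfect completion preserves fully faithfulness, so $D^\pi F$ reflects isomorphisms and preserves all $\Ext^i$ spaces. To\"en's construction is functorial for such fully faithful exact functors — its structure constants are computed purely from $\Hom$/$\Ext$ spaces and counts of isomorphism classes, all preserved by $F$ — so we obtain a monomorphism $F_\ast\co \Alg(S,A) \hookrightarrow \Alg(S',A')$. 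These embeddings are compatible with the arc-flip equivalences assembling the colimit $\SAlg(S,M) = \colim_{A} \Alg(S,A)$, and because both colimits are along isomorphisms the induced map $f_\ast\co \SAlg(S,M) \to \SAlg(S',M')$ remains injective.

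The main obstacle is justifying fully faithfulness of $F$, and it is precisely here that the hypothesis on marked intervals is used. A priori, the embedding could create new morphisms between arcs of $A$ either by identifying two marked intervals of $(S,M)$ in $(S',M')$, or by producing strips that wrap around through $S' \setminus f(S)$. The injectivity of $\pi_0(M) \to \pi_0(M')$ rules out the first possibility: marked intervals act as stops in the partially wrapped Fukaya category, and distinct stops in $(S,M)$ remain distinct stops in $(S',M')$. The second possibility is excluded by the local-polygon description of morphisms in the topological Fukaya category together with the fact that arcs of $A$ remain separated by the stopping data in $A'$. Verifying carefully that these two observations suffice — i.e., that the derived hom-spaces computed in the two categories really do agree on the nose — is the technical heart of the argument.
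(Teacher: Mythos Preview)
Your overall architecture matches the paper's: produce a full and faithful functor $\F(S,A)\to\F(S',A')$ (Prop.~\ref{ffprop}), deduce a monomorphism of composition subalgebras via functoriality of the derived Hall construction (Prop.~\ref{mapprop}), and pass to the colimit (Cor.~\ref{skeinfuncor}, Cor.~\ref{skdiskembcor}). The passage to twisted complexes, split-closure, and the colimit is handled in the paper exactly as you outline.

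The gap is in your argument for full faithfulness. You describe morphism spaces in $\F(S,A)$ as being ``computed by immersed polygons bounded by arcs,'' and argue that no new polygons appear in $S'$. But in the HKK model used here (Def.~\ref{fukcatdef}), morphisms between arcs $X$ and $Y$ are not polygons: they are $k$-linear combinations of \emph{boundary paths}, i.e.\ paths running along marked intervals from an endpoint of $X$ to an endpoint of $Y$. Polygons (disk sequences) only enter in the higher operations $\mu_m$ for $m\geq 3$. So the question of full faithfulness is purely a question about boundary paths, and your polygon argument does not address it.

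The paper's argument (Prop.~\ref{ffprop}) is accordingly more direct. Faithfulness is immediate because an embedding sends boundary paths injectively to boundary paths. For fullness, suppose $z'$ is a boundary path in $S'$ between $\iota(X)$ and $\iota(Y)$; then $z'$ lies in a single marked interval $m'\subset M'$, so both endpoints $X(0)$ and $Y(0)$ map into $m'$. Injectivity of $\pi_0(M)\to\pi_0(M')$ forces $X(0)$ and $Y(0)$ to lie in a single marked interval $m\subset M$, and since $S$ is finitary there is a unique boundary path $z\subset m$ between them with $\iota(z)=z'$. This is where the hypothesis is actually consumed --- not to block wrapping strips, but to ensure that endpoints sharing a marked interval in the target already shared one in the source. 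Your identification of ``distinct stops remain distinct'' is the right intuition, but the mechanism you invoke (no new polygons through $S'\setminus f(S)$) is not what is needed.
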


Given a marked surface $(S,M)$ with a full arc system $A$ that cuts the surface into disks $(D_k,\A_{m_k})$, the gluing procedure for disks allows us to define a  \emph{naive algebra} $\NAlg(S,A)$, see Def. \ref{def:naive}. This algebra is the free product of the disk algebras $\Alg(D_k,\A_{m_k})$ subject to the gluing relations in Def. 
\ref{alggluedef}. 
If $(S,M)$ has enough marked intervals then Thm. \ref{prop:naivemaps} uses the functoriality statement above to construct the map below.
\begin{introthm}{(\ref{prop:naivemaps})}
There is a surjective homomorphism $\gamma$ from the naive algebra to the composition subalgebra.
\begin{equation}\label{eq:naivemap}
\gamma: \NAlg(S,A) \twoheadrightarrow \Alg(S,A)
\end{equation}
\end{introthm}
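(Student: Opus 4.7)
\emph{Plan.} The natural approach is to apply the functoriality result of Section~\ref{hallsurfsec} to each of the disk inclusions $\iota_k \co (D_k, \A_{m_k}) \hookrightarrow (S, M)$ coming from the decomposition, assemble the resulting maps via the universal property of the free product, and then verify that the gluing relations of Definition~\ref{alggluedef} descend. For the first step, each inclusion $\iota_k \co D_k \hookrightarrow S$ is an embedding of marked surfaces, and the hypothesis that $(S,M)$ has enough marked intervals is precisely what forces the induced map $\pi_0(M_{D_k}) \to \pi_0(M)$ to be injective for every~$k$. Functoriality then yields monomorphisms
\begin{equation*}
\iota_{k,*} \co \Alg(D_k, \A_{m_k}) \longrightarrow \SAlg(S,M),
\end{equation*}
each with image in the composition subalgebra $\Alg(S,A)$, since $\Alg(D_k, \A_{m_k})$ is generated by suspensions of arcs of $\A_{m_k}$ which all appear as arcs of $A$ under $\iota_k$.

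By the universal property of the free product, these maps assemble into a homomorphism
\begin{equation*}
\widetilde{\gamma} \co \Alg(D_1, \A_{m_1}) \ast \Alg(D_2, \A_{m_2}) \ast \cdots \longrightarrow \Alg(S,A).
\end{equation*}
To factor $\widetilde{\gamma}$ through the quotient $\NAlg(S,A)$, I must verify that the gluing relations of Definition~\ref{alggluedef} already hold in $\Alg(S,A)$. Each such relation is localized to a pair of disk pieces adjacent to a shared boundary arc or to a shared marked interval. Applying functoriality a second time, now to the embedding into $S$ of a regular neighborhood of the offending configuration, each gluing relation becomes the pullback of an identity holding by construction in the corresponding two-disk Hall algebra. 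This produces the desired homomorphism $\gamma \co \NAlg(S,A) \to \Alg(S,A)$.

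For surjectivity, recall that by definition $\Alg(S,A)$ is generated by the suspensions $\sigma^n \E_i$ of the arcs $\E_i \in A$. Every arc of $A$ lies in a disk piece $(D_k, \A_{m_k})$, and all of its suspensions belong to $\Alg(D_k, \A_{m_k})$; hence every generator of $\Alg(S,A)$ is already in the image of some $\iota_{k,*}$, and therefore of $\gamma$. The main obstacle is the gluing verification: the naive gluing relations encode short exact sequences in the Fukaya category that involve pairs of arcs from different disks and do not appear in either disk algebra alone, so a genuine input from functoriality applied to a glued two-disk configuration is required. Concretely, I would isolate each relation in a regular neighborhood of the shared arc (a union of two disks) and use the two-disk specialization of Theorem~\ref{minarcthm} in concert with functoriality, thereby reducing the verification to a Hall algebra computation whose structure is already known.
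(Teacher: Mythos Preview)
Your overall architecture matches the paper's: build maps out of each disk algebra via the functoriality statement (using the ``enough marked intervals'' hypothesis), assemble them via the free product, and observe that surjectivity is automatic because every generating arc of $\Alg(S,A)$ lies in some disk piece. That part is fine.

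The divergence is in how you propose to verify the gluing relations. You describe them as ``encoding short exact sequences in the Fukaya category that involve pairs of arcs from different disks'' and propose to check them by embedding a two-disk regular neighborhood and invoking Theorem~\ref{gluethm}. This mischaracterizes what \eqref{g1:itm} and \eqref{g3:itm} actually say. Relation \eqref{g1:itm} identifies two generators that are literally the same arc in $S$, so it holds tautologically in $\Alg(S,A)$; no neighborhood argument is needed. Relation \eqref{g3:itm} asserts that two arcs $X,Y$ not sharing a marked interval commute; this is exactly the situation where $\Hom^*(X,Y)=\Hom^*(Y,X)=0$ in $\F(S,A)$, and then the Hall product formula (Definition~\ref{toenalgdef}) gives $[X,Y]_1=0$ directly. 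That is how the paper disposes of both relations in two lines.

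Your proposed two-disk-neighborhood argument is not only unnecessary but has a genuine gap: even when each individual disk $D_k$ satisfies the $\pi_0$-injectivity hypothesis of Proposition~\ref{ffprop}, the glued union $D_k \cup D_{k+1}$ need not. Two marked intervals belonging to different $D_k$'s can coincide in $S$ (think of a handle re-entering near a boundary component), so the embedding of the two-disk piece may fail the hypothesis of Corollary~\ref{fdiskembcor} and you cannot invoke functoriality there. The direct $\Hom$-vanishing argument avoids this issue entirely.
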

This map is shown to be an isomorphism for disks in Prop. \ref{fullarcprop}. We conjecture that this holds whenever $(S,M)$ has enough marked intervals; a property which holds for many surfaces.
\begin{introconjecture}{(\ref{conjconj})}
The map $\gamma$ is an algebra isomorphism.
\end{introconjecture}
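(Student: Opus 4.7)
The plan is to prove injectivity of $\gamma$ by induction on the number of disks in the decomposition of $(S,A)$ cut out by the arc system. The base case, a single disk, is handled by Proposition \ref{fullarcprop}. For the inductive step, choose an interior arc $a \in A$ along which $(S,A)$ decomposes: either $a$ separates $S$ into two pieces, or $a$ is non-separating and cutting along it reduces the topological complexity. Write $(S',A')$ for the resulting simpler marked surface, so that $(S,A)$ is recovered from $(S',A')$ by a single gluing of boundary intervals.

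By the inductive hypothesis one has $\NAlg(S',A') \cong \Alg(S',A')$. The strategy is to express $\NAlg(S,A)$ as a quotient of $\NAlg(S',A')$ (or the free product of its pieces, if $a$ separates) by the gluing relations at $a$, and to prove that the kernel of the induced map to $\Alg(S,A)$ is generated precisely by those relations. Functoriality is the main tool: after possibly enlarging the marking so that the embedding $(S',M') \hookrightarrow (S,M)$ satisfies the $\pi_0$-injectivity hypothesis (this is where the ``enough marked intervals'' assumption enters), the induced map $\Alg(S',A') \hookrightarrow \Alg(S,A)$ is a monomorphism, which already rules out the most obvious possible failures of injectivity and lets one control products of generators supported on proper subsurfaces.

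The heart of the argument should be a PBW-type decomposition. Ideally one produces a spanning set of $\NAlg(S,A)$ indexed by isomorphism classes of objects of $\F(S,A)$, equivalently by multisets of indecomposable arcs with homological shifts, and shows that $\gamma$ sends this spanning set onto the natural basis of the derived Hall algebra provided by To\"en's construction. Equivalently, one could filter both sides by ``number of disks touched'' and analyze the associated graded, which should be tensor-product-like over the disks, with the gluing relations accounting for all interactions across shared edges.

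The main obstacle will be completeness of the gluing relations. Two arcs lying in different disks of the decomposition may nevertheless be joined into a single longer arc in $S$, and extensions between such composite arcs can in principle produce relations in $\Alg(S,A)$ that are not deducible from the disk presentations of Theorem \ref{minarcthm} together with the local gluing relations at $a$. Controlling these long-range extensions requires an explicit comparison of $\Ext$-groups in $\F(S,A)$ with those in the pieces, ideally through a cofiber sequence associated to cutting along $a$ and its image under the derived Hall algebra construction. Making this precise in the partially wrapped Fukaya setting seems to be the crux of the conjecture; a successful resolution may also require verifying that the Hernandez--Leclerc relations $(\text{R}1)$--$(\text{R}3)$ and the gluing relations together satisfy a diamond lemma, so that every word in $\NAlg(S,A)$ has a well-defined normal form whose image in $\Alg(S,A)$ is non-zero.
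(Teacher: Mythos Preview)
The statement you are attempting to prove is a \emph{conjecture} in the paper, not a theorem: the paper does not supply a proof, and explicitly says only that Proposition~\ref{fullarcprop} establishes the disk case. There is therefore no proof to compare your proposal against.

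Your proposal is not a proof but an outline of a strategy, and you yourself flag the essential gap: controlling the relations in $\Alg(S,A)$ that come from extensions between indecomposable objects supported across multiple disks of the decomposition. This is indeed the crux. A few further issues with the inductive scheme as written: when $a$ is non-separating, the cut surface $(S',M')$ acquires \emph{two} new marked intervals where $a$ used to be, and there is no guarantee that $(S',M')$ still has ``enough marked intervals'' in the sense of Definition~\ref{def:enough}, so the inductive hypothesis may not apply. More seriously, even in the separating case the algebra $\Alg(S,A)$ is not in any obvious sense a quotient of $\Alg(S',A')$ or of a free product of the pieces; the functoriality results in \S\ref{hallsurfsec} give embeddings, not surjections with controlled kernel, so the step ``express $\NAlg(S,A)$ as a quotient of $\NAlg(S',A')$ by the gluing relations and prove the kernel is generated by those'' does not have an analogue on the $\Alg$ side to compare to. A PBW/diamond-lemma argument as you suggest would have to proceed by producing a normal form in $\NAlg(S,A)$ and then independently verifying that its image in $\tDHa(D^\pi\F(S,A))$ is linearly independent, which requires understanding the indecomposable objects and their extensions in the Fukaya category of a general surface---precisely what the paper does not attempt.
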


The remainder of the paper shows that a graded analogue of 
the HOMFLY-PT skein relation appears as a commutator relation between certain curves in 
the Fukaya Hall algebras $\SAlg(S,M)$ of all surfaces. If $X$ and $Y$ are two arcs that intersect then they represent generators in distinct Hall algebras.
However, $X$ and $Y$ play an equal role within the Fukaya Hall algebra
$\SAlg(S,M)$.  It is within this context that a graded version of the
HOMFLY-PT skein relation appears.

\begin{introthm}{(\ref{skeinrelcor})}
  Suppose that $X$ and $Y$ are graded arcs whose endpoints intersect four
  distinct marked intervals in a finitary marked surface $(S,M)$.  If $X$
  and $Y$ intersect uniquely at a point $p$ then the 
  relation
\begin{equation}\label{grskeineq}
[X,Y]_1 = \left\{ \begin{array}{cl}
(q-q^{-1})X\#_p Y & \textrm{if } i_p(X,Y)=1\\
0&\textrm{if } i_p(X,Y) > 1
\end{array}
\right.
\end{equation}
holds in the Fukaya Hall algebra $\SAlg(S,M)$. 
\end{introthm}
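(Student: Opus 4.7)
My plan is to reduce the identity to a Hall-algebra computation in a disk with four marked intervals and then apply the explicit presentation of Theorem~\ref{minarcthm}. A regular neighborhood $N$ of the compact set $X\cup Y \subset S$ is topologically a disk, and because $X$ and $Y$ have endpoints in four distinct marked intervals, the intersection $N\cap M$ consists of four arcs, one near each endpoint. The inclusion $(N,N\cap M)\hookrightarrow(S,M)$ is therefore an embedding of finitary marked surfaces inducing an injection on connected components of the marked set, and the functoriality theorem yields a monomorphism $\SAlg(N,N\cap M)\hookrightarrow\SAlg(S,M)$. Consequently it suffices to establish \eqref{grskeineq} inside $\SAlg(D^2,M_N)$, where $(D^2,M_N)$ denotes the disk equipped with four marked intervals.

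Within this disk I choose a full arc system containing both $X$ and $Y$; Proposition~\ref{fullarcprop} together with the gluing presentation of Theorem~\ref{gluethm} then reduce the computation to the relations (R1)--(R3) of Theorem~\ref{minarcthm} on some minimal arc system $\A_4$. Under this reduction $X$ and $Y$ become explicit (graded) expressions in generators $\E_{i,k}$, with the shift indices controlled by the foliation data and by the local invariant $i_p(X,Y)$. I then compute $[X,Y]_1 = XY - YX$ directly from these relations. When $i_p(X,Y)=1$, the adjacent-commutativity/convolution relation (R2) leaves exactly one surviving extension term whose underlying object is the arc obtained by Lagrangian surgery at $p$, namely $X\#_p Y$; the scalar $(q-q^{-1})$ then emerges from the normalization $\vnp{\Ext^1}/\vnp{\Hom}$ in the Hall product \eqref{halleq}, corresponding to the factor $q^{-1}/(q^2-1)$ in (R1). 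When $i_p(X,Y)>1$, the degree-one Hom and Ext groups between $X$ and $Y$ vanish in $D^\pi\F(D^2,M_N)$, so the only surviving contributions to $XY$ and $YX$ come from split extensions and higher-degree Hom-complexes that enter symmetrically in the two orderings; these cancel in the commutator, reproducing the behavior of (R3).

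The principal technical step is the explicit identification of the extension appearing on the right-hand side of the convolution relation with the geometric Lagrangian surgery $X\#_p Y$. For a single transverse intersection in a disk this is essentially the statement that Lagrangian surgery represents the cone on a degree-one Floer morphism; nevertheless, the grading shifts and the sign/ordering conventions implicit in the Hall product must be tracked carefully to obtain the coefficient $q-q^{-1}$ with the correct sign and to rule out extraneous contributions in the higher-index case. Once this local identification is established in the disk with four marked intervals, pushing the resulting identity forward along the monomorphism $\SAlg(D^2,M_N)\hookrightarrow\SAlg(S,M)$ completes the proof.
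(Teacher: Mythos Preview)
Your overall strategy---reduce to a regular-neighborhood disk with four marked intervals via the functoriality/embedding theorem, then carry out the computation there---is exactly the paper's approach.  However, there is one literal error and one place where your sketch diverges from what actually needs to be done.

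First, you write that you ``choose a full arc system containing both $X$ and $Y$.''  This is impossible: arc systems consist of pairwise disjoint arcs, and $X$ and $Y$ meet at $p$.  What you presumably mean (and what the paper does) is to work in the minimal arc system $\A_4=\{E_1,E_2,E_3,E_4\}$ of boundary arcs and express each of $X$ and $Y$ there as an iterated $q$-commutator of the $E_i$'s.  Concretely, after normalizing the foliation to a ``standard form'' (Def.~\ref{stdef}), the paper identifies $X=[E_{2,1},E_{1,h(1)}]_q$ and $Y=[E_{3,1-h(2)},E_{2,0}]_q$ (Lemma~\ref{lemma:xpyp}) and computes $[X,\sigma^\ell Y]_1$ for all $\ell$ in one stroke (Lemma~\ref{skeinthmthm}) using the omni-Jacobi identity together with \eqref{r1:itm}--\eqref{r3:itm}.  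The resulting Kronecker deltas $\delta_{\ell,0}$ and $\delta_{\ell,1}$ handle simultaneously the nonzero case and the vanishing for $i_p(X,Y)>1$; a separate lemma (Lemma~\ref{lemma:gcurves}) then matches the two nonzero outputs with the Lagrangian surgeries $X\#_pY$.

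Second, your account of the $i_p(X,Y)>1$ case via ``degree-one Hom and Ext groups vanish, so the split contributions cancel symmetrically'' is a different argument from the paper's direct computation, and as stated it is incomplete: $X$ and $Y$ are not objects of the same $\F(D^2,A)$, so to speak of $\Ext^*(X,Y)$ you must first realize both as twisted complexes of boundary arcs in $D^\pi\F(D^2,\A_4)$ and then analyze the Hall product there.  This can be made to work, but the bookkeeping (Euler-form twist, the $\{X,L\}/\{X,X\}$ factors in Def.~\ref{toenalgdef}) is nontrivial, and the paper avoids it entirely by the uniform $q$-commutator calculation of Lemma~\ref{skeinthmthm}.
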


The idea of the proof is to enclose the two arcs 
within a disk $(D^2,4)$
with four marked intervals. By the functoriality theorem, relations
between $X$ and $Y$ are implied by relations in 
the Hall algebra $\SAlg(D^2,4)$
of the disk. This local computation in the disk is given in Lemma \ref{skeinthmthm}.

The commutator relation above is called the graded HOMFLY-PT skein relation
because the curves in $\F(S,A)$ are graded, not oriented. If the foliation
used to define the grading comes from a 1-form then each graded curve
inherits an orientation and Eqn. \eqref{grskeineq}
can be pictured in the same manner as the oriented HOMFLY-PT relation in
Eqn. \eqref{homflypteq}.

Materials in this paper suggest many extensions and
variations of the literature. For example, 
we expect that one can extend the Goldman Lie algebras and
the HOMFLY-PT skein algebras to surfaces with boundary and marked intervals
using Thm. \ref{minarcthm}, 
see Remark \ref{rmk:skeinskein}. 
String topology was introduced by Chas and Sullivan to provide a context for the
Goldman Lie algebra. A relationship between string topology and the Hall
algebras of surfaces would be enlightening.
Very recently, a preprint by Lekili and Polishchuk studies homological
mirror symmetry between partially wrapped Fukaya categories and certain
nodal stacky curves \cite{Lekili}. Our construction suggests that a skein relation
holds within the Hall algebras of derived categories of coherent sheaves on
these curves. It would be interesting to study these algebras from the
B-model perspective.

\subsection{Organization}\label{orgsec}
This paper is organized in the following way.

Section 2 begins with a review of
basic facts about
Hall algebras and the twisted derived Hall algebra $\tDHa(\aT)$.
 Section \ref{latticealgsec} recalls the computation
of the derived Hall algebras of simply-laced quivers performed by
Hernandez and Leclerc \cite{HL}.

Section 3 contains a detailed study of the quiver algebra $\tDHa(A_{m-1})$
introduced in \S \ref{latticealgsec}. 
Families of special elements
$z_{(a,b),n} \in \tDHa(A_{m-1})$ are introduced and many relations between
them are proven. 
A summary appears in Appendix II,
see Remark \ref{arcrelssec}.

Section 4 discusses the topology of graded surfaces
\S\ref{gradedsurfsec}, marked surfaces \S\ref{markedsec}, and  the concept of a
finitary surface \S\ref{markedsec}.  Section \S\ref{ainfsec}
contains the Fukaya category $\F(S,A)$
and  
derived Fukaya category $D^\pi\F(S,A)$. The section concludes with
Prop. \ref{finiteimpliesfiniteprop} which shows that $D^\pi\F(S,A)$ is a
finitary triangulated category when $S$ is a finitary surface. Section
\ref{disksec} features several key results about the Fukaya categories of
disks.

Section 5 introduces the composition subalgebras $\Alg(S,A)$ in
Def. \ref{compalgdef}. These algebras are combined to form the 
Fukaya Hall algebras
$\SAlg(S,M)$ by Def. \ref{skdef}. Section \ref{hallsurfsec} contains 
functoriality results, see Cor. \ref{skdiskembcor}. The remainder
of Section 5 studies the disk. Theorem \ref{minarcthm} produces a presentation for $\Alg(D^2,\A_m)$
the disk with minimal arc system and Thm. \ref{gluethm} extends this result
to produce a presentation for $\Alg(D^2,A)$.

Section 6 contains Theorem \ref{skeinrelcor} which shows that certain 
arcs in the algebras $\SAlg(S,M)$ satisfy the graded HOMFLY-PT skein relation.

Appendix I relates the elements $z_{(a,b),n}\in \tDHa(A_{m-1})$ to Lusztig's
PBW basis. Appendix II contains a useful index of notation, a summary of
relations which hold among the elements $z_{(a,b),n}$ and some remarks about
$q$-algebra.

\subsection{Acknowledgments} 
The authors would like to thank M.\ Abouzaid, G. Arlie,
A. Appel, B. Gammage, F. Goodman, R. Lipshitz, Y. Lekili,
O. Schiffmann and E. Zaslow for helpful conversations. 
We would especially like to thank F. Haiden for patiently answering many questions and C.\ Frohman for bringing the authors to Iowa. The work of the second author was funded in part by the European Research Council grant no. 637618.

\vskip .25in

\section{Hall algebras}

\subsection{Hall algebras of triangulated categories}\label{dhasec}
If $\aT$ is a triangulated category satisfying some modest finiteness
conditions then the derived Hall algebra $\DHa(\aT)$ is an 
invariant of $\aT$. The multiplication in this algebra encodes the structure of
extensions between various objects in the category $\aT$ \cite{Toen,
  XX}. There is a version $\tDHa(\aT)$ of this construction which is
obtained by twisting the multiplication. Our main interest is in the
invariant $\tDHa(\aT)$ when $\aT$ is a triangulated category obtained from
the Fukaya category of a surface.

\begin{notation}\label{ringnotation1}
In this section, the ring $\kk$ is required to be a finite field.
\end{notation}

\begin{defn}\label{finitenessdef}
An additive $\kk$-linear category $\aT$ is {\em finitary} when the two properties below hold.
\begin{enumerate}
\item For any two objects $X,Y\in\Ob(\aT)$, the cardinality of the set $\Hom(X,Y)$ is finite:
$$ \vnp{\Hom(X,Y)} < \infty.$$ 
\item If $X\in\Ob(\aT)$ is an indecomposable object then the ring of endomorphisms $\Endo(X)$ is a finite dimensional local $\kk$-algebra.
\end{enumerate}
A {\em finitary triangulated category $\aT$} is 
finitary as an additive $\kk$-linear category and {\em left homologically finite}: for any two objects $X,Y\in\Ob(\aT)$, the sum below is finite.
$$\sum_{n\geq 0} \dim_k \Ext^{-n}(X,Y) < \infty$$
\end{defn}

\begin{remark}
  When a triangulated category $\aT$ is finitary, each set $\Ext^n(X,Y)$ has finite cardinality for all $n\in\ZZ$, and the total cardinality $\Ext^{\leq 0}(X,Y)$ in negative degree is finite.
\end{remark}

\begin{notation}\label{homnote}
The following notation simplifies the formula for the product below.
$$\{X,Y\} := \prod_{n > 0} \vnp{\Ext^{-n}(X,Y)}^{(-1)^n}$$ 
$$\Hom(X,L)_Y := \{ f\in \Hom(X,L) : C(f)\cong Y \}$$
\end{notation}

\begin{definition}\label{toenalgdef}
In the notation above, {\em To\"{e}n's algebra structure constants} are defined to be:
$$F^L_{X,Y} := \frac{\vnp{\Hom(X,L)_Y}}{\vnp{\Aut(X)}}\cdot \frac{\{X,L\}}{\{X,X\}} \conj{ for } X,Y,L \in \Ob(\aT)/iso.$$
\end{definition}

The analogous formula for the Hall algebra of an abelian category does not
contain the terms in the curly braces above. Roughly speaking, these terms
compensate for the difference between the short exact sequences which are
used to define the Hall product for abelian categories and the distinguished
triangles, which represent long exact sequences, used to define the derived
Hall product. Using the constants $F^L_{X,Y}$ above, the derived Hall
algebra is introduced by the theorem below.

\begin{thm}[\cite{XX, Toen}]\label{dhadef}
Suppose that $\aT$ is a finitary triangulated category. Then {\em the derived Hall algebra of $\aT$} is the associative algebra determined by the abelian group $\DHa(\aT) = \QQ\inp{\Ob(\aT)/iso}$ spanned by isomorphism classes of objects in $\aT$ together with the multiplication below.
$$[X]\cdot [Y] := \sum_{[L]} F_{X,Y}^L [L]$$
Moreover, an exact equivalence $f : \aT \to \aT'$ of triangulated categories induces an isomorphism of derived Hall algebras: $f_* : \DHa(\aT) \xto{\sim} \DHa(\aT')$.
\end{thm}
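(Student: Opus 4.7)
The plan is to establish the three required properties in sequence: the sum defining $[X]\cdot [Y]$ is finite; the resulting product is associative; and any exact equivalence preserves the structure constants. The first point is the quickest. Isomorphism classes $[L]$ contributing a nonzero term are precisely those appearing as the middle object of a distinguished triangle $X\to L\to Y\to X[1]$, so they are parametrized by $\Hom(Y,X[1])/\Aut(Y)$; finitariness of $\aT$ makes $\Hom(Y,X[1])$ finite, and hence $[L]$ ranges over a finite set. The correction factor $\{X,L\}/\{X,X\}$ is a finite rational number by left homological finiteness, which guarantees the infinite product over $n>0$ in Notation~\ref{homnote} has all but finitely many terms equal to $1$.

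The heart of the argument is associativity, which amounts to proving
\[
\sum_{[L]} F^L_{X,Y}\, F^K_{L,Z} \;=\; \sum_{[M]} F^M_{Y,Z}\, F^K_{X,M}
\]
for all isomorphism classes $[X],[Y],[Z],[K]$. The strategy is to interpret both sides as weighted counts over the same set of octahedral configurations, following Xiao--Xu's refinement of To\"en's original proof. A term on the left corresponds to a pair of morphisms $X\to L\to K$ with cones $Y$ and $Z$; a term on the right corresponds to a factorization via $M$ with $X\to K$ having cone $M$ and $M$ fitting in a triangle $Y\to M\to Z\to Y[1]$. The octahedral axiom provides a bijection between these data sets, and the groupoid of such configurations can be compared via a cardinality computation balancing the numerators $\vnp{\Hom(-,-)_?}$ against the orders of automorphism groups. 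The genuinely subtle step is matching the homological correction factors $\{X,L\}\{L,Z\}/\{X,X\}\{L,L\}$ with $\{Y,Z\}\{X,M\}/\{Y,Y\}\{X,X\}$; this reduces, via the long exact sequences in negative $\Ext$-degree associated to the triangles $X\to L\to Y$, $L\to K\to Z$, $Y\to M\to Z$, $X\to K\to M$, to an alternating-product identity for the orders of $\Ext^{-n}$ groups, which is where left homological finiteness is essential (so that all products converge and the identity makes sense term by term).

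Functoriality then drops out formally. An exact equivalence $f\co \aT\to \aT'$ induces a bijection on isomorphism classes, bijections on all $\Hom$ and $\Ext$ groups, and takes distinguished triangles to distinguished triangles, hence preserves the cone of any morphism. Consequently every cardinality appearing in Definition~\ref{toenalgdef} is preserved: $\vnp{\Hom(X,L)_Y} = \vnp{\Hom(f(X),f(L))_{f(Y)}}$, $\vnp{\Aut(X)} = \vnp{\Aut(f(X))}$, and $\{X,L\} = \{f(X),f(L)\}$. Therefore $F^L_{X,Y} = F^{f(L)}_{f(X),f(Y)}$, so $[X]\mapsto [f(X)]$ extends linearly to an algebra homomorphism, and since $f$ is an equivalence, this map is a bijection on the distinguished bases, hence an isomorphism of algebras.

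The principal technical obstacle is the comparison of the $\{-,-\}$ factors in the associativity identity; the combinatorial counting of octahedra is classical, but keeping track of the infinite products over negative $\Ext$ degrees, and seeing that the two sides agree without convergence issues, is where the full strength of the finitary hypotheses is needed and where care must be taken in writing out the details.
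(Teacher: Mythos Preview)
Your argument is correct. The paper's own proof is much terser: it simply cites \cite{XX} for associativity and then gives the same observation you give for functoriality (exact equivalences induce bijections on all the sets entering the structure constants, hence preserve $F^L_{X,Y}$). So on the invariance clause you and the paper agree essentially verbatim.

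The difference is that for associativity the paper does not reproduce any argument at all, whereas you sketch the Xiao--Xu proof: interpret both sides of the associativity identity as weighted counts of octahedral diagrams, match the combinatorial factors via the octahedral axiom, and reconcile the $\{-,-\}$ correction terms using the long exact sequences in negative $\Ext$-degree together with left homological finiteness. Your sketch is accurate and identifies the genuinely delicate point (the alternating-product identity for the correction factors). What your approach buys is a self-contained outline; what the paper's approach buys is brevity, since the result is not original to the paper and a citation suffices for its purposes.
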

\begin{proof}
  The statement that $\DHa(\aT)$ is an associative algebra is the main
  theorem of \cite{XX}. Invariance under exact equivalences of triangulated
  categories follows from the observation that such equivalences induce natural
  bijections among the sets which determine the structural constants above.
\end{proof}

We introduce a twisted analogue, $\tDHa(\aT)$, of the derived Hall algebra. This twisting of the Hall product by the Euler form goes back to Ringel \cite{Ringel}; without it the Serre relations appear somewhat mangled.

\begin{defn}\label{grotheulerdef}
Suppose that $\aT$ is a $\kk$-linear triangulated category. Then the 
{\em Grothendieck group $K_0(\aT)$ of $\aT$} is given by
$$K_0(\aT) := \ZZ\inp{\Ob(\aT)}/\sim$$
where the relation $[B] \sim [A] + [C]$ is imposed whenever there is a distinguished triangle of the form $A \to B \to C \to A[1]$. 
The {\em Euler pairing}:
$$\inp{X,Y} : K_0(\aT)\ott K_0(\aT) \to \ZZ \conj{is} \inp{X,Y} := \sum_{n\in\ZZ} (-1)^n \dim_k \Ext^n(X,Y).$$
\end{defn}

Since isomorphic objects in $\aT$ are identified by the relation $\sim$, the quotient map $\Ob(\aT)\to K_0(\aT)$ factors through a map $\Ob(\aT)/iso \to K_0(\aT)$. By extending this map additively to a map $\ZZ\inp{\Ob(\aT)/iso}\to K_0(\aT)$, the Euler pairing can be applied to elements of the derived Hall algebra $\DHa(\aT)$. 

\begin{defn}\label{dha2def}
The {\em twisted derived Hall algebra} $\tDHa(\aT)$ is determined by an extension of the abelian group
\begin{equation}\label{dhaabeqn}
\tDHa(\aT) := \DHa(\aT)\ott_{\QQ} \QQ(\sqrt{q})
\end{equation}
 where $q$ is the
characteristic of the field $\kk$. However, multiplication is scaled by the
Euler pairing $\inp{-,-}$ introduced by the previous definition:
$$[X]*[Y] := q^{\inp{Y,X}/2} [X]\cdot [Y].$$
\end{defn}

\begin{terminology}
Throughout the remainder of the paper we will refer to the twisted derived Hall algebra of a triangulated category $\aT$ as the Hall algebra or the derived Hall algebra of $\aT$.
\end{terminology}

\begin{rmk}\label{dhafunrmk}
  The Hall algebra can be understood as a functor from the groupoid category
  of finitary triangulated categories and exact equivalences. This is a
  restatement of invariance. The construction remains functorial under
  inclusions between categories which are full, faithful and exact, but
  functoriality in any broader sense is not expected.
\end{rmk}

\subsection{Hall algebras of quivers}\label{latticealgsec}

Suppose that $Q$ is a simply-laced Dynkin quiver. Denote by $I$ the vertices of
$Q$ and $\cdot : I\times I \to \ZZ$ the Cartan pairing. Within the category
of finite dimensional representations $\Rep_\kk(Q)$ there is a unique
1-dimensional simple module $z_i$ associated to each vertex $i\in I$. Since
$\Rep_{\kk}(Q)$ is hereditary, the derived category of bounded cochain
complexes $D^b(\Rep_\kk(Q))$ is a finitary triangulated category when $\kk$
is a finite field. In the proposition below, we recall a presentation for
the derived Hall algebra
\begin{equation}\label{tdhaqdef}
\tDHa(Q) := \tDHa (D^b\Rep_\kk(Q))
\end{equation}
 which was computed by D. Hernandez and B. Leclerc, see \cite[Prop. 8.1]{HL}.

There are objects $\zim = z_i[n]$ in the derived category $D^b(\Rep_\kk(Q))$ of cochain complexes of $Q$-representations associated to the $n$-fold suspension of the simple module at the $i$th vertex. Their presentation uses elements $\zim \in \tDHa(Q)$ associated to these objects.

\begin{prop}[{\cite{HL}}]\label{dhaanprop}
  The derived Hall algebra $\tDHa(Q)$ is generated by the elements
  $z_{i,n}$, for $i\in I, n\in \ZZ$, which are subject to the relations:
\begin{description}
\item[(H1)\namedlabel{h1:itm}{\lab{H1}}] for $n\in \ZZ$,
\begin{align*}
\zim \zjm - \zjm \zim &=0 \conj{if} i\cdot j = 0,\\
\zim^2 \zjm - (q + q^{-1}) \zim\zjm\zim + \zjm \zim^2  &= 0  \conj{if} i\cdot j = -1,
\end{align*}
\item[(H2)\namedlabel{h2:itm}{\lab{H2}}] for $m\in\ZZ$ and $i,j\in I$,
\begin{align*}
\zim\zjmp = q^{-i\cdot j} \zjmp\zim + \d_{ij}\frac{q^{-1}}{q^2 -1},
\end{align*}
\item[(H3)\namedlabel{h3:itm}{\lab{H3}}] for $k > 1$ and $i,j\in I$,
  \begin{align*}
\zim\zjmk = q^{(-1)^k i\cdot j} \zjmk \zim.
  \end{align*}
\end{description}

\end{prop}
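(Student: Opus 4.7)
The proof proposal is as follows. The plan is to recognize $\tDHa(Q)$ as an iterated extension of the twisted Ringel-Hall algebra $\tHH(\Rep_\kk(Q)) \cong U_q^+(\mathfrak{g})$ across cohomological degrees, and then verify each family of relations by a direct computation in To\"en's formula, supplementing with a basis argument to rule out additional relations.

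First, I would record the following computational inputs, which follow from the hereditary property of $\Rep_\kk(Q)$: every object of $D^b(\Rep_\kk(Q))$ is isomorphic to a direct sum of shifts of modules $M[n]$; for the one-dimensional simples $z_i$ one has $\Hom(z_i,z_j)=\delta_{ij}\kk$, $\dim_\kk\Ext^1(z_i,z_j)=-i\cdot j$ for $i\ne j$ and $=0$ for $i=j$, and all higher $\Ext$'s vanish. Consequently $\Ext^k(z_{i,n},z_{j,m}) = \Ext^{k+m-n}(z_i,z_j)$ is supported in at most one value of $k+m-n\in\{0,1\}$, and the Euler form on the generators is $\inp{z_{i,n},z_{j,m}} = (-1)^{m-n}(\delta_{ij}-[m-n\in\{0,1\}]\,(-i\cdot j))$ in an easily-stated form.

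Next I would verify the three relations. For \ref{h1:itm} (fixed cohomological degree $n$): the subalgebra generated by $\{z_{i,n}\}_i$ is isomorphic to the twisted Ringel-Hall algebra of the abelian subcategory $\Rep_\kk(Q)[n]$, which by Ringel's theorem is $U_q^+(\mathfrak{g})$; the stated quadratic and cubic relations are exactly the quantum Serre relations. For \ref{h2:itm} (consecutive degrees, $m\to m{+}1$): apply Def.\ \ref{toenalgdef} to $z_{i,m}*z_{j,m+1}$ and $z_{j,m+1}*z_{i,m}$. In the first product, $\Ext^{-1}(z_{i,m},z_{j,m+1})=\Ext^0(z_i,z_j)$ is trivial and $\Ext^1(z_{i,m},z_{j,m+1})=\Ext^0(z_i,z_j)=\delta_{ij}\kk$, so cones split as $z_{i,m}\oplus z_{j,m+1}$ together with (when $i=j$) a contribution from the nontrivial self-extension that enumerates to $\frac{q^{-1}}{q^2-1}$ after cancellation of the $\vnp{\Aut}$ and $\{-,-\}$ factors; the Euler twist produces $q^{-i\cdot j}$. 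For \ref{h3:itm} (gap $k>1$): all $\Ext$-groups between $z_{i,n}$ and $z_{j,n+k}$ vanish, so each product reduces to the single isomorphism class $z_{i,n}\oplus z_{j,n+k}$, and the Euler twist yields the scalar $q^{(-1)^k i\cdot j}$.

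Finally, to conclude that \ref{h1:itm}–\ref{h3:itm} are a complete set of relations, I would show that the monomials in $z_{i,n}$ taken in a fixed total order span $\tDHa(Q)$ on an explicit basis matching $\Ob(D^b(\Rep_\kk(Q)))/\text{iso}$. By the splitting noted above, each object is uniquely a direct sum $\bigoplus_n M_n[n]$ with $M_n\in\Rep_\kk(Q)$. Using \ref{h3:itm} to sort monomials by cohomological degree, \ref{h2:itm} to straighten consecutive-degree pairs modulo lower-degree corrections, and Ringel's PBW basis within each fixed degree via \ref{h1:itm}, one obtains a spanning set indexed by tuples $(M_n)_{n\in\ZZ}$ with almost all $M_n=0$; comparison of Hilbert series with the natural basis shows it is in fact a basis.

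The main obstacle is the completeness step: the straightening argument must genuinely terminate, and the $\delta_{ij}$ correction term in \ref{h2:itm} produces lower-order remainders whose bookkeeping needs an inductive filtration (e.g.\ by a height function on $\bigoplus_n K_0(\Rep_\kk(Q))$). Verifying that this filtration is compatible with the relations, so that the straightening algorithm is confluent, is the essential technical point; the relation computations themselves are routine applications of Def.\ \ref{toenalgdef} once the $\Ext$-calculations above are in hand.
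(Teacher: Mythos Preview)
The paper does not give its own proof of this proposition: it is quoted verbatim from Hernandez--Leclerc \cite[Prop.~8.1]{HL}, with no argument beyond the citation. So there is no ``paper's proof'' to compare against. Your outline is in fact a reasonable reconstruction of the Hernandez--Leclerc argument: identify each fixed-degree subalgebra with the twisted Ringel--Hall algebra of $\Rep_\kk(Q)$ to obtain the quantum Serre relations \eqref{h1:itm}, compute the cross-degree products \eqref{h2:itm}--\eqref{h3:itm} directly from To\"en's structure constants using that the category is hereditary, and then run a PBW straightening to show the relations are complete.

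One concrete slip to fix: in your \eqref{h2:itm} sketch you write that $\Ext^{-1}(z_{i,m},z_{j,m+1})=\Ext^0(z_i,z_j)$ is trivial and that $\Ext^1(z_{i,m},z_{j,m+1})=\Ext^0(z_i,z_j)=\delta_{ij}\kk$; these two claims contradict each other, and in fact $\Ext^1(z_{i,m},z_{j,m+1})=\Ext^2(z_i,z_j)=0$. The correction term $\delta_{ij}\tfrac{q^{-1}}{q^2-1}$ arises instead from the nonzero map in $\Hom(z_{j,n+1}[-1],z_{i,n})\cong\Hom(z_j,z_i)=\delta_{ij}\kk$, whose cone is the zero object (the unit of the Hall algebra). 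This does not affect your overall strategy, but the bookkeeping needs to be redone before the straightening/filtration argument can be made precise.
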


\begin{rmk}\label{dhaamrmk}
When the quiver $Q$ is $A_{m-1}$, the generators $\zim$ of $\tDHa(A_{m-1})$ are indexed by vertices $I =\{ i\in\ZZ : 1\leq i < m\}$ and the Cartan matrix is determined by the assignments: 
$$i\cdot j = \left\{\begin{array}{cl} 
2 & \vnp{i-j} = 0,\\ 
-1 & \vnp{i-j}=1, \\ 
 0  & \vnp{i-j} \geq 2.
\end{array}\right.$$

In this case, the relations above can be organized as follows:
\begin{align}
   [z_{i,n},z_{j,n}]_1 &=0  & \textrm{ when } i \cdot j &= 0 \label{eq:z1}\tag{Z1}\\
  [z_{i,n}, [z_{i,n},z_{j,n}]_{q^{\mp 1}}]_{q^{\pm 1}} &= 0  & \textrm{ when }  i \cdot j &= -1\notag\\
[z_{i,n},z_{j,n+k}]_{q^{(-1)^k i\cdot j}}&= \delta_{i,j}\delta_{k,1}\frac{q^{-1}}{q^2-1} & \textrm{ when }  k \geq &1\label{eq:z2}\tag{Z2}
\end{align}
The relations \eqref{eq:z1} are equivalent to \eqref{h1:itm}. The second equation \eqref{eq:z2} combines \eqref{h2:itm} and \eqref{h3:itm} from the definition above.
\end{rmk}

\begin{rmk}
  B. To\"{e}n remarked on the similarities between his (untwisted) derived
  Hall algebra and the lattice algebra $\L(\Rep_{\kk}(Q))$ introduced by
  M. Kapranov, see \cite{Kapranov} and \cite[\S 7]{Toen}. 
\end{rmk}

\vskip 1.25in
\section{Relations in the quiver algebra}\label{algsec}

In this section we study relations among certain elements
$z_{(a,b),n} \in \tDHa(A_{m-1})$ where $1\leq a < b \leq m$ and $n\in\ZZ$. Each such element is an iterated
commutator of adjacent generators $z_{i,n}$:
$$z_{(a,b),n} = [z_{b-1,n}, [z_{b-2,n}, [\cdots, [z_{a+1,n},z_{a,n}]_q]_q]_q]_q.$$
When interpreted as an element in the Hall algebra of the Fukaya category of the disk using the parameterization $\phi_i$ from
Prop. \ref{dereqprop}, the element $z_{(a,b),n}$ is a graded arc which bisects the disk $(D^2,m)$. The relations shown to hold in this section are those of arcs in the Hall algebra of the Fukaya category which are needed to establish the isomorphism in Thm. \ref{minarcthm}.

\begin{defn}\label{bracketnotationdef}
If $A$ is a $\ZZ[q]$-algebra and $\{a_i\}_{i=1}^{n}\subset A$ is a collection of elements then the {\em right $q$-bracketing} of the word $a_na_{n-1}\cdots a_2a_1$ will be denoted by
$$[a_n,a_{n-1},\ldots, a_2,a_1]_q := [a_n, [a_{n-1}, [\cdots, [a_2,a_1]_q]_q]_q]_q$$
and, when $n=1$, $[a_n]_q = a_n$.
\end{defn}

The proposition below implies that most of the iterated $q$-bracketings
considered in this paper will not depend on the order in which the $q$-Lie brackets
are applied. The notation introduced above was chosen to emphasize this
observation.

\begin{prop}{(Rebracketing)}\namedlabel{orderprop}{\lab{RB}}
Suppose that $A$ is a $\ZZ[q]$-algebra with elements $\{a_i\}_{i=1}^n \subset A$ which satisfy the property:
$$[a_i,a_j]_1=0\conj{when} \vnp{i-j} \geq 2.$$
Then any two $q$-Lie bracketings of the word $a_1\cdots a_n$ are equal.
\end{prop}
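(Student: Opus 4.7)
My plan is to proceed by two nested inductions: an outer induction on the number $n$ of elements in the word, together with a second induction within its inductive step that handles the top-level split. The cases $n\le 2$ are trivial, as only one bracketing is possible. For the outer step, observe that any $q$-Lie bracketing of $a_n a_{n-1}\cdots a_1$ (viewed as a binary tree with leaves in order) splits at its root as $[B_1,B_2]_q$, where $B_1$ is a bracketing of $a_n,\ldots,a_{k+1}$ and $B_2$ is a bracketing of $a_k,\ldots,a_1$ for some $1\le k<n$. The hypothesis on commuting pairs is inherited by both sub-words, so the outer induction hypothesis forces $B_1=[a_n,\ldots,a_{k+1}]_q$ and $B_2=[a_k,\ldots,a_1]_q$, reducing the proposition to the identity
$$\bigl[[a_n,\ldots,a_{k+1}]_q,\,[a_k,\ldots,a_1]_q\bigr]_q \;=\; [a_n,\ldots,a_1]_q.$$

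The main tool I would prove first is a twisted Jacobi identity
$$[[x,y]_q,z]_q \;=\; [x,[y,z]_q]_q \;+\; q\,[[x,z]_1,y]_1,$$
valid in any $\ZZ[q]$-algebra, which follows from a direct expansion: both sides contain $xyz$ and $q^2zyx$, and the four remaining cross terms reassemble into $q([x,z]_1 y - y[x,z]_1)$. In particular, whenever $[x,z]_1=0$ this collapses to $[[x,y]_q,z]_q=[x,[y,z]_q]_q$.

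I would then prove the displayed identity above by a second induction on $n-k$. The base case $n-k=1$ is immediate from the definition of the right bracketing. For the inductive step, write $[a_n,\ldots,a_{k+1}]_q=[a_n,L']_q$ with $L'=[a_{n-1},\ldots,a_{k+1}]_q$ and set $R=[a_k,\ldots,a_1]_q$. Since $n-k\ge 2$, the hypothesis gives $[a_n,a_i]_1=0$ for every $i\le k$, so $a_n$ commutes with $R$, and the twisted Jacobi identity with $x=a_n$, $y=L'$, $z=R$ collapses to
$$\bigl[[a_n,L']_q,R\bigr]_q \;=\; \bigl[a_n,[L',R]_q\bigr]_q.$$
The inner induction hypothesis identifies $[L',R]_q$ with $[a_{n-1},\ldots,a_1]_q$, so the right-hand side equals $[a_n,[a_{n-1},\ldots,a_1]_q]_q=[a_n,\ldots,a_1]_q$ by definition. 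The only genuine computation is the expansion proving the twisted Jacobi identity; I expect this to be the one step requiring care, while the rest is essentially combinatorial bookkeeping about splitting binary trees.
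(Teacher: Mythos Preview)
Your argument is correct and rests on the same key local identity as the paper, namely $[[x,y]_q,z]_q=[x,[y,z]_q]_q$ whenever $[x,z]_1=0$. The paper derives this from its omni-Jacobi identity; your direct expansion of $[[x,y]_q,z]_q-[x,[y,z]_q]_q=q[[x,z]_1,y]_1$ is an equally valid route.

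Where you differ is in the global organization. The paper argues that any two bracketings of $a_n\cdots a_1$ are connected by a chain of elementary reassociations $[x,[y,z]_q]_q\leftrightarrow[[x,y]_q,z]_q$ (this is the standard connectedness of the associahedron), and then observes that in any such move the outer factors $x$ and $z$ bracket blocks of $a_i$'s separated by the block underlying $y$, so every monomial of $x$ commutes with every monomial of $z$ and the move preserves the value. Your approach instead fixes the right-nested form as a target and shows every bracketing equals it by induction on $n$. Both are standard ways to prove coherence; the paper's is a bit more conceptual while yours avoids appealing to the tree-combinatorics fact.

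One small correction: in your inner inductive step you write that ``the inner induction hypothesis identifies $[L',R]_q$ with $[a_{n-1},\ldots,a_1]_q$.'' That equality is a statement about the word of length $n-1$, so it is the \emph{outer} induction hypothesis (the proposition for shorter words) that supplies it, not the inner one. In fact, once you see this, the inner induction collapses to a single Jacobi move: for any $k<n-1$ the Jacobi identity pulls $a_n$ outside, leaving $[a_n,(\text{a bracketing of }a_{n-1}\cdots a_1)]_q$, and the outer hypothesis finishes immediately. This is only a labeling issue and does not affect the validity of your proof.
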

\begin{proof}
Equation \eqref{eq:comm4usedlaterdonotdelete} with $f=g=q$ shows that if $[x,z]_1 = 0$ then
\begin{equation}\label{eq:treecomm}
[x,[y,z]_q]_q = [[x,y]_q,z]_q\conj{when} [x,z]_1=0.
\end{equation}

Now the assumption $[a_i,a_j]_1=0$ when $\vnp{i-j} \geq 2$ implies that if
$x$ is any Lie bracketing of the word $a_i\cdots a_k$ and $z$ is any Lie
bracketing of the word $a_{k+2}\cdots a_\ell$ then $[x,z]_1=0$ since each
monomial in $x$ commutes with each monomial in $z$.

Any Lie bracketing of the word $a_1\cdots a_n$ can be related to any
other by a sequence of reassociations:
$$[x,[y,z]_q]_q \leftrightarrow [[x,y]_q,z]_q$$
which change the order in which the $q$-bracket is applied. The previous
paragraph implies that the assumption of Equation \eqref{eq:treecomm} holds
for any such reassociation and Equation \eqref{eq:treecomm} shows that
performing this operation doesn't change the resulting element in $A$.
\end{proof}

The corollary below follows from the proposition above and the relation
\eqref{h1:itm}.

\begin{cor}\label{assoccor}
  Any two $q$-Lie bracketings of the word
  $\z_{b-1,n} \z_{b-2,n} \cdots \z_{a+1,n} \z_{a,n}$ are equal in
  $\tDHa(A_{m-1})$, where $1\leq a < b\leq m$.
\end{cor}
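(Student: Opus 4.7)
The plan is to invoke Proposition \ref{orderprop} (Rebracketing) directly, after verifying that its hypothesis is met by the sequence $z_{a,n}, z_{a+1,n}, \ldots, z_{b-1,n}$.

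First, I would set $a_i := z_{i,n}$ for $i = a, a+1, \ldots, b-1$, viewed as elements of the $\mathbb{Z}[q]$-algebra $\tDHa(A_{m-1})$ (which is in fact a $\mathbb{Q}(\sqrt{q})$-algebra, so certainly a $\mathbb{Z}[q]$-algebra). The hypothesis of Proposition \ref{orderprop} requires $[a_i, a_j]_1 = 0$ whenever $|i-j| \geq 2$. By Remark \ref{dhaamrmk}, the $A_{m-1}$ Cartan pairing satisfies $i \cdot j = 0$ precisely when $|i-j| \geq 2$, and relation \eqref{h1:itm} (equivalently \eqref{eq:z1}) then gives $[z_{i,n}, z_{j,n}]_1 = z_{i,n} z_{j,n} - z_{j,n} z_{i,n} = 0$ in this case. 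Hence the required commutation condition holds.

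Having verified the hypothesis, Proposition \ref{orderprop} applies verbatim to the word $z_{b-1,n} z_{b-2,n} \cdots z_{a+1,n} z_{a,n}$ and yields the claim that any two $q$-Lie bracketings of this word give the same element of $\tDHa(A_{m-1})$.

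I do not anticipate any obstacle: the corollary is essentially a specialization of the rebracketing proposition. The only thing to check is that the distant-commutation hypothesis of Proposition \ref{orderprop} is a consequence of relation \eqref{h1:itm} restricted to equal second index $n$, which it clearly is. No further computation is needed.
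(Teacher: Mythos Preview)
Your proposal is correct and matches the paper's approach exactly: the paper states that the corollary follows from Proposition \ref{orderprop} together with relation \eqref{h1:itm}, which is precisely what you verify and invoke.
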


\begin{defn}\label{usefulcor}
In the algebra $\tDHa(A_{m-1})$, there are elements $z_{(a,b),n}$ defined by
$$\z_{(a,b),n} := [\z_{b-1,n},\z_{b-2,n},\ldots, \z_{a+1,n}, \z_{a,n}]_q \conj{ when } 1 \leq a < b \leq m.$$
\end{defn}

The corollary also implies the proposition below. 

\begin{prop}\label{assocprop}
 The identity below holds in the algebra $\tDHa(A_{m-1})$.
 \begin{description}
   \item[(S0)\namedlabel{s0:itm}{\lab{S0}}] Finger relation:
   $$[\z_{(b,c),n}, \z_{(a,b),n}]_q = \z_{(a,c),n}\conj{ for } 1\leq a < b < c\leq m.$$
 \end{description}
\end{prop}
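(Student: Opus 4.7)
The plan is to observe that the finger relation is, after a moment's unpacking, an immediate consequence of the rebracketing result in Corollary \ref{assoccor}. The strategy is therefore to exhibit both sides of the claimed identity as $q$-Lie bracketings of the single word
\[
z_{c-1,n}\, z_{c-2,n}\, \cdots\, z_{a+1,n}\, z_{a,n},
\]
and then appeal to Corollary \ref{assoccor} to conclude that they agree in $\tDHa(A_{m-1})$.

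First, I would recall from Definition \ref{usefulcor} that $z_{(a,c),n}$ is by definition the right $q$-bracketing of the word displayed above, so the right-hand side of the finger relation is one explicit $q$-Lie bracketing of it. For the left-hand side, I would observe that the word factors as the concatenation of $z_{c-1,n}\cdots z_{b,n}$ (which, by Definition \ref{usefulcor} and Corollary \ref{assoccor}, evaluates to $z_{(b,c),n}$ irrespective of how one brackets it on the right) with $z_{b-1,n}\cdots z_{a,n}$ (which similarly evaluates to $z_{(a,b),n}$). Consequently, $[z_{(b,c),n}, z_{(a,b),n}]_q$ is nothing more than the $q$-Lie bracketing of the total word in which one first brackets each of the two halves on the right and then combines them with an outer $q$-bracket. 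This is a legitimate $q$-Lie bracketing of the whole word $z_{c-1,n}\cdots z_{a,n}$.

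To invoke Corollary \ref{assoccor} on this word, I need the hypothesis of Proposition \ref{orderprop}, namely $[z_{i,n}, z_{j,n}]_1 = 0$ whenever $|i-j|\geq 2$, to hold for all indices $a\leq i,j\leq c-1$ appearing in the word. This is exactly relation \eqref{eq:z1} recorded in Remark \ref{dhaamrmk}. With the hypothesis verified, Corollary \ref{assoccor} asserts that any two $q$-Lie bracketings of the word $z_{c-1,n}\cdots z_{a,n}$ agree; applying this to the two bracketings identified above yields
\[
[z_{(b,c),n},\, z_{(a,b),n}]_q \;=\; z_{(a,c),n},
\]
which is the desired identity.

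There is essentially no computational obstacle here: the whole content of the finger relation is the combinatorial observation that nested right-bracketings of adjacent subwords reassemble to the right-bracketing of the concatenation, which is precisely what the rebracketing proposition licenses. The only point that requires a moment's care is checking that every pair of far-separated generators commutes in the symmetric sense ($[-,-]_1 = 0$), so that Proposition \ref{orderprop} applies to the concatenated word and not merely to the two halves separately; this is immediate from the Cartan data of $A_{m-1}$.
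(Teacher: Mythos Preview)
Your proof is correct and takes exactly the same approach as the paper, which simply notes that the two sides differ by reassociation and cites Corollary \ref{assoccor}. Your version spells out the details more explicitly, but the underlying argument is identical.
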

\begin{proof}
The two sides of the equation differ by reassociation, see Corollary \ref{assoccor}.
\end{proof}  

This is called the finger relation because it states that if you push the
interior of a Lagrangian arc into a boundary point then it can be written as
a commutator of the two arcs into which it splits.

The theorem below establishes the most important relations satisfied by the
elements $z_{(a,b),n}$. In the image of the functor $\phi_i$ from Prop. \ref{dereqprop},
$z_{(a,b),n}$ becomes a Lagrangian arc between two marked intervals of the
disk. From this perspective, the relations \eqref{s2:itm} and \eqref{s3:itm}
state that the arcs must commute when they are disjoint. Relation \eqref{s1:itm}
states that two arcs which intersect admit a surgery and the relation in
Prop. \ref{assocprop} above corresponds to the observation that this surgery
is associative.

\begin{theorem}\label{relsthm}
Assume that $1\leq a<b<c<d \leq m$. Then the relations listed below hold among the elements $\z_{(a,b),n}$ in the algebra $\tDHa(A_{m-1})$.
  \begin{description}
  \item[(S1)\namedlabel{s1:itm}{\lab{S1}}] Skein relation:
$$ [\z_{(a,c),n}, \z_{(b,c),n-1}]_q = \z_{(a,b),n},$$
\item[(S2)\namedlabel{s2:itm}{\lab{S2}}] Interwoven commutativity:
$$[\z_{(a,d),n}, \z_{(b,c),n-1}]_1 = 0, $$ 
\item[(S3)\namedlabel{s3:itm}{\lab{S3}}] Distant commutativity:
$$[\z_{(a,b),n}, \z_{(c,d),k}]_1 = 0.$$
  \end{description}
\end{theorem}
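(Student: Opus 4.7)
The plan is to prove the three relations in the order (S3), (S1), (S2), using the defining relations (H1), (H2), (H3) recalled in Remark~\ref{dhaamrmk}, the rebracketing Proposition~\ref{orderprop}, the finger relation (S0), and induction on the lengths of the arcs involved.

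Relation (S3) is the most direct. The hypothesis $b<c$ forces every generator $z_{i,n}$ in the bracket expansion of $z_{(a,b),n}$ to satisfy $a\le i\le b-1$ and every $z_{j,k}$ in $z_{(c,d),k}$ to satisfy $c\le j\le d-1$; hence $|i-j|\ge 2$ and $i\cdot j=0$. Under this condition, each of (H1), (H2), (H3) specializes to exact commutativity $z_{i,n}z_{j,k}=z_{j,k}z_{i,n}$ at arbitrary suspension shifts. Expanding both sides and commuting term by term yields $[z_{(a,b),n},z_{(c,d),k}]_1=0$.

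For (S1) I proceed by induction on $c-b$. In the base case $c=b+1$, $z_{(b,c),n-1}=z_{b,n-1}$, and writing $z_{(a,c),n}=[z_{b,n},z_{(a,b),n}]_q$ via (S0) reduces the identity to $[[z_{b,n},z_{(a,b),n}]_q,z_{b,n-1}]_q=z_{(a,b),n}$. The three commutation rules from (H2)--(H3)---namely $z_{b,n-1}z_{b,n}=q^{-2}z_{b,n}z_{b,n-1}+\tfrac{q^{-1}}{q^{2}-1}$, $z_{b,n-1}z_{b-1,n}=q\,z_{b-1,n}z_{b,n-1}$, and $z_{b,n-1}z_{i,n}=z_{i,n}z_{b,n-1}$ for $i\le b-2$---combine to show that when $z_{b,n-1}$ is moved through each monomial the homogeneous $q$-scalars collapse to $1$ while the inhomogeneous extension scalar from the pair $(z_{b,n-1},z_{b,n})$ survives with coefficient exactly $1$, producing $z_{(a,b),n}$. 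For the inductive step $c>b+1$, I rewrite $z_{(b,c),n-1}=[z_{(b+1,c),n-1},z_{b,n-1}]_q$ via (S0) and apply a $q$-Jacobi identity---validated by (S3)-type commutativity between $z_{(a,c),n}$ and $z_{(b+1,c),n-1}$---to fold the computation into an instance controlled by the inductive hypothesis together with (S3).

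For (S2) I induct on $d-a$. The smallest non-trivial case $(a,d)=(1,4)$, $(b,c)=(2,3)$ is a direct expansion: $z_{(1,4),n}$ is a sum of four monomials in $z_{1,n},z_{2,n},z_{3,n}$, and on commuting $z_{2,n-1}$ through each monomial the four inhomogeneous contributions $\tfrac{q^{-1}}{q^2-1}$ arising from (H2) cancel in pairs, leaving a genuine commutation relation. For the inductive step, one decomposes $z_{(a,d),n}=[z_{(b,d),n},z_{(a,b),n}]_q$ via (S0) and invokes (S1), (S3), and the inductive hypothesis to push the bracket with $z_{(b,c),n-1}$ past each factor, reducing to strictly smaller instances of (S2). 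The main obstacle is the $q$-bookkeeping and the verification of $q$-Jacobi identities of the form $[[A,B]_f,C]_g=[A,[B,C]_{f'}]_{g'}+\lambda[B,[A,C]_{f''}]_{g''}$ needed to move brackets around in the inductive steps; Proposition~\ref{orderprop} mitigates this substantially by letting generators with trivial Cartan pairing be freely permuted, but several intermediate lemmas will be required to organize the induction cleanly.
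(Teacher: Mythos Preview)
Your argument for (S3) is correct and matches the paper.  Your base case for (S1), namely $c-b=1$, is also valid: the direct monomial computation you describe does go through (the paper instead inducts on $c-a$ with Lemma~\ref{bases1lem} as the base, but your route works too).

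The genuine gap is in the inductive step of (S1).  You write $z_{(b,c),n-1}=[z_{(b+1,c),n-1},z_{b,n-1}]_q$ and then invoke ``(S3)-type commutativity between $z_{(a,c),n}$ and $z_{(b+1,c),n-1}$'' to justify a Jacobi manipulation.  But these two elements share the endpoint $c$: the generators appearing in $z_{(a,c),n}$ are $z_{a,n},\ldots,z_{c-1,n}$ and those in $z_{(b+1,c),n-1}$ are $z_{b+1,n-1},\ldots,z_{c-1,n-1}$, so the index sets overlap and (S3) simply does not apply.  In fact your own inductive hypothesis says $[z_{(a,c),n},z_{(b+1,c),n-1}]_q=z_{(a,b+1),n}\ne 0$, so no $q$-commutativity holds between them.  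The commutativity that is actually needed to push the Jacobi identity through is $[z_{(a,c),n},z_{b,n-1}]_1=0$, and that is an instance of (S2) (with $(a,d)\mapsto(a,c)$ and $(b,c)\mapsto(b,b+1)$), which you have not yet proved.

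This creates a circular dependency in your ordering: your (S1) inductive step needs (S2), while your (S2) argument explicitly invokes (S1).  The paper breaks this circle by a finer staging: first prove (S1) only in the special case $c-b=1$ (which you already have); then prove (S2), first for $c-b=1$ using that special case of (S1) together with (S3) and the omni-Jacobi identity, and then in general by induction on $c-b$; finally prove (S1) in general by induction on $c-b$, using (S2) to supply the needed commutativity $[z_{(a,c),n},z_{b,n-1}]_1=0$.  Your overall plan is salvageable, but you must reorganize it along these lines rather than attempt to finish (S1) before (S2).
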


Before proving the theorem, two lemmas are introduced. The first will serve as a base case for the inductive proofs of \eqref{s1:itm} and \eqref{eq:skeinleft}. The second lemma concerns some useful special cases of the omni-Jacobi identity \eqref{oj:itm}.
\begin{notation}\label{gradingnote}
  In many of the proofs notation is simplified by taking one of the gradings
  to be zero. Any other form of the relation can be obtained from
  suspension.
\end{notation}

\begin{lemma}\label{bases1lem}
When $1\leq a < m$, the two relations below hold in $\tDHa(A_{m-1})$.
\begin{align}
[[z_{a+1,n},z_{a,n}]_q,z_{a+1,n-1}]_q &= z_{a,n} \label{eq:S1a-base}\\
[z_{a,n+1},[z_{a+1,n},z_{a,n}]_q]_q &= z_{a+1,n} \label{eq:S1b-base}
\end{align}
\end{lemma}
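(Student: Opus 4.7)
The plan is to prove both identities by direct computation, using the quadratic relations that Prop.~\ref{dhaanprop} supplies. Abbreviate $x := z_{a,n}$, $y := z_{a+1,n}$, $x' := z_{a,n+1}$ and $y_- := z_{a+1,n-1}$. The mixed case of \eqref{h2:itm} with $i\cdot j = -1$ gives
\[
yx' = qx'y \conj{ and } y_-x = qxy_-,
\]
while the diagonal case with $i = j$ gives
\[
xx' = q^{-2}x'x + \frac{q^{-1}}{q^2-1} \conj{ and } y_-y = q^{-2}yy_- + \frac{q^{-1}}{q^2-1}.
\]
These four identities, together with the definition $[a,b]_q := ab - qba$, are all that the proof needs.

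For \eqref{eq:S1a-base} I would expand
\[
[[y,x]_q, y_-]_q \;=\; yxy_- - qxyy_- - qy_-yx + q^2 y_-xy,
\]
then use $y_-x = qxy_-$ in each of the last two terms to bring $y_-$ adjacent to $y$, and substitute the self-extension for $y_-y$. The ordered cubic monomials $yxy_-$ and $xyy_-$ appear twice each with opposite signs and cancel pairwise, leaving the scalar residue $\bigl(\tfrac{q^2}{q^2-1} - \tfrac{1}{q^2-1}\bigr)x = x$. A parallel computation handles \eqref{eq:S1b-base}: expand
\[
[x', [y,x]_q]_q \;=\; x'yx - qx'xy - qyxx' + q^2 xyx',
\]
commute $x'$ through $y$ using $x'y = q^{-1}yx'$ and through $x$ using $x'x = q^2 xx' - \tfrac{q}{q^2-1}$. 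Again the four cubic monomials cancel in pairs and the scalar residue collapses to $y$.

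No conceptual obstacle appears; the delicate step is simply bookkeeping of the powers of $q$, and the structures of the two computations are mirror images of each other under the informal duality $(x,y_-)\leftrightarrow(x',y)$. It is worth emphasizing that the coefficient must come out to exactly $1$ rather than a rescaled scalar multiple of $x$ (resp.\ $y$), because these identities will serve as the base cases of the induction on $b-a$ that proves the skein relation \eqref{s1:itm} of Theorem~\ref{relsthm}; any spurious $q$-factor here would propagate through the induction.
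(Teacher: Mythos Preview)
Your direct expansion is correct: the four relations you extract from \eqref{h2:itm} are exactly right, and the cancellation of the cubic monomials together with the scalar residue $\frac{q^2-1}{q^2-1}=1$ goes through as you describe for both identities. The informal duality you point out is real and makes the second computation a mirror of the first.

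The paper takes a slightly different route. Rather than expanding into monomials, it invokes the omni-Jacobi identity \eqref{oj:itm} with parameters $[q,q^2,q^{-2}]$ to rewrite $[[y,x]_q,y_-]_q$ as a sum of two triple brackets, one of which vanishes immediately because $[x,y_-]_{q^{-1}}=0$ (this is the off-diagonal case of \eqref{h2:itm}), and the other of which collapses to a scalar by the diagonal case of \eqref{h2:itm}. Your approach is more elementary and self-contained, avoiding the omni-Jacobi machinery entirely; the paper's approach is more structural and fits the pattern used repeatedly in the rest of \S\ref{algsec}, where nearly every argument is organized around a well-chosen instance of \eqref{oj:itm}. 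Either is fine here, and yours is arguably easier to verify by hand.
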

\begin{proof}
  To prove the first equation, we use the omni-Jacobi relation $\eqref{oj:itm}[q,q^2,q^{-2}]$:
  \begin{equation}
[x,[y,z]_{q^{-1}}]_{q^3} + q[z,[x,y]_q]_{q^{-1}} + q^3 [ y,[z,x]_{q^{-2}}]_{q^{-2}} = 0.
    \label{eq:j3}
  \end{equation}
Using anti-symmetry \eqref{as:itm} to change the order of brackets in the second summand and setting $x = z_{a+1,0}$, $y = z_{a,0}$ and $z = z_{a+1,-1}$ shows that the left-hand side of Eqn. \eqref{eq:S1a-base} is equal to:
\begin{align*}
&= [z_{a+1,0},[z_{a+1,0},z_{a+1,-1}]_{q^{-1}}]_{q^3} + q^3 [z_{a,0}, [z_{a+1,-1}, z_{a+1,0}]_{q^{-2}} ]_{q^{-2}}\\ 
&= 0 + q^3 [z_{a,0}, \frac{q^{-1}}{q^2-1} ]_{q^{-2}} & \eqref{h2:itm}\\
&= z_{a,0}
\end{align*}

The second equation is argued similarly. Setting $q$ to $q^{-1}$ in Eqn. \eqref{eq:j3} and applying \eqref{as:itm} gives:
$[z,[y,x]_q]_q = q^2 [x,[y,z]_q]_{q^{-3}} + q^{-1} [ y, [z,x]_{q^2} ]_{q^2}$.
After setting $z = z_{a,1}$, $y = z_{a+1,0}$ and $x = z_{a,0}$ and applying \eqref{h2:itm}, we find that $0 + q^{-1}[z_{a+1,0}, -q^{2}\frac{q^{-1}}{q^2-1} ]_{q^2} = z_{a+1,0}$.\\
\end{proof}

\begin{proof}{(of Thm. \ref{relsthm})}
The proof of the theorem requires four steps. The first step is to
note that relation \eqref{s3:itm} follows from \eqref{h3:itm}, see
Lem. \ref{lemma:qcomm}. The second step is to prove \eqref{s1:itm} while
assuming that $c-b=1$. In Step 3, \eqref{s2:itm} is established. In the last
step, \eqref{s1:itm} is established in complete generality.

\noindent {\it Step 2}: Assuming $c-b=1$, we prove \eqref{s1:itm} by
induction on the quantity $c-a$. Note that $c-b=1$ implies $z_{(b,c),0} = z_{b,0}$. 
The base case is $c-a = 2$ which is \eqref{eq:S1a-base}. For the inductive step:
\begin{align*}
[z_{(a-1,c),1},z_{b,0}]_q &= [[z_{(a,c),1},z_{a-1,1}]_q,z_{b,0}]_q & \eqref{orderprop}\\
&= [[z_{(a,c),1},z_{b,0}]_q,z_{a-1,1}]_q&\eqref{eq:comm1}\,\&\,\eqref{s3:itm}\\
&= [z_{(a,b),1},z_{a-1,1}]_q&\textrm{(induction)}\\
&= z_{(a-1,b),1}&\eqref{orderprop}
\end{align*}

\noindent {\it Step 3}: We prove \eqref{s2:itm}. First assume $c-b=1$ so that $z_{(b,c),0} = z_{b,0}$. It follows that
\begin{align*}
[z_{(a,d),1},z_{b,0}]_1 &= -[z_{b,0},z_{(a,d),1}]_1 &\eqref{as:itm}\\
&= -[z_{b,0},[z_{(b+1,d),1},z_{(a,b+1),1}]_q]_1 & \eqref{s0:itm}\\
&= [z_{(a,b+1),1},[z_{b,0},z_{(b+1,d),1}]_q]_1 + [z_{(b+1,d),1},[z_{(a,b+1),1},z_{b,0}]_q]_1& \textrm{$\eqref{oj:itm}\lbrack 1,1,q\rbrack$}\\
&= [z_{(a,b+1),1},[z_{b,0},z_{(b+1,d),1}]_q]_1 + [z_{(b+1,d),1},z_{(a,b),1}]_1&\textrm{(Step 1)}\\
&= [z_{(a,b+1),1},[z_{b,0},z_{(b+1,d),1}]_q]_1 + 0& \eqref{s3:itm}\\
&= [z_{(a,b+1),1},[z_{b,0},[z_{(b+2,d),1},z_{b+1,1}]_q]_q]_1 &  \eqref{orderprop}\\
&= [z_{(a,b+1),1},[z_{(b+2,d),1},[z_{b,0},z_{b+1,1}]_q]_q]_1& \eqref{eq:comm2}\\
&= [z_{(a,b+1),1},[z_{(b+2,d),1},q^{-1}/(q^2-1)]_q]_1  & \eqref{s3:itm}\\
&=0
\end{align*}
The general case for \eqref{s2:itm} is implied by writing $z_{(b,c),0} = [z_{(b+1,c),0},z_{b,0}]_q$ and observing that both terms in the commutator commute with $z_{(a,d),1}$ by induction.

\noindent {\it Step 4}: The general case of \eqref{s1:itm} follows by the induction on the number $c-b$ below.
\begin{align*}
[z_{(a,c),1},z_{(b-1,c),0}]_q &= [z_{(a,c),1},[z_{(b,c),0},z_{b-1,0}]_q]_q & \eqref{orderprop}\\
&=-q[z_{(a,c),1},[z_{b-1,0},z_{(b,c),0}]_{q^{-1}}]_q & \eqref{as:itm}\\
&= -q[z_{b-1,0},[z_{(a,c),1},z_{(b,c),0}]_q]_{q^{-1}}&\textrm{\eqref{eq:comm2} \& \eqref{s2:itm}}\\
&=-q[z_{b-1,0},z_{(a,b)_1}]_{q^{-1}}&\textrm{(induction)}\\
&=[z_{(a,b),1},z_{b-1,0}]_q &\eqref{as:itm}   \\
&= z_{(a,b-1),1}&\textrm{(induction)}
\end{align*}
\end{proof}

The relation \eqref{s1:itm} generalizes Eqn. \eqref{eq:S1a-base} in Lem. \ref{bases1lem}. The proposition below generalizes the second relation in this lemma.

\begin{prop}\label{gensk1}
In the algebra $\tDHa(A_{m-1})$ the following relation holds.
 \begin{description}
   \item[(S1')\namedlabel{eq:skeinleft}{\lab{S1'}}] Opposite skein relation:
$$[\z_{(a,b),n+1},\z_{(a,c),n}]_q = \z_{(b,c),n} \conj{ for } 1\leq a < b < c \leq m.$$
 \end{description}

\end{prop}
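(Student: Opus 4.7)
\emph{Proof proposal.} My plan is to argue by nested induction, mirroring the structure of the proof of Theorem~\ref{relsthm}. The base case is Lemma~\ref{bases1lem}, equation~\eqref{eq:S1b-base}: when $b = a+1$ and $c = a+2$ the claim reduces to $[z_{a,n+1}, [z_{a+1,n}, z_{a,n}]_q]_q = z_{a+1,n}$ after observing that $z_{(a,a+1),n+1} = z_{a,n+1}$ and $z_{(a+1,a+2),n} = z_{a+1,n}$.

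First I would settle the case $b - a = 1$ by induction on $c - a \geq 2$. For the inductive step I expand $z_{(a,c),n} = [z_{c-1,n}, z_{(a,c-1),n}]_q$ using Proposition~\ref{orderprop}; since $c \geq a+3$, distant commutativity~\eqref{s3:itm} gives $[z_{a,n+1}, z_{c-1,n}]_1 = 0$, and the commutator identity~\eqref{eq:comm2} then yields
\[
[z_{a,n+1}, z_{(a,c),n}]_q = [z_{c-1,n}, [z_{a,n+1}, z_{(a,c-1),n}]_q]_q = [z_{c-1,n}, z_{(a+1,c-1),n}]_q = z_{(a+1,c),n},
\]
where the middle equality is the inductive hypothesis and the last is a further application of Proposition~\ref{orderprop}.

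For the general statement I would induct on $b - a$, the case $b - a = 1$ just handled. Writing $z_{(a,b),n+1} = [z_{(a+1,b),n+1}, z_{a,n+1}]_q$ via the finger relation~\eqref{s0:itm} and applying~\eqref{eq:treecomm} with $X = z_{(a+1,b),n+1}$, $Y = z_{a,n+1}$, $Z = z_{(a,c),n}$ transforms the left-hand side into $[z_{(a+1,b),n+1}, [z_{a,n+1}, z_{(a,c),n}]_q]_q$. The inner bracket equals $z_{(a+1,c),n}$ by the $b - a = 1$ case, after which the inductive hypothesis on $b - a$ (with $a$ replaced by $a+1$) identifies $[z_{(a+1,b),n+1}, z_{(a+1,c),n}]_q$ with $z_{(b,c),n}$, completing the step.

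The main obstacle is verifying the hypothesis $[z_{(a+1,b),n+1}, z_{(a,c),n}]_1 = 0$ of~\eqref{eq:treecomm}. This is an opposite-graded version of interwoven commutativity~\eqref{s2:itm}, in which the inner arc carries the higher grading instead of the outer arc. I would establish it as an auxiliary lemma by a double induction on the inner-arc length $q-p$ and the outer-arc length $c-a$. The length-one inner-arc base case splits into the subcase $p \geq a+2$, which reduces via distant commutativity~\eqref{s3:itm} and the derived identity $[A,[B,C]_q]_1 = [[A,B]_1, C]_q$ (valid when $[A,C]_1 = 0$), and the borderline subcase $p = a+1$, which is dispatched by a direct algebraic calculation using the $b-a=1$ opposite skein relation and~\eqref{h2:itm}. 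The inductive step on $q-p$ expands the inner arc via the finger relation~\eqref{s0:itm} and propagates commutativity using the Leibniz-type identity $[[X,Y]_q, Z]_1 = X[Y,Z]_1 + [X,Z]_1 Y - qY[X,Z]_1 - q[Y,Z]_1 X$. With this lemma in hand the main induction closes.
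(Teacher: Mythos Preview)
Your proof is correct, but the inductive step takes a less economical route than the paper's. Both arguments handle the base case $b-a=1$ the same way (induction on $c-a$, using \eqref{eq:comm2} and \eqref{s3:itm}). The difference is in how $z_{(a,b),n+1}$ is decomposed for the induction on $b-a$.

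The paper peels off the \emph{top} generator: $z_{(a,b),n+1}=[z_{b-1,n+1},z_{(a,b-1),n+1}]_q$. Applying \eqref{oj:itm}$[1,q^{-1},q]$ to $[[z_{b-1,n+1},z_{(a,b-1),n+1}]_q,z_{(a,c),n}]_q$ yields two terms; the inductive hypothesis reduces the second to $[z_{b-1,n+1},z_{(b-1,c),n}]_q=z_{(b,c),n}$ (the $\ell=1$ case again), and the first vanishes provided $[z_{(a,c),n},z_{b-1,n+1}]_1=0$. This is only the \emph{length-one} instance of the opposite interwoven commutativity, and it follows from a single Jacobi computation: expand $z_{(a,c),n}=[z_{(b-1,c),n},z_{(a,b-1),n}]_q$ via \eqref{s0:itm}, apply \eqref{oj:itm}$[1,1,q]$, and use the $\ell=1$ case of (S1') just established together with $[z_{(a,b-1),n},z_{b-1,n+1}]_q=0$ (from \eqref{h2:itm} and Lemma~\ref{lemma:qcomm}) and \eqref{s3:itm}.

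You instead peel off the \emph{bottom} generator, which forces you to prove the general-length opposite-(S2) relation $[z_{(a+1,b),n+1},z_{(a,c),n}]_1=0$. Your sketch of this auxiliary lemma is sound: the Leibniz identity you quote is correct, the borderline case $p=a+1$ is exactly the one-line calculation above, and the reduction in the $p\ge a+2$ subcase works. But all of this extra machinery is avoidable by expanding from the other end. The trade-off: your route builds a stronger lemma (full opposite-(S2)) at the cost of more bookkeeping; the paper's route is shorter but cites \eqref{s2:itm} where, strictly speaking, the opposite-graded length-one version is what is used.
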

\begin{proof}
The proof below will use two specializations of the omni-Jacobi identity \eqref{oj:itm}:
\begin{align*}
[x, [y,z]_q]_{q^{-1}}  + [z, [x,y]_1]_1 + q^{-1} [y, [z,x]_{q}]_q &= 0 
\\
[x,[y,z]_{q^{-1}}]_q + q[y,[z,x]_1 ]_{q^{-2}} + q^{-1} [z,[x,y]_q]_q &= 0 
\end{align*}
The first relation follows from the choices $a=1$, $b= q^{-1}$ and $c = q$,
while the second is obtained by setting $a=q^{-1}$, $b=q^2$ and $c=1$.

  We proceed by induction on the length $\ell = a - b$ of the element $\z_{(a,b),1}$. If $\ell = 1$ then $\z_{(a,b),1} = \z_{a,1}$ and we calculate
  \begin{align*}
    [z_{a,1},z_{(a,c),0}]_q &= [z_{a,1}, [z_{c-1,0}, z_{(a,c-1),0}]_q]_q & \eqref{orderprop}\\
                            &=-q[z_{c-1,0},[z_{(a,c-1),0},z_{a,1}]_{q^{-1}}]_q \\
 &\quad\quad\phantom{=} + q^2 [z_{(a,c-1),0}, [z_{a,1}, z_{c-1,0}]_1]_{q^{-2}} &\eqref{oj:itm}[q^{-1},q^2,1]\\
                              &=-q[z_{c-1,0},[z_{(a,c-1),0},z_{a,1}]_{q^{-1}}]_q + q^2 [z_{(a,c-1),0}, 0]_{q^{-2}} &\textrm{\eqref{h3:itm} \& \eqref{as:itm}}\\
                              &= [z_{c-1,0},z_{(a+1,c-1),0}]_q  &\textrm{\eqref{as:itm} \& \eqref{orderprop}}\\
    &=z_{(a+1,c),n}
    \end{align*}

    When the length, $\ell = a - b > 1$, we proceed again by induction.
    \begin{align*}
      [\z_{(a,b),1},\z_{(a,c),0}]_q &= [[z_{b,1},\z_{(a,b-1),1}]_q,\z_{(a,c),0}]_q & \eqref{orderprop}\\
                                       &= q[z_{(a,b-1),1},[z_{(a,c),0},z_{b,1}]_1]_1 + [z_{b,1},[z_{(a,b-1),1},z_{(a,c),0}]_q]_q & \eqref{oj:itm}[1,q^{-1},q]\\
                                       &= [z_{b,1},z_{(b-1,c),0}]_q + [z_{b,1},0]_q & \eqref{s2:itm}\\
      &= z_{(b,c),0} & \eqref{orderprop}
    \end{align*}
\end{proof}

The proposition below shows that the elements $z_{(a,b),n}$ satisfy \eqref{eq:z2}. This will be used in Lem. \ref{lemma:psihom}.

\begin{proposition}\label{prop:skeinselfext}
The elements $z_{(a,b),n}\in\tDHa(A_{m-1})$ satisfy the relation below.
 \begin{description}
   \item[(S4)\namedlabel{eq:skeinselfext}{\lab{S4}}] Self-skein relation:
$$ [z_{(a,b),n}, z_{(a,b),n+k}]_{q^{2(-1)^{k}}} = \delta_{k,1} \frac{q^{-1}}{q^2-1} \conj{ for } k \geq 1 \conj{ and }1\leq a < b \leq m.$$
 \end{description}

\end{proposition}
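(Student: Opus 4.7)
The plan is to proceed by induction on the arc length $\ell = b - a$. The base case $\ell = 1$ gives $z_{(a,a+1),n} = z_{a,n}$, and the identity reduces to \eqref{eq:z2} applied with $i = j = a$ (where $i \cdot j = 2$). For the inductive step, set $w_\bullet := z_{(a+1,b),\bullet}$ and $v_\bullet := z_{a,\bullet}$, so that $z_{(a,b),n} = [w_n,v_n]_q$ and $z_{(a,b),n+k} = [w_{n+k},v_{n+k}]_q$ by Def.~\ref{usefulcor} and Cor.~\ref{assoccor}. The inductive hypothesis applied to $w$ and the base case applied to $v$ yield the two ``diagonal'' self-extension relations
\begin{equation*}
[w_n, w_{n+k}]_{q^{2(-1)^k}} = \delta_{k,1}\frac{q^{-1}}{q^2-1}, \qquad [v_n, v_{n+k}]_{q^{2(-1)^k}} = \delta_{k,1}\frac{q^{-1}}{q^2-1}.
\end{equation*}

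The principal auxiliary step is to establish the two ``cross'' relations, for $k \geq 1$:
\begin{equation*}
[w_n, v_{n+k}]_{q^{(-1)^{k+1}}} = 0, \qquad [v_n, w_{n+k}]_{q^{(-1)^{k+1}}} = 0,
\end{equation*}
which say that $w$ and the shifted $v$ $q$-commute with no delta correction. I would prove these by a secondary induction on the length of $w$. The base case $w = z_{a+1,\bullet}$ is \eqref{eq:z2} applied to $(i,j) = (a+1,a)$, noting that $i \cdot j = -1$, so $q^{(-1)^k(i \cdot j)} = q^{(-1)^{k+1}}$, and that the delta term vanishes since $i \neq j$. For the inductive step, decompose $w_\bullet = [z_{b-1,\bullet}, z_{(a+1,b-1),\bullet}]_q$; since $b-1 \geq a+2$ we have $(b-1) \cdot a = 0$, so $z_{b-1,n}$ commutes freely with $z_{a,n+k}$ by \eqref{h2:itm} and \eqref{h3:itm}, which allows the cross relation for $z_{(a+1,b-1),\bullet}$ to propagate to $z_{(a+1,b),\bullet}$ by a short direct calculation inside a single $q$-commutator.

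With all four commutation relations in hand, the target identity $[[w_n,v_n]_q, [w_{n+k},v_{n+k}]_q]_{q^{2(-1)^k}} = \delta_{k,1}\frac{q^{-1}}{q^2-1}$ follows by expansion. Writing $\alpha = q^{(-1)^k}$ and $\beta = \alpha^2 = q^{2(-1)^k}$, the key algebraic input is the identity $\alpha^{-2}\beta = 1$: all off-diagonal (non-delta) monomials in the expansion pair up and cancel, while only the diagonal contributions from the two self-extension delta corrections survive and combine to give the required $\delta_{k,1}\frac{q^{-1}}{q^2-1}$. The cleanest implementation proceeds via the omni-Jacobi identity \eqref{oj:itm} with suitably chosen parameters, expanding $[z_{(a,b),n}, [w_{n+k}, v_{n+k}]_q]_{\beta}$ into nested commutators of the form $[[z_{(a,b),n}, w_{n+k}]_?, v_{n+k}]_?$ and $[w_{n+k}, [z_{(a,b),n}, v_{n+k}]_?]_?$ and then substituting the cross and diagonal relations established above. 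The main obstacle is the bookkeeping in this last step: ensuring the $q$-parameters match exactly throughout the iterated commutators so that the non-delta monomials cancel -- parameter tracking of the same flavour already encountered in the proof of Thm.~\ref{relsthm}.
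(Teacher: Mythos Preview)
Your approach is correct but takes a genuinely different route from the paper for the case $k=1$. The paper only splits the \emph{second} factor, writing $z_{(a,b),0}=[z_{(a+1,b),0},z_{a,0}]_q$, applies one instance of \eqref{oj:itm}$[q^2,1,q^{-1}]$ to $[z_{(a,b),1},z_{(a,b),0}]_{q^2}$, and then evaluates the two resulting inner brackets using the already-proven skein relations \eqref{s1:itm} and \eqref{eq:skeinleft}: one inner bracket collapses to $z_{a,1}$ (giving the constant $c$ via \eqref{eq:z2}), the other collapses to something whose outer bracket vanishes. Your plan instead splits \emph{both} factors, establishes the four primitive relations between $w_\bullet$ and $v_\bullet$ (two diagonal self-extensions, two cross $q$-commutations), and then expands the double bracket from scratch. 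This is more self-contained---you never invoke \eqref{s1:itm} or \eqref{eq:skeinleft}---but the final expansion carries more bookkeeping than the paper's, which offloads the work to Theorem~\ref{relsthm} and Proposition~\ref{gensk1}.

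Two small remarks on your writeup. First, for $k\geq 2$ your four relations are all homogeneous $q$-commutations (no delta term), so Lemma~\ref{lemma:qcomm} applied to the monomials of $[w_n,v_n]_q$ and $[w_{n+k},v_{n+k}]_q$ gives the result in one line; this is exactly what the paper does and is cleaner than routing through \eqref{oj:itm}. Second, for $k=1$ your description of the surviving terms is slightly off: the quadratic contributions proportional to $w_{1}w_{0}$ and $v_{1}v_{0}$ (each a single delta correction) actually cancel among themselves, and what survives is the \emph{product} of the two delta corrections, $(q^3-q)c^2=c$ where $c=q^{-1}/(q^2-1)$. If you do pursue the \eqref{oj:itm} route, a useful intermediate identity is $[\,[w_n,v_n]_q,\,w_{n+1}\,]_{q^{-1}}=0$ (the would-be delta term carries the coefficient $\alpha^{-1}-q=0$), which makes one of the two branches vanish immediately.
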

\begin{proof}
When $k=1$ and $b-a = 1$ the statement is equivalent to Eqn. \eqref{eq:z2}. If  $b-a > 1$ then the expression $[z_{(a,b),0},z_{(a,b),1}]_{q^{-2}}$ is equal to
\begin{align*}
&=   
-q^{-2}[z_{(a,b),1},z_{(a,b),0}]_{q^2} & \eqref{as:itm}\\
&= -q^{-2}[z_{(a,b),1},[z_{(a+1,b),0},z_{a,0}]_q]_{q^2}
& \eqref{orderprop}\\
&= -q^{-2}(-q^2[z_{a,0}, [z_{(a,b),1}, z_{(a+1,b),0}] - q^2[z_{(a+1,b),0}, [z_{a,0}, z_{(a,b),1}]_{q^{-1}}]_1)
& \textrm{$\eqref{oj:itm}[q^2,1,q^{-1}]$}\\
&= [z_{a,0},z_{a,1}]_{q^{-2}} + [z_{(a+1,b),1},[z_{a,0},z_{(a,b),1}]_{q^{-1}}]_1
&\eqref{s1:itm}\\
&= \frac{q^{-1}}{q^2-1} - q^{-1}[z_{(a+1,b),1},z_{(a+1,b),1}]_1 
&\textrm{\eqref{eq:z2} \& \eqref{eq:skeinleft}}\\
&= \frac{q^{-1}}{q^2-1}
\end{align*}

When $k \geq 2$, each monomial in $z_{(a,b),0}$ $q$-commutes with each monomial in $z_{(a,b),k}$. In particular,
 $[z_{i,0}, z_{j,k}]_{q^{(-1)^k a_{ij}}} = 0$ where the $A_{m-1} = (a_{ij})$ is the Cartan matrix, see Rmk. \ref{dhaamrmk}. Then Lemma \ref{lemma:qcomm} implies $[z_{(a,b),0},z_{(a,b),k}]_{q^{2(-1)^k}} = 0$ since $\sum_{i,j} a_{ij} = 2$.

\end{proof}

\vskip .25in

\section{Fukaya categories}\label{fuksec}

This section reviews the definition of the topological Fukaya categories; we
follow \cite{HKK}. The first part of Section \ref{surfsec} introduces
gradings on surfaces and curves. The second part of the section recalls
marked surfaces and arc systems. These two ideas are combined in Section
\ref{ainfsec} in order to define the Fukaya category $\F(S,A)$ of a surface
and its associated derived category $D^\pi\F(S,A)$. The last section
discusses the special case of the disk. The interested reader may consult
the references \cite{DK, Bok, Nadler}.

For convenience, all of the surfaces are assumed to be compact and oriented in what follows.

\subsection{Graded topology and marked surfaces}\label{surfsec}
The section begins with graded surfaces and graded curves. The key point is
that an intersection point of any two graded curves comes with an integer,
$i_p(c_1,c_2)$, called the intersection index. A shift operator $\s$, which
acts on graded curves is introduced, marked surfaces are introduced as
graded surfaces with corners, and arc systems on marked surfaces are
discussed.

If $X$ is a space then denote by $\Pi_1(X,x,y)$ the set of homotopy classes
of paths from $x$ to $y$ in $X$. There is a composition
$(\a,\b) \mapsto \a\cdot \b$ given by concatenation of paths.

\subsubsection{Graded surfaces and graded curves}\label{gradedsurfsec}
A {\em grading} $\eta\in \Ga(\PP(TS))$ of an oriented surface $S$ is a section
of the projectivized tangent bundle $\PP(TS)$. As $\eta_p\in\PP(T_pS)$ is a
line in the tangent space $T_pS$ at each point $p\in S$, a grading of $S$ is
a foliation of $S$ by lines.  A {\em graded surface} is a pair $(S,\eta)$
consisting of a oriented surface and a grading on $S$.

A {\em map $(S,\eta_S) \to (T,\eta_T)$ of graded surfaces} is a pair $(f,\tilde{f})$
where $f : S \to T$ is an orientation preserving local diffeomorphism and
$\tilde{f}$ is a homotopy from $f^*\eta_T$ to $\eta_S$; i.e.  
$\tilde{f}\in\Pi_1(\Ga(\PP(TS)), f^*\eta_T, \eta_S)$. If $(f,\tilde{f}) : S\to T$ and 
$(g,\tilde{g}) : R\to S$ are maps of graded surfaces then there is a composite map
$(f\circ g, (g^*\tilde{f})\cdot \tilde{g}) : R\to T$. 
There is a {\em shift automorphism} $\s = (1_S, \s) : S \to S$ where $\s$ is the generator of $\pi_1(\PP(T_pS))$ determined by the orientation at each point $p\in S$, see \cite[\S 2.1]{HKK}. 

A {\em graded curve} $c$ in a graded surface $(S,\eta)$ is a
triple $(I,c,\tilde{c})$ in which $I$ is a 1-manifold, $c : I\to S$ is an
immersion and $\tilde{c}$ is a path from $\eta_{c(t)}$ to the tangent line $\dot{c}(t)$
for each $t\in I$; i.e. $\tilde{c} \in \Pi_1(\Ga(c^*\PP(TS)),c^*\eta, \dot{c})$.

If $(I,c,\tilde{c})$ is a graded curve in a graded surface $(S,\eta)$ and $(f,\tilde{f}) : (S,\eta) \to (T,\rho)$ is a map of graded surfaces then the {\em pushforward $f_*(c)$ of $c$ by $f$} is the graded curve $(I, f\circ c, (c^*\tilde{f})\cdot \tilde{c})$ in $T$.
If $c$ is a graded curve in a graded surface then the {\em $n$-fold suspension of $c$} is $\s^n c = c[n] = (\s^n)_*(c)$ for $n\in \ZZ$.

If two graded curves $(I_1,c_1,\tilde{c}_2)$ and $(I_2,c_2,\tilde{c}_2)$ intersect at a point, $c_1(t_0) = p = c_2(t_1)$, then there is an {\em intersection index}:
\begin{equation}\label{indexeqn}
  i_p(c_1,c_2) := \tilde{c}_1(t_1) \cdot \kappa \cdot \tilde{c}_2(t_2)^{-1} \in \pi_1(\PP(T_pS))
  \end{equation}
where $\kappa$ is the shortest counterclockwise path from $\dot{c}_1(t_1)$ to $\dot{c}_2(t_2)$.
The intersection index satisfies the equations:
\begin{equation}\label{indexpropeqn}
  i_p(c_1,c_2) + i_p(c_2,c_1) = 1\conj{and} i_p(c_1[n],c_2[m]) = i_p(c_1,c_2) + n -m.
  \end{equation}

\subsubsection{Marked surfaces}\label{markedsec}

When a surface $S$ has corners, the smooth part of its boundary $\partial S = \partial_0 S \sqcup \partial_1 S$ is a disjoint union of closed
$1$-manifolds, $\partial_0 S$, and open intervals, $\partial_1 S$.
A {\em marked
  surface} is a surface $S$ with corners together with a subset
$M\subset\partial S$ of the boundary which contains each closed component,
$\partial_0S \subset M$, and every other component of $\partial_1 S$.

A marked surface is {\em finitary} when it is compact, has boundary and
there is at least one marked interval on each boundary component. 
Proposition \ref{finiteimpliesfiniteprop} shows that
finitary surfaces have finitary Fukaya categories in
the sense of Definition \ref{finitenessdef}.

An {\em arc} in a marked surface $S$ is a closed embedded interval which
intersects $M$ transversely at its endpoints and is not isotopic to an
interval in $M$.  Arcs are considered up to isotopies in which endpoints
may move within respective components of $M$.  A {\em boundary arc} is an
arc which is isotopic to the closure of a component of $\partial S
\backslash M$. An {\em internal arc} is an arc which is not a boundary
arc. 

An {\em arc system} $A$ in $S$ is a collection of pairwise disjoint
non-isotopic arcs. A {\em full arc system} is an arc system $A$ containing
all boundary arcs that cuts $S$ into a collection of disks.  A {\em map $f :
  (S,M) \to (T,N)$ of marked surfaces} is an orientation preserving
immersion which satisfies $f(M)\subset N$ and maps the boundary arcs of $S$
to disjoint non-isotopic arcs in $T$. 
Note that such maps are not necessarily
closed under composition. 
If $f$ also takes arcs in an arc system for $S$ to arcs in an arc system for $T$
then $f$ induces a strict $A_\infty$-functor
between associated Fukaya categories. For further discussion see \cite[\S 3]{HKK}.

\begin{center}\label{fig:rhcpdisk}
\begin{overpic}[scale=0.6] 
{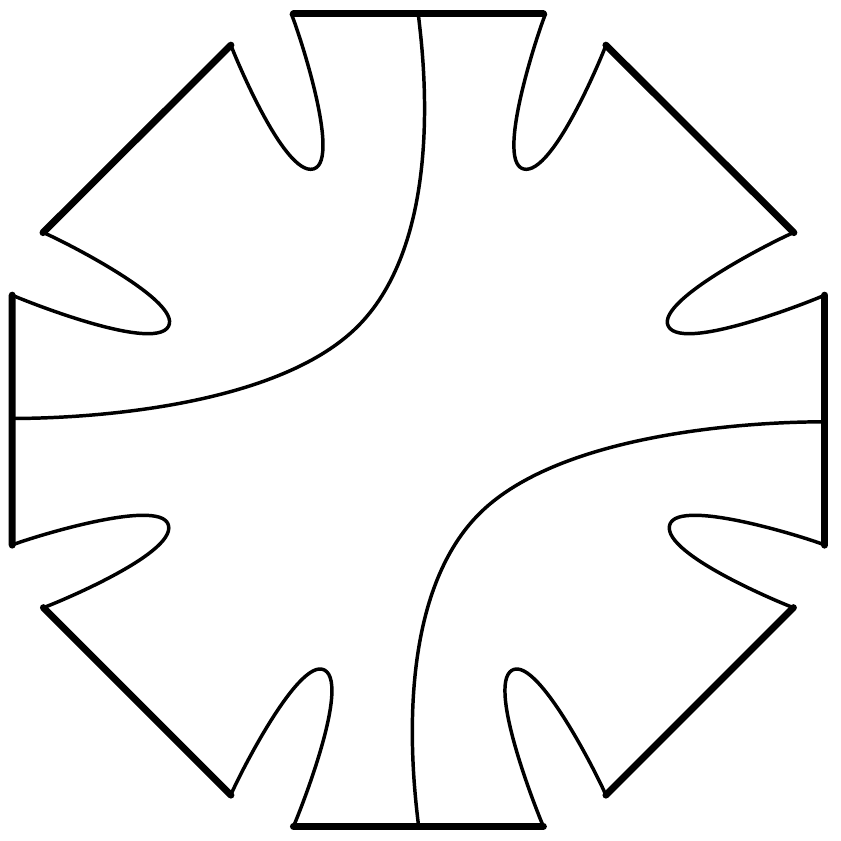}
\end{overpic}
\end{center}

The illustration above features a marked disk $D^2$.  Thin lines are used to
denote the arc system $A$. This arc system consists of eight boundary arcs
and two internal arcs. The thick lines can be thought of as marked intervals or
boundary paths, see Def. \ref{fukcatdef}.

\subsection{$A_\infty$-categories and the Fukaya category}\label{ainfsec}
The purpose of this section is to introduce an $A_\infty$-category $\F(S,A)$
associated to a graded marked surface $S$  with an arc system
$A$. This $A_\infty$-category determines a triangulated category
$D^\pi\F(S,A)$. The convention is to refer to both $\F(S,A)$
and $D^\pi\F(S,A)$ as Fukaya categories. The central concern of this paper
is to study the Hall algebra of the triangulated category
$D^\pi\F(S,A)$.  

\subsubsection{The Fukaya category}
\begin{defn}\label{ainfdef}
An {\em $A_{\infty}$-category} $\aC$ consists of a collection of objects $\Ob(\aC)$ and $\ZZ$-graded $k$-module of morphisms $\Hom(X,Y)$ for each pair of objects $X,Y\in \Ob(\aC)$ together with maps
$$\mu_d : \Hom(X_{d-1},X_{d})\ott  \cdots \ott \Hom(X_0,X_1)\to \Hom(X_0,X_d)[2-d], d\geq 1$$
which satisfy the relations
$$\sum_{m=0}^d \sum_{n=0}^{d-m} (-1)^{\ddagger_n} \mu_{d-m+1}(f_d,\ldots,f_{n+m+1},\mu_m(f_{n+m},\ldots,f_{n+1}),f_n,\ldots,f_1) = 0$$
where $\ddagger_n = \vnp{f_n} + \cdots + \vnp{f_1}-n$ and $d \geq 1$.

An $A_\infty$-category $\aC$ is said to be {\em strictly unital} when there is a unique degree zero morphism $1_X \in \Hom^0(X,X)$ for each $X\in \Ob(\aC)$ which satisfies:
\begin{align}\label{identityeqn}
\mu_2(f,1_X) = f, \quad\quad (-1)^{\vnp{g}}\mu_2(1_X,g) = g\\ \conj{ and }\mu_d(\ldots, 1_X, \ldots) = 0 \conj{ when } d \ne 2,\nonumber
\end{align}
for any maps $f : A \to X$ or $g : X\to B$ and any object $X\in\Ob(\aC)$.
\end{defn}

\begin{defn}{$(\F(S,A))$}\label{fukcatdef}
If $S$ is an oriented graded marked surface and $A$ is an arc system then there is an $\Ainf$-category $\F(S,A)$ with objects $\Ob(\F(S,A))=A$ given by the set of graded arcs in $A$. The morphisms in $\F(S,A)$ are $k$-linear combinations of boundary paths. 

Given two distinct arcs $X$ and $Y$ in $A$, a {\em boundary path} from $X$ to $Y$ is a non-constant path in $A$ which starts on an endpoint of $X$, follows the reverse orientation of the boundary and ends on an endpoint $Y$. When $X$ and $Y$ coincide, the trivial path $1_X$ is considered a boundary path. The {\em degree} of a boundary path $\gamma : [0,1] \to S$ from $X$ to $Y$ is given by
$$\vnp \ga := i_{\ga(0)}(X,\ga) - i_{\ga(1)}(Y,\ga)$$
for any grading of $\ga$. 

The $\Ainf$-structure on $\F(S,A)$ is defined below.
\begin{description}
\item[$(\mu_1)$] The map $\mu_1$ is always zero.
\item[$(\mu_2)$] The map $\mu_2$ is given by concatenation of boundary paths: if $a$ and $b$ can be concatenated 
then 
$$\mu_2(b,a) := (-1)^{\vnp a} a\cdot b,$$ 
otherwise, $\mu_2(b,a) := 0$.
\item[$(\mu_m)$] Suppose that $(D^2,\A)$ is a disk with $m$ marked intervals and $m \geq 3$. Let $\A$ be the boundary arcs and $\{c_1,\ldots,c_m\}$ the boundary paths between them ordered cyclically according to the reverse disk orientation. Then a {\em disk sequence} is a collection of boundary paths 
$\{f\circ c_1,\ldots,f \circ c_m\}$ in $(S,A)$ for some map $f : (D^2,\A) \to (S,A)$ of marked surfaces. 

If $a_1,\ldots,a_m$ is a disk sequence and $b$ is a boundary path then 
$$\mu_m(a_m,\ldots,a_1\cdot b) := (-1)^{\vnp b} b \conj{ or } \mu_m(b\cdot a_m,\ldots, a_1) := b$$
when $a_1\cdot b\ne 0$ or $b\cdot a_m\ne 0$ respectively; otherwise the map $\mu_m$ is defined to be zero.
\end{description}
\end{defn}

The definition above determines an $\Ainf$-category, see \cite[Prop. 3.1]{HKK}.

\subsubsection{The split-closed derived Fukaya category}
The remainder of this section is concerned with the construction of a
triangulated category $D^\pi (\aC)$ associated to an $A_\infty$-category
$\aC$. This category $D^\pi(\aC)$ is the homotopy category of the
$A_\infty$-category $\Pi\Tw(\Si\aC)$ of split-closed twisted complexes in the
additivization of $\aC$. Definition \ref{adddef} concerns the additiviation $\Si\aC$, Definition \ref{twdef} involves twisted complexes 
and Proposition \ref{karoubiprop} discusses split-closure.

The additivization of $\aC$ is determined by adding formal direct sums and grading shifts.

\begin{defn}{$(\Si\aC)$}\label{adddef}
  The additivization $\Si\aC$ of an $\Ainf$-category $\aC$ has objects $\Ob(\Si(\aC))$ given by formal direct sums:  $\oplus_{i=1}^n V_i \ott X_i$   where $V_i$ is a $\ZZ$-graded $k$-module and $X_i\in\Ob(\aC)$.
  Maps between formal direct sums are given by matrices of maps between summands:
  $$\Hom_{\Si\aC}(V\ott X, W\ott Y) := \Hom(V,W)\ott \Hom(X,Y).$$
  The $\Ainf$-structure is extended by:
  $$\mu_k(f_k\ott a_k, \ldots, f_1\ott a_1) = (-1)^{\sum_{i<j} \vnp{f_i}(\vnp{a_j}-1)}f_k\cdots f_1\ott\mu_k(a_k,\ldots, a_1).$$
\end{defn}  
 
The category of twisted complexes $\Tw(\aC)$ is determined by adding representatives for all mapping cones to the category $\aC$.

\begin{defn}{$(\Tw(\aC))$}\label{twdef}
A {\em twisted complex} $(X,\d_X)$ is a finite sum $X = \oplus_{i=1}^n X_i$ of formally graded objects in $\aC$ along with a strictly lower triangular matrix $\d_X = (\d_{ij})_{1\leq i,j \leq n}$, $\d_{ij} \in \Hom^1(X_j, X_i)$ that satisfies the Maurer-Cartan equation:
$$\sum_{n\geq 0} \mu_n(\d_X,\ldots,\d_X) = 0.$$
Maps between twisted complexes are given by matrices of maps between various summands:
$$\Hom_{\Tw(\aC)}((X,\d_X),(Y,\d_Y)) := \oplus_{i,j} \Hom(X_i,Y_j).$$
If $(X_0,\d_{X_0}), \ldots, (X_d,\d_{X_d})$ is a sequence of twisted complexes and $f_i = (f_i^{jk})$ where $(f_i^{jk}) \in \Hom_{\Tw(\aC)}((X_{i-1},\d_{X_{i-1}}), (X_i, \d_{X_i}))$ then the map $\mu_d^{\Tw(\aC)}$ is defined by
\begin{align*}
  \mu_d^{\Tw(\aC)}(f_d,\ldots,f_1) = \sum_{j_d,\ldots,j_0\geq 0} \mu_{d+j_0+\cdots+j_d}(\d_{X_d},\ldots,\d_{X_d}, f_d, \d_{X_{d-1}},\\\d_{X_{d-1}},\ldots,\d_{X_{d-1}},f_{d-1},\ldots,f_1, \d_{X_0},\ldots,\d_{X_0})
\end{align*}
where each group of $\d_{X_i}$ factors appear $j_i$ times. This determines the $A_\infty$-category $\Tw(\aC)$ of twisted complexes.
\end{defn}

The objects of $\Tw(\aC)$ are, not necessarily uniquely, built out of successive applications of the mapping cone construction.

In an $A_\infty$-category $\aC$, there is a degree $1$ map, $\mu_1 : \Hom(X_0,X_1)\to \Hom(X_0,X_1)$, for each pair of objects $X_0,X_1\in \Ob(\aC)$, which satisfies $\mu_1\circ\mu_1 = 0$; the simplest $A_\infty$-relation above. Taking homology everywhere with respect to these maps produces the homotopy category defined below.

\begin{defn}{$(\Ho(\aC))$}\label{hodef}
The {\em homotopy category} $\Ho(\aC)$ of an $A_\infty$-category $\aC$ is the $k$-linear category with the same objects as $\aC$ and morphisms given by homology classes of maps $[f] \in H^*(\Hom(X,Y),\mu_1)$ for each $X,Y\in \Ob(\aC)$. The composition is defined by
$$[f_2]\circ [f_1] := (-1)^{\vnp{f_1}} [\mu_2(f_2,f_1)].$$
\end{defn}

Although the homotopy category of the category of twisted complexes is a
triangulated category, in order to satisfy the technical assumptions made by
the derived Hall construction, it is necessary to take the split-closure
$\Pi\Tw(\aC)$ of the category of twisted complexes, see
Prop. \ref{finiteimpliesfiniteprop}. This adds to an $\Ainf$-category
objects representing the images of homotopy idempotents.

\begin{prop}\label{karoubiprop}
If $\aC$ is an $\Ainf$-category then a {\em split-closure $\iota : \aC \to
  \Pi\aC$} is an $\Ainf$-category such that
\begin{enumerate}
\item $\Ho(\Pi\aC)$ is split-closed,
\item $\iota$ induces a full and faithful functor on homotopy categories,
\item every object in the category $\Ho(\Pi \aC)$ is isomorphic to the image of an idempotent endomorphism in $\aC$,
\end{enumerate}
see \cite[I.(4c)]{Seidel}. The split-closure is unique up to homotopy and preserves triangularity. In more detail, every $\Ainf$-category $\aC$ has a split-closure $\aC \to \Pi\aC$. Any two split-closures $\iota_1 : \aC\to \Pi_1\aC$ and $\iota_2 : \aC \to \Pi_2\aC$ for a given $\Ainf$-category $\aC$ are quasi-equivalent via a functor $f : \Pi_1\aC\xto{\sim}\Pi_2\aC$ which satisfies $f\circ \iota_1 \simeq \iota_2$, see \cite[Lem. 4.7]{Seidel}. The split-closure of a triangulated $\Ainf$-category is triangulated, see \cite[Lem. 4.8]{Seidel}. 
\end{prop}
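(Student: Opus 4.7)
The plan is to follow Seidel's construction closely. First I would build $\Pi\aC$ explicitly by adjoining images of idempotents: an object is a pair $(X,p)$ with $X\in\Ob(\aC)$ and a morphism $p\in\Hom_{\aC}^{0}(X,X)$ that is a homotopy idempotent, that is $\mu_1(p)=0$ and $\mu_2(p,p)=p$ in cohomology, together with the higher coherence data needed to make $p$ into a ``quasi-idempotent'' in the $A_\infty$-sense. Morphisms are defined by $\Hom_{\Pi\aC}((X,p),(Y,q)) := \Hom_{\aC}(X,Y)$ with $A_\infty$-structure maps given by pre- and post-composing with the idempotents, i.e. $\mu_d^{\Pi\aC}(f_d,\dots,f_1)$ is $\mu_d^{\aC}$ applied to alternating insertions of the $p_i$'s with the $f_i$'s, in a way that produces a strictly unital structure. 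The inclusion $\iota\co \aC\hookrightarrow\Pi\aC$ sends $X$ to $(X,1_X)$.

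Next I would verify the three defining properties. Fullness and faithfulness of $\iota$ at the level of $\Ho$ is immediate because $\Hom((X,1_X),(Y,1_Y))$ is, by definition, $\Hom_{\aC}(X,Y)$ and the $\mu_1$ agree. Essential surjectivity onto images of idempotents is built into the construction: if $e\in \Hom^0_{\aC}(X,X)$ represents an idempotent in $\Ho(\aC)$, then $(X,e)$ provides the image, split by the evident maps $e\co (X,1_X)\to(X,e)$ and $e\co (X,e)\to(X,1_X)$, whose $\mu_2$-composition in either order recovers the required idempotent on the nose. Finally, $\Ho(\Pi\aC)$ is itself split-closed, since an idempotent on $(X,p)$ can be expressed as an idempotent in $\Hom_{\aC}(X,X)$ lying below $p$, and then the corresponding object is already present in $\Pi\aC$ by construction.

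For uniqueness up to quasi-equivalence, I would invoke the universal property: the inclusion $\iota\co\aC\to\Pi\aC$ is characterized (up to quasi-equivalence) by the property that restriction along $\iota$ gives an equivalence between $A_\infty$-functors $\Pi\aC\to\aD$ into split-closed targets and $A_\infty$-functors $\aC\to\aD$. Given two split-closures $\iota_1,\iota_2$, applying this to $\iota_2\co\aC\to\Pi_2\aC$ yields a quasi-functor $f\co\Pi_1\aC\to\Pi_2\aC$ with $f\circ\iota_1\simeq\iota_2$, and a symmetric argument produces its quasi-inverse.

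The main obstacle, and the one I would tackle last, is preservation of triangularity. Because the mapping cone of a morphism in $\Pi\aC$ need not be built as a twisted complex of objects already lying in the essential image of $\iota$, one must check that given $f\co(X,p)\to(Y,q)$, its cone in $\Tw(\Pi\aC)$ is homotopy equivalent to an object of $\Pi\aC$ itself. The point is that the cone of $f$ in $\Tw(\aC)$ carries an induced idempotent coming from $p$ and $q$, and cutting down by this idempotent produces the desired cone in $\Pi\aC$; compatibility of the idempotent with the Maurer--Cartan differential uses only the $A_\infty$-relations in $\aC$. Assembling these verifications reproduces \cite[I.(4c), Lem.~4.7, Lem.~4.8]{Seidel}, so I would simply record the construction and refer the reader there for the detailed computational checks.
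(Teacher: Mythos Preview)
The paper does not prove this proposition at all; it merely records the definition and cites Seidel for existence, uniqueness, and preservation of triangularity. Your final sentence---record the construction and defer to Seidel for the checks---is precisely what the paper does, so in that sense your proposal matches.

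However, the explicit construction you sketch for $\Pi\aC$ contains a genuine gap. You set $\Hom_{\Pi\aC}((X,p),(Y,q)) := \Hom_{\aC}(X,Y)$ and propose to obtain the $A_\infty$-structure by inserting copies of the idempotents into the $\mu_d$'s. This cannot produce the correct cohomology. As recorded in Remark~\ref{splclrmk}, one must have $\Hom_{\Ho(\Pi\aC)}(Y,X) = p_X\,\Hom_{\Ho(\aC)}(Y,X)\,p_Y$, which is typically a proper summand. But a degree count shows that inserting the components $\wp^{a_i}\in\Hom^{1-a_i}$ of a homotopy idempotent into $\mu_{k+1}$ shifts degree by $1-\sum a_i\le 0$, so no combination of such insertions yields a new degree-$1$ differential on the bare complex $\Hom_\aC(X,Y)$; when $\mu_1=0$ you are stuck with zero differential and hence the wrong (too large) cohomology. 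Seidel's construction, visible in Remark~\ref{splclrmk}, avoids this by defining the module as $Z_Y(X)=\Hom_\aC(X,Y)[q]$, i.e.\ tensoring with a polynomial ring in a formal variable, and using the full sequence $(\wp^d)_{d\ge 1}$ against powers of that variable to build a genuine degree-$1$ differential whose cohomology is $p_Y\Hom_{\Ho(\aC)}(X,Y)$. Without this auxiliary variable your proposed model of $\Pi\aC$ is not quasi-equivalent to the split-closure.
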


In particular, the last lemma cited above ensures that the homotopy category
$\Ho(\Pi \Tw(\aC))$ is a split-closed triangulated category in the usual
sense. The remark below summarizes two properties of the split-closure from Seidel.

\begin{rmk}\label{splclrmk}
  There is a homologically fully-faithful embedding of the split-closure $Z : \Pi\aC\to \aC\module$ inside of
  the category of $\Ainf$-functors from $\aC$ to chain
  complexes $\Ch_k$. 
If $Y\in\Ob(\aC)$ and $\wp_Y$ is a homotopy idempotent then one can associate a $\aC$-module $Z_Y : \aC\to \Ch_k$, defined on objects by $Z_Y(X) = \Hom_{\aC}(X,Y)[q]$ \cite[p.56]{Seidel}. By \cite[Lem. 4.4]{Seidel}, the homology of this module is given by $H(Z_Y)(X) = p_Y \Hom_{H(\aC)}(X,Y)$ where $p_Y = [\wp^1_Y]$. By \cite[Lem. 4.5]{Seidel}, if $M$ is any $\aC$-module then $\Hom_{H(\aC\module)}(Z_Y,M) = H(M)(Y)p_Y$. 

After setting $M=Z_X$, where $Z_X$ is the Yoneda module  associated to the homotopy idempotent $\wp_X$, the two lemmas combine to show:
\begin{align*}\Hom_{\Ho(\Pi\aC)}(Y,X) &= \Hom_{\Ho(\aC\module)}(Z_Y,Z_X)\\ &= H(Z_X)(Y)p_Y\\ &= p_X\Hom_{\Ho(\aC)}(Y,X)p_Y 
\end{align*}
where $p_X = [\wp_X^1]$. 
\end{rmk}

Sometimes this construction is called the idempotent completion or Karoubi envelope.

\begin{defn}{$(D^\pi(\aC))$}\label{trianote}
  If $\aC$ is an $A_\infty$-category then the {\em split-closed derived category
    $D^\pi(\aC)$ of $\aC$} is the homotopy category of the split-closed
  twisted complexes on the additivization of $\aC$.
  $$ D^\pi(\aC) := \Ho(\Pi\Tw(\Si\aC))$$
\end{defn}

\begin{rmk}
For any $\Ainf$-category $\aC$, there is a category of modules $\aC\module$ into which $\Pi\Tw(\Si\aC)$ embeds homologically fully faithfully. The split-closed derived category $D^\pi(\aC)$ is the smallest split-closed triangulated subcategory of $\Ho(\aC\module)$ containing the image of the Yoneda embedding.
\end{rmk}  

Definition \ref{trianote} above allows us to construct a triangulated category
$D^\pi\F(S,A)$ from the $A_\infty$-category introduced in
Def. \ref{fukcatdef} above. The majority of this paper concerns the
computation of the derived Hall algebras of these categories. The
proposition below verifies that Fukaya categories associated to finitary
surfaces are finitary triangulated categories. For a discussion of finitary
surfaces see Section \ref{markedsec}. See Definition \ref{finitenessdef} for a discussion of 
finitary triangulated categories.

\begin{prop}\label{finiteimpliesfiniteprop}
The derived category $D^\pi\F(S,A)$ of the Fukaya category of a finitary
surface $S$ is a finitary triangulated category.
\end{prop}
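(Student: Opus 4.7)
The plan is to verify each of the three conditions in Definition \ref{finitenessdef} for $\aT = D^\pi\F(S,A)$ by establishing finiteness of morphism spaces in the underlying $\Ainf$-category $\F(S,A)$ and then propagating this finiteness through the additivization, twisted-complex, split-closure, and homotopy-category constructions.

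First I would show that $\F(S,A)$ has finitely many objects and finite-dimensional morphism spaces over $k$. Finiteness of $\Ob(\F(S,A))=A$ follows from compactness of $S$: since $A$ cuts $S$ into disks and $S$ has finite Euler characteristic, $A$ consists of finitely many arcs. For any two arcs $X,Y \in A$, a generating boundary path is determined by choosing endpoints $x \in X\cap M$, $y \in Y\cap M$ on a common boundary component and traversing the reverse boundary orientation from $x$ to $y$. The finitary hypothesis---at least one marked interval on each boundary component---ensures that the path so described cannot wrap arbitrarily often around a boundary circle, so it is pinned down by its endpoints. Since each arc has two endpoints, there are at most four boundary paths between $X$ and $Y$, whence $\Hom^*(X,Y)$ is finite-dimensional over $k$ and, because $k$ is a finite field, a finite set.

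Finite-dimensionality of morphism spaces is preserved by the subsequent constructions. A twisted complex in $\Tw(\Si\F(S,A))$ is by definition a \emph{finite} direct sum of shifted objects of $\F(S,A)$, so $\Hom_{\Tw}((X,\d_X),(Y,\d_Y))=\bigoplus_{i,j}\Hom(X_i,Y_j)$ is finite-dimensional. By Remark \ref{splclrmk}, split-closure identifies $\Hom_{\Ho(\Pi\Tw)}(Y,X)$ with the subspace $p_X\Hom_{\Ho(\Tw)}(Y,X)p_Y$, which remains finite-dimensional. This yields condition $(1)$ of Definition \ref{finitenessdef}. Condition $(2)$ is then automatic: in a $k$-linear additive category whose $\Hom$ spaces are finite-dimensional and in which idempotents split, any indecomposable $X$ has a finite-dimensional endomorphism algebra with no nontrivial idempotents, and is therefore local.

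For condition $(3)$, I would observe that $\Ext^{-n}(X,Y)=\Hom_{D^\pi\F(S,A)}(X,Y[-n])$ is a subquotient of $\bigoplus_{i,j}\Hom_{\F(S,A)}(X_i,Y_j[-n])$, where $X=\bigoplus X_i$ and $Y=\bigoplus Y_j$ are the finite underlying direct sums of shifted arcs. Each generating boundary path carries a fixed integer degree prescribed by Equation \eqref{indexeqn}, so $\Hom(X_i,Y_j[-n])$ is nonzero for only finitely many $n\geq 0$; summing over the finite index set keeps $\sum_{n\geq 0}\dim_k \Ext^{-n}(X,Y)$ finite. The main obstacle, I expect, is the first step: carefully verifying that the finitary hypothesis on $(S,M)$ restricts the number of boundary paths between any two arcs to a finite set, since without a marked interval on every boundary component a path could wrap around an unmarked boundary circle arbitrarily many times.
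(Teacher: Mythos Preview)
Your proposal is correct and follows the same overall architecture as the paper: establish finite-dimensionality of $\Hom$ in $\F(S,A)$ from the finitary hypothesis, then propagate through $\Si$, $\Tw$, $\Pi$, and $\Ho$. The differences are in the execution of three steps, and in each case your argument is more self-contained where the paper either cites an external result or takes a longer route.

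For the $\Tw$ step, the paper inducts along a Postnikov filtration of each twisted complex and uses the associated long exact sequences in $\Ext$; your observation that the Hom complex in $\Tw$ is already a finite matrix of finite-dimensional spaces before taking cohomology is more direct and suffices. For condition~(2), the paper invokes that finite-dimensional algebras are semiperfect and cites \cite{Krause} for the Krull--Schmidt consequence; your argument that idempotent-splitting plus indecomposability forces $\End(X)$ to have no nontrivial idempotents, and that a finite-dimensional $k$-algebra with this property is local, is exactly the content of that citation unpacked. For left-homological finiteness, the paper appeals to \cite[Cor.~3.1]{HKK}, which asserts homological properness of $\F(S,A)$ for finitary $S$; your direct argument---that each boundary path carries a single fixed degree, so $\Hom^*_{\F(S,A)}(X_i,Y_j)$ is supported in finitely many degrees, and this bound survives passage to subquotients under $\Tw$ and $\Pi$---is again the underlying reason that result holds. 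The paper's version is shorter by outsourcing; yours is more transparent about why the finitary hypothesis on $(S,M)$ is exactly what is needed.
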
  
\begin{proof}
  Since $S$ is finitary, there is a marked interval on each boundary
  component so that $\Hom(X,Y)$ in the Fukaya category $\F(S,A)$ is always
  finite dimensional. Maps between objects in $\Si\aC$ are finite
  matrices of maps between objects in $\aC$. 

  If the mapping spaces $\Hom_{\Ho(\aC)}(X,Y)$ are finite dimensional then the mapping spaces $\Hom_{\Ho(\Tw\aC)}(X,Y))$ must be finite dimensional. This is because any twisted complex $X$ is the total complex of a Postnikov system:
$$\tau_1 X \xto{f^X_1} \tau_2 X \xto{f^X_2} \tau_3 X \to \cdots \to \tau_{\ell_X} X = X$$
where the cone $C(f^X_n)\simeq C^X_n\in \Ob(\aC)$. So that if $Y\in \Ob(\aC)$
then, for each $n$, the long exact sequence:
$$\cdots \to \Ext^i(\tau_{n-1} X,Y) \to \Ext^i(\tau_{n} X, Y) \to Ext^i(C^X_{n-1},Y)\to \cdots$$
together with finite dimensionality of $\Ext^*(\tau_{n-1} X,Y)$ and $\Ext^*(C^X_{n-1},Y)$ implies finite dimensionality of $\Ext^*(\tau_{n} X,Y)$. If both $X, Y\in \Tw(\aC)$ then using the long exact sequence:
$$\cdots \from \Ext^i(X,\tau_m Y) \from \Ext^i(X, \tau_{m-1} Y) \from Ext^i(X,C^Y_{m-1})\from \cdots$$
together with finite dimensionality of $\Ext^*(X,\tau_{m-1}Y)$ and $\Ext^*(C^Y_{m-1},Y)$ implies finite dimensionality of $\Ext^*(X, \tau_{m} Y)$. Therefore, $H^*(\Hom_{\Tw\aC}(X,Y))$ is finite dimensional for all $X,Y\in \Ob(\Tw\aC)$.

If $\aC$ is an $\Ainf$-category and $\Ho(\aC)$ has finite dimensional $\Hom$-spaces then $\Ho(\Pi\aC)$ has finite dimensional $\Hom$-spaces since:
$$\dim p_X\Hom_{\Ho(\aC)}(Y,X)p_Y \leq \dim \Hom_{\Ho(\aC)}(Y,X)$$
see Remark \ref{splclrmk}.

  By the above argument $End(X)$ is finite dimensional for each
  $X\in\Ob(D^\pi\F(S,A))$. Finite dimensional $k$-algebras are examples of
  semiperfect rings. Since $D^\pi\F(S,A)$ is split-closed, the category
  $D^\pi\F(S,A)$ is Krull-Schmidt. In particular, when $X$ is indecomposable
  the algebra $\End(X)$ is local, 
  see \cite[Cor. 4.4, Prop. 4.9]{Krause}.

  Lastly, when $S$ is finitary, the Fukaya category is homologically proper
  by \cite[Cor. 3.1]{HKK}.  This stronger condition implies that
  $D^\pi\F(S,A)$ is left-homologically finite.
\end{proof}

\subsection{The Fukaya category of the disk}\label{disksec}

The special case of the disk $(D^2,m)$ with $m$ marked intervals is
discussed. When equipped with a foliation, there is a natural choice of a
minimal arc system $\A_m$ for $(D^2,m)$. Two propositions are stated after the
introduction of some notation. The first proposition states that any one arc
in $\A_m$ is isomorphic to a twisted complex consisting of the other arcs in
$D^\pi\F(D^2,\A_m)$. The second proposition identifies the category
$D^\pi\F(D^2,\A_m)$ with the derived category of $A_{m-1}$-quiver
representations.

\begin{notation}\label{disknotation}
Denote by $(D^2,m)$ the oriented disk $D^2$ with $m$ marked intervals equipped
with minimal full arc system $\A_m$ consisting only of boundary
arcs. Following the reverse orientation induces a cyclic ordering of the
boundary arcs so that $\A_m = \{E_i\}_{i\in \ZZ/m}$; the labelling
$i\in\ZZ/m$ of the arc $\E_i$ from the $i$th to the $i+1$st marked interval
is unique up to cyclic shift. The group $\ZZ = \inp{\s}$ acts on $\A_m$ by
suspension and so we set $\E_{i,n} = \s^n\E_i$ and $\ZZ \A_m = \{ \E_{i,n} : i\in\ZZ/m, n\in\ZZ\}$.  The subscript $m$ will be dropped from $\A_m$ when
it can be inferred by other means.

The Fukaya category uses the foliation $\eta$ to determine the degree $\vnp{a_i}$ of the unique map $a_i : \E_i \to \E_{i+1}$ for each $i\in\ZZ/m$. This information can be recorded more simply as a map $\h : \A_m \to \ZZ$, $\h(\E_i) = \vnp {a_i} = i_p(E_i,E_{i+1})$, which satisfies the equation
$$\sum_{\E_i\in\A_m} \h(\E_i) = m - 2.$$
Any such map will be called {\em foliation data}. We will consistently abbreviate notation by writing $\h(\E_i)=\h(i)$. 

\begin{remark}
For this equation to hold, it is essential for the disk to be a manifold with corners, see Section \ref{markedsec}. If a disk is realized as a subset of the plane so that each edge is a straight line then it must be a convex polygon.
\end{remark}

There is also a function $\inp{-} : \A_m \to \ZZ$ which will be used to  express the shift of $j$th arc, $\E_j$, which occurs when expressing the $i$th arc, $\E_i$, in terms of the other arcs: $\E_{i+1}, \E_{i+2}, \ldots, \E_{i+m-1}$, see Prop \ref{convprop}.
\begin{equation}\label{knotation}
\inp{k} := \sum_{j=1}^{k-1} (1-h(j))
\end{equation}
In this notation, $\inp 1 = 0$. When we wish to emphasize the dependence of the function $\inp{-} $ on the function $\h$, we will write $\inp{-}_\h$.

The permutation $\tau = (1,2,\ldots,m)$ acts on the arc system  $\A_m$ by $\tau^i(j) = i+j$. 
This induces an action of $\tau$ on the set of $\ZZ$-linear combinations of functions $\Hom(\A_m,\ZZ)$. 
Notice that there is an inclusion $\ZZ\hookrightarrow \Hom(\A_m,\ZZ)$ determined by taking an integer $n$ to the constant function on $\A_m$ with value $n$.
For example, $\tau^i(\h(j)) = h(i+j)$ and  $\tau^i\inp{k} = \sum (1-h(i+j))$. 
The action of $\tau$ is further extended to elements $E_{i,f(m)}$, where $f$ is a $\ZZ$-linear combination of functions on $\A_m$. For example,
$$\tau^i(E_{j,3 + \inp j})=E_{i + j,3 + \tau^i \inp j}.$$
\end{notation}

The two propositions below concern basic properties of the Fukaya categories
$\F(D^2,\A)$.  The first proposition says that any arc can be recovered from
the others in the Fukaya category of the disk.

\begin{prop}\label{convprop}
  Suppose that $\E_i\in \A=\{E_i\}_{i\in\ZZ/m}$ is an arc in a minimial arc system for the
  disk $(D^2,m)$ equipped with foliation data $\h : \A \to\ZZ$. Then the arc
  $\E_i$ is isomorphic to the following twisted complex of the remaining arcs
  $\A\backslash \{\E_i\}$ in the Fukaya category $\dfda$:
$$\E_{i+1,\tau^i\inp{1}} \xto{a_{i+1}} \E_{i+2,\tau^i\inp{2}} \xto{a_{i+2}} \E_{i+3,\tau^i\inp{3}} \to \cdots \xto{a_{i+m-2}} \E_{i+m-1,\tau^i\inp{m-1}}.$$
The maps $a_i : \E_{i} \to E_{i+1,\tau^i\inp{1}}$ and $a_{i+n-1} : E_{i+m-1,\tau^i\inp{m-1}}\to E_i$ establish this isomorphism, see \cite[\S 3.3]{HKK}.
\end{prop}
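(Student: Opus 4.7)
The plan is to proceed by induction on $m$, the number of marked intervals. The base case $m = 3$ follows directly from the $\mu_3$-relation of Def.~\ref{fukcatdef}, and the inductive step uses an interior diagonal to split the disk into a triangle and a sub-disk with $m-1$ marked intervals, to which the base case and the inductive hypothesis respectively apply.

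For the base case $m = 3$, the minimal arc system has three boundary arcs $E_i, E_{i+1}, E_{i+2}$ with boundary paths $a_i, a_{i+1}, a_{i+2}$ of degrees $h(i), h(i+1), h(i+2)$ summing to $m - 2 = 1$. The disk sequence relation in Def.~\ref{fukcatdef} yields $\mu_3(a_{i+2}, a_{i+1}, a_i) = \pm 1_{E_i}$. This identity exhibits $E_i$ as the mapping cone of $a_{i+1} : E_{i+1, \tau^i\inp{1}} \to E_{i+2, \tau^i\inp{2}}$, with $\tau^i\inp{1} = 0$ and $\tau^i\inp{2} = 1 - h(i+1)$ as prescribed by the formula \eqref{knotation}. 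The maps $a_i$ and $a_{i+2}$ play the roles of the inclusion of the first term and the projection onto $E_i$ in the resulting distinguished triangle.

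For the inductive step, choose an internal arc $F$ joining the marked intervals $i+1$ and $i+m-1$. This cuts $(D^2, m)$ into a triangle $T$ bounded by $E_i, F, E_{i+m-1}$ and a sub-disk $D'$ with $m-1$ marked intervals bounded by $F, E_{i+1}, \ldots, E_{i+m-2}$. Applying the base case inside $T$ expresses $E_i$ as a two-term twisted complex in $F$ and $E_{i+m-1}$, while the inductive hypothesis applied to $D'$ expresses $F$ as a twisted complex of $E_{i+1}, \ldots, E_{i+m-2}$ with connecting maps $a_{i+1}, \ldots, a_{i+m-3}$. Splicing these two Postnikov systems (an iterated cone of an iterated cone is itself an iterated cone) produces the required twisted complex for $E_i$, with the structure maps $a_i : E_i \to E_{i+1, \tau^i\inp{1}}$ and $a_{i+m-1} : E_{i+m-1, \tau^i\inp{m-1}} \to E_i$ realizing the inclusion and projection of the overall iterated cone.

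The main obstacle is the grading bookkeeping. One must verify that the partial-sum shifts $\inp{k}$ from \eqref{knotation} computed inside the smaller disk $D'$, together with the shift contributed by the cone operation performed in the triangle $T$, reassemble into the shifts $\tau^i\inp{k}$ appearing in the original disk. This reduces to the observation that the degree $h(F)$ induced on the new arc $F$ by the ambient foliation is forced by the triangle constraint $h(i) + h(F) + h(i+m-1) = 1$ on $T$; this value of $h(F)$ precisely compensates for the difference between partial sums taken over $D'$ and those taken over $(D^2, m)$, so that the nested shifts telescope. The signs from Def.~\ref{fukcatdef} can be tracked along the way and absorbed into the differentials of the twisted complex.
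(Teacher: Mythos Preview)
The paper does not supply its own proof here; the proposition is stated with a reference to \cite[\S 3.3]{HKK}. The argument there is direct rather than inductive: the full boundary word $a_i, a_{i+1}, \ldots, a_{i+m-1}$ is itself a disk sequence in $\F(D^2,\A_m)$, so Def.~\ref{fukcatdef} gives $\mu_m(a_{i+m-1},\ldots,a_i)=1_{E_i}$, while all intermediate $\mu_k$ on consecutive sub-words vanish (adjacent $a_j$, $a_{j+1}$ lie on distinct marked intervals so $\mu_2$ vanishes, and no smaller disk immerses into a minimally decorated disk so the higher $\mu_k$ with $3\le k<m$ vanish). Unwinding the formula for $\mu_2^{\Tw}$ in Def.~\ref{twdef}, this single relation makes $a_i$ and $a_{i+m-1}$ mutually inverse isomorphisms between $E_i$ and the displayed twisted complex. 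Your base case is exactly the $m=3$ instance of this computation, so the induction buys nothing.

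Your inductive step also has a gap as written. The internal arc $F$ is not an object of $\F(D^2,\A_m)$, so ``applying the base case inside $T$'' and ``the inductive hypothesis applied to $D'$'' do not take place in the category at hand: they produce isomorphisms in $D^\pi\F(T,\{E_i,F,E_{i+m-1}\})$ and $D^\pi\F(D',\{F,E_{i+1},\ldots,E_{i+m-2}\})$ respectively. To repair this you must enlarge to the arc system $A'=\A_m\cup\{F\}$, invoke the Morita equivalence $\F(D^2,\A_m)\hookrightarrow\F(D^2,A')$ of \cite[Lem.~3.2]{HKK} (cf.\ Cor.~\ref{sheafpropcor}), transport both isomorphisms into $D^\pi\F(D^2,A')$ via the fully faithful embeddings of $T$ and $D'$ (as in Prop.~\ref{ffprop}), splice there, and then descend. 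This is all doable, but it is strictly more work than the one-line $\mu_m$ argument, which never leaves the minimal arc system.
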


The proposition above states that the arc $E_i$ can be expressed as iterated
cones on the arcs $\A\backslash\{E_i\}$. The objects $E_j$ in subcategory of
$\F(D^2,\A)$ formed by the remaining arcs $\A\backslash\{E_i\}$ can be
identified with the simple modules $z_j$ associated to vertices of the
$A_{m-1}$-quiver. The next proposition states that this identification
induces an exact equivalence of categories.

\begin{prop}\label{dereqprop}
  Suppose that $k$ is a field and $\dfda$ is the $k$-linear Fukaya category of the disk with $m$ marked intervals equipped with minimal arc system $\A=\{\E_i\}_{i\in\ZZ/m}$ and foliation data  $\h : \A\to \ZZ$. If $\E_i\in A$ is a fixed arc then there is a exact equivalence:
$$\phi_i : D^b(\Rep_k(A_{m-1}))\to \dfda$$
determined by taking the simple module associated to the $j^{\textrm{th}}$ vertex $z_j$ to the arc $\E_{i+j,\tau^i\inp{j}}$.
\end{prop}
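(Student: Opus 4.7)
The plan is to construct $\phi_i$ as an $A_\infty$-quasi-equivalence on full subcategories spanned by compact generators on each side, and then to extend to the split-closed derived categories.

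First I would identify compact generators on both sides. In $D^b(\Rep_k(A_{m-1}))$, the simple modules $z_1, \ldots, z_{m-1}$ form a classifying generating set, as every bounded complex of representations is in the triangulated hull of the simples. On the Fukaya side, let $T_j := E_{i+j, \tau^i\langle j\rangle}$ for $j = 1, \ldots, m-1$. Proposition~\ref{convprop} exhibits the excluded arc $E_i$ explicitly as an iterated cone on the $T_j$, so the triangulated subcategory generated by $\{T_j\}$ contains all of $\A$, and after passing to split-closed twisted complexes, all of $D^\pi\F(D^2, \A)$.

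Next I would compute the $A_\infty$-endomorphism algebra of $T := \bigoplus_j T_j$ in $\F(D^2, \A)$ and compare it with $\End_{D^b(\Rep_k(A_{m-1}))}(\bigoplus_j z_j)$. The quiver side is hereditary and formal: its cohomology is the path algebra of $A_{m-1}$ (for some orientation), concentrated in degrees $0$ and $1$, with a degree-$1$ generator between consecutive simples coming from $\Ext^1(z_j, z_{j+1}) = k$. On the Fukaya side, the choice of shifts $\tau^i\langle j\rangle$ is precisely engineered so that the basic boundary path $a_{i+j}$ between $T_j$ and $T_{j+1}$ lands in the correct degree: using the relation $\tau^i\langle j+1\rangle - \tau^i\langle j\rangle = 1 - h(i+j)$ together with the degree formula for graded shifts of boundary paths, the morphism between the shifted arcs becomes a degree-$1$ generator, matching $\Ext^1$ on the quiver side. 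The endomorphisms of each $T_j$ remain $k$ in degree $0$, and any morphism between non-adjacent $T_j$ and $T_k$ in the full subcategory factors through $E_i$ after the $A_\infty$-identifications, contributing nothing to the Ext-algebra.

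Then I would verify formality. The only source of nontrivial higher products $\mu_m$ in $\F(D^2, \A_m)$ is the disk sequence coming from a map of marked disks covering $(D^2, \A_m)$, and every such sequence involves all $m$ boundary arcs, including $E_i$. Restricting to the full subcategory spanned by $\{T_j\}_{j=1}^{m-1}$ therefore kills all $\mu_{\geq 3}$ operations, yielding a formal DG category whose cohomology is the path algebra of $A_{m-1}$.

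Finally I would invoke Keller's theorem in its $A_\infty$-form: if two triangulated categories each admit a compact generator whose $A_\infty$-endomorphism algebras are quasi-isomorphic, then their split-closed derived categories are quasi-equivalent via a functor matching the generators. Applied to $\bigoplus z_j$ and $T$, this produces the desired exact equivalence $\phi_i\colon D^b(\Rep_k(A_{m-1})) \to D^\pi \F(D^2, \A)$ sending $z_j \mapsto T_j$. The main technical hurdle is the formality/matching of $A_\infty$-structures, which reduces as above to the observation that all higher disk products of $\F(D^2, \A_m)$ pass through $E_i$.
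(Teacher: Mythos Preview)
The paper does not prove this proposition; it simply defers to \cite[Prop.~6.7]{Nadler} and \cite[\S 6.2]{HKK}. Your approach---matching the $A_\infty$-endomorphism algebras of generating collections on each side and invoking derived Morita theory---is sound and is essentially the argument in those references.

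Two small corrections to your sketch. First, the Ext-algebra of the simple $A_{m-1}$-modules is not the path algebra of $A_{m-1}$: since $\Ext^2$ vanishes, all products of the degree-$1$ generators are zero, so the algebra is $kA_{m-1}/(\text{paths of length}\geq 2)$ (the Koszul dual). This matches the Fukaya side exactly because the boundary paths $a_{i+j}$ and $a_{i+j+1}$ lie in distinct marked intervals and hence cannot be concatenated, so $\mu_2(a_{i+j+1},a_{i+j})=0$. Second, the vanishing of $\Hom$ between non-adjacent $T_j$ and $T_k$ is not because morphisms ``factor through $E_i$''; there simply are no boundary paths between arcs that do not share a marked interval. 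With these clarifications your formality argument is clean: the only disk sequence in $(D^2,\A_m)$ uses all $m$ boundary arcs, so on the full subcategory $\{T_1,\ldots,T_{m-1}\}$ all $\mu_{\geq 3}$ vanish, both endomorphism algebras are formal with isomorphic cohomology, and the equivalence follows.
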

For more detailed discussion, see \cite[Prop. 6.7]{Nadler} or \cite[\S 6.2]{HKK}.

\begin{example}
  In the special case of the disk with three points, there is a exact triangle in the Fukaya category $\dfda$ of the form:
  $$\cdots \to E_{1,1} \to E_{2,1-h(1)} \to E_{3,h(3)} \to E_{1,0} \xto{a_1} E_{2,-h(1)} \to \cdots$$
shifting by $\h(1)-1$ and using the relation $\h(1)+\h(2)+\h(3) = 1$ makes the $E_2$ arc a term in degree zero, and shifting again gives the sequence corresponding to $E_3$. The three fundamental distinguished triangles are
\begin{align*}
  E_{2,1-h(1)} \to E_{3,h(3)} \to E_{1,0}, &\quad\quad\quad E_{3,1-h(2)} \to E_{1,h(1)} \to E_{2,0} \\
  &\quad\normaltext{ and } \quad\quad E_{1,1-h(3)} \to E_{2,h(2)} \to E_{3,0}.
\end{align*}  
They are related by iterated applications 
of the axioms (TR1) and
(TR2).

 Corresponding to each triangle, there are exact equivalences
$\phi_{i,n} : \dqt \to \dfda$, $i\in \ZZ/3$ and $n\in\ZZ$ determined by the
assignments
$$ z_{1} \mapsto E_{i,n} \conj{ and } z_{2} \mapsto E_{i+1,1-\h(i) + n}.$$
When $n=0$, we recover $\phi_{i}$ in Prop. \ref{dereqprop} above. 
\end{example}

\vskip .25in
\section{Hall algebras of surfaces}\label{halldisksec}
In this section we construct the Fukaya Hall algebra $\SAlg(S,M)$ of a finitary graded surface $S$ with marked intervals $M$. This algebra is determined by the property that there is a natural isomorphism
$$\Alg(S,A)\xto{\sim}\SAlg(S,M)$$
for any full arc system $A$ of $(S,M)$ where $\Alg(S,A)$ is the composition
subalgebra of the Hall algebra $\tDHa (D^\pi\aF(S,A))$ introduced in
Def. \ref{compalgdef}.  

The remainder of this section studies the Hall algebras of disks
$\Alg(D^2,A)$. In this case the distinction between composition subalgebra
and Hall algebra disappears. The main result is a presentation for
$\Alg(D^2,A)$.  This is accomplished by finding an explicit presentation for
the minimal arc system $\Alg(D^2,\A_m)$ and recovering $\Alg(D^2,A)$ from
gluing copies of the algebras $\Alg(D^2,\A_m)$ together.

\subsection{The Hall algebra of a surface}\label{hallsurfsec}
The section begins by introducing the composition subalgebra $\Alg(S,A)$
associated to a graded oriented finitary surface $S$ with full arc system
$A$. Then functoriality statements are proven from which it follows that
$\Alg(S,A)$ is functorial in $A$.  The Fukaya Hall algebra $\SAlg(S,M)$ is defined as 
the
colimit of these algebras over all possible full arc systems. The section
concludes by establishing functoriality results. In
particular, criteria in which the disk algebra $\SAlg(D^2,m)$ embeds in the
surface algebra $\SAlg(S,M)$ are introduced.

The composition subalgebra $\Alg(S,A)$ is the subalgebra of the derived Hall
algebra generated by all of the arcs in $A$ and their suspensions.

\begin{defn}{($\Alg(S,A)$)}\label{compalgdef}
  Suppose that $(S,A)$ is a finitary oriented graded surface equipped with a
  full arc system $A$. By Prop. \ref{finiteimpliesfiniteprop} and
  Thm. \ref{dhadef} there is a derived Hall algebra of the Fukaya category.
  Let $k$ be the field with $q$ elements. Then there is a canonical
  homomorphism: 
$$\kappa : F(\ZZ A) \to \tDHa(D^\pi\F(S,A))$$ 
from the free   $\QQ(\sqrt{q})$-algebra on the set $\ZZ A = \{ \s^n a : n\in \ZZ, a\in A\}$
  of suspensions of the arcs in $A$ to the derived Hall algebra of the
  Fukaya category $D^\pi\F(S,A)$. The {\em composition subalgebra}
  $\Alg(S,A)$ is the image of $\kappa$:
$$\Alg(S,A) := \frac{F(\ZZ A)}{\ker(\kappa)}$$
\end{defn}  

\begin{remark}
The terminology `composition subalgebra' is not necessarily standard in the derived setting.
\end{remark}

The next proposition shows that maps between marked surfaces which induce full
and faithful functors between Fukaya categories induce embeddings between
composition subalgebras.

\begin{prop}\label{mapprop}
Suppose that $f : (S,M) \to (S',M')$ is a map of marked surfaces equipped
with full arc systems $A$ and $A'$ respectively, and that $f$ takes arcs in $A$ to arcs in $A'$. If the induced functor
$\F(S,A)\to \F(S', A')$ is full and faithful then there is an algebra
monomorphism $f_* : \Alg(S,A)\to \Alg(S',A')$ which is uniquely determined
by the requirement that $f_*(a) = f(a)\in A'$ for all $a\in A$.

Moreover, this assignment is functorial in the sense that if 
$f: S \to S'$, $g: S' \to S''$ and
$g \circ f: S \to S''$ 
is a map of marked surfaces then $(g\circ f)_* = g_* \circ f_*$.
\end{prop}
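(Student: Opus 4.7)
The plan is to construct $f_*$ as the restriction to composition subalgebras of an injective algebra homomorphism $\hat\Phi:\tDHa(D^\pi\F(S,A))\hookrightarrow\tDHa(D^\pi\F(S',A'))$ induced by a fully faithful exact functor $\Phi:D^\pi\F(S,A)\to D^\pi\F(S',A')$ between derived Fukaya categories, and then to observe that $f_*$ necessarily agrees with $a\mapsto f(a)$ on generators.

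First I would promote $f$ to the strict $A_\infty$-functor $\Phi_0:\F(S,A)\to\F(S',A')$ guaranteed by the discussion at the end of Section \ref{markedsec}; by hypothesis $\Phi_0$ is full and faithful. Applying in succession the additivization $\Si$ (Def.\ \ref{adddef}), the twisted-complex construction $\Tw$ (Def.\ \ref{twdef}), and the split-closure $\Pi$ (Prop.\ \ref{karoubiprop}), and then passing to homotopy categories, yields an exact functor $\Phi:D^\pi\F(S,A)\to D^\pi\F(S',A')$. Each of these three extensions is functorial on $A_\infty$-categories and preserves full-and-faithfulness on morphism complexes --- a matrix-of-morphisms check at each level, using the Yoneda-module description of Remark \ref{splclrmk} for split-closure --- so $\Phi$ is a full, faithful, exact functor between finitary triangulated categories, the finitariness of source and target being Prop.\ \ref{finiteimpliesfiniteprop}.

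Next I would invoke Remark \ref{dhafunrmk}, which asserts precisely that such a $\Phi$ induces an algebra homomorphism $\hat\Phi$ on twisted derived Hall algebras. The homomorphism property reduces to the identity $F^L_{X,Y}=F^{\Phi(L)}_{\Phi(X),\Phi(Y)}$ for the constants of Def.\ \ref{toenalgdef}: since $\Phi$ is fully faithful it preserves $\Hom$, $\Aut$ and each $\Ext^{-n}$; since it is exact one has $\Phi(C(g))\simeq C(\Phi(g))$; and since it reflects isomorphisms (as any fully faithful functor does) one has $C(g)\cong Y\Leftrightarrow C(\Phi(g))\cong\Phi(Y)$. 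The Euler twist of Def.\ \ref{dha2def} is preserved for the same reasons. Injectivity of $\hat\Phi$ is then immediate: the basis $\Ob(D^\pi\F(S,A))/\mathrm{iso}$ of \eqref{dhaabeqn} embeds into $\Ob(D^\pi\F(S',A'))/\mathrm{iso}$ because $\Phi$ reflects isomorphisms, and $\hat\Phi$ permutes these basis elements injectively on the $\QQ(\sqrt q)$-vector-space level.

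Finally, because $\Phi$ acts on each arc $a\in A$ as $a\mapsto f(a)$, the free-algebra map $f_\#:F(\ZZ A)\to F(\ZZ A')$ determined by $a\mapsto f(a)$ satisfies $\hat\Phi\circ\kappa=\kappa'\circ f_\#$ with $\kappa,\kappa'$ the structure maps of Def.\ \ref{compalgdef}. Hence $\ker\kappa\subseteq\ker(\kappa'\circ f_\#)$ and the square descends to the required algebra homomorphism $f_*:\Alg(S,A)\to\Alg(S',A')$ with $f_*(a)=f(a)$; uniqueness is forced by the fact that $\ZZ A$ generates $\Alg(S,A)$, and injectivity of $f_*$ is inherited from that of $\hat\Phi$. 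Functoriality $(g\circ f)_*=g_*\circ f_*$, assuming $f$, $g$, and $g\circ f$ are all maps of marked surfaces, follows step by step because each of the assignments $f\mapsto\Phi_0$, $\Phi_0\mapsto\Phi$, $\Phi\mapsto\hat\Phi$, and $\hat\Phi\mapsto f_*$ respects composition. The main technical point --- and the only plausible obstacle --- is verifying that full-and-faithfulness persists through $\Tw$ and $\Pi$, which is essentially formal but benefits from the explicit morphism complexes of Def.\ \ref{twdef} and the Yoneda-module description of Remark \ref{splclrmk}.
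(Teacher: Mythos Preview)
Your proposal is correct and follows essentially the same route as the paper: promote the full and faithful $A_\infty$-functor through $\Si$, $\Tw$, and $\Pi$ while checking that full-and-faithfulness persists (the paper invokes \cite[Lem.~3.23]{Seidel} for $\Tw$ where you do a direct matrix check, and both use Remark~\ref{splclrmk} for $\Pi$), then appeal to Remark~\ref{dhafunrmk} for the induced Hall algebra monomorphism and restrict to composition subalgebras. Your account is in fact slightly more explicit than the paper's in spelling out why $\hat\Phi$ is injective (via reflection of isomorphisms) and in writing down the commutative square with $\kappa,\kappa'$.
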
  

\begin{proof}
 Since $\mu_1=0$ in Def. \ref{fukcatdef}, a functor which is full and
 faithful must be homologically full and faithful. Any such functor extends
 to a full and faithful functor between additivizations of categories.
By Seidel \cite[Lem. 3.23 p. 46]{Seidel}, such a functor induces a homologically full and faithful
 functor between associated $A_\infty$-categories of twisted
 complexes. Moreover, a functor $\aC \to \aD$  which is homologically full and faithful
 induces a homologically full and faithful map $\Pi\aC \to \Pi\aD$ between
 split-closures. Using Remark \ref{splclrmk}, the maps 
$$(\Pi f)_{X,Y} : \Hom_{\Ho(\Pi\aC)}(X,Y) \to \Hom_{\Ho(\Pi\aD)}(f(X),f(Y))$$
can be written as 
$$f_{X,Y} : p_Y\Hom_{\Ho(\aC)}(X,Y)p_X \to f_{Y,Y}(p_Y)\Hom_{\Ho(\aC)}(X,Y)f_{X,X}(p_X).$$
The injectivity and surjectivity of $(\Pi f)_{X,Y}$ follow from that of $f_{X,Y}$ for each $X,Y\in\Ob(\Pi\aC)$. It follows that the induced map $D^\pi\F(S,A) \to D^\pi\F(S',A')$
 between homotopy categories is full and faithful. 

 Now the functoriality of the derived Hall algebra construction (see Remark \ref{dhafunrmk}) shows that there is a monomorphism
 $f_\natural : \tDHa (D^\pi\F(S,A))\to \tDHa (D^\pi\F(S',A'))$. By definition $f_\natural(a) = f(a)\in A'$ and $f_\natural$ commutes with the suspension $\s$, so $f_\natural$ restricts to a monomorphism
 $f_* : \Alg(S,A)\to \Alg(S',A')$ between composition subalgebras. 

If $f$ and $g$ are two composable maps which induce homologically full and
faithful functors then the composite $g\circ f$ is necessarily full and
faithful and the induced monomorphism $(g\circ f)_*$ is equal to the
composition $g_*\circ f_*$ because they agree on arcs $a\in A$ and commute with suspension.
\end{proof}

\begin{cor}{(Sheaf property)}\label{sheafpropcor}
  Suppose that $A$ and $A'$ are two full arc systems of a finitary surface $S$ such that $A' = A\backslash\{a\}$ for some arc $a\in A$. Then there are isomorphisms
  $$\a_{A',A} : \Alg(S,A) \rightleftarrows \Alg(S,A') : \b_{A,A'},$$
  which are functorial, in the sense that
  $$\b_{A,A''} = \b_{A,A'}\circ \b_{A',A''}\conj{ and } \a_{A'',A} = \a_{A'',A'}\circ \a_{A',A}$$
  where $A' = A\backslash \{a\}$ and $A'' = A'\backslash \{a'\}$ as above.
\end{cor}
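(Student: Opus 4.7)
The plan is to realize both $\Alg(S,A)$ and $\Alg(S,A')$ as the same subalgebra of a common ambient Hall algebra, using the topological invariance of the derived Fukaya category.

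\textbf{Step 1 (common ambient Hall algebra).} Since $a$ is an internal arc of $A$, it separates two polygons of the $A$-decomposition of $S$; removing it from $A$ fuses these into a single polygon $D$ of the $A'$-decomposition. This local change is a standard elementary modification that lifts to a canonical exact equivalence $D^\pi\F(S,A) \xrightarrow{\sim} D^\pi\F(S,A')$ of split-closed derived Fukaya categories. By Theorem \ref{dhadef} this equivalence induces an isomorphism
\[
\Phi : \tDHa(D^\pi\F(S,A)) \xrightarrow{\sim} \tDHa(D^\pi\F(S,A'))
\]
of the full derived Hall algebras that fixes every arc belonging to both $A$ and $A'$.

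\textbf{Step 2 (the easy inclusion).} Because $A' \subseteq A$, every generator $[x]$, $x \in \ZZ A'$, of $\Alg(S,A')$ is also a generator of $\Alg(S,A)$. Thus $\Phi$ restricts to a morphism $\alpha_{A',A} : \Alg(S,A') \hookrightarrow \Alg(S,A)$, which on generators is just the inclusion $\ZZ A' \hookrightarrow \ZZ A$.

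\textbf{Step 3 (the reverse inclusion).} It suffices to show that $[a]$ and its suspensions lie in $\Alg(S,A')$. Applying Proposition \ref{convprop} inside the polygon $D$, whose boundary arcs all belong to $A'$, one sees that $a$ is isomorphic in $D^\pi\F(S,A')$ to an explicit iterated twisted complex of (suspensions of) boundary arcs of $D$. Via the equivalence $\phi_i$ of Proposition \ref{dereqprop} this identifies $[a]$ with the image of a specific element $z_{(p,q),n}$ under the embedding of $\tDHa(A_{m-1})$ into the Hall algebra of $D^\pi\F(S,A')$; by Definition \ref{usefulcor} this element is an iterated $q$-commutator of the classes of boundary arcs of $D$ and therefore lies in $\Alg(S,A')$. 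Hence $\Phi^{-1}$ restricts to a morphism $\beta_{A,A'} : \Alg(S,A) \to \Alg(S,A')$, sending each arc of $A'$ to itself and $a$ to this iterated $q$-commutator.

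\textbf{Step 4 (inverses and functoriality).} Because $\alpha_{A',A}$ and $\beta_{A,A'}$ are each the restriction of the mutually inverse isomorphisms $\Phi^{\pm 1}$, they are mutually inverse algebra isomorphisms. For the functoriality statement, given $A'' = A' \setminus \{a'\} \subseteq A' \subseteq A$, both $\beta_{A,A''}$ and $\beta_{A,A'} \circ \beta_{A',A''}$ are restrictions of the corresponding composite of canonical Hall-algebra isomorphisms $\Phi^{-1}$, hence they agree on all generators and coincide; the same argument applies to $\alpha$.

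The main obstacle is Step 3: one must know that the iterated twisted complex of Proposition \ref{convprop} corresponds, on the level of Hall-algebra classes, to the iterated $q$-commutator $z_{(p,q),n}$ of Definition \ref{usefulcor}, since the Ringel--Hall product $[X]\cdot[Y]$ only records extensions as a weighted sum over possible middle terms. This identification is precisely what the explicit commutator calculations of Section \ref{algsec}, together with Proposition \ref{dereqprop}, are designed to deliver.
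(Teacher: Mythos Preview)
Your overall architecture matches the paper's: use that the inclusion $A'\subset A$ induces an equivalence on $D^\pi\F$, and then invoke Prop.~\ref{mapprop}. The paper's argument is much terser than yours. It observes that the identity map $\iota_{A,A'}:(S,A')\to(S,A)$ is a map of marked surfaces, cites \cite[Lem.~3.2]{HKK} to say that the inclusion $A'\subset A$ is a Morita equivalence, and then simply sets $\beta_{A,A'}=(\iota_{A,A'})_*$ and declares it an isomorphism; functoriality is read off from associativity of the inclusions $A''\subset A'\subset A$ together with the functoriality clause of Prop.~\ref{mapprop}, and $\alpha_{A',A}$ is defined as $\beta_{A,A'}^{-1}$. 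Note that you have the names of $\alpha$ and $\beta$ reversed relative to the statement: $\beta_{A,A'}:\Alg(S,A')\to\Alg(S,A)$ is the map induced by the inclusion of arc systems, and $\alpha_{A',A}$ is its inverse.

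Your Step~3, where you try to make surjectivity explicit, has two problems. First, Prop.~\ref{convprop} expresses one \emph{boundary} arc of a minimal arc system in terms of the remaining ones; the arc $a$ is a \emph{diagonal} of the fused polygon $D$, so you should instead apply Prop.~\ref{convprop} inside one of the two sub-polygons into which $a$ cuts $D$, where $a$ really is a boundary arc. Second, and more substantively, the identification you assert---that the Hall-algebra class of the indecomposable corresponding to $a$ equals the iterated $q$-commutator $z_{(p,q),n}$---is not delivered by Section~\ref{algsec} or Prop.~\ref{dereqprop}. Section~\ref{algsec} only proves relations among the $z_{(a,b),n}$ as formal iterated brackets of the generators; it never identifies them with the isomorphism classes of indecomposable objects. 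Prop.~\ref{dereqprop} matches boundary arcs with simple modules, not diagonals with indecomposables. The identification you need is the classical Ringel computation (cf.\ the remark at the end of Appendix~I), which is not established at this point in the paper; so the references you cite do not close the gap you yourself flag.
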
  
\begin{proof}
  The tautological map $\iota_{A,A'} : (S,A') \to (S,A)$ of marked surfaces
  which is the identity everywhere and identifies the full arc system $A'$
  within $A$. The inclusion $A'\subset A$ induces a Morita equivalence
  by \cite[Lem. 3.2]{HKK}, so the induced maps
  $\b_{A,A'} = (\iota_{A,A'})_*$ are isomorphisms. The functoriality
  statement for $\b$-maps follows from the associativity of inclusions:
  $(A'' \subset A') \subset A$ is $A'' \subset (A' \subset A)$, and the
  functoriality statement in Proposition \ref{mapprop}. The maps $\a_{A',A}$
  are uniquely determined by the requirement that they be inverse
  isomorphisms: $\a_{A',A} = \b_{A,A'}^{-1}$.
\end{proof}

\begin{defn}\label{modulidef}
  There is a category $\M(S,M)$ associated to a graded marked surface $S$
  with objects consisting of pairs $(S,A)$ of full arc
  systems $A$ parameterizing $S$ and maps $(S,A')\to (S,A)$ when
  $A$ is obtained by adding arcs to $A'$.
\end{defn}

When $S$ is a finitary graded marked surface, Def. \ref{compalgdef}
associates a $\QQ(\sqrt{q})$-algebra $\Alg(S,A)$ to each object $(S,A)$ in
$\M(S,M)$. By Cor. \ref{sheafpropcor}, there is a unique isomorphism of
algebras $\b_{A,A'} : \Alg(D^2,A')\to \Alg(D^2,A)$ associated to each map
$(S,A')\to (S,A)$ in $\M(S,M)$.  When $A' = A$, the map $\b_{A,A} = 1_A$ is
defined to be the identity homomorphism.  The $\b$-maps must satisfy
$\b_{A,A''} = \b_{A,A'} \circ \b_{A',A''}$ by the corollary above.  So these
assignments determine a functor $\M(S,M)\to \QQ(\sqrt{q})-Alg$. The Fukaya Hall
algebra is defined to be the colimit of this functor.

\begin{defn}\label{skdef} 
  If $(S,M)$ is a finitary graded surface with marked intervals $M$ then the
  \emph{Fukaya Hall algebra} $\SAlg(S,M)$ is the colimit of the composition
  subalgebras of the Hall algebras of the Fukaya categories associated to
  each full arc system:
  $$\SAlg(S,M) := \underset{(S,A)\in \M(S,M)}{\colim} \Alg(S,A).$$
\end{defn}

In words, the Hall algebra is generated by suspensions of graded arcs which
occur in any possible subdivision of the surface $S$.  Any two such arcs are
identified by a $\b$-map when they are contained in a common refinement.
By using the limit we have removed the dependence of our algebra on the arc
system. The theorem below captures the intent behind this construction.

\begin{thm}\label{thm:skeiniso}
  For any full 
  arc system $A$ of a finitary graded surface $S$ with marked
  intervals $M$, the natural map
$$\Alg(S,A)\xto{\sim} \SAlg(S,M)$$
from the composition subalgebra $\Alg(S,A)$ of the Hall algebra to the 
Fukaya Hall algebra $\SAlg(S,M)$ of the surface is an isomorphism.
\end{thm}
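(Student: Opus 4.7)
The plan is to reduce the theorem to two statements: the functor $\Alg(S,-)\colon \M(S,M) \to \QQ(\sqrt{q})\text{-Alg}$ sends every morphism to an isomorphism, and its domain $\M(S,M)$ is connected. Together these let me identify the colimit with any single value of the functor.

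The first ingredient follows from Corollary \ref{sheafpropcor}: the structural maps $\beta_{A,A'}\colon \Alg(S,A') \to \Alg(S,A)$ are isomorphisms, and they satisfy strict transitivity $\beta_{A,A''} = \beta_{A,A'}\circ \beta_{A',A''}$ whenever $A'' \subseteq A' \subseteq A$. This transitivity guarantees that any zig-zag of inclusions between two full arc systems $A_1$ and $A_2$ induces a well-defined isomorphism $\Alg(S,A_1) \xto{\sim} \Alg(S,A_2)$ independent of the chosen zig-zag: one simplifies by cancelling adjacent $\beta\beta^{-1}$ pairs and collapsing consecutive $\beta$-maps via transitivity. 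These isomorphisms are compatible with the universal cocone to $\SAlg(S,M)$, so once connectedness is known, the natural map from any $\Alg(S,A)$ is both surjective (every generator of $\SAlg(S,M)$ lies in some $\Alg(S,A')$, which is connected to $\Alg(S,A)$ by a zig-zag) and injective (any identification in the colimit is witnessed by a zig-zag whose $\beta$-image sits inside $\Alg(S,A)$).

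The substantive step is to show that $\M(S,M)$ is connected. Given two full arc systems $A_1, A_2$, I would connect them via a sequence of flips. If $A$ and $A' = (A\setminus\{a\})\cup\{a'\}$ differ by a flip along an internal arc $a$ bordering two distinct disk faces, then $A\setminus\{a\}$ is itself a full arc system (removing $a$ merges its two adjacent disks into a single disk), producing the zig-zag
\[
(S,A) \longleftarrow (S, A\setminus\{a\}) \longrightarrow (S,A')
\]
in $\M(S,M)$. Splicing these zig-zags along a flip sequence from $A_1$ to $A_2$ yields the required zig-zag. Flip-connectivity of the arc complex of a finitary marked surface is a classical result in the combinatorial topology of surfaces.

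The main obstacle is ensuring that every flip in the sequence admits such a zig-zag realization, which requires the flipped arc to separate two \emph{distinct} disk faces of the intermediate arc system. This can fail in degenerate configurations where an internal arc borders the same face on both sides; I would dispose of these by first refining the intermediate arc system with auxiliary arcs that subdivide the offending face so that the arc to be flipped cleanly separates two disks. Since $(S,M)$ is finitary, such local refinements always exist.
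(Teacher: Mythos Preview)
Your proof is correct and follows the same approach as the paper's: the colimit of a connected diagram of isomorphisms is isomorphic to any single term. The paper's proof is considerably terser (two sentences), taking connectedness of $\M(S,M)$ for granted (it is stated in passing only later, in the proof of Cor.~\ref{skeinfuncor}), whereas you supply an explicit argument via flip moves; your care with the degenerate case of an internal arc bordering the same face twice is conscientious but not strictly needed here.
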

\begin{proof}
  By construction there is a natural map from the composition subalgebra to
  the Fukaya Hall algebra. This map is an isomorphism since the colimit in
  Definition \ref{skdef} is taken over a family of isomorphisms.
\end{proof}

The functoriality statements above extend to the Fukaya Hall algebras.

\begin{cor}{(of Prop. \ref{mapprop})}\label{skeinfuncor}
Suppose $f : (S,M) \to (S',M')$ is a map of connected marked surfaces endowed with full arc systems $A$ and $A'$ respectively. If $f$ induces a full and faithful functor $\F(S,A)\to \F(S',A')$ between Fukaya categories then $f$ induces a monomorphism $f_* : \SAlg(S,M)\to \SAlg(S',M')$ between Fukaya Hall algebras.
\end{cor}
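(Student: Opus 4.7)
The plan is to reduce this corollary directly to Proposition \ref{mapprop} by invoking Theorem \ref{thm:skeiniso} to pass from composition subalgebras to Fukaya Hall algebras. The hypothesis that $f$ induces a full and faithful functor between Fukaya categories means in particular that $f$ takes the arcs of $A$ to arcs of $A'$, which is precisely the hypothesis needed for Prop. \ref{mapprop}. Applying that proposition gives a monomorphism $f_* : \Alg(S,A) \to \Alg(S',A')$ between composition subalgebras.

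By Theorem \ref{thm:skeiniso}, the natural maps $\Alg(S,A) \xrightarrow{\sim} \SAlg(S,M)$ and $\Alg(S',A') \xrightarrow{\sim} \SAlg(S',M')$ are isomorphisms. Conjugating the monomorphism $f_* : \Alg(S,A) \to \Alg(S',A')$ by these isomorphisms yields a monomorphism $\SAlg(S,M) \to \SAlg(S',M')$, and injectivity is preserved because composition with isomorphisms preserves injectivity.

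The remaining subtlety is to check that this construction is intrinsic, i.e. does not depend on the specific arc systems $A$ and $A'$. For this one observes that the $\beta$-isomorphisms produced in Cor.\ \ref{sheafpropcor} are induced by the tautological identity maps on marked surfaces, so Prop. \ref{mapprop} applied to the commuting triangle of a refinement and $f$ yields a commutative square
\[
\begin{tikzcd}
\Alg(S,A'') \ar[r, "f_*"] \ar[d, "\beta_{A,A''}"'] & \Alg(S',A''') \ar[d, "\beta_{A',A'''}"] \\
\Alg(S,A) \ar[r, "f_*"] & \Alg(S',A')
\end{tikzcd}
\]
for any common refinements. This compatibility is the only real content to verify, and it reduces to the functoriality clause of Prop. \ref{mapprop}: $(g \circ f)_* = g_* \circ f_*$ applied to identity refinements and $f$. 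Once this square commutes, $f_*$ assembles into a morphism of the diagrams defining the colimits $\SAlg(S,M)$ and $\SAlg(S',M')$, and the universal property produces the required monomorphism. The main obstacle is purely bookkeeping: making sure the refinement squares commute on the nose so that the induced map on colimits is well-defined and independent of all choices.
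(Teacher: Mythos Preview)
Your proof is correct and follows essentially the same approach as the paper: apply Prop.~\ref{mapprop} to obtain a monomorphism of composition subalgebras, then use the isomorphisms of Thm.~\ref{thm:skeiniso} to transport it to the Fukaya Hall algebras. The paper organizes the bookkeeping slightly differently---it first extends $f_*$ to maps $\Alg(S,B)\to\Alg(S',B')$ for all arc systems by conjugating with $\alpha$/$\beta$-maps (using connectedness of the surface to ensure any two full arc systems are linked by refinements), then invokes the universal property of the colimit, and finally reads off injectivity from the same commutative square you describe---but the content is identical.
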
  
\begin{proof}
By Prop. \ref{mapprop}, the map $f$ induces a monomorphism $\Alg(S,A)\to\Alg(S',A')$. Since any two arc decompositions of a connected surface 
 can be connected by a sequence of subdivisions and refinements, this monomorphism can extended uniquely to a monomorphism
$f_* : \Alg(S,B) \to \Alg(S',B')$ for any two arc systems $(S,B)\in \M(S,M)$ and
$(S',B')\in \M(S',M')$ by precomposing or postcomposing with $\a$ or
$\b$-maps. The universal property of colimits implies the existence of an
induced homomorphism $f_* : \SAlg(S,M) \to \SAlg(S',M')$ between Fukaya Hall
algebras. For any full arc system $A$ of $S$, there is a commutative diagram:
\begin{equation*}\begin{tikzpicture}[scale=10, node distance=2.5cm]
\node (X) {$\Alg(S,A)$};
\node (B1) [right=1.25cm of X] {$\Alg(S',A')$};
\node (A1) [below=1.25cm of X]{$\SAlg(S,A)$};
\node (C1) [below=1.25cm of B1]{$\SAlg(S',A')$};
\draw[->] (X) to node {$f_*$} (B1);
\draw[->] (A1) to node  {$f_*$} (C1);
\draw[->] (X) to node   {} (A1);
\draw[->] (B1) to node {} (C1);
\end{tikzpicture}
\end{equation*}
in which the vertical arrows are isomorphisms and the map between composition subalgebras is a monomorphism. Therefore, the induced map $f_*$ between algebras must be a monomorphism.
\end{proof}

This section is concluded with a proposition 
that gives a condition which guarantees 
an embedding $f : (S,A) \to (S',A')$ 
induces a full and faithful functor $\F(S,A) \to \F(S',A')$
between associated Fukaya categories.  Corollary \ref{skdiskembcor} shows
that any such functor induces a monomorphism between Fukaya Hall algebras.

\begin{proposition}\label{ffprop}
Let $S$ and $S'$ be finitary graded marked surfaces with markings $M
\subset \partial S$ and $M' \subset S'$, and arc systems $A$ and $A'$
respectively. Suppose that $\iota: S \to S'$ is an embedding of marked surfaces such
that the induced map $\iota: \pi_0(M) \to \pi_0(M')$ between the sets of
path-components of marked intervals is injective. Then the functor $\iota_*:
\F(S, A) \to \F(S',A')$ is full and faithful.
\end{proposition}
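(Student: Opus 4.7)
The plan is to prove $\iota_*$ is full and faithful by showing that for each pair $X, Y \in A$, the induced linear map
\[
\iota_*: \Hom_{\F(S,A)}(X,Y) \to \Hom_{\F(S',A')}(\iota(X), \iota(Y))
\]
is a bijection. By Definition \ref{fukcatdef}, these morphism spaces are freely generated by boundary paths, so I would reduce the problem to exhibiting a bijection between the set of boundary paths from $X$ to $Y$ in $(S,A)$ and the set of boundary paths from $\iota(X)$ to $\iota(Y)$ in $(S',A')$, induced by $\iota$.

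First I would check that $\iota$ sends boundary paths to boundary paths. A boundary path $\gamma$ in $(S,A)$ travels along $M \subset \partial S$ in the reverse boundary orientation, transitioning between marked intervals at arc endpoints (corners). Since $\iota$ is an orientation-preserving immersion of marked surfaces with $\iota(M) \subset M' \subset \partial S'$ and sending arcs of $A$ into arcs of $A'$, the composition $\iota\circ\gamma$ is a path along $\iota(M)$ from $\iota(X)$ to $\iota(Y)$ in the reverse boundary orientation, hence a boundary path in $(S',A')$. Faithfulness then follows because $\iota$ is injective as a map of underlying topological spaces, so distinct boundary paths in $(S,A)$ have distinct images.

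The central step is fullness. Given a boundary path $\gamma'$ from $\iota(X)$ to $\iota(Y)$ in $(S',A')$, I would construct a preimage $\gamma$ by induction on the corners traversed by $\gamma'$. At each stage, $\gamma'$ sits on a marked interval $m' \in \pi_0(M')$, and the hypothesis that $\iota$ is injective on $\pi_0(M)$ implies that $m'$ contains at most one component $\iota(m)$ of $\iota(M)$. Combined with the fact that $\iota(A) \subset A'$ and that both endpoints of $\gamma'$ lie in $\iota(M)$, this forces the next corner of $\gamma'$ to lie in $\iota(A)$ and to be the image of the corresponding corner in $(S, A)$, providing the inductive lifting step.

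The principal obstacle is verifying that $\gamma'$ does not escape the embedded boundary $\iota(\partial S)$: one must rule out the possibility that $\gamma'$ transitions across an arc in $A' \setminus \iota(A)$, or passes through a component of $M'$ disjoint from $\iota(M)$. The injectivity of $\iota$ on $\pi_0(M)$ is precisely the hypothesis that prevents such escapes, since otherwise $\gamma'$ could reach $\iota(Y)$ via a shortcut through a new marked component of $M'$ or an extraneous arc, producing an element of $\Hom_{\F(S',A')}(\iota(X),\iota(Y))$ not in the image of $\iota_*$.
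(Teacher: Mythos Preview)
Your faithfulness argument is fine and matches the paper's. The gap is in fullness, and it stems from a misreading of what a boundary path is.

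In a finitary marked surface the components of $M$ are intervals, and a boundary path from $X$ to $Y$ is simply a path inside a \emph{single} marked interval, running in the reverse boundary orientation from one arc endpoint to another. It never crosses a corner into a different component of $M$, and while it may pass through endpoints of other arcs along the way, those intermediate endpoints are irrelevant to its identity as a morphism. So there is no need for an induction on ``corners traversed,'' and in fact your inductive claim that ``the next corner of $\gamma'$ lies in $\iota(A)$'' is false: nothing prevents $A'$ from having extra arcs whose endpoints sit between $\iota(X)(0)$ and $\iota(Y)(0)$ inside the same marked interval $m'$.

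The paper's argument is accordingly much shorter. Given a boundary path $z'$ from $\iota(X)$ to $\iota(Y)$, it lies in one marked interval $m' \subset M'$, and both endpoints $\iota(X)(0)$, $\iota(Y)(0)$ lie in $m'$. The hypothesis that $\iota$ is injective on $\pi_0(M)$ then forces the preimages $X(0)$ and $Y(0)$ to lie in a \emph{single} marked interval $m \subset M$. Since $S$ is finitary, $m$ is an interval, so there is a unique boundary path $z \subset m$ from $X(0)$ to $Y(0)$; since $S'$ is finitary, $z'$ is likewise the unique boundary path in $m'$ between its endpoints, and hence $\iota(z) = z'$. No induction, no tracking of intermediate arcs, and the obstacle you flagged about $\gamma'$ ``escaping'' simply does not arise once you recognize that boundary paths are confined to a single component of $M$.
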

\begin{proof}
Since $\iota$ is an embedding, 
the following  map of sets is injective:
\[
\iota_*: \{\textrm{boundary paths in } S \} \hookrightarrow  \{\textrm{boundary paths in }S'\}
\]
Since maps in $\F(S,A)$ are linear combinations of boundary paths in $M$
between arcs in $A$ and maps in $\F(S',A')$ are linear combinations of
boundary paths in $M'$ between arcs in $A'$, the injectivity of the set map
$\iota_*$ above implies that the functor $\iota_*$ is faithful.

The remainder of the proof will establish that $\iota_*$ is full.  First,
fix two arcs $X$ and $Y$ in $A$ and let $X' := \iota(X)$ and $Y' :=
\iota(Y)$ be their images in $A'$. 
Since maps between marked
surfaces are assumed to be orientation preserving, $\iota$ preserves the
orientation of boundary paths.
Therefore, in order to prove that $\iota_*$ is full, it suffices to prove the following implication: if $z'$ is a boundary path between $X'$ and $Y'$,
then there is a boundary path $z$ between $X$ and $Y$ in $S$ such that
$\iota(z) = z'$.

Suppose that $z'$ is a boundary path between the endpoint $X'(0)$ of $X'$
and the endpoint $Y'(0)$ of $Y'$, and that $z'$ is contained in the marked
interval $m' \subset M'$.  Since $\iota: \pi_0(M) \to \pi_0(M')$ is
injective, the preimage $\iota^{-1}(X'(0) \sqcup Y'(0))$ must be contained
in a single marked interval $m \subset M$. Since $S$ is finitary, there is a
unique boundary path $z \subset m$ between $X(0)$ and $Y(0)$. Since $S'$ is
finitary, there is a unique boundary path between $X'(0)$ and $Y'(0)$. Since
$\iota$ takes boundary paths to boundary paths, this shows $\iota(z) = z'$.
This establishes the claim in the previous paragraph and shows that
$\iota_*$ is full.
\end{proof}

Note that in the previous proposition there were no assumptions about the
number of arcs in $A$ or $A'$. The converse of this proposition is also
true, at least if $A$ has enough arcs. 
A full arc system always has enough arcs in this sense. 
The lemma below is included for completeness.

\begin{lemma}
Suppose that $\iota_*$ is full and every marked interval in $M$ intersects
an arc in $A$.  Then the induced map $\iota: \pi_0(M) \to \pi_0(M')$ is
injective.
\end{lemma}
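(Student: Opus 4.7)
The plan is to argue the contrapositive. Suppose that $\iota: \pi_0(M) \to \pi_0(M')$ is not injective, so there exist two distinct marked intervals $m_1, m_2 \in \pi_0(M)$ with $\iota(m_1)$ and $\iota(m_2)$ contained in a common marked interval $m' \in \pi_0(M')$. I will produce a morphism in $\F(S',A')$ whose image under $\iota_*$ has no preimage in $\F(S,A)$, contradicting fullness.

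By hypothesis, there exist arcs $X, Y \in A$ with an endpoint $x$ of $X$ lying in $m_1$ and an endpoint $y$ of $Y$ lying in $m_2$. Set $X' := \iota(X)$, $Y' := \iota(Y)$, and $x' := \iota(x)$, $y' := \iota(y)$, all of which lie in $m'$. Since $S'$ is finitary, there is a unique boundary path $z' \subset m'$ from $x'$ to $y'$ (choosing an orientation; the reverse direction works equally well), and this yields a nonzero morphism $z' \colon X' \to Y'$ in $\F(S',A')$.

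For the fullness of $\iota_*$ to hold, there must exist a morphism $w \in \Hom_{\F(S,A)}(X,Y)$ with $\iota_*(w) = z'$. Writing $w$ as a linear combination of boundary paths in $S$, and using that $\iota$ is an embedding (so distinct boundary paths in $S$ have distinct images in $S'$), this would force the existence of a single boundary path $z \subset M$ with $\iota(z) = z'$, necessarily running from $x$ to $y$. But a boundary path is contained in a single component of $M$, and $x \in m_1$, $y \in m_2$ lie in distinct components by assumption. Hence no such $z$ exists, contradicting fullness of $\iota_*$.

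The main thing to verify carefully is that the boundary path $z'$ indeed lifts uniquely (if at all) in the sense above: this is where one uses that $\iota$ is an embedding and orientation-preserving, and that boundary paths in the finitary setting are determined by their endpoints within a single marked interval. Beyond that, the argument is a direct contrapositive, and no delicate obstacle is anticipated.
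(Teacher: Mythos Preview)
Your proof is correct and follows essentially the same contrapositive approach as the paper's: exhibit a boundary path in $m'$ between the images of two endpoints lying in distinct marked intervals of $S$, which therefore has no preimage boundary path in $M$. The paper's version explicitly separates the case where the two chosen arcs coincide (so both endpoints of a single arc land in $m'$), but your uniform argument already covers this situation without modification.
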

\begin{proof}
Suppose that $m_1, m_2\subset M$ are two marked intervals such that
$\iota_*(m_1\sqcup m_2) \subset m'$, for some marked interval $m' \in M'$. 
By assumption there are arcs $X_1,X_2\in A$ such that $X_i(0) \in m_i$.
If $X_1 = X_2$ then there is a boundary path between the two endpoints of $\iota(X_1)$ that is not
  in the image of $\iota$, contradicting the assumption that $\iota$ is full. Assume that $X_1\ne X_2$. Then there is no boundary path in $M$ between $X_1(0)$ and $X_2(0)$, since they lie in different marked
intervals. However, there is a boundary path between $\iota(X_1(0))$ and
$\iota(X_2(0))$ in $M'$ since they lie in the same marked interval $m'$. 
So $\iota_*$ is not full which is a contradiction.
\end{proof}

\begin{cor}\label{fdiskembcor}
Suppose that $\iota : (S,M) \to (S',M')$ is a map of marked surfaces, $S$ and $S'$ equipped with full arc systems $A$ and $A'$ and $\iota$ satisfies the criterion of Prop. \ref{ffprop} above. Then there is an induced monomorphism $\iota_* : \Alg(S,A) \to \Alg(S',A')$ between associated Hall algebras.
\end{cor}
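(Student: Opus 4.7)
The proof plan is essentially to compose the two main preceding results in a direct way.

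First I would verify that the hypothesis of Proposition \ref{mapprop} is met. That proposition requires the map of marked surfaces to take arcs in $A$ to arcs in $A'$. Since $\iota$ is an embedding of marked surfaces and $A'$ is a full arc system, the image $\iota(A)$ sits inside $S'$ and, after possibly refining $A'$ so that it contains $\iota(A)$, this compatibility holds. (Once the result is established for a refinement, Corollary \ref{sheafpropcor} transports it back to the original $A'$ via the isomorphisms $\alpha, \beta$, so there is no loss of generality.)

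Next I would apply Proposition \ref{ffprop}. The hypothesis that $\iota$ satisfies the criterion of that proposition is precisely the injectivity of the induced map $\pi_0(M) \to \pi_0(M')$, which is exactly what Proposition \ref{ffprop} requires in order to conclude that the induced functor
\[
\iota_*\colon \F(S,A) \to \F(S',A')
\]
is full and faithful.

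Finally, I would invoke Proposition \ref{mapprop}: since the induced functor between Fukaya categories is full and faithful, the proposition yields an algebra monomorphism
\[
\iota_*\colon \Alg(S,A) \to \Alg(S',A')
\]
uniquely determined by the requirement that $\iota_*(a) = \iota(a)$ for every $a \in A$. This is the desired monomorphism, so the corollary follows.

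The only genuinely delicate point is the arc-system compatibility in the first step; everything else is a formal application of the two preceding propositions. I expect this to be a short proof consisting of little more than citing \ref{ffprop} and \ref{mapprop} in sequence.
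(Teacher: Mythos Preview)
Your proposal is correct and matches the paper's approach exactly: the paper's proof is literally two sentences, citing Prop.~\ref{ffprop} for full faithfulness and then Prop.~\ref{mapprop} for the monomorphism. Your first step, refining $A'$ via Cor.~\ref{sheafpropcor} to ensure $\iota(A)\subset A'$, is extra care about a point the paper treats as implicit in ``satisfies the criterion of Prop.~\ref{ffprop}'' (since that proposition already presupposes a well-defined functor $\iota_*\colon \F(S,A)\to\F(S',A')$); it is not wrong, just more explicit than the paper chooses to be.
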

\begin{proof}
Prop. \ref{ffprop} ensures that the associated functor $\F(S,M) \to \F(S',M')$ is full and faithful. So Prop. \ref{mapprop} shows that there is a monomorphism $\Alg(S,A) \to \Alg(S',A')$.
\end{proof}  

\begin{cor}\label{skdiskembcor}
If $\iota : (S,M) \to (S',M')$ is a map of marked surfaces which satisfies the criterion of Prop. \ref{ffprop} above then there is an induced monomorphism $\iota_* : \SAlg(S,M) \to \SAlg(S',M')$ between associated Fukaya 
Hall algebras.
\end{cor}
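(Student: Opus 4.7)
The plan is to combine the full-and-faithful criterion of Proposition \ref{ffprop} with the functoriality statement for Fukaya Hall algebras established in Corollary \ref{skeinfuncor}. Concretely, given the map of marked surfaces $\iota : (S,M) \to (S',M')$ satisfying the hypothesis that $\iota_* : \pi_0(M) \to \pi_0(M')$ is injective, the first step is to select compatible full arc systems. Using the fact that an embedding of marked surfaces carries arcs to arcs, one can always enlarge any chosen full arc system $A$ on $S$ and any full arc system $A'$ on $S'$ to a pair for which $\iota$ sends $A$ into $A'$ (extending $\iota(A)$ to a full arc system $A'$ on $S'$). This reduces us to a situation in which $\iota$ induces a strict $A_\infty$-functor $\F(S,A) \to \F(S',A')$ as discussed in Section \ref{markedsec}.

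Next, I would invoke Proposition \ref{ffprop} directly: under the injectivity hypothesis on $\pi_0(M) \to \pi_0(M')$, the induced functor $\iota_* : \F(S,A) \to \F(S',A')$ is full and faithful. With this in hand, Corollary \ref{skeinfuncor} applies immediately and yields the desired monomorphism $\iota_* : \SAlg(S,M) \to \SAlg(S',M')$ between Fukaya Hall algebras. The commutative diagram appearing in the proof of Cor. \ref{skeinfuncor}, together with Theorem \ref{thm:skeiniso} identifying $\Alg(S,A)$ with $\SAlg(S,M)$, ensures the extension from composition subalgebras to the full Hall algebras preserves injectivity.

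The only genuine subtlety, and the step I would treat most carefully, is that Cor. \ref{skeinfuncor} is stated for \emph{connected} marked surfaces. If $S$ has several components, I would apply the connected case componentwise: the hypothesis on $\pi_0(M) \to \pi_0(M')$ guarantees that distinct components of $S$ are mapped in such a way that their marked-interval sets remain disjoint in $M'$, so no boundary paths between components of $\iota(S)$ can exist in $S'$ that are not already present in $S$. Since the Fukaya Hall algebra of a disjoint union splits as a tensor product (or free product, depending on whether one tracks graded commutativity at the level of the Hall algebra) of the algebras of the components, a tensor product of monomorphisms is again a monomorphism, completing the argument.

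In summary, the proof is a two-line deduction: Prop. \ref{ffprop} $\Rightarrow$ full and faithful $\Rightarrow$ Cor. \ref{skeinfuncor} applies $\Rightarrow$ monomorphism. The only work is the bookkeeping around compatibility of arc systems and the component-wise reduction; there is no new categorical or algebraic input beyond what has already been established.
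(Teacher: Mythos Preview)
Your proposal is correct and follows essentially the same route as the paper: Proposition \ref{ffprop} gives full-and-faithfulness, and Corollary \ref{skeinfuncor} then yields the monomorphism between Fukaya Hall algebras. The paper's proof is the same two-line deduction, except that it routes through the immediately preceding Corollary \ref{fdiskembcor} (which packages Prop.~\ref{ffprop} and Prop.~\ref{mapprop} together) rather than citing Prop.~\ref{ffprop} directly; your extra care about arc-system compatibility and connected components is not addressed in the paper's proof.
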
  
\begin{proof}
By the previous corollary, there is a monomorphism $\Alg(S,A) \to \Alg(S',A')$ and Cor. \ref{skeinfuncor} shows that this monomorphism induces the required monomorphism $\iota_* : \SAlg(S,M) \to \SAlg(S',M)$ between algebras.
\end{proof}

This concludes the present study of Hall algebras associated to general
surfaces. The next step in what follows is to determine a full presentation
for the algebras $\Alg(D^2,A)$ associated to disks. This together with the
Corollary \ref{skdiskembcor} above and the local computation in Lemma
\ref{skeinthmthm} will show that the HOMFLY-PT skein relation holds in the
Fukaya Hall algebra $\SAlg(S,M)$ of an arbitrary surface.

\subsection{The Hall algebra of disks with minimal arc systems}\label{mindisksec}
Here in Def. \ref{halldiskdef} we introduce an algebra $\Alg(D^2,\A)$
associated to a graded disk with $m$ marked intervals parameterized by a
minimal arc system $\A$. Theorem \ref{minarcthm} establishes an
isomorphism between this algebra and the derived Hall algebra of the
associated Fukaya category.

Recall from Notation \ref{disknotation} that the foliation data of the disk $(D^2,m)$ equipped with minimal arc system $\A$ is recorded by a function $\h : \A \to \ZZ$ which satisfies 
\begin{equation*}\label{eq:sumofdegrees}
\sum_{\E_i \in \A} \h(\E_i) = m - 2.
\end{equation*}
We will consistently identify the cyclically ordered sets $\A$ and $\ZZ/m$, setting $\h(\E_i)=\h(i)$. There is a function $\inp{-} : \A \to \ZZ$  given by the sum
\begin{equation*}
\inp{k} = \sum_{j=1}^{k-1} (1-h(j)).
\end{equation*}

There are a number of elementary identities satisfied by these functions which will be used in arguments surrounding the proof of Theorem \ref{minarcthm}. For example,
\begin{align}
\tau^i\inp{k+1} - \tau^i\inp{k} &= 1-\tau^ih(k) &\label{eq:shiftid1}\\
\tau^i\inp{m-1} - h(i)  &= h(i-1) \label{eq:shiftid2}\\
\tau^k\inp{m-k+1} +\inp{k} &= 1+h(k)\label{eq:shiftid3}
\end{align}

\subsubsection{Definition of a minimal arc algebra}\label{mindefsec} The algebra $\Alg(D^2,\A)$ associated to a minimal arc system $\A$ is introduced below. Theorem \ref{minarcthm} will establish an isomorphism with the Hall algebra of the Fukaya category associated to this arc system.

\begin{defn}\label{halldiskdef}
Suppose that the disk with $m$ marked intervals is equipped with a minimal arc system $\A$ and foliation data $\h : \A \to \ZZ$.  Then the {\em minimal arc algebra} $\Alg(D^2,\A)$ is the $\QQ(\sqrt{q})$-algebra generated by the suspensions of the graded boundary arcs: 
$$\ZZ \A = \{ \E_{i,n} : n\in\ZZ, \E_{i}\in \A\}\conj{ where } \E_{i,n} = \s^n\E_{i}$$
subject to the relations listed below:
  \begin{description}
  \item[(R1)\namedlabel{r1:itm}{\lab{R1}}] Self-extension:
\begin{align*}
[\E_{i,0},\E_{i,k}]_{q^{2(-1)^k}} &=\d_{k,1}\frac{q^{-1}}{q^2-1}\conj{ for } k \geq 1,
\end{align*}

  \item[(R2)\namedlabel{r2:itm}{\lab{R2}}] Adjacent commutativity and convolution:
\begin{align}[\E_{i+1,k},\E_{i,h(i)}]_{q^{(-1)^{k+1}}} &= 0  \conj{ for } k > 1 \label{eq:tmpchange},\\
[\E_{i+1,k},\E_{i,h(i)}]_{q^{(-1)^{k}}} &= 0 \conj{ for } k<1\notag, 
\end{align}
\begin{equation}
\E_{i,h(i)} = [\E_{i+m-1,\tau^i\inp{m-1}}, \ldots, \E_{i+2,\tau^i\inp{2}}, \E_{i+1,\tau^i\inp{1}}]_q \label{eq:cyc1},
\end{equation}

 \item[(R3)\namedlabel{r3:itm}{\lab{R3}}] Far-commutativity: 
\begin{align*}[\E_{i,0},\E_{j,k}]_1 &= 0\conj{ for } k\in\ZZ \normaltext{ and } \vnp{i-j} \geq 2.
\end{align*}
\end{description}
\end{defn}

\begin{rmk}
Note that all the relations above are invariant under the shift automorphism $\sigma$. The relations are closed under the 
action of $\ZZ/m = \inp{\tau}$. For instance, Eqn. \eqref{eq:cyc1} can be written as:
$$\tau^i\E_{0,h(0)} = \tau^i[\E_{m-1,\inp{m-1}}, \ldots, \E_{2,\inp{2}}, \E_{1,\inp{1}}]_q\conj{ for } i\in\ZZ/m.$$
\end{rmk}

The relations of the minimal arc algebra in Def. \ref{halldiskdef} imply a
family of relations analogous to relation \eqref{eq:cyc1}. Each relation in
this family is obtained by truncating the twisted complex from
Prop. \ref{convprop} in different places. Since each relation implies the
next, and since the final equation is equivalent to the first one, Eqn. \eqref{eq:cyc1} can be replaced by any of the relations in Lemma \ref{cyclicprop2}. Deriving these
relations from the presentation above is the purpose of the lemma below.

\begin{lemma}\label{cyclicprop2}
The {\em cyclic convolution relations} below, obtained by cyclically shuffling arcs to  $\E_j$ to the left-hand side of Eqn. \eqref{eq:cyc1} at the expense of a degree shift, are each equivalent to Eqn. \eqref{eq:cyc1} assuming other the relations in Def. \ref{halldiskdef} above. 
\begin{gather*}
\E_{i,h(i)} = [\E_{i+m-1,\tau^i\inp{m-1}}, \ldots, \E_{i+1,\tau^i\inp{1}}]_q \\
 [\E_{i+1,\tau^i\inp{1}+1},\E_{i,h(i)}]_q = [\E_{i+m-1,\tau^i\inp{m-1}}, \ldots, \E_{i+2,\tau^i\inp{2}}]_q \\
 [\E_{i+2,\tau^i\inp{2}+1}, \E_{i+1,\tau^i\inp{1}+1},\E_{i,h(i)}]_q = [\E_{i+m-1,\tau^i\inp{m-1}},\ldots, \E_{i+3,\tau^i\inp{3}}]_q\\
 \vdots\hspace{2in}\vdots \\
 [\E_{i+m-3,\tau^i\inp{m-3}+1},\ldots, \E_{i+1,\tau^i\inp{1}+1}, \E_{i,h(i)}]_q =   [\E_{i+m-1,\tau^i\inp{m-1}},\E_{i+m-2,\tau^i\inp{m-2}}]_q\\
 [\E_{i+m-2,\tau^i\inp{m-2}+1}, \ldots, \E_{i+1,\tau^i\inp{1}+1}, \E_{i,h(i)}]_q =   \E_{i+m-1,\tau^i\inp{m-1}}.
\end{gather*}
\end{lemma}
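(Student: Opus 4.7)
The plan is to argue by induction on $k \in \{0, 1, \ldots, m-2\}$ that each equation in the displayed list is equivalent to the next, modulo the other relations in Definition \ref{halldiskdef} (self-extension \eqref{r1:itm}, the two adjacent-commutativity statements of \eqref{r2:itm}, and far-commutativity \eqref{r3:itm}). Since the $k=0$ equation is precisely Eqn.~\eqref{eq:cyc1}, chaining these equivalences delivers the Lemma.

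Write $L_k$ and $R_k$ for the left- and right-hand sides of the $k$-th equation in the list. The shape of the list yields
$$
L_{k+1} = [\E_{i+k+1,\tau^i\inp{k+1}+1},\, L_k]_q, \qquad R_k = [R_{k+1},\, \E_{i+k+1,\tau^i\inp{k+1}}]_q.
$$
Setting $Y = \E_{i+k+1,\tau^i\inp{k+1}}$ and $X = \sigma Y = \E_{i+k+1,\tau^i\inp{k+1}+1}$ and substituting $R_k = [R_{k+1}, Y]_q$ into the definition of $L_{k+1}$ (and dually) reduces the inductive step to establishing the two identities
$$
[X, [A,Y]_q]_q = A \qquad \text{and} \qquad [[X,B]_q, Y]_q = B,
$$
applied with $A = R_{k+1}$ and $B = L_k$ respectively. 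These are in exactly the form of Eqns.~\eqref{eq:S1b-base} and \eqref{eq:S1a-base} of Lemma~\ref{bases1lem}, and the proofs given there apply verbatim once one checks the vanishing of the appropriate first term in the specializations $(OJ)[q,q^2,q^{-2}]$ and its $q\to q^{-1}$ variant. That vanishing amounts to verifying
$$
[X, A]_{q^{-1}} = 0 \qquad \text{and} \qquad [B, Y]_{q^{-1}} = 0,
$$
after which the self-extension $[Y, \sigma Y]_{q^{-2}} = \tfrac{q^{-1}}{q^2-1}$ supplies a scalar whose coefficient, after the omni-Jacobi manipulation, collapses to produce $A$ (resp.\ $B$).

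To verify the two commutation statements above, I would proceed factor by factor through $A$ and $B$ using Lemma~\ref{lemma:qcomm}. Every factor indexed by $i+j$ with $|j - (k+1)| \geq 2$ contributes $q^0$ by far-commutativity \eqref{r3:itm}. The unique adjacent factor ($\E_{i+k+2,\tau^i\inp{k+2}}$ in $A$, and $\E_{i+k,\tau^i\inp{k}+1}$ in $B$) contributes $q^{-1}$; this uses the identity \eqref{eq:shiftid1} to compute that the grading differences of the adjacent pairs are $h(i+k+1)$ and $-h(i+k)$ respectively, so that after a common suspension each pair assumes the shape $(\E_{j+1, 0},\, \E_{j, h(j)})$, to which \eqref{r2:itm} with parameter $k=0 < 1$ applies and gives the required $q$-commutation.

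The main obstacle will be the careful bookkeeping of the suspension shifts $\tau^i\inp{k+1}$, $\tau^i\inp{k+1}+1$, and $\tau^i\inp{k+2}$ so as to align them with the specific grading pattern demanded by \eqref{r2:itm}. Once this alignment is accomplished via the identities \eqref{eq:shiftid1}--\eqref{eq:shiftid3}, the remaining work is a direct transcription of the omni-Jacobi calculations already carried out in Lemma~\ref{bases1lem}, applied in each direction of the equivalence $(k)\Leftrightarrow(k+1)$.
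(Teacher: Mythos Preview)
Your approach is correct and is essentially the same as the paper's for the forward implication $(k)\Rightarrow(k+1)$: both arguments pivot on verifying a single $q^{-1}$-commutation (your $[X,A]_{q^{-1}}=0$, the paper's $[X_{k+1},Y_{k+2}]_{q^{-1}}=0$) via far-commutativity plus one adjacent application of \eqref{r2:itm}, and then feed this into an omni-Jacobi specialization together with the self-extension \eqref{r1:itm} to produce the scalar that collapses the bracket to the desired side.

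The one organizational difference is that the paper proves only the forward implication and obtains the full equivalence from the cyclic structure: the last displayed equation, after the index change $i\mapsto i-1$ and a shift, coincides with an instance of \eqref{eq:cyc1}, so the chain of implications wraps around. You instead argue both directions at each step, which is slightly more work but avoids that cyclic bookkeeping. Two small points to tidy in your write-up: Lemma~\ref{lemma:qcomm} is stated for products, so when applying it to the iterated bracket $A$ (or $B$) you should note, as the paper does elsewhere (e.g.\ in the proof of Prop.~\ref{prop:skeinselfext}), that each monomial in the expansion of the iterated $q$-commutator contains every generator exactly once, so the net exponent is well-defined. And at $k=0$ the ``unique adjacent factor in $B$'' is $\E_{i,h(i)}$ itself rather than $\E_{i+0,\tau^i\inp{0}+1}$; the needed vanishing $[\E_{i,h(i)},\E_{i+1,0}]_{q^{-1}}=0$ still follows from \eqref{r2:itm} with parameter $0<1$, so this is only a notational slip.
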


\begin{proof}
The proof is by induction. The first item in the list above is Eqn. \eqref{eq:cyc1}. 
Before proceeding to the inductive step, let's simplify notation by introducing the variables
$$ X_{k} := E_{i+k,\tau^i\inp{k}+1}\conj{and} Y_k := [E_{i+m-1,\tau^i\inp{m-1}},\ldots,E_{i+k,\tau^i\inp{k}}]_q.$$
Assume the $k$th cyclic convolution equation above holds:
\begin{equation}\label{eq:tmpeqk}
[X_{k},\ldots, X_1, \E_{i,\h(i)}]_q = Y_{k+1}.
\end{equation}
The induction step has two parts. First we show that
$[X_{k+1}, Y_{k+2}]_{q^{-1}} = 0$. The second part uses this assumption together with the omni-Jacobi
relation \eqref{oj:itm} to show that $[X_{k+1},Y_{k+1}]_q = Y_{k+2}$. The
latter relation suffices to complete the proof since applying the
operator $[X_{k+1}, -]_q$ to both sides of the $k$th cyclic convolution equation \eqref{eq:tmpeqk} produces the
$k+1$st cyclic convolution equation by  rebracketing  \eqref{orderprop}.

{\it Part 1:} 
Note that $Y_k = [Y_{k+1},E_{i+k,\tau^i\inp{k}}]_q$ by rebracketing, 
so $[Y_{k+2},X_{k+1}]_q$ becomes
\begin{align*}
&=[[Y_{k+3},E_{i+k+2,\tau^i\inp{k+2}}]_q,E_{i+k+1,\tau^i\inp{k+1}+1}]_q\notag\\
&= [Y_{k+3},[E_{i+k+2,\tau^i\inp{k+2}},E_{i+k+1,\tau^i\inp{k+1}+1}]_q]_q  &\textrm{\eqref{orderprop} \& \eqref{eq:tmpchange}}\notag\\
&=0\notag.
\end{align*}
The last equality uses  \eqref{eq:tmpchange} 
and the identity \eqref{eq:shiftid1}.
The relation $[X_{k+1}, Y_{k+2}]_{q^{-1}} = 0$ follows from  $[Y_{k+2},X_{k+1}]_q = 0$ and anti-symmetry \eqref{as:itm}.

{\it Part 2:}  The omni-Jacobi relation $\eqref{oj:itm}[q^3,q^{-2}q^{-2}]$ is
\begin{equation*}
[x,[y,z]_q]_q = -q^3[z,[x,y]_{q^{-1}}]_{q^{-3}} - q[y,[z,x]_{q^{-2}}]_{q^2}.
\end{equation*}
Now the expression $[X_{k+1}, Y_{k+1}]_q$ can be written as
\begin{align*}
&= [X_{k+1},[Y_{k+2},E_{i+k+1,\tau^i\inp{k+1}}]_q]_q & \eqref{orderprop}\\
&= -q^3[E_{i+k+1,\tau^i\inp{k+1}},[X_{k+1},Y_{k+2}]_{q^{-1}}]_{q^{-3}} \\
&\quad\quad\phantom{=}- q[Y_{k+2},[E_{i+k+1,\tau^i\inp{k+1}},X_{k+1}]_{q^{-2}}]_{q^2}&\textrm{$\eqref{oj:itm}[q^3,q^{-2}, q^{-2}]$}\\
&= 0-q[Y_{k+2},q^{-1}/(q^2-1)]_{q^2}   &\textrm{(Step 1) \& \eqref{r1:itm}}\\
&= Y_{k+2}.
\end{align*}

\end{proof}

\subsubsection{The isomorphism theorem}\label{minarcproofsec}
In the theorem below, the Hall algebra of a disk with $m$ marked intervals is
computed. The gist of the argument is to show that a set of relations
\eqref{r1:itm}, \eqref{r2:itm} and \eqref{r3:itm} generate the kernel of the
natural map $\kappa$ from the free $\QQ(\sqrt{q})$-algebra on 
arcs to the Hall algebra of the Fukaya category. This is accomplished by
comparing the presentation in Def. \ref{halldiskdef} to the Hall algebra
$\tDHa(A_{m-1})$ in Def. \ref{dhaanprop} using the equivalence
$\phi = \phi_0$ from Prop. \ref{dereqprop}.

\begin{thm}\label{minarcthm}
  The natural map $\kappa : F(\ZZ \A) \to \tDHa (D^\pi\F(D^2,\A))$ from the
  free $\QQ(\sqrt{q})$-algebra on suspensions of a minimal arc system to the
  derived Hall algebra of the Fukaya category associated to $(D^2,\A)$ with
  disk with $m$ marked intervals induces an isomorphism
  $$\bar{\kappa} : \Alg(D^2,\A)\xto{\sim} \tDHa (D^\pi \F(D^2,\A))$$
from the minimal arc algebra to the derived Hall algebra.
\end{thm}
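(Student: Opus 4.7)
The plan is to use the exact equivalence $\phi=\phi_0$ of Prop.\ \ref{dereqprop} to identify $\tDHa(D^\pi\F(D^2,\A))$ with $\tDHa(A_{m-1})$, and then to construct an inverse to $\bar\kappa$ at the level of the minimal arc algebra. Under the induced isomorphism $\phi_*\colon\tDHa(A_{m-1})\xto{\sim}\tDHa(D^\pi\F(D^2,\A))$ the class $z_{j,n}$ corresponds to $\E_{j,n+\langle j\rangle}$ for $1\leq j\leq m-1$; the remaining arc $\E_0$ sits in the distinguished triangle obtained from the twisted-complex description of Prop.\ \ref{convprop}, so its class corresponds to the iterated commutator $z_{(1,m),n-h(0)}$ of Def.\ \ref{usefulcor}. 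Thus it suffices to construct a well-defined homomorphism $\Psi\colon\tDHa(A_{m-1})\to\Alg(D^2,\A)$ by $z_{j,n}\mapsto\E_{j,n+\langle j\rangle}$ and to check that $\Psi$ and $\bar\kappa$ are mutually inverse up to $\phi_*$.

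First I would verify that $\bar\kappa$ is well-defined by checking that the relations \ref{r1:itm}--\ref{r3:itm} of Def.\ \ref{halldiskdef} hold in $\tDHa(D^\pi\F(D^2,\A))$. After transport by $\phi_*^{-1}$, those relations that do not involve $\E_0$ become identities among the simples $z_{j,n}$ which are immediate from Prop.\ \ref{dhaanprop} once the grading shifts are tracked via the identities \eqref{eq:shiftid1}--\eqref{eq:shiftid3}. The relations involving $\E_0$ translate into identities among the iterated commutators $z_{(a,b),n}$ established in Section \ref{algsec}: \ref{r1:itm} for $\E_0$ becomes the self-skein \ref{eq:skeinselfext} (Prop.\ \ref{prop:skeinselfext}); the instances of \ref{r3:itm} featuring $\E_0$ become distant commutativity \ref{s3:itm} or interwoven commutativity \ref{s2:itm} (Thm.\ \ref{relsthm}); the adjacent commutativity \ref{r2:itm} between $\E_0$ and $\E_1$ or $\E_{m-1}$ is the opposite skein relation \ref{eq:skeinleft}; and the cyclic convolution \eqref{eq:cyc1} is the defining identity of $z_{(1,m),0}$. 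Dually, to see that $\Psi$ is well-defined I would verify the Hernandez--Leclerc relations \ref{h1:itm}--\ref{h3:itm} inside $\Alg(D^2,\A)$: the far commutativity \ref{h1:itm} for non-adjacent indices and \ref{h3:itm} follows from \ref{r3:itm}, while the self-extension and adjacent commutativity \ref{h2:itm} follow from \ref{r1:itm} and \ref{r2:itm}.

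Given both maps I would check that $\bar\kappa\circ\Psi=\phi_*$ and $\Psi\circ\phi_*^{-1}\circ\bar\kappa=\mathrm{id}$ on generators; the only non-trivial case is $\E_{0,n}$, which is handled by the cyclic convolution \eqref{eq:cyc1} since $\Psi$ sends $z_{(1,m),n-h(0)}$ to the iterated commutator that equals $\E_{0,n}$ in $\Alg(D^2,\A)$. Because $\phi_*$ is an isomorphism, $\Psi$ is then forced to be both injective and surjective, and $\bar\kappa$ is its inverse (up to $\phi_*$). The main obstacle is the Serre-type identity hidden in \ref{h1:itm} for adjacent simples: the R-presentation encodes $q$-commutation at the same grading between adjacent arcs only implicitly, through the cyclic convolution \eqref{eq:cyc1}. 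Extracting Serre inside $\Alg(D^2,\A)$ therefore requires rebracketing this convolution (using Lemma \ref{cyclicprop2}) and a controlled application of the omni-Jacobi identity, in the style of the calculations of Section \ref{algsec}. A secondary technical burden is the combinatorial bookkeeping needed to match the shifts $\tau^i\langle k\rangle$ on both sides when translating between the R-presentation and the H-presentation.
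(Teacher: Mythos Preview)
Your proposal is correct and follows essentially the same architecture as the paper: use the equivalence $\phi_*$ from Prop.~\ref{dereqprop}, construct a map $\Psi\colon\tDHa(A_{m-1})\to\Alg(D^2,\A)$ (this is the paper's $\varphi$), verify well-definedness in both directions, and check mutual inversity on generators with the cyclic convolution handling the exceptional arc. Your identification of the Serre relation as the main obstacle for $\Psi$, and your recognition that Lemma~\ref{cyclicprop2} together with omni-Jacobi is what resolves it, matches the paper's Lemma~\ref{lemma:phihom} precisely.

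One small imprecision: your mapping of the boundary cases (those \ref{r2:itm} and \ref{r3:itm} relations involving $E_0$) to the labeled identities \ref{eq:skeinleft}, \ref{s2:itm}, \ref{s3:itm} is not quite literal. For instance, the adjacent $q$-commutativity $[E_{1,k},E_{0,h(0)}]_{q^{\pm}}=0$ translates under $\phi_*^{-1}$ to $[z_{1,k},z_{(1,m),0}]_{q^{\pm}}=0$, which is not an instance of \ref{eq:skeinleft} as stated; likewise \ref{s2:itm} only gives interwoven commutativity at a specific relative grading, not for all $k$ as \ref{r3:itm} requires. The paper (Lemma~\ref{lemma:psihom}) does not cite these lemmas verbatim either but runs bespoke omni-Jacobi and rebracketing computations in the same style. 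Since you already anticipate exactly this kind of argument, this is a matter of labeling rather than a gap.
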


The proof is established by a series of lemmas which constitute the majority
of this section.  Recall that Prop. \ref{dereqprop} introduces an equivalence 
\[
\phi := \phi_0 : D^b(\Rep_k(A_{m-1})) \xto{\sim} D^\pi\F(D^2,\A)
\] 
of triangulated categories. The proof of the theorem is structured to leverage the induced isomorphism
$$\phi_* : \tDHa(A_{m-1}) \xto{\sim} \tDHa(D^\pi\F(D^2,\A)).$$
In order to show that the kernel of the map $\kappa$ is generated by the
relations \eqref{r1:itm}, \eqref{r2:itm} and \eqref{r3:itm}, it suffices to define mutually inverse isomorphisms $\phi : \tDHa(A_{m-1}) \to \Alg(D^2,\A)$ and $\psi : \Alg(D^2,\A)\to \tDHa(A_{m-1})$ in such a way as to make the diagram below commute.
\begin{equation}\label{thmdiag}\begin{tikzpicture}[scale=10, node distance=2.5cm]
\node (X) {$\Alg(D^2,\A)$};
\node (A1) [below=1.25cm of X]{$\tDHa(A_{m-1})$};
\node (B1) [right=1.25cm of X] {$\tDHa (D^\pi\F(D^2,\A))$};
\draw[->,bend right=15] (A1) to node [swap] {$\phi_*$} (B1);
\draw[->,bend left=15] (A1) to node   {$\varphi$} (X);
\draw[->,bend left=15] (X) to node {$\psi$} (A1);
\draw[->] (X) to node {$\bar{\kappa}$} (B1);
\end{tikzpicture}
\end{equation}

Since the choice $\phi_*$ is fixed, commutativity of the diagram forces us to define the map $\varphi$ as 
\begin{equation}\label{eq:phidef}
  \varphi(z_{i,0}) := \E_{i,\inp i} \conj{ for } 1\leq i < m
\end{equation}  
and to define the map $\psi$ as
\begin{equation}\label{eq:psidef}
\psi(\E_{i,0}) := 
\left\{ \begin{array}{cl} 
z_{i,-\inp{i}} & \textrm{ when } 1 \leq i < m, \\
{[z_{m-1,-\h(m)},\ldots,z_{1,-\h(m)}]_q} & \textrm{ when } i=m,
\end{array}\right.
\end{equation}
where the iterated bracket notation in Def. \ref{bracketnotationdef} is used for $\psi$ when $i=m$.

\subsubsection{Proof of the isomorphism theorem}
The proof consists of three steps. First we argue that $\varphi$ is a
homomorphism. The second step is to show that $\psi$ is a
homomorphism. Lastly, we check that $\varphi$ and $\psi$ are mutually
inverse isomorphisms. Each step constitutes a lemma in what follows.

\begin{rmk}
  In what follows, since both $\varphi$ and $\psi$ commute with shifts in
  homological degree in either algebra, we will simplify notation by proving
  the relations above when the homological degree $n$ is zero.
\end{rmk}

\begin{lemma}{(Step 1)}\label{lemma:phihom}
The assignment Eqn. \eqref{eq:phidef}  determines a homomorphism
$$\varphi : \tDHa(A_{m-1}) \to \Alg(D^2,\A).$$
\end{lemma}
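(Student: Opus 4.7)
The plan is to verify that each defining relation of $\tDHa(A_{m-1})$ from Prop.~\ref{dhaanprop}, conveniently reorganized as (Z1)--(Z2) in Rmk.~\ref{dhaamrmk}, is respected by the assignment $\varphi(z_{i,n}) = E_{i,\langle i\rangle + n}$. Both the source relations and the target relations (R1)--(R3) of Def.~\ref{halldiskdef} are invariant under the shift $\sigma$, so it suffices to check each relation with the extra homological degree $n$ set to zero, working purely in terms of the shifts $\langle\cdot\rangle$ and the foliation data $h$.

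First I would dispose of the two commuting families. The commutativity in (Z1) and the $|i-j|\ge 2$ case of (Z2) both follow immediately from far-commutativity (R3), since the hypothesis $i\cdot j = 0$ on the source exactly matches the hypothesis $|i-j|\ge 2$ on the target. The self-extension case $i = j$ of (Z2) is
\[
[E_{i,\langle i\rangle}, E_{i,\langle i\rangle + k}]_{q^{2(-1)^k}} = \delta_{k,1}\tfrac{q^{-1}}{q^2-1},
\]
which is (R1) after translating the homological grading by $-\langle i\rangle$. The main nontrivial case is the adjacent one $|i-j|=1$ in (Z2). For $j = i+1$, using the identity $\langle i+1\rangle - \langle i\rangle = 1 - h(i)$ from \eqref{eq:shiftid1}, the required relation
\[
[E_{i,\langle i\rangle}, E_{i+1,\langle i+1\rangle + k}]_{q^{(-1)^{k+1}}} = 0 \qquad (k\ge 1)
\]
is transported, via antisymmetry and a global shift by $h(i) - \langle i\rangle$, into an instance of the adjacent commutativity \eqref{eq:tmpchange} of (R2) with parameter $k + 1 - h(i) > 1$. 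The symmetric subcase $j = i-1$ is handled identically using the second line of (R2), which covers the $k < 1$ regime.

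Finally I would address the Serre relations (H1) for adjacent $i,j$. These live in a single homological degree, so (Z2) cannot be invoked directly. Instead I would expand
\[
[E_{i,\langle i\rangle},\, [E_{i,\langle i\rangle},\, E_{i+1,\langle i+1\rangle}]_{q^{\mp 1}}]_{q^{\pm 1}}
\]
by an omni-Jacobi manipulation in the spirit of Lemma~\ref{bases1lem}, permuting arcs so that one of the inner brackets, after the shift $1 - h(i)$, either reduces to the constant $q^{-1}/(q^2-1)$ via (R1) or vanishes by the adjacent commutativity just established; the constant term is then absorbed by a bracket with a remaining generator and cancels. The main obstacle is the bookkeeping of the gradings $\langle k\rangle$ and the signs $(-1)^k$ so that the cases above interlock correctly with the parity conventions in (R2); once this matching is in place, each verification collapses to a routine $q$-bracket calculation of the same flavour as those performed throughout Section~\ref{algsec}.
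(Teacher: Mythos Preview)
Your treatment of the commutativity in \eqref{eq:z1}, the far-commutativity case of \eqref{eq:z2}, the self-extension case $i=j$, and the adjacent case $|i-j|=1$ of \eqref{eq:z2} is correct and matches the paper's argument (the parameter in \eqref{eq:tmpchange} comes out to $k+1$, not $k+1-h(i)$, but this is a harmless arithmetic slip).

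The genuine gap is in your handling of the Serre relations. Your plan is to apply an omni-Jacobi identity to $[a,[a,b]_{q^{\mp 1}}]_{q^{\pm 1}}$ with $a = E_{i,\langle i\rangle}$ and $b = E_{i+1,\langle i+1\rangle}$ and then kill inner brackets via \eqref{r1:itm} or the adjacent commutativity in \eqref{r2:itm}. But all three entries $a,a,b$ live at the fixed degrees $\langle i\rangle$ and $\langle i+1\rangle$, and omni-Jacobi does not shift degrees. The self-extension relation \eqref{r1:itm} requires the two copies of $E_i$ to sit in \emph{different} homological degrees ($k\ge 1$), so it never applies here. And the bracket $[E_{i+1,\langle i+1\rangle},E_{i,\langle i\rangle}]$ is precisely the $k=1$ gap in the adjacent commutativity of \eqref{r2:itm}, so that relation cannot be invoked either. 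In the algebra generated by the $E_{i,n}$ modulo only \eqref{r1:itm}, \eqref{r3:itm}, and the two $q$-commutator lines of \eqref{r2:itm}, the expression $a^2b-(q+q^{-1})aba+ba^2$ is not zero; something else is needed.

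What is missing is the convolution relation \eqref{eq:cyc1}. The paper's proof uses Lemma~\ref{cyclicprop2} to rewrite $[E_{i+1,1},E_{i,h(i)}]_q$ as the long iterated bracket $[E_{i+m-1,\tau^i\langle m-1\rangle},\ldots,E_{i+2,\tau^i\langle 2\rangle}]_q$ in the \emph{other} arcs. After rebracketing via \eqref{orderprop}, the outer $E_{i+1}$ (or $E_i$) meets an adjacent arc $E_{i+2}$ (or $E_{i+m-1}$) in an inner bracket that lands in the $k\ne 1$ range of \eqref{r2:itm} and therefore vanishes. This global convolution step is the essential input you omitted; without it the Serre relations do not follow from the remaining relations of Def.~\ref{halldiskdef}.
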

\begin{proof}
First observe that $\varphi$ preserves the first part of \eqref{eq:z1} by 
the relation
\eqref{r3:itm}. The interesting part of the proof consists of checking the Serre relations in \eqref{eq:z1}. In the first case, when $1\leq i \leq m-2$, the expression $\varphi([z_{i+1,0},[z_{i+1,0},z_{i,0}]_q]_{q^{-1}})$ is equal to
\begin{align*}
&= [E_{i+1,\inp {i+1}},[E_{i+1,\inp{i+1}},E_{i,\inp i}]_q]_{q^{-1}} & \eqref{eq:phidef}\\
&=  \sigma^{\inp{i+1}-1}[E_{i+1,1},[E_{i+1,1},E_{i,h(i)}]_q]_{q^{-1}} &\eqref{eq:shiftid1}\lbrack i=1\rbrack \\
&= -q^{-1}\sigma^{\inp{i+1}-1}[[E_{i+1,1},E_{i,h(i)}]_q,E_{i+1,1}]_{q} & \eqref{as:itm}\\
&= -q^{-1}\sigma^{\inp{i+1}-1}[[E_{i+m-1,\tau^i\inp{m-1}},\ldots,E_{i+2,\tau^i\inp {2}}]_q,E_{i+1,1}]_q&\textrm{(Lem. \ref{cyclicprop2})}\\
&= -q^{-1}\sigma^{\inp{i+1}-1}[
\ldots,E_{i+3,\tau^i\inp 3},[E_{i+2,\tau^i\inp {2}},E_{i+1,1}]_q]_q & \eqref{orderprop}\\
&= -q^{-1}\sigma^{\inp{i+1}-1}[
\ldots,E_{i+3,\tau^i\inp 3},[E_{i+2,1-h(i+1)},E_{i+1,1}]_q]_q &\textrm{(Def. of } \tau^i\inp 2\textrm{)}\\
&= -q^{-1}\sigma^{1-h(i+1)}\sigma^{\inp{i+1}-1}[\ldots,[E_{i+2,0},E_{i+1,h(i+1)}]_{q}]_q \\
&=0&\textrm{\eqref{r2:itm}}
\end{align*}
Similarly, for the other Serre relation, $\varphi([z_{i,0},[z_{i,0},z_{i+1,0}]_{q^{-1}}]_{q})$ is 
\begin{align*}
&= [E_{i,\inp i}, [E_{i, \inp i},E_{i+1,\inp{i+1}}]_{q^{-1}}]_q & \eqref{eq:phidef}\\
&= -q^{-1}[E_{i,\inp i},[E_{i+1,\inp{i+1}},E_{i,\inp i}]_q]_q & \eqref{as:itm}\\
&= -q^{-1}\sigma^{\inp{i+1}-1}[E_{i,h(i)},[E_{i+1,1},E_{i,h(i)}]_q]_q & \eqref{eq:shiftid1}\lbrack i=1\rbrack \\
&= -q^{-1}\sigma^{\inp{i+1}-1}[E_{i,h(i)},[E_{i+n-1,\tau^i\inp{n-1}},\ldots,E_{i+2,\tau^i\inp 2}]_q]_q& \textrm{(Lem. \ref{cyclicprop2})}\\
&=-q^{-1}\sigma^{\inp{i+1}-1}
[[E_{i,h(i)},E_{i+n-1,\tau^i\inp{n-1}}]_q,\ldots]_q&  \eqref{orderprop}\\
&=-q^{-1}\sigma^{\inp{i+1}-1}\sigma^{-h(i)}[[E_{i,0},E_{i+n-1,h(i-1)}]_{q},\ldots]_q & \eqref{eq:shiftid2} \\
&=0& \eqref{r2:itm}
\end{align*}

The second half of the proof consists of checking that $\varphi$ preserves the \eqref{eq:z2} relation
\[
[z_{i,0},z_{j,k}]_{q^{(-1)^k i\cdot j}} = \delta_{i,j}\delta_{k,1}\frac{q^{-1}}{q^2-1} \conj{ for } k \geq 1.
\]
There are three cases corresponding to the three possibilities $i\cdot j\in\{0,-1,2\}$.
If $i =j$ then $i\cdot j = 2$ and \eqref{r1:itm} implies the result. If $\vnp{i-j} \geq 2$ then $i\cdot j = 0$ and \eqref{r3:itm} shows that the claim holds. In the remaining case, if $j=i+1$ then set $p(k) = (-1)^k$ and observe that
\begin{align*}
\varphi([z_{i,0},z_{i+1,k}]_{q^{p(k+1)}}) &= 
[E_{i,\inp i},E_{i+1,\inp{i+1}+k}]_{q^{p(k+1)}} & \eqref{eq:phidef}\\
&= -q^{p(k+1)}[E_{i+1,\inp{i+1} + k},E_{i,\inp i}]_{q^{p(k)}} & \eqref{as:itm}\\
&= -q^{p(k+1)}\sigma^{\inp{i+1}-1}[E_{i+1,1+k},E_{i,h(i)}]_{q^{p(k)}} &\eqref{eq:shiftid1}\lbrack i=1\rbrack\\
&= -q^{p(\ell)}\sigma^{\inp{i+1}-1} [E_{i+1,\ell},E_{i,h(i)}]_{q^{p(\ell-1)}} & (\ell = k+1)\\
&=0&\textrm{\eqref{r2:itm}}
\end{align*}
The $j=i-1$ case is omitted because its proof is almost identical.

\end{proof}

\begin{lemma}{(Step 2)}\label{lemma:psihom}
The assignment Eqn. \eqref{eq:psidef}  determines a homomorphism
$$\psi : \Alg(D^2,\A) \to \tDHa(A_{m-1}).$$
\end{lemma}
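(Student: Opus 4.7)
The plan is to extend $\psi$ to all suspensions $\sigma$-equivariantly, giving $\psi(\E_{i,n}) = z_{i, n-\inp i}$ for $1 \leq i < m$ and $\psi(\E_{m,n}) = z_{(1,m), n-h(m)}$, where $z_{(1,m),\ast}$ is the iterated commutator from Def.\ \ref{usefulcor}. It then suffices to verify that each defining relation in Def.\ \ref{halldiskdef} is sent to a valid identity in $\tDHa(A_{m-1})$. The calculations split naturally into two regimes: the ``interior'' case in which every arc index lies in $\{1,\ldots,m-1\}$, and the ``boundary'' case involving $\E_m$.

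In the interior regime, $\psi$ is simply a shifted relabeling of quiver generators, and the verification closely mirrors the calculation in the proof of Lemma \ref{lemma:phihom} carried out in reverse: after applying the shift identities \eqref{eq:shiftid1}--\eqref{eq:shiftid3} and standard $q$-commutator manipulations, each relation reduces to an instance of \eqref{eq:z1} or \eqref{eq:z2}. In the boundary regime, $\psi(\E_m) = z_{(1,m),\ast}$ and the relevant relations of $\Alg(D^2,\A)$ translate precisely to identities already proved in Section \ref{algsec}. The self-extension relation \eqref{r1:itm} at $i=m$ is exactly the self-skein relation \eqref{eq:skeinselfext}. The adjacent commutativity clauses of \eqref{r2:itm} with $i=m-1$ or $i=m$ reduce to the skein \eqref{s1:itm} and opposite-skein \eqref{eq:skeinleft} relations. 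Far-commutativity \eqref{r3:itm} between $\E_m$ and $\E_j$ for $2 \leq j \leq m-2$ becomes $[z_{(1,m),n}, z_{j,k}]_1 = 0$, which I would derive by decomposing $z_{(1,m),n}$ via the finger relation \eqref{s0:itm} as $[z_{(j+1,m),n}, z_{(1,j),n}]_q$ (after absorbing $z_j$) and then invoking the distant commutativity \eqref{s3:itm} together with a $q$-Jacobi identity to push the commutator past $z_{j,k}$.

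The main obstacle is the convolution relation \eqref{eq:cyc1} for indices $i \ne m$. Its left-hand side $\psi(\E_{i, h(i)})$ is the single quiver generator $z_{i, h(i)-\inp i}$, while its right-hand side embeds $\psi(\E_m) = z_{(1,m),\ast}$ as one factor in a longer iterated $q$-bracket over the remaining arcs $\E_{i+1},\ldots,\E_{i+m-1}$. The plan is to collapse the right-hand side by repeated application of the finger relation \eqref{s0:itm} to split $z_{(1,m),\ast}$ in exactly the place where it is embedded, freely re-bracketing via Cor.\ \ref{assoccor}, so that the surrounding generators fuse into a telescoping product that collapses to a single $z_{i,\ast}$ via the (opposite) skein relation. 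The entire proof is ultimately an exercise in bookkeeping: tracking the shifts $\tau^i\inp{k}$ and the $q$-exponents consistently, guided by the geometric fact that $\psi$ is the algebraic counterpart of the inverse of the derived equivalence $\phi_0$ from Prop.\ \ref{dereqprop}, whose existence ensures that every verification must succeed.
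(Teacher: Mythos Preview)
Your outline is correct and matches the paper's approach: both split into the interior case (where $\psi$ is just a shifted relabeling and the relations reduce to \eqref{eq:z1}/\eqref{eq:z2}) and the boundary case involving $\E_m$, where \eqref{r1:itm} is handled by the self-skein relation \eqref{eq:skeinselfext}, and the convolution \eqref{eq:cyc1} for $i\neq m$ is collapsed using \eqref{s1:itm} and \eqref{eq:skeinleft} after grouping the right-hand side into three iterated brackets (the paper groups as $[z_{(1,i)},z_{(1,m)},z_{(i+1,m)}]_q$ rather than splitting $z_{(1,m)}$ first, but the two orders are equivalent via \eqref{orderprop}). Your explicit treatment of \eqref{r3:itm} with $\E_m$ is in fact more careful than the paper's.

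One point to correct: the adjacent-commutativity clauses of \eqref{r2:itm} at $i\in\{m-1,m\}$ do \emph{not} reduce to \eqref{s1:itm}/\eqref{eq:skeinleft} as you claim. Those relations produce a nonzero right-hand side and only cover the specific degree shift $k=1$; for $k\neq 1$ you need to show e.g.\ $[z_{1,k},z_{(1,m),0}]_{q^{\pm 1}}=0$, and the paper does this by first proving the auxiliary identity $[z_{1,k},z_{(2,m),0}]_{q^{(-1)^k}}=0$ via \eqref{orderprop}, \eqref{s3:itm}, and \eqref{eq:z2}, and then invoking the omni-Jacobi relation $\eqref{oj:itm}[q^{1-2p(k)},q^{-1+p(k)},q^{2p(k)}]$ to finish. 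So the tools are all in Section~\ref{algsec}, but not the two you named.
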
  
\begin{proof}
The relation \eqref{eq:z2} implies that $\psi$ preserves \eqref{r3:itm} and \eqref{r1:itm} when $i \ne n$. If $i=n$ then $\psi$ preserves \eqref{r1:itm} by Proposition \ref{prop:skeinselfext}. 

In order to complete the proof it suffices to show that $\psi$ preserves \eqref{r2:itm}. This relation consists of three equations. The longest relation
\begin{equation}\label{eq:wewant}
E_{i,h(i)} = [E_{i+m-1,\tau^i\inp{m-1}},\ldots,E_{i+1,\tau^i\inp 1}]_q,
\end{equation}
will be examined first. Before proceeding to the proof observe that 
\begin{equation}
  \psi(E_{m,n}) = z_{(1,m-1),-h(m)+n}\label{eq:psiid0}
\end{equation}
by \eqref{eq:psidef} and Def. \ref{usefulcor}. When $i\ne m$, the same assumptions imply
\begin{align}
  \psi([E_{i-1,\tau^i\inp{-1}},\cdots,E_{1,\tau^i\inp{1-i}}]_q) &= z_{(1,i),\tau^i\inp{-i+1}}   \label{eq:psiid1}\\
  \psi([E_{m-1,\tau^i\inp{-i-1}},\cdots,E_{i+1,\tau^i\inp{1}}]_q) &= z_{(i+1,m),-2+\tau^i\inp{-i+1}} \label{eq:psiid2}
\end{align}

Now when $i=m$, Eqn. \eqref{eq:psiid0} shows that $\psi$ preserves \eqref{eq:wewant}. So assuming $i\ne m$, applying $\psi$ to the left-hand side of \eqref{eq:wewant} produces $\psi(E_{i,h(i)}) = z_{i,h(i)-\inp i}$ by \eqref{eq:psidef}. Then applying $\psi$ to the right-hand side of \eqref{eq:wewant} is equal to
\begin{align*}
&= \psi([E_{i-1,\tau^i\inp{-1}},\ldots,E_{1,\tau^i\inp{1-i}},E_{m,\tau^i\inp{-i}},E_{m-1,\tau^i\inp{-i-1}},\ldots, E_{i+1,\tau^i\inp 1}]_q)
 & \textrm{($\inp{m+k} = \inp k$)}\\
&= [z_{(1,i),\tau^i\inp{-i+1}},z_{(1,m-1),-h(m)+\tau^i\inp{-i}},z_{(i+1,m),-2+\tau^i\inp{-i+1}}]_q
&\textrm{\eqref{eq:psiid1} \& \eqref{eq:psiid2} }
\\
&=\sigma^{\tau^i\inp{-i+1}} [z_{(1,i),0},[z_{(1,n-1),-1},z_{(i+1,n-1),-2}]_q]_q&\textrm{\eqref{orderprop} \& \eqref{eq:shiftid1}}\\
&=\sigma^{\tau^i\inp{-i+1}}[z_{(1,i),0},z_{(1,i+1),-1}]_q  &\eqref{s1:itm}\\
&=\sigma^{\tau^i\inp{-i+1}}z_{i,-1}&\textrm{(Prop. \ref{gensk1})}\\
&=\sigma^{1+h(i)-\inp i}z_{i,-1}
&\eqref{eq:shiftid3}\\
&= z_{i,h(i)-\inp i}
\end{align*}

In order to complete the proof, it suffices to check that $\psi$ preserves relation \eqref{r2:itm} when $k\ne 1$.
If $i \not\in \{m,m-1\}$ and $k>1$ then
\begin{align*}
\psi([E_{i+1,k},E_{i,h(i)}]_{q^{p(k+1)}}) &=  [z_{i+1,k-\inp{i+1}},z_{i,h(i)-\inp{i}}]_{q^{p(k+1)}} & \textrm{$(p(k) = (-1)^k)$} \\
&= \sigma^{h(i)-\inp{i}}[z_{i+1,k+1},z_{i,0}]_{q^{p(k+1)}} & \eqref{eq:shiftid1}\\
&=-q^{p(k+1)}\sigma^{h(i)-\inp{i}}[z_{i,0},z_{i+1,k+1}]_{q^{-p(k+1)}}  & \textrm{\eqref{as:itm} \& \eqref{eq:z2}}\\
&=0
\end{align*}

Before proceeding to the $i=m$ and $i=m-1$ cases, we show that $[z_{1,k},z_{(2,n-1),0}]_{q^{(-1)^k}} = 0$. Observe that $\eqref{oj:itm}[q^{1+p(k)},q^{-1},q^{-p(k)}]$ is
$$[x,[y,z]_q]_{q^{p(k)}} + q^{1+p(k)}[z,[x,y]_1]_{q^{-1-p(k)}} - q^{p(k)}[y,[z,x]_{q^{-p(k)}}]_q=0.$$
So the commutator $[z_{1,k},z_{(2,n-1),0}]_{q^{p(k)}}$ of interest is equal to
\begin{align*}
&= [z_{1,k},[z_{(3,n-1),0},z_{2,0}]_q]_{q^{p(k)}} &  \eqref{orderprop} \\
&= -q^{1+p(k)}[z_{2,0},[z_{1,k},z_{(3,n-1),0}]_1]_{q^{-1-p(k)}} \\
&\quad\quad\phantom{=} - q^{p(k)}[z_{(3,n-1),0},[z_{2,0},z_{1,k}]_{q^{-p(k)}}]_q & \textrm{\eqref{oj:itm}$\lbrack q^{1+p(k)},q^{-1},q^{-p(k)}\rbrack$}\\
&= -q^{1+p(k)}[z_{2,0},0]_{q^{-1-p(k)}} - q^{p(k)} [z_{(3,n-1),0},0]_{q} & \textrm{ \eqref{s3:itm} \& \eqref{eq:z2} }\\
&=0
\end{align*}

Note that $\eqref{oj:itm}[q^{1-2p(k)},q^{-1+p(k)},q^{2p(k)}] $ is\\
$$
[x,[y,z]_q]_{q^{p(k+1)}} + q^{1-2p(k)} [z, [x,y]_{q^{p(k)}}]_{q^{2p(k)-1}} + q^{-p(k)}[y,[z,x]_{q^{2p(k)}}]_{q^{1-p(k)}}=0.
$$

Now  $\psi$ is applied to \eqref{r2:itm} with $i=m$ and $k>1$.  After setting $p(k) = (-1)^k$, the expression $\psi([E_{1,k},E_{m,h(m)}]_{q^{p(k+1)}})$ is equal to
\begin{align*}
&= [z_{1,k}, z_{(1,m-1),0}]_{q^{p(k+1)}} & \eqref{eq:psidef}\\
&=[z_{1,k},[z_{(2,m-1),0},z_{1,0}]_{q}]_{q^{p(k+1)}} & \eqref{orderprop}\\
&= -q^{1-2p(k)}[z_{1,0},[z_{1,k},z_{(2,m-1),0}]_{q^{p(k)}}]_{q^{2p(k)-1}} \\ 
& \quad\quad\phantom{=}- q^{-p(k)} [z_{(2,m-1),0},[z_{1,0},z_{1,k}]_{q^{2p(k)}}]_{q^{1-p(k)}}&\eqref{oj:itm}[q^{1-2p(k)},q^{-1+p(k)},q^{2p(k)}] \\
&=-q^{1-2p(k)}[z_{1,0},0]_{q^{2p(k)-1}} - q^{-p(k)} [z_{(2,m-1),0},0]_{q^{1-p(k)}} & \textrm{(Previous) \& \eqref{eq:z2}}
\\
&=0
\end{align*}

When $i=m$ and $k=0$, the expression $\psi([E_{1,0},E_{m,h(m)}]_{q^{p(0)}})$ is equal to
\begin{align*}
&= [z_{1,0},z_{(1,m-1),0}]_q & \textrm{\eqref{eq:psidef} \& \eqref{eq:psiid0}}\\
&= -q[[z_{(3,m-1),0},[z_{2,0},z_{1,0}]_q]_q,z_{1,0}]_{q^{-1}}&\textrm{\eqref{orderprop} \& \eqref{as:itm}} \\
&=-q[z_{(3,m-1),0},[[z_{2,0},z_{1,0}]_q,z_{1,0}]_{q^{-1}}]_{q}&\eqref{eq:comm4usedlaterdonotdelete}\\
&=-q[z_{(3,m-1),0},[[z_{2,0},z_{1,0}]_q,z_{1,0}]_q]_q & \eqref{eq:z1} \\
&=0
\end{align*}
The proofs of the remaining cases, $i=m$ and $k<0$ case, as well as the $i=m-1$ cases, are nearly identical and are omitted.
\end{proof}

\begin{lemma}{(Step 3)}\label{lemma:invisos}
  The homomorphisms $\psi$ and $\varphi$ are mutually inverse isomorphisms
$$\psi\varphi = 1_{\tDHa(A_{m-1})}\conj{ and } \varphi\psi = 1_{\Alg(D^2,\A)}.$$
\end{lemma}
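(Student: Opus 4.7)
The plan is to check the two equalities on generators, after observing that both $\psi$ and $\varphi$ commute with the suspension automorphism $\sigma$ (this is built into the formulas \eqref{eq:phidef} and \eqref{eq:psidef}, since a suspension on one side corresponds to shifting the second index on the other). Thus it suffices to verify $\psi\varphi(z_{i,0}) = z_{i,0}$ for $1 \leq i < m$, and $\varphi\psi(E_{i,0}) = E_{i,0}$ for every $i \in \ZZ/m$.

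First I would handle $\psi\varphi = 1_{\tDHa(A_{m-1})}$. For $1 \leq i < m$, a direct unwinding gives
\[
\psi\varphi(z_{i,0}) = \psi(E_{i,\inp{i}}) = \sigma^{\inp{i}}\psi(E_{i,0}) = \sigma^{\inp{i}} z_{i,-\inp{i}} = z_{i,0},
\]
and since the $z_{i,0}$ together with $\sigma$ generate $\tDHa(A_{m-1})$ by Proposition \ref{dhaanprop}, this suffices.

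Next I would turn to $\varphi\psi = 1_{\Alg(D^2,\A)}$. For $1\leq i< m$ the computation is the mirror image:
\[
\varphi\psi(E_{i,0}) = \varphi(z_{i,-\inp{i}}) = \sigma^{-\inp{i}} E_{i,\inp{i}} = E_{i,0}.
\]
The only non-trivial check is the case $i=m$, where the definition \eqref{eq:psidef} is different. Here I compute
\[
\varphi\psi(E_{m,0}) = \varphi\bigl([z_{m-1,-h(m)},\ldots,z_{1,-h(m)}]_q\bigr) = [E_{m-1,\inp{m-1}-h(m)},\ldots, E_{1,\inp{1}-h(m)}]_q,
\]
using that $\varphi$ is a homomorphism (Lemma \ref{lemma:phihom}) together with $\varphi(z_{j,-h(m)}) = \sigma^{-h(m)} E_{j,\inp{j}} = E_{j,\inp{j}-h(m)}$.

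The remaining step, and the only one that uses the presentation non-trivially, is to identify this right-nested bracket with $E_{m,0}$. For this I invoke the cyclic convolution relation \eqref{eq:cyc1} at $i=m$: since $\tau^m$ acts trivially on $\ZZ/m$ (hence on $\h$), we have $\tau^m\inp{k} = \inp{k}$, and indices are read modulo $m$, so
\[
E_{m,h(m)} = [E_{m-1,\inp{m-1}}, E_{m-2,\inp{m-2}}, \ldots, E_{1,\inp{1}}]_q.
\]
Applying $\sigma^{-h(m)}$ to both sides yields exactly the expression computed for $\varphi\psi(E_{m,0})$, which completes the argument. There is no real obstacle: the content of the lemma is precisely that the $i=m$ case of \eqref{eq:cyc1} is the compatibility needed to make the definitions \eqref{eq:phidef} and \eqref{eq:psidef}, which treat the arc $E_m$ asymmetrically from the others, mutually inverse.
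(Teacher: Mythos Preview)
Your proof is correct and follows essentially the same approach as the paper: both reduce to checking on generators, handle $\psi\varphi$ and the $i<m$ cases of $\varphi\psi$ directly from the definitions, and treat the single nontrivial case $i=m$ by invoking the cyclic convolution relation \eqref{eq:cyc1} at $i=m$ together with $\tau^m=1$. The paper checks $\varphi\psi(E_{m,h(m)})$ directly rather than $\varphi\psi(E_{m,0})$ followed by a $\sigma^{-h(m)}$-shift, but this is a cosmetic difference.
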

\begin{proof}
By definition $\psi\circ \varphi = 1_{\tDHa(A_{m-1})}$ and $\varphi(\psi(E_{i,n})) = E_{i,n}$ for $1 \leq i < m$ and $n\in\ZZ$. In the remaining case, when $i=m$,
\[
\varphi(\psi(E_{m,h(m)})) = \varphi([z_{m-1,0},\ldots,z_{1,0}]_q) = [E_{m-1,\inp{m-1}},\ldots,E_{1,\inp{1}}]_q=E_{m,h(m)}
\]
where the final equality follows from \eqref{r2:itm} and the identity $\tau^m=1$.
\end{proof}

Our discussion is concluded with a summary.

\begin{proof}{(of Thm. \ref{minarcthm})}
  By Lemmas \ref{lemma:phihom} and \ref{lemma:psihom}, there are maps
  $\varphi$ and $\psi$ between the minimal arc algebra $\Alg(D^2,\A)$ and the derived
  Hall algebra $\tDHa(A_{m-1})$ of the $A_{m-1}$-quiver. By Lemma
  \ref{lemma:invisos}, the map $\psi$ is an isomorphism.  On the other hand,
  there is an isomorphism
  $\phi_* : \tDHa(A_{m-1}) \xto{\sim} \tDHa (D^\pi\F(D^2,\A))$ obtained from
  Prop. \ref{dereqprop}. Composing these two maps and using the
  commutativity of Diagram \eqref{thmdiag}, $\bar{\kappa} = \phi_* \circ \psi$,
  implies that the map $\bar{\kappa}$ is an isomorphism.
\end{proof}

\subsection{Gluing minimal arc systems in the disk}\label{diskgluesec}
In this section we give a presentation for the Hall algebra of the Fukaya
category of a disk $D^2$ with an arbitrary arc system $A$.  This is
accomplished by cutting $D^2$ along the internal arcs of $A$ until one
obtains a collection of disks $(D^2_i,\A_{n_i})$ for which the arc system is
minimal. A gluing theorem shows that the algebra $\Alg(D^2,A)$ can be
assembled from $\aprod$-products of the algebras $\Alg(D^2_i,\A_{n_i})$ defined below.

\begin{defn}\label{topgluedef}
  Suppose that $(D^2_1,\A_n)$ and $(D^2_2,\A_m)$ are two disks with minimal
  arc systems $\A_n = \{E_k\}_{k\in\ZZ/n}$ and $\A_m=\{F_k\}_{k\in\ZZ/m}$. Up to homotopy, the foliations of each disk can be chosen to be tangent to boundary arcs. So if $E_i\in\A_n$ and $F_j\in\A_m$ then there is a gluing defined by the quotient
$$(D^2_1,\A_n)\sqcup_{i,j} (D^2_2,\A_m) = (D^2_1 \sqcup_{i,j} D^2_2, \A_n \sqcup_{i,j} \A_m).$$
The glued disk $D^2_1 \sqcup_{i,j} D^2_2 = (D^2_1 \sqcup D^2_2) / (E_i\sim F_j)$ is formed
by the quotient which identifies the two boundary arcs on each disk. The
disk supports a minimal arc system
$\A_n\sqcup_{i,j}\A_m = (\A_n\backslash \{E_i\}) \sqcup
(\A_m\backslash\{F_j\})$.

\vspace{.15in}
\begin{center}\label{fig:tmpgluedisk}
\begin{overpic}[scale=0.7]
{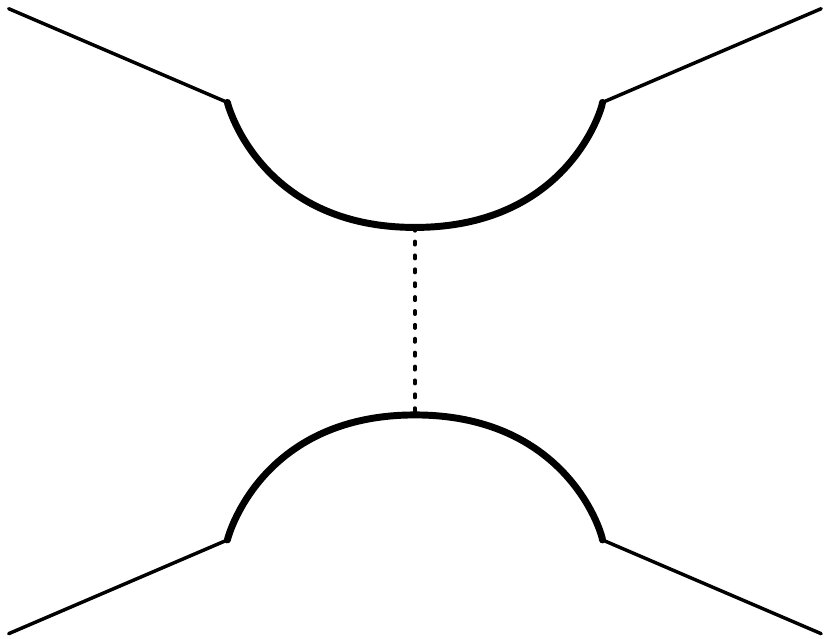}
\put(65,60){$E_i$}
\put(89,60){$F_j$}
\put(10,130){$E_{i-1}=G_{n-1}$}
\put(10,-5){$E_{i+1}=G_1$}
\put(110,130){$F_{j+1}=G_n$}
\put(110,-5){$F_{j-1}=G_{0}$}
\end{overpic}
\end{center}
\vspace{.15in}

The glued disk $D^2_1 \sqcup_{i,j} D^2_2$ supports a minimal arc system $\A_n\sqcup_{i,j}\A_m = \{G_k\}_{k\in\ZZ/(n+m-2)}$ with foliation data given by
\begin{equation}\label{gluefoleq}
g(G_k) = \left\{
\begin{array}{cl} 
e(F_i) + f(F_{j-1}) & k = 0 \\
e(E_{i+k}) & 1\leq k \leq n-2\\ 
e(E_{i-1}) + f(F_j) & k = n-1\\
f(F_{j+k-n+1}) & n \leq k \leq n+m-3.
\end{array}\right.
\end{equation}
\end{defn}  

Theorem \ref{gluethm} will show that the $\aprod$-product, introduced below,
computes the Hall algebra of the Fukaya category of a gluing.

\begin{defn}\label{alggluedef}
Suppose that $(D^2_1,\A_n)$ and $(D^2_2,\A_m)$ are graded disks and $(D^2_1,\A_n) \sqcup_{i,j} (D^2_2,\A_m)$ is a gluing. Then the $\aprod_{i,j}$-product algebra is defined by 
\begin{equation}\label{aproddef}
  \Alg(D^2_1,\A_n)\aprod_{i,j} \Alg(D^2_2,\A_m) := \frac{\Alg(D^2_1,\A_n)\ast \Alg(D^2_2,\A_m)}{\inp{(G1), (G3)}},
  \end{equation}
given by the free product of the algebras $\Alg(D^2_1,\A_n)$ and $\Alg(D^2_2,\A_m)$ subject to the relations listed below.
\begin{description}
\item[(G1)\namedlabel{g1:itm}{\lab{G1}}] Gluing: 
$$F_{j,s} = E_{i,s}\conj{ for all } s\in\ZZ,$$
\item[(G3)\namedlabel{g3:itm}{\lab{G3}}] Far-commutativity: 
$$[E_{k,s}, F_{\ell,t}]_1 = 0 \conj{ for all } (k,\ell)\not\in Int \normaltext{ and } s, t\in\ZZ,$$
where $Int =\{ (i+1,j-1), (i,j-1), (i,j+1), (i-1,j), (i-1,j+1), (i+1,j)\}$.
\end{description}

Relation \eqref{g1:itm} says that the arcs that are glued are
identified within the product $\Alg(D^2,\A_n)\aprod_{i,j} \Alg(D^2,\A_m)$,
and relation \eqref{g3:itm} says that all pairs which are not neighboring
must commute. Relations between adjacent edges, corresponding to \eqref{r2:itm}, are not imposed.
\end{defn}  

The theorem below shows that the topological $\sqcup_{i,j}$-gluing of disks
is compatible with the $\aprod_{i,j}$-product introduced above. The utility
of Eqn. \eqref{glueeq} below is that the algebra on the right-hand side is
naturally associated to the topological gluing of disks and the algebra on
the left-hand side contains a representative for the internal arc along
which the gluing was performed.

\begin{thm}\label{gluethm}
The Hall algebra of the disk obtained by a gluing is isomorphic to the $\aprod$-product of Hall algebras of the disks. There are isomorphisms:
\begin{equation}\label{glueeq}
  \a : \Alg(D^2_1 \sqcup_{i,j} D^2_2, \A_n \sqcup_{i,j} \A_m) \rightleftarrows \Alg(D^2_1,\A_n)\aprod_{i,j} \Alg(D^2_2,\A_m) : \b.
  \end{equation}
\end{thm}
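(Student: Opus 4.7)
The plan is to construct mutually inverse algebra homomorphisms $\alpha$ and $\beta$ using the explicit presentations from Theorem \ref{minarcthm}. First, I would define $\alpha$ on generators by sending each $G_{k,s}$ in $\Alg(D^2_1 \sqcup_{i,j} D^2_2, \A_n \sqcup_{i,j} \A_m)$ to the corresponding $E$- or $F$-type generator in the $\aprod$-product, following the identifications in Definition \ref{topgluedef}: $G_{0,s} \mapsto F_{j-1,s}$, $G_{k,s} \mapsto E_{i+k,s}$ for $1 \leq k \leq n-2$, $G_{n-1,s} \mapsto E_{i-1,s}$, $G_{n,s} \mapsto F_{j+1,s}$, and so on. To define $\beta$, I would send each boundary arc generator $E_{k,s}$ with $k \ne i$ and each $F_{\ell,s}$ with $\ell \ne j$ to its $G$-counterpart, and send the identified gluing arc $E_{i,s} = F_{j,s}$ to the element of $\Alg(D^2_1 \sqcup_{i,j} D^2_2, \A_n \sqcup_{i,j} \A_m)$ representing the internal arc of the glued disk. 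Concretely, by Proposition \ref{convprop} applied inside $D^2_1$, this internal arc is isomorphic in $D^\pi\F$ to an iterated mapping cone of the remaining boundary arcs $\A_n \setminus \{E_i\}$; in the Hall algebra this translates to an explicit iterated $q$-bracket of the corresponding $G_k$'s, and this bracket is taken to be $\beta(E_{i,s})$.

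Next, I would verify that $\alpha$ respects each of the relations \eqref{r1:itm}, \eqref{r2:itm}, and \eqref{r3:itm} defining the glued minimal arc algebra. The self-extension \eqref{r1:itm} is preserved because $\alpha$ identifies arcs with arcs. The far-commutativity \eqref{r3:itm} splits into cases: pairs of $G$'s on the same side fall under \eqref{r3:itm} in the respective sub-disk algebra, pairs on opposite sides that are not in $Int$ are handled by \eqref{g3:itm}, and pairs in $Int$ are adjacent in the glued disk and so do not appear in \eqref{r3:itm}. The adjacent half of \eqref{r2:itm} reduces similarly to \eqref{r2:itm} in each sub-disk together with direct checks for the two gluing-adjacent pairs $\{G_0, G_1\}$ and $\{G_{n-1}, G_n\}$. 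Symmetrically, I would verify that $\beta$ respects the $\aprod$-product relations together with the sub-disk relations: \eqref{g1:itm} holds by construction, \eqref{g3:itm} follows from \eqref{r3:itm} on the glued disk, and the sub-disk relations involving $E_i$ or $F_j$ translate into statements about the iterated bracket $\beta(E_i)$ that follow from Lemma \ref{cyclicprop2}. Consistency of the two possible expressions for $\beta(E_i)$ (via $D^2_1$-side boundary arcs or via $D^2_2$-side boundary arcs) likewise follows from the cyclic convolution identities.

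Checking that $\alpha$ and $\beta$ are mutually inverse then reduces to comparison on generators: on non-gluing generators both composites are tautologically the identity, while on $E_{i,s}$ one uses that $\alpha(\beta(E_{i,s}))$ is the iterated $q$-bracket of $E$'s on the $D^2_1$-side, which equals $E_{i,s}$ (up to the prescribed degree shift) in the $\aprod$-product by \eqref{eq:cyc1} of $(D^2_1, \A_n)$. The main obstacle is verifying the cyclic convolution identity \eqref{eq:cyc1} for the glued minimal arc algebra under $\alpha$: one must show that the long $q$-bracket of $G$'s defining this identity in the glued disk, once expanded via the labellings above, can be reassembled by rebracketing (Proposition \ref{orderprop}), \eqref{g3:itm}, and the cyclic convolution variants in each sub-disk (Lemma \ref{cyclicprop2}) into an identity that follows from \eqref{r2:itm} on each side combined with \eqref{g1:itm}. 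The foliation bookkeeping is particularly delicate because the glued foliation data $g$ is prescribed by Equation \eqref{gluefoleq}, so the grading shifts $\tau^k\inp{-}$ on either side must align precisely under the identifications, which requires systematic use of identities like \eqref{eq:shiftid1}--\eqref{eq:shiftid3} extended across the two sub-disks.
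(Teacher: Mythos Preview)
Your proposal is correct and follows essentially the same route as the paper: define $\alpha$ and $\beta$ explicitly on generators, verify each preserves the defining relations, and check they are mutually inverse on generators, with the cyclic convolution relation under $\alpha$ and the consistency $\beta(E_i)=\beta(F_j)$ (the paper's Lemma \ref{lemma:gluingapp}) as the substantive computations. The one efficiency the paper gains over your outline is in Step 2: rather than directly verifying that $\beta$ preserves all sub-disk relations (including those involving the glued arc), the paper first builds a map $\xi$ from the free product $\Alg(D^2_1,\A_n)\ast\Alg(D^2_2,\A_m)$ into the glued Hall algebra using the categorical embeddings of Corollary \ref{fdiskembcor} and the sheaf isomorphism of Corollary \ref{sheafpropcor}, so that the sub-disk relations are automatically respected and only \eqref{g1:itm} and \eqref{g3:itm} need to be checked to descend to $\beta$.
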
  

\newcommand{\pE}{\Alg(E)}
\newcommand{\pF}{\Alg(F)}
\newcommand{\pG}{\Alg(G)}

\vspace{.15in}
The remainder of this section contains the proof of the theorem.
Here is an outline of things to come. First, in Step 1, the statement of
Theorem \ref{gluethm} is refined, the assignments $\a$ and $\b$ are
introduced and shown to be mutually inverse on generators.  In Step 2, the
map $\b$ is shown to be a homomorphism. In Step 3, the map $\a$ is shown
to be a homomorphism.

Before getting started, a few notational preparations will simplify the discussion later. The auxiliary function
$$\inp{k}_e = \sum_{\ell=1}^{k-1} (1-e(\ell))\conj{ and } \inp{1}_e = 0,$$
from Notation \ref{disknotation} admits a mild generalization below.
\begin{notation}\label{peternotation}
\begin{equation*}
\fol j k e := \sum_{\ell = j}^{k-1} (1-e(\ell)).
\end{equation*}
By definition, $\inp{k}_e = \fol 1 k e$. In particular, $\fol 1 n e = 1 + e(n)$.
This function also satisfies the following identities:
\begin{equation*}\label{eq:splitpaths}
\fol j k e = \fol j {k'} e + \fol {k'} k e  \conj{if} j \leq k' \leq k,
\end{equation*}
$$\fol j k e + \fol k j e = 2.$$
\end{notation}
See Prop. \ref{folprop} below for properties of this function which hold in the context of the gluing argument.

\vspace{.15in}
{\it Step 1: Setup} Abbreviate $D^2_3 = D^2_1 \sqcup_{i,j} D^2_2$ and let $A$ be the arc system of the $D^2_3$ given by the union $\A_n\cup \A_m$. Then $A$ contains the minimal arc system plus one additional internal arc corresponding to the gluing.  Cor. \ref{fdiskembcor} implies that the inclusions:
$\iota_1 : \Alg(D^2_1,\A_n) \hookrightarrow \Alg(D^2_3,\A_n \sqcup_{i,j} \A_m)$ and
$\iota_2 : \Alg(D^2_2,\A_m) \hookrightarrow \Alg(D^2_3, \A_n \sqcup_{i,j} \A_m)$ are
monomorphisms. Moreover, the subset $\A_n \sqcup_{i,j}\A_m \subset A$ determines an 
isomorphism $\eta = \a_{\A_n \sqcup_{i,j}\A_m, A}$ by Corollary \ref{sheafpropcor}.
These observations lead to the diagram below.

  \begin{equation*}
  \begin{tikzpicture}[scale=10,node distance=2.5cm]
  \node (A1) {$\Alg(D^2_1,\A_n)\ast \Alg(D^2_2,\A_m)$};
  \node (A3) [right=2.5cm of A1] {$\Alg(D^2_3,\A_n \sqcup_{i,j} \A_m)$};
\node (B1) [below=1.5cm of A1] {$\Alg(D^2_1,\A_n) \aprod_{i,j} \Alg(D^2_2,\A_m)$};
   \draw[->] (A1) to node {$\xi$} (A3);
   \draw[->>] (A1) to node {$\pi$} (B1);
   \draw[bend left=10,->] (B1) to node {$\b$} (A3);
   \draw[bend left=10,->] (A3) to node {$\a$} (B1);
  \end{tikzpicture}
  \end{equation*}  
  The map $\pi$ is the quotient map, and the map $\xi$ is defined as
  $\eta \circ (\iota_1\ast \iota_2)$ using the maps which are
  described in the previous paragraph.  The maps $\a$ and $\b$ are
  determined on generators by the requirement that the diagram above commute
  and that they must be mutually inverse.

Before introducing $\a$ and $\b$, we'll simplify the notation.  The names of algebras will be abbreviated as follows
$$\pE = \Alg(D^2_1,\A_n),\quad \pF = \Alg(D^2_2,\A_m)\quad\normaltext{ and }\quad \pG = \Alg(D^2_1 \sqcup_{i,j} D^2_2, \A_n \sqcup_{i,j} \A_m).$$
The cyclic sets $\A_n$, $\A_m$ and $\A_n\sqcup_{i,j}\A_m$ will be indexed in
the following way:
\begin{gather}
  \A_n  \cong \{E_1,E_2,\ldots,E_n\},\quad \A_m \cong \{F_{n-1},F_{n}, \ldots,F_{n+m-2}\}\label{cycideq}\\
 \conj{ and } \A_n\sqcup_{i,j}\A_m \cong \{ G_1,G_2,\ldots, G_{n+m-2}\}.\notag
\end{gather}
Note the choice for $\A_m$. The advantage of this convention is that the $i$th boundary arc $G_i$ in $\pG$ is identified with either $E_i$ or $F_i$ in $\pE$ or $\pF$ respectively, see Eqns. \eqref{eq:gluingalphadef} and \eqref{eq:gluingbetadef} below.
As a consequence, the $n$th edge in $\pE$ is attached to the $n-1$st edge in $\pF$.

The algebra $\pE \aprod \pF$ is now the quotient of the free product $\pE \ast \pF$ by the gluing relations \eqref{g1:itm} and \eqref{g3:itm}:
\begin{gather}
E_{n,s} = F_{n-1,s}  \conj{ for all }  s\in\ZZ, \label{eq:gluingproofg1}\\
[E_{k,s}, F_{\ell,t}]_1 = 0  \conj{ when }\,\, (k,\ell) \not\in Int \normaltext{ and } s,t\in\ZZ \label{eq:gluingproofg2}
\end{gather}
where $Int = \{(1,n-1),(1,m+n-2),(n-1,n-1),(n-1,n),(n,n), (n,m+n-2)\}$. 
The foliation data for the glued disk is written as follows:
\begin{equation}\label{eq:gluingfoliation}
g(i) = 
\left\{ 
\begin{array}{cl}
e(i)&\textrm{if } 1 \leq i \leq n-2\\
e(n-1) + f(n-1) & \textrm{if } i=n-1\\
f(i)&\textrm{if } n \leq i \leq n+m-3\\
e(n) + f(n+m-2) & \textrm{if } i = n+m-2.
\end{array}
\right.
\end{equation}

\begin{prop}\label{folprop}
This data satisfies the relations below.
\begin{align}
\fol i k g &= \fol i k e & \conj{ for } & 1\leq i < k \leq n-1 \notag\\
\fol i k g &= \fol i k f & \conj{ for } & n\leq i < k \leq n+m-2 \notag\\
\fol i n g &= \fol i n e - f(n-1) & \conj{ for } & 1 \leq i < n \label{eq:gluingfolid2}
\end{align}
\end{prop}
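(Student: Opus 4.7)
The plan is to verify each of the three identities directly from the definition of $\fol i k g = \sum_{\ell=i}^{k-1}(1-g(\ell))$ together with the piecewise formula for the glued foliation data $g$ given in \eqref{eq:gluingfoliation}. Each identity follows by inspecting the range of $\ell$ in the summation and substituting the appropriate branch of $g$; no commutator or Hall-algebra identities are needed.

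For the first identity, I would note that for $1 \leq i < k \leq n-1$ the index $\ell$ runs over $i \leq \ell \leq k-1 \leq n-2$, which lies entirely within the first branch of \eqref{eq:gluingfoliation} where $g(\ell) = e(\ell)$. Term-by-term substitution then gives $\fol i k g = \sum_{\ell=i}^{k-1}(1-e(\ell)) = \fol i k e$. The second identity is analogous: for $n \leq i < k \leq n+m-2$ the range $i \leq \ell \leq k-1 \leq n+m-3$ falls into the third branch where $g(\ell) = f(\ell)$, giving $\fol i k g = \fol i k f$ immediately.

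For the third identity I would split the sum at $\ell = n-1$. Assuming $1 \leq i < n$, write
\begin{equation*}
\fol i n g = \sum_{\ell=i}^{n-2}(1-g(\ell)) + (1-g(n-1)).
\end{equation*}
On the first summand $\ell \leq n-2$ and hence $g(\ell)=e(\ell)$, while the second summand uses $g(n-1) = e(n-1)+f(n-1)$ from \eqref{eq:gluingfoliation}. Combining these and re-assembling the sum yields
\begin{equation*}
\fol i n g = \sum_{\ell=i}^{n-2}(1-e(\ell)) + (1-e(n-1)) - f(n-1) = \fol i n e - f(n-1),
\end{equation*}
as required. A degenerate case $i = n-1$ should be checked separately but reduces to the single-term identity $1-g(n-1) = 1-e(n-1)-f(n-1)$, which is the statement.

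Since the proof is a straightforward unwinding of definitions, there is no real obstacle to overcome; the only care needed is bookkeeping of the index ranges so that the correct branch of the piecewise definition of $g$ is applied, and in particular that the special index $\ell = n-1$ (where the gluing adjusts the foliation by adding $f(n-1)$) is isolated cleanly. The statement is essentially a lemma recording which sums of $1-g$ equal the corresponding sums of $1-e$ or $1-f$, and it will be used later to translate identities involving the glued foliation into identities on the two constituent disks.
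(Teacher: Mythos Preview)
Your proof is correct and essentially identical to the paper's. The paper dispatches the first two identities in one sentence and for the third uses the splitting rule $\fol i n g = \fol i{n-1} g + \fol{n-1} n g$ from Notation~\ref{peternotation}, then applies the first identity and evaluates $\fol{n-1} n g = 1 - g(n-1)$; this is exactly your decomposition at $\ell = n-1$, just phrased via the $\fol{}{}{}$ notation rather than the raw sums.
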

\begin{proof}
The first and second equations follow from the definition of $g$ above.
The last equation follows from
\begin{align*}
\fol i n g &= \fol i {n-1} g + \fol {n-1} n g\\
&= \fol i {n-1} e + \fol {n-1} n g\\
&= \fol i n e  - f(n-1)
\end{align*}
\end{proof}

Having fixed the notation for the algebras and their foliation data, we now introduce the maps $\a$ and $\b$. The map $\alpha$ is defined by
\begin{equation}\label{eq:gluingalphadef}
\alpha(G_{i,s}) := 
\left\{ 
\begin{array}{cl}
E_{i,s}&\textrm{if } 1 \leq i \leq n-1\\
F_{i,s}&\textrm{if } n \leq i \leq m+n-2
\end{array}
\right.
\end{equation}
The map $\b$ is given by:
\begin{equation}\label{eq:gluingbetadef}\begin{array}{lll}
\b(E_{k,s}) &:=  G_{k,s} & \textrm{if } 1 \leq k \leq n-1, \\
\b(F_{k,s}) &:=  G_{k,s} & \textrm{if } n \leq k \leq n+m-2.\\
\end{array}\end{equation}
The cyclic relations in $\pE$ and $\pF$ determine the value of $\b$ on the glued generators $E_{n,1}$ and $F_{n-1,1}$ to be the following:
\begin{align}
\b(E_{n,1}) &= [G_{n-1,\fol n {n-1} e},\ldots, G_{1,\fol n 1 e}]_q\notag\\
&= \sigma^{\fol n 1 e}[G_{n-1,\fol 1 {n-1} e},\ldots, G_{1,\fol 1 1 e}]_q\notag\\
&= \sigma^{1-e(n)}[G_{n-1,\fol 1 {n-1} g},\ldots,G_{1,\fol 1 1 g}]_q\label{eq:gluingbe}\\
\b(F_{n-1,1}) &= [G_{n+m-2,\fol {n-1} {n+m-2} f},\ldots,G_{n,\fol {n-1} n f}]_q\notag\\
&= \sigma^{\fol {n-1} n f}[G_{n+m-2,\fol n {n+m-2} f},\ldots,G_{n,\fol n n f}]_q\notag\\
&= \sigma^{1-f(n-1)}[G_{n+m-2,\fol n {n+m-2} g},\ldots,G_{n,\fol n n g}]_q\label{eq:gluingbf}
\end{align}
Materials from Notation \ref{peternotation} and Proposition \ref{folprop} were used above.

With these definitions the proposition below is straightforward.

\begin{prop}\label{mutualinvprop}
  The maps $\a$ and $\b$ are mutually inverse on generators.
\end{prop}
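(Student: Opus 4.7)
The verification naturally splits into two parts: $\beta \circ \alpha = \mathrm{id}$ on generators $G_{i,s}$ of $\pG$, and $\alpha \circ \beta = \mathrm{id}$ on generators $E_{k,s}$ and $F_{k,s}$ of $\pE \aprod \pF$. The first is immediate from Eqns. \eqref{eq:gluingalphadef} and \eqref{eq:gluingbetadef}: for $1 \leq i \leq n-1$ we have $\beta(\alpha(G_{i,s})) = \beta(E_{i,s}) = G_{i,s}$, and the argument is symmetric for $n \leq i \leq n+m-2$. Moreover, since $\alpha$ and $\beta$ were declared to commute with $\sigma$, it suffices to treat the case $s = 1$ throughout.

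For $\alpha \circ \beta = \mathrm{id}$, the bulk generators $E_{k,s}$ with $1 \leq k \leq n-1$ and $F_{k,s}$ with $n \leq k \leq n+m-2$ satisfy $\alpha(\beta(E_{k,s})) = \alpha(G_{k,s}) = E_{k,s}$ (and similarly for $F_{k,s}$) by direct inspection of the definitions. The content of the proposition is therefore concentrated in the two glued generators $E_{n,s}$ and $F_{n-1,s}$, whose images under $\beta$ are the expressions in Eqns. \eqref{eq:gluingbe} and \eqref{eq:gluingbf}; these were derived by invoking the cyclic convolution relation \eqref{eq:cyc1} in $\pE$ and $\pF$ respectively.

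To handle $E_{n,1}$, I apply $\alpha$ term-by-term to the right-hand side of \eqref{eq:gluingbe}. Each $G_{k,*}$ appearing has index $1 \leq k \leq n-1$, so $\alpha$ replaces it with $E_{k,*}$; the grading indices $\fol 1 k g$ are converted to $\fol 1 k e$ by the first identity of Prop. \ref{folprop}. What remains is $\sigma^{1-e(n)}$ applied to a bracket expression that is precisely the right-hand side of the cyclic relation for $E_{n,e(n)}$ in $\pE$ (with $i = n$, where $\tau^n$ acts trivially on the cyclic indexing of $\A_n$). Collapsing via this relation yields $\sigma^{1-e(n)} E_{n,e(n)} = E_{n,1}$. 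The calculation for $F_{n-1,1}$ is entirely symmetric, using the second identity in Prop. \ref{folprop}.

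The only real obstacle is index bookkeeping: one must check that every index $\fol 1 k g$ (resp. $\fol n k g$) appearing in $\beta(E_{n,1})$ (resp. $\beta(F_{n-1,1})$) falls into the range where Prop. \ref{folprop} translates it into the $e$-foliation (resp. $f$-foliation) data, and that the overall shift $\sigma^{1-e(n)}$ (resp. $\sigma^{1-f(n-1)}$) lines up precisely with the ``degree offset'' between $E_{n,1}$ and $E_{n,e(n)}$. Once these matchings are verified, no further input beyond the cyclic convolution relation is needed.
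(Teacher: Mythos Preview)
Your proof is correct and follows the same approach as the paper: check $\beta\alpha$ directly on generators, handle the bulk generators of $\alpha\beta$ by inspection, and reduce the glued generators $E_{n,1}$ and $F_{n-1,1}$ to the cyclic convolution relation \eqref{r2:itm}. The only difference is cosmetic: you apply $\alpha$ to the third line of \eqref{eq:gluingbe} (with $g$-indices) and then invoke Prop.~\ref{folprop} to convert $\fol 1 k g$ back to $\fol 1 k e$, whereas the paper applies $\alpha$ directly to the first line of \eqref{eq:gluingbe}, which already carries $e$-indices and so collapses immediately via \eqref{r2:itm} without any conversion step.
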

\begin{proof}
  Observe that $\b\a = 1_{\pG}$ by virtue of the definition of each map on
  generators.  For the same reason, $\a\b(E_{k,s}) = E_{k,s}$ when $k\ne i$ and
   $\a\b(F_{k,s}) = F_{k,s}$ when $k \ne j$.  The exceptional cases
  follow from the \eqref{r2:itm} relation. For instance
\begin{align*}
  \a\b(E_{n,1}) &= \a [G_{n-1,\fol n {n-1} e},\ldots, G_{1,\fol n 1 e}]_q\\
                   & = [E_{n-1,\fol n {n-1} e},\ldots, E_{1,\fol n 1 e}]_q\\
                   &=E_{n,1}.
\end{align*}                     
The computation $\a\b(F_{n-1,1})$ is similar.
\end{proof}

\vspace{.15in}
To complete the proof it is necessary to show that the assignments $\a$ and $\b$ are
homomorphisms. Step 2 shows that $\b$ is a homomorphism and Step
3 shows that $\a$ is a homomorphism.

\vspace{.15in}
{\it Step 2.} In order to check that the map $\b$ is a homomorphism, it
suffices to show that the relations \eqref{eq:gluingproofg1} and \eqref{eq:gluingproofg2} are
contained in the kernel of the map $\xi$ from Step 1 above. The relations \eqref{eq:gluingproofg2} are in the
kernel by the far-commutativity relations \eqref{r3:itm} in the algebra $\pG$. The lemma below addresses the relations \eqref{eq:gluingproofg1}.

\begin{lemma}\label{lemma:gluingapp}
The following identity holds.
$$\b(E_{n,1}) = \b(F_{n-1,1})$$
\end{lemma}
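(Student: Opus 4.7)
The plan is to embed both $\pE$ and $\pF$ into an enlarged algebra $\Alg(D^2_3, A)$, where $D^2_3 := D^2_1 \sqcup_{i,j} D^2_2$ and $A := (\A_n \sqcup_{i,j} \A_m) \cup \{\gamma\}$ is the full arc system obtained by adjoining the internal arc $\gamma$ produced by the gluing. The sheaf property (Corollary \ref{sheafpropcor}) gives an isomorphism $\Alg(D^2_3, A) \cong \pG$ that is the identity on the common generators $G_{k,s}$. Within $\Alg(D^2_3, A)$ I plan to show that both $\b(E_{n,1})$ and $\b(F_{n-1,1})$ represent the same element, namely $\gamma[1]$, which forces their equality in $\pG$.

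First I would verify that the inclusions $\iota_1 \co (D^2_1,\A_n) \hookrightarrow (D^2_3, A)$ and $\iota_2 \co (D^2_2,\A_m) \hookrightarrow (D^2_3, A)$ satisfy the hypothesis of Proposition \ref{ffprop}. The only potential identifications of marked intervals occur at the two endpoints of $\gamma$, where a single marked interval of $D^2_1$ fuses with a single marked interval of $D^2_2$ to form one marked interval of $D^2_3$; in particular no two marked intervals of $D^2_1$ get identified, so the map $\pi_0(M_1) \to \pi_0(M_3)$ is injective, and similarly for $D^2_2$. Corollary \ref{fdiskembcor} then produces monomorphisms $\iota_{1*}\co \pE \hookrightarrow \Alg(D^2_3, A)$ and $\iota_{2*}\co \pF \hookrightarrow \Alg(D^2_3, A)$ sending $E_k \mapsto G_k$ for $1\leq k \leq n-1$, $E_n \mapsto \gamma$, $F_{n-1}\mapsto \gamma$, and $F_k \mapsto G_k$ for $n \leq k \leq n+m-2$.

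The second step is to compare $\b(E_{n,1})$ with $\iota_{1*}(E_{n,1})$. On the one hand, $\iota_{1*}(E_{n,1}) = \sigma\,\iota_{1*}(E_n) = \gamma[1]$ directly. On the other hand, applying the cyclic convolution relation \eqref{eq:cyc1} to $E_{n,1}$ in $\pE$ yields $\sigma^{1-e(n)}[E_{n-1,\tau^n\inp{n-1}_e}, \ldots, E_{1,\tau^n\inp{1}_e}]_q$, whose image under $\iota_{1*}$ replaces each $E_k$ by $G_k$. Using the cyclic identity $\tau^n\inp{k}_e = \inp{k}_e = \fol{1}{k}{e}$ together with Proposition \ref{folprop} (which gives $\fol{1}{k}{e} = \fol{1}{k}{g}$ for $1\leq k \leq n-1$), this image coincides term by term with the right-hand side of \eqref{eq:gluingbe}, i.e.\ with $\b(E_{n,1})$ transported via the sheaf isomorphism. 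Hence $\b(E_{n,1})$ corresponds to $\gamma[1]$ in $\Alg(D^2_3, A)$, and a symmetric computation on the $D^2_2$ side, using the analogous identity \eqref{eq:gluingfolid2} for the $f$-foliation, shows that $\b(F_{n-1,1})$ also corresponds to $\gamma[1]$. Since the sheaf map is an isomorphism, the two elements agree in $\pG$.

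The main obstacle is purely bookkeeping: the shifts appearing in $\b(E_{n,1})$ are expressed in terms of the glued foliation $g$, whereas those coming out of the cyclic relation inside $\pE$ are expressed in terms of $e$, and one must check these agree slot-by-slot. This is exactly the content of Proposition \ref{folprop}, and once those index identities are in place the argument becomes formal. A minor companion step is to check the analogous identities for the $f$-foliation on the $D^2_2$-side; these follow by symmetry from the same proposition.
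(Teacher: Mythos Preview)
Your approach is correct and arguably more conceptual than the paper's, but you are doing more work than necessary. The paper's Step~1 already defines $\xi = \eta \circ (\iota_1 \ast \iota_2)$, where $\eta$ is the sheaf isomorphism $\Alg(D^2_3,A)\xto{\sim}\pG$ and $\iota_1,\iota_2$ are the disk embeddings you describe. Since $\b$ is by construction the factorization of $\xi$ through $\pi$, one has immediately $\b(E_{n,1}) = \xi(E_{n,1}) = \eta(\iota_{1*}(E_n)[1]) = \eta(\gamma[1])$ and likewise $\b(F_{n-1,1}) = \eta(\gamma[1])$; this is your argument in one line. Your second paragraph, which rewrites $E_{n,1}$ via the cyclic relation in $\pE$ and checks slot-by-slot agreement with \eqref{eq:gluingbe}, is therefore redundant: it is just re-verifying that $\b$ agrees with $\xi$ on $E_n$, which is how $\b$ was defined in the first place.

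By contrast, the paper's proof stays entirely inside $\pG$ (no appeal to the enlarged arc system or to $\gamma$) and verifies the equality of the two explicit iterated brackets \eqref{eq:gluingbe} and \eqref{eq:gluingbf} directly, using one instance of the cyclic relation (Lemma~\ref{cyclicprop2}) for the $(n+m-2)$-gon together with the foliation identity $\fol{1}{n}{g} = 1 + e(n) - f(n-1)$. Your route trades this short computation for the structural observation that both expressions name the same geometric object $\gamma[1]$; the paper's route has the advantage that it confirms the explicit formulas are internally consistent without invoking the larger algebra.
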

\begin{proof}
The cyclic relation for the algebra $\pG$ reads
\begin{align*}
[G_{n-1,\fol 1 {n-1} g},\ldots,G_{1,\fol 1 1 g}]_q 
&= \sigma^{-1}[G_{n+m-2,\fol 1 {m+n-2} g},\ldots,G_{n,\fol 1 n g}]_q\\
&= \sigma^{-1+\fol 1 n g}[G_{n+m-2,\fol n {m+n-2} g},\ldots,G_{n,\fol n n g}]_q\\
&= \sigma^{e(n)-f(n-1)}[G_{n+m-2,\fol n {m+n-2} g},\ldots,G_{n,\fol n n g}]_q
\end{align*}
where in the last step we used the identity $\fol 1 n g = 1+e(n)-f(n-1)$, see Eqn. \eqref{eq:gluingfolid2} and Not. \ref{peternotation} above. This calculation, combined with the equations \eqref{eq:gluingbe} and \eqref{eq:gluingbf} above, completes the proof.
\end{proof}

\vspace{.15in}
{\it Step 3.} 
In order to prove that the assignment $\a$, introduced by Eqn. \eqref{eq:gluingalphadef}, is an algebra homomorphism, it is necessary to show that $\a$ respects the
relations defining the algebra $\pG$. The self-extension relations
\eqref{r1:itm} follow immediately from the \eqref{r1:itm} relations imposed in
either $\pE$ or $\pF$ respectively.  The far-commutativity relations
\eqref{r3:itm} hold by virtue of far-commutativity relations in either 
algebra $\pE$ or $\pF$ and the gluing relations \eqref{eq:gluingproofg2}.

The substance of the proof is to show that $\a$ respects the \eqref{r2:itm} relations. In the notation of this section, these appear as follows:
\begin{align}
[\E_{i+1,k},\E_{i,h(i)}]_{q^{p(k+1)}} &= 0  &  \normaltext{ for } & k > 1 \label{eq:gluingr2a}\\
[\E_{i+1,k},\E_{i,h(i)}]_{q^{p(k)}} &= 0   & \normaltext{ for } & k<1\notag\\
[\E_{i+m-1,\fol i {i+m-1} e}, \cdots, \E_{i+k, \fol i {i+k} e},\cdots, \E_{i+1, \fol i {i+1} e}]_q &=\E_{i,1} & & \label{eq:gluingr2b}
\end{align}
Since there are two types of these relations the proof will contain two
independent arguments.

The relations \eqref{eq:gluingr2a} are examined first.
Now if $1 \leq i \leq n-2$ or $n \leq i \leq m+n-3$ then $\alpha$ respects these relations 
because they are imposed in the individual disk algebras $\pE$ and $\pF$ and 
the foliation data \eqref{eq:gluingfoliation} agrees.
By symmetry, switching $\pE$ and $\pF$, the cases $i=n-1$ and $i=m+n-2$ are
equivalent, so it suffices to check only the first. 
The two relations in \eqref{eq:gluingr2a} will be addressed simultaneously using the function $a = a(\ell)$ below.
\begin{equation}\label{eq:gluingadef}
a := \left\{ 
\begin{array}{cl}
q^{-p(\ell)} & \textrm{if } \ell > 1\\
q^{p(\ell)} & \textrm{if } \ell < 1.
\end{array}\right.
\end{equation}

Now, in order  to show that $\a$ satisfies Eqns. \eqref{eq:gluingr2a} above,  we compute $\alpha\left([G_{n,\ell},G_{n-1,g(n-1)}]_{a}\right)$ to be
\begin{align*}
&= [F_{n,\ell},E_{n-1,e(n-1)+f(n-1)}]_{a} 
& \eqref{eq:gluingfoliation}\\
&= [F_{n,\ell},\sigma^s E_{n-1,0}]_{a} 
& (\mathrm{defines}\,s)\\
&= [F_{n,\ell},\sigma^{-1+s} 
    [E_{n-2, \fol {n-1} {n-2} e},\cdots ,E_{1,\fol {n-1} 1 e},E_{n,\fol {n-1} n e}]_q]_{a}
    & \eqref{eq:gluingr2b}\\
&= [F_{n,\ell},\sigma^{-1+s} 
    [[E_{n-2, \fol {n-1} {n-2} e},\cdots ,E_{1, \fol {n-1} 1 e}]_q,E_{n,\fol {n-1} n e}]_q]_{a}
    & \eqref{orderprop}\\
&= [F_{n,\ell},[y,\sigma^{-1+s}E_{n, \fol {n-1} n e}]_q]_{a}
&(\mathrm{defines}\,y)\\
&= [F_{n,\ell},[y,\sigma^{-1+e(n-1)+f(n-1)}E_{n,1-e(n-1)}]_q]_{a}\\
&= [F_{n,\ell},[y,E_{n,f(n-1)}]_q]_{a}\\
&= [F_{n,\ell},[y,F_{n-1,f(n-1)}]_q]_{a}
&\eqref{eq:gluingproofg1}\\
&= [x,[y,z]_q]_{a}
&(\mathrm{defines}\,x\, \&\, z)\\
&= [y,[x,z]_{a}]_q
&\eqref{eq:comm2}\,\&\, \eqref{eq:gluingproofg2}\\
&= [y,[F_{n,\ell},F_{n-1,f(n-1)}]_{a}]_q\\
&= 0
&\eqref{r1:itm}\,\&\,\eqref{eq:gluingadef}
\end{align*}
Here we used the fact that all the generators involved in the definition of
$y$ commute with $F_{n,\ell}$ when we applied \eqref{eq:comm2}. We also used
the definition of the constant $a$ when we applied \eqref{r1:itm} in
the last step. This shows that $\a$ satisfies equations \eqref{eq:gluingr2a} above.

\vspace{.10in}
Before checking the next relation, recall that the cyclic relations become
\begin{equation}\label{eq:gluingcyc}
\sigma [E_{i + \ell, \fol i {i+\ell} e },\cdots E_{i, \fol i i e }]_q = 
[E_{i+m-1, \fol i {i+m-1} e}, \cdots ,E_{i+\ell+1,\fol i {i+\ell+1} e}]_q
\end{equation}
The same argument from Lemma \ref{cyclicprop2} shows that for any $i$ the relation \eqref{eq:gluingr2b} is equivalent to the relation \eqref{eq:gluingcyc}. Therefore, in order to prove each instance of the relation \eqref{eq:gluingr2b}, it suffices to prove an equivalent version of the cyclic relation.

It is time to prove that $\a$ respects the relation \eqref{eq:gluingr2b}.
The cases $1 \leq i \leq n-1$ and $n \leq i \leq n+m-2$ are equivalent, by switching $\pE$ and $\pF$, so it suffices to check only the first. We will prove the following identity:
\begin{equation}\label{eq:gluingtoprove}
\alpha\left( \sigma [G_{n-1,\fol i {n-1} g },\cdots ,G_{i,\fol i i g}]_q\right) \stackrel{?}{=} \alpha\left([G_{i-1,\fol i {i-1} g},\cdots,G_{n, \fol i n g}]_q \right).
\end{equation}

The elements $X_i$ and $Y$ below will play a key role in intermediate steps of the computation.
\begin{align*}
X_i &:= [G_{i-1,
\fol 1 {i-1} g},\cdots,G_{1,
\fol 1 1 g}]_q\\
Y &:= [G_{n+m-2,
\fol n {n+m-2} g},\cdots,G_{n,
\fol n n g}]_q
\end{align*}
Since the shifts in the elements defining $X_i$ and $Y$ are entirely contained $\pE$ and $\pF$ respectively, one can compute the effect of $\alpha$ on $X_i$ and $Y$ as follows:
\begin{align}
\alpha(X_i) &= [E_{i-1,
\fol 1 {i-1} e},\cdots, E_{1,
\fol 1 1 e}]_q\label{eq:gluingax}\\
\alpha(Y) &= [F_{n+m-2,
\fol n {n+m-2} f},\cdots,F_{n,
\fol n n f}]_q\notag\\
&= \sigma^{-
\fol {n-1} n f}[F_{n+m-2,
\fol n {n+m-2} f},\cdots,F_{n,
\fol {n-1} n f}]_q \notag\\
&= \sigma^{-(1-f(n-1))}F_{n-1,1}
&\eqref{eq:gluingr2b}\notag\\
&= \sigma^{f(n-1)}E_{n,0}
&\eqref{eq:gluingproofg1} \label{eq:gluingay}
\end{align}

We now simplify the right-hand side of the equation \eqref{eq:gluingtoprove} before applying $\alpha$. Observe that the right-hand side can be written as
\begin{align*}
&= [G_{i-1,
\fol i {i-1} g},\cdots,G_{n,
\fol i n g}]_q \\
&= \sigma^{
\fol i n g}[G_{i-1,
\fol n {i-1} g},\cdots,G_{n,
\fol n n g}]_q\\
&= \sigma^{
\fol i n g}[G_{i-1,
\fol n {i-1} g},\cdots,
G_{1,
\fol n 1 g},G_{n+m-2,
\fol n {n+m - 2} g},\cdots,G_{n,
\fol n n g}]_q\\
&= \sigma^{
\fol i n g}[\sigma^{
\fol n 1 g} X_i, Y]_q\\
&= \sigma^{
\fol i n g}[\sigma^{f(n-1) + \fol n  1 e} X_i, Y]_q
&\eqref{eq:gluingfolid2}
\end{align*}

Using \eqref{eq:gluingax} and \eqref{eq:gluingay}, one can compute $\a$ applied to the right-hand side of the equation \eqref{eq:gluingtoprove}
\begin{align*}
\a(\sigma^{\fol i n g}[\sigma^{f(n-1) + \fol n  1 e} X_i, Y]_q)
&= \sigma^{
\fol i n g}[\sigma^{f(n-1)+
\fol n 1 e}\alpha(X_i), \sigma^{f(n-1)}E_{n,0}]_q\\
&= \sigma^{
\fol i n g + f(n-1)}[[E_{i-1,
\fol n {i-1} e},\cdots,E_{1,
\fol n 1 e}]_q,E_{n,0}]_q\\
&= \sigma^{
\fol i n g+f(n-1)}[E_{i-1,
\fol n {i-1} e},\cdots,E_{n,
\fol n n e}]_q
&\eqref{orderprop}\\
&= \sigma^{
\fol i n e} [E_{i-1,
\fol n {i-1} e},\cdots,E_{n,
\fol n n e}]_q
&\eqref{eq:gluingfolid2}\\
&= [E_{i-1,
\fol i {i-1} e},\cdots,E_{n,
\fol i n e}]_q\\
&= \sigma[E_{n-1,
\fol i {n-1} e},\cdots,E_{i,
\fol i i e}]_q
&\eqref{eq:gluingcyc}
\end{align*}
The last line above is the left-hand side of the equation \eqref{eq:gluingtoprove}.

This completes the proof of Theorem \ref{gluethm}.

\subsubsection{Associativity of gluing}\label{asssec}
\newcommand{\dsk}[1]{\Alg(D^2_{#1}, \A_{n_{#1}})}
\newcommand{\adsk}[1]{(D^2_{#1}, \A_{n_{#1}})}
\newcommand{\dk}[1]{\Alg_{#1}}
\newcommand{\psz}{1.823} 

When a collection of disks is glued to form a larger disk, an iterated $\aprod$-product of algebras $\dsk{i}$ must be performed. The next proposition shows that the result is independent of the order in which the $\aprod$-products are performed. 

\begin{prop}\label{assprop}
Suppose that $\adsk{1}\sqcup_{i_1,j_1} \adsk{2} \sqcup_{i_2,j_2} \adsk{3}$ is a gluing of three disks and set $\dk{i} = \dsk{i}$. Then there are isomorphisms
  $$a_{1,2,3} : \left(\dk{1}\aprod_{i_1,j_1}\dk{2} \right) \aprod_{i_2,j_2} \dk{3} \xto{\sim} \dk{1}\aprod_{i_1,j_1}\left(\dk{2}\aprod_{i_2,j_2}\dk{3}\right).$$
For any gluing of four disks, the following pentagon diagram commutes:
  \begin{equation*}
  \begin{tikzpicture}[scale=10,node distance=2.5cm]
  \node (A1) {$(\dk{1}\aprod\dk{2})\aprod(\dk{3}\aprod\dk{4})$};
  \node (B) [below=\psz*0.691 cm of A1] {};
  \node (A2) [left=\psz*.951cm of B] {$( (\dk{1}\aprod\dk{2}) \aprod \dk{3}) \aprod \dk{4}$};
  \node (A5) [right=\psz*.951cm of B] {$\dk{1}\aprod (\dk{2} \aprod (\dk{3}\aprod \dk{4}))$};
  \node (C) [below=\psz*1.118cm of B] {};
  \node (A3) [left=\psz*.5879cm of C] {$(\dk{1} \aprod (\dk{2}\aprod \dk{3}))\aprod \dk{4}$};
  \node (A4) [right=\psz*.5879cm of C] {$\dk{1} \aprod ((\dk{2}\aprod\dk{3})\aprod \dk{4})$};
  \draw[->] (A2) to node {} (A1);
  \draw[->] (A1) to node {} (A5);
  \draw[->] (A4) to node {} (A5);
  \draw[->] (A3) to node {} (A4);
  \draw[->] (A2) to node {} (A3);
  \end{tikzpicture}
  \end{equation*}  
\end{prop}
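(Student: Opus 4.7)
The strategy is to identify both sides of the associativity isomorphism with the Hall algebra of a single topologically glued disk, and then to invoke the (evident) topological associativity of disk gluing. The pentagon axiom reduces to the same principle applied one step further.

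First, I would observe that the gluing operation $\sqcup_{i,j}$ of Definition \ref{topgluedef} is a purely local operation: it modifies only the two boundary arcs being identified, and leaves the rest of each disk (its foliation data on non-glued arcs, the remaining arc system, and the remaining marked intervals) intact, as shown by the formula in Eqn. \eqref{gluefoleq}. Consequently, under the implicit assumption that the indices $(i_2,j_2)$ pick out an arc on $D^2_2$ (so that both orderings of the iterated gluing are defined), the two graded marked disks with arc systems
\[
\bigl(\adsk{1} \sqcup_{i_1,j_1} \adsk{2}\bigr) \sqcup_{i_2,j_2} \adsk{3}
\quad\text{and}\quad
\adsk{1} \sqcup_{i_1,j_1} \bigl(\adsk{2} \sqcup_{i_2,j_2} \adsk{3}\bigr)
\]
are \emph{literally equal}. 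Call this common glued disk $(D^2_{123},\A)$ and write $\Alg_{123}=\Alg(D^2_{123},\A)$.

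Next, I would apply Theorem \ref{gluethm} twice, in each of the two orders, to obtain two chains of isomorphisms
\[
\bigl(\dk{1} \aprod_{i_1,j_1} \dk{2}\bigr) \aprod_{i_2,j_2} \dk{3}
\;\xto{\sim}\; \Alg_{123} \;\xfrom{\sim}\;
\dk{1} \aprod_{i_1,j_1} \bigl(\dk{2} \aprod_{i_2,j_2} \dk{3}\bigr),
\]
and define $a_{1,2,3}$ as the composition of the first map with the inverse of the second. By construction, on the free-product generators $\dk{1}\ast\dk{2}\ast\dk{3}$, both composite maps agree with the homomorphism sending a generator in $\dk{i}$ to the corresponding arc in $\Alg_{123}$ under the embedding of Corollary \ref{fdiskembcor}. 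Hence $a_{1,2,3}$ is characterized at the level of generators by the identity of the free product, and the fact that it descends to the two different quotients is precisely the content of Theorem \ref{gluethm}.

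For the pentagon, each of the five vertices is, by the iterated application of the same argument, naturally isomorphic to $\Alg(D^2_{1234},\A)$, the Hall algebra of the unique disk obtained by topologically gluing all four pieces along the prescribed arcs. Each of the five edges of the pentagon is one of the instances of $a_{*,*,*}$, and therefore corresponds to an identification of two representations of this common target algebra. Both composites around the pentagon are generator-level homomorphisms from the free product $\dk{1}\ast\dk{2}\ast\dk{3}\ast\dk{4}$ to $\Alg(D^2_{1234},\A)$ which restrict on each $\dk{i}$ to the embedding of Corollary \ref{fdiskembcor}. Since a homomorphism out of a free product is determined by its restrictions to the factors, the two composites agree.

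The only non-trivial step is really bookkeeping: one must track the cyclic index conventions from the proof of Theorem \ref{gluethm} (cf.\ Eqn.\ \eqref{cycideq}) to verify that the iterated applications of Theorem \ref{gluethm} agree on generators rather than up to a further cyclic relabeling. This is the main potential pitfall, but it is resolved by choosing, once and for all, a consistent cyclic indexing on the intermediate glued disks; with that choice, every map above is the identity on generators, and both associativity and the pentagon follow formally.
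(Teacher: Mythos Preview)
Your argument is correct, but it takes a longer route than the paper's. The paper never invokes Theorem \ref{gluethm} here: since the $\aprod$-product in Def.~\ref{alggluedef} is defined purely as a quotient of a free product by the relations \eqref{g1:itm} and \eqref{g3:itm}, both bracketings $(\dk{1}\aprod\dk{2})\aprod\dk{3}$ and $\dk{1}\aprod(\dk{2}\aprod\dk{3})$ are visibly isomorphic to the single algebra generated by $\ZZ\inp{\A_{n_1}\sqcup\A_{n_2}\sqcup\A_{n_3}}$ modulo the union of all the relations (those internal to each $\dk{i}$ plus the two sets of gluing relations). Associativity of the free product together with the order-independence of imposing relations gives $a_{1,2,3}$ directly; the pentagon follows the same way with four factors.

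Your approach instead realizes each bracketing as the Hall algebra $\Alg(D^2_{123},\A)$ of the common topologically glued disk via two applications of Theorem \ref{gluethm}, and then defines $a_{1,2,3}$ by composing. This is perfectly valid and has the virtue of making the geometric content explicit, but it leans on a substantially harder theorem to prove what is really a formal fact about presentations. The bookkeeping worry you raise at the end (tracking cyclic labels through iterated gluings) is a real artifact of your method; the paper's direct presentation argument sidesteps it entirely, since the generators are just the arcs themselves and no relabeling ever occurs.
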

\begin{proof}
  The algebras $\left(\dk{1}\aprod \dk{2} \right) \aprod \dk{3}$ and $\dk{1}\aprod \left(\dk{2}\aprod\dk{3}\right)$ are isomorphic to the algebra $\dk{1}\aprod \dk{2} \aprod \dk{3}$ with generators $\ZZ\inp{\A_{n_1} \sqcup \A_{n_2} \sqcup \A_{n_3}}$ subject to relations \eqref{r1:itm}, \eqref{r2:itm} and \eqref{r3:itm}. The map $a_{1,2,3}$ is given by composing one isomorphism with the inverse of the other. 

The commutativity of the pentagon diagram follows from the observation that each vertex is isomorphic to the algebra with generators $\ZZ\inp{\A_{n_1} \sqcup \A_{n_2} \sqcup \A_{n_3}\sqcup \A_{n_4}}$ and these isomorphisms commute with the maps forming the boundary of the pentagon.
\end{proof}

It is possible to show that the maps $\a$ and $\b$ introduced by the gluing Thm. \ref{gluethm} commute with the associators introduced above. This has been omitted as it will not be needed in what follows.

\subsection{The Hall algebra of the disk}\label{skdisk}
Here the gluing theorem from the previous section is used to define an
algebra $\Alg(D^2,A)$ associated to the Hall algebra of
$D^\pi\F(S,A)$.

\begin{defn}\label{genalgdef}
Suppose that $(D^2,m)$ is a disk with $m$ marked intervals equipped with
grading $\eta$ and $A$ is a full arc system. By definition, the internal arcs of $A$ cut the disk into a collection of disks, so
$$(D^2, A) = (D^2_1,\A_{m_1}) \sqcup_{i_1,j_1} (D^2_2,\A_{m_2})
\sqcup_{i_2,j_2} \cdots \sqcup_{i_{\ell-1},j_{\ell-1}}
(D^2_\ell,\A_{m_\ell})$$ with minimal arc systems equipped with foliation
data $h_k : \A_{m_k} \to \ZZ$. 
The numbers $m_k$ satisfy $\sum_{k=1}^\ell m_k = m + 2p$, where $p$ is the number
of internal arcs in $A$. Since the disk $(D^2,A)$ can be reassembled by
gluing the $k$ disks together, identifying the $i_k$th arc in
$(D^2_k,\A_{m_k})$ with the $j_k$th arc in $(D^2,\A_{m_{k+1}})$, the gluing
theorem allows us to construct the presentation of {\em algebra $\Alg(D^2,A)$ associated to the
  arc system $A$} as
$$\Alg(D^2,A) = \Alg(D^2,\A_{m_1}) \aprod_{i_1,j_1} \Alg(D^2_2,\A_{m_2}) \aprod_{i_2,j_2} \cdots \aprod_{i_{\ell-1},j_{\ell-1}} \Alg(D^2_\ell,\A_{m_\ell}).$$
Prop. \ref{assprop} ensures that the algebra is independent of the order in which the products are taken.
\end{defn}

The next proposition records that the algebras $\Alg(D^2,A)$ compute
the Hall algebras of the Fukaya categories $D^\pi\F(D^2,A)$.

\begin{prop}\label{fullarcprop}
Suppose that $(D^2,m)$ is a graded disk with $m$ marked intervals equipped 
with a full arc system $A$. Then the map $\kappa' : F(\ZZ A)\to \tDHa (D^\pi\F(D^2,A))$ induces an isomorphism $\bar{\kappa}' : \Alg(D^2,A) \xto{\sim} \tDHa( D^\pi\F(D^2,A))$.
\end{prop}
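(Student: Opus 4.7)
The plan is to induct on the number $p$ of internal arcs in $A$, equivalently on the number $\ell$ of minimal disks in the decomposition of $(D^2,A)$.

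For the base case $p = 0$, the full arc system $A$ is already minimal, so $A = \A_m$ and the claim reduces to Theorem \ref{minarcthm}, which provides the isomorphism $\bar\kappa\co \Alg(D^2,\A_m) \xto{\sim} \tDHa(D^\pi\F(D^2,\A_m))$.

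For the inductive step, pick an internal arc $c\in A$ and cut $(D^2,A)$ along $c$. Topologically this expresses $(D^2,A)$ as a gluing $(D^2_1,A_1) \sqcup_{i,j}(D^2_2,A_2)$, where each $(D^2_k,A_k)$ is a disk with a full arc system containing strictly fewer internal arcs than $A$ (since $c$ became a boundary arc in each piece). By construction of $\Alg(D^2,A)$ via iterated gluing in Definition \ref{genalgdef} and the associativity from Proposition \ref{assprop}, there is a natural identification
\begin{equation*}
\Alg(D^2,A) \;\cong\; \Alg(D^2_1,A_1)\aprod_{i,j}\Alg(D^2_2,A_2).
\end{equation*}
By the inductive hypothesis, there are isomorphisms $\bar\kappa'_k\co\Alg(D^2_k,A_k)\xto{\sim}\tDHa(D^\pi\F(D^2_k,A_k))$ for $k=1,2$. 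On the other hand, Theorem \ref{gluethm} provides an isomorphism
\begin{equation*}
\a\co \tDHa(D^\pi\F(D^2,A)) \;\xto{\sim}\; \Alg(D^2_1,A_1)\aprod_{i,j}\Alg(D^2_2,A_2),
\end{equation*}
since the Hall algebra of the glued disk is the $\aprod$-product of the Hall algebras of the pieces, and the glued disk is exactly $(D^2,A)$. Composing $\a^{-1}$ with the isomorphism assembled from the $\bar\kappa'_k$ gives a candidate isomorphism $\Alg(D^2,A)\xto{\sim}\tDHa(D^\pi\F(D^2,A))$.

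The final step, which is the main point requiring care, is to verify that this composite agrees with the map $\bar\kappa'$ induced by the canonical $\kappa'\co F(\ZZ A)\to\tDHa(D^\pi\F(D^2,A))$ sending each arc to itself. This reduces to tracking generators: both the gluing isomorphism of Theorem \ref{gluethm} and the isomorphisms $\bar\kappa'_k$ are defined so that a boundary arc of a piece maps to the same graded arc in $\tDHa(D^\pi\F(D^2,A))$ under the obvious inclusion; the internal arc $c$ is expressed on each side via the relation \eqref{r2:itm} applied within one of the minimal pieces constituting $(D^2_k,A_k)$, which corresponds under $\bar\kappa'_k$ to Proposition \ref{convprop} in the Fukaya category. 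The hardest part is this bookkeeping, but no new computation is required beyond the identifications already established in the proof of Theorem \ref{gluethm}, so the induction closes.
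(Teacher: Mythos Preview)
Your argument is correct in substance and uses the same ingredients as the paper, but the inductive packaging is both unnecessary and slightly imprecise. Theorem~\ref{gluethm} is stated only for \emph{minimal} arc systems and relates the presented algebras~$\Alg$, not the Hall algebras~$\tDHa$; so the isomorphism you claim at the inductive step,
\[
\tDHa(D^\pi\F(D^2,A)) \;\xto{\sim}\; \Alg(D^2_1,A_1)\aprod_{i,j}\Alg(D^2_2,A_2),
\]
is not what Theorem~\ref{gluethm} literally provides. To justify it you must unfold each $\Alg(D^2_k,A_k)$ back into its minimal constituents, apply Theorem~\ref{gluethm} iteratively to collapse the whole $\aprod$-product to $\Alg(D^2,\A_m)$, and then invoke Theorem~\ref{minarcthm} once. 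At that point the inductive hypothesis has not been used.

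The paper's two-line proof does exactly this directly: by Definition~\ref{genalgdef} (which has Theorem~\ref{gluethm} built in), $\Alg(D^2,A)\cong\Alg(D^2,\A_m)$; by Theorem~\ref{minarcthm} this is $\tDHa(D^\pi\F(D^2,\A_m))$; and Corollary~\ref{sheafpropcor} identifies the latter with $\tDHa(D^\pi\F(D^2,A))$. Your final paragraph on tracking generators is the right way to see the composite equals $\bar\kappa'$, and the paper leaves that implicit. So your proof works once repaired, but it is really the direct argument in disguise.
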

\begin{proof}

This follows from Thm. \ref{minarcthm} and Cor. \ref{sheafpropcor}.

\end{proof}

\subsection{A conjectural presentation of composition subalgebras}
The gluing construction in Section \ref{genalgdef} can be applied to an arbitrary marked surface. In some cases, the resulting algebra is not the composition subalgebra $\Alg(S,A)$.
However, we conjecture that when the surface $(S,M)$ has enough marked intervals, the algebra obtained by gluing is the composition subalgebra. This constitutes a broad class of surfaces. In particular, by subdividing marked intervals any surface can be refined to satisfy this criteria.

\begin{definition}\label{def:naive}
Suppose that $(S,M)$ is a graded marked surface and $A$ is a full arc system. By definition, the internal arcs of $A$ cut the surface into a collection of disks:
$$(S, A) = (D^2_1,\A_{m_1}) \sqcup_{i_1,j_1} (D^2_2,\A_{m_2}) \sqcup_{i_2,j_2} \cdots \sqcup_{i_{\ell-1},j_{\ell-1}} (D^2_\ell,\A_{m_\ell})$$ 

Since the surface $(S,A)$ can be reassembled by
gluing the $k$ disks together, identifying the $i_k$th arc in
$\A_{m_k}$ with the $j_k$th arc in $\A_{m_{k+1}}$, the gluing
theorem allows us to construct the presentation of the {\em naive algebra} $\NAlg(S,A)$:
$$\NAlg(S,A): = \Alg(D^2,\A_{m_1}) \aprod_{i_1,j_1} \Alg(D^2_2,\A_{m_2}) \aprod_{i_2,j_2} \cdots \aprod_{i_{\ell-1},j_{\ell-1}} \Alg(D^2_\ell,\A_{m_\ell}).$$
In words, $\NAlg(S,A)$ is the free product of the disk algebras $\Alg(D^2_k,\A_{m_k})$  subject to the \eqref{g1:itm} relation   that arcs $a$ and $a'$ which are equal in $S$ are equal in $\NAlg(S,A)$ and the \eqref{g3:itm} relation that arcs $a$ and $a'$ that don't end on the same marked interval commute in $\NAlg(S,A)$.
\end{definition}
\begin{definition}\label{def:enough}
The marked surface $(S,M)$ above is said to have {\em enough marked intervals} when, for each $k$, the inclusion $\iota_k : (D^2_k,M_k)\to (S,M)$ 
is injective on connected components.
\begin{equation}\label{eq:embcond}
(\iota_k)_*: \pi_0(M_k) \to \pi_0(M) \textrm{ is injective}
\end{equation}
\end{definition}

When this criteria is satisfied, there is a map from the naive algebra to the composition subalgebra.

\begin{thm}\label{prop:naivemaps}
Suppose that  $(S,M)$ is a graded marked surface with a full arc system $A$. If $(S,M)$ has enough marked intervals then there is a surjective map 
$$\ga : \NAlg(S,A) \to \Alg(S,A).$$
\end{thm}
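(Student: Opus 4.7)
The plan is to construct $\gamma$ by combining the universal property of free products with the functoriality results of Proposition~\ref{ffprop} and Corollary~\ref{fdiskembcor}. Since $(S,M)$ has enough marked intervals, each embedding $\iota_k : (D^2_k, M_k) \hookrightarrow (S,M)$ satisfies the injectivity hypothesis on $\pi_0(M_k) \to \pi_0(M)$ required by Proposition~\ref{ffprop}. Applying Corollary~\ref{fdiskembcor} to the arc systems $\A_{m_k}$ (the restriction of $A$ to $D^2_k$) and $A$ yields monomorphisms $(\iota_k)_* : \Alg(D^2_k, \A_{m_k}) \hookrightarrow \Alg(S,A)$. By the universal property of the free product, these assemble into a homomorphism
$$\widetilde{\gamma} : \Alg(D^2_1, \A_{m_1}) \ast \cdots \ast \Alg(D^2_\ell, \A_{m_\ell}) \longrightarrow \Alg(S,A).$$

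To show that $\widetilde{\gamma}$ descends to the required $\gamma : \NAlg(S,A) \to \Alg(S,A)$, one must verify that the gluing relations \eqref{g1:itm} and \eqref{g3:itm} are sent to zero. Relation \eqref{g1:itm} is immediate: if arcs $a \in \A_{m_k}$ and $a' \in \A_{m_{k'}}$ are glued to the same arc of $A$, then $(\iota_k)_*(a)$ and $(\iota_{k'})_*(a')$ coincide as generators of $\Alg(S,A)$. For \eqref{g3:itm}, suppose $X \in \A_{m_k}$ and $Y \in \A_{m_{k'}}$ have images in $A$ whose endpoints lie in pairwise disjoint marked intervals of $M$. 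Since morphisms in $\F(S,A)$ are $k$-linear combinations of boundary paths, and no such path connects arcs with endpoints in disjoint marked intervals, we have $\Hom_{\F(S,A)}(X,Y) = 0 = \Hom_{\F(S,A)}(Y,X)$. Because $\mu_1 = 0$ in a topological Fukaya category, graded $\Hom$-spaces agree with $\Ext$-groups, so $\Ext^n(X,Y) = 0 = \Ext^n(Y,X)$ for every $n \in \ZZ$ in $D^\pi\F(S,A)$. The Euler pairing $\langle X, Y\rangle$ therefore vanishes, and To\"en's structure constants (Def.~\ref{toenalgdef}) reduce to $[X] \cdot [Y] = [X \oplus Y] = [Y] \cdot [X]$. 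In the twisted derived Hall algebra this becomes $[X,Y]_1 = 0$, which is precisely \eqref{g3:itm}.

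Surjectivity of $\gamma$ follows from the observation that every generator of $\Alg(S,A)$ is a suspension of some arc $a \in A$, and every such arc appears as a boundary arc of at least one disk piece $D^2_k$ in the decomposition, hence lies in the image of $(\iota_k)_*$ and therefore of $\gamma$. The main obstacle is the verification of \eqref{g3:itm}: one must upgrade the combinatorial absence of boundary paths between arcs in disjoint marked intervals to the vanishing of all higher $\Ext$-groups after passing to the split-closed derived category. This hinges on the vanishing of the $A_\infty$-differential $\mu_1$ in $\F(S,A)$, which identifies each graded $\Hom$-space with its cohomology, so that the purely combinatorial input controls all higher extensions. Once this is in place, the remaining steps are formal consequences of universal properties and the previously established functoriality.
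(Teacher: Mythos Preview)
Your proof is correct and follows essentially the same route as the paper's: use the enough-marked-intervals hypothesis together with Corollary~\ref{fdiskembcor} to get homomorphisms from each disk algebra into $\Alg(S,A)$, check that the gluing relations \eqref{g1:itm} and \eqref{g3:itm} hold in the target (the latter via vanishing of $\Hom^*$ in both directions between arcs ending on disjoint marked intervals), and observe that every generator of $\Alg(S,A)$ is hit. Your treatment of \eqref{g3:itm} is more explicit than the paper's---you spell out why $\mu_1=0$ lets the combinatorial $\Hom$-vanishing propagate to all $\Ext$-groups in $D^\pi\F(S,A)$ and then unpack To\"en's structure constants to get $[X]\cdot[Y]=[X\oplus Y]=[Y]\cdot[X]$---but this is elaboration of the same argument, not a different one.
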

\begin{proof}
The map $\ga$ is determined on generators by mapping each arc $a\in A$ to $a\in A$. Since this is so, if $\ga$ is a homomorphism then it is surjective.

Since $(S,M)$ has enough marked intervals, Cor. \ref{fdiskembcor} implies
that each map $\Alg(D_k,\A_k)\to \Alg(S,A)$ is a
monomorphism. Since each such map factors through $\ga$ above, it suffices
to check that the relations \eqref{g1:itm} and \eqref{g3:itm} from
Def. \ref{alggluedef} hold in $\Alg(S,A)$. The \eqref{g1:itm} relations hold
because arcs which are equal in the gluing are equal in the surface by
definition. The \eqref{g3:itm} relations hold because if $X$ and $Y$ are
arcs in $A$ which don't end on the same marked interval in $M$ then
$$\Hom^*(X,Y) = 0 \conj{ and } \Hom^*(Y,X) = 0$$
and so the equations for $F^L_{X,Y}$ in Def. \ref{toenalgdef} show that
$X$ and $Y$ commute in $\Alg(S,A)$. 
\end{proof}

Proposition \ref{fullarcprop} proves the following conjecture in the case that $S$ is a disk.
\begin{conjecture}\label{conjconj}
If the surface $(S,M)$ has enough marked intervals then the map $\ga$ is an isomorphism.
\end{conjecture}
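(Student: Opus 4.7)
The surjectivity of $\gamma$ is already given by Theorem \ref{prop:naivemaps}, so the entire task is to establish injectivity. My plan is to reduce any would-be new relation in $\Alg(S,A)$ to a relation among arcs that are all contained in a single embedded disk, at which point Proposition \ref{fullarcprop} identifies the disk's Hall algebra with the disk's naive algebra, which in turn injects into $\NAlg(S,A)$.

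First I would set up a filtration on $\NAlg(S,A)$ by the number of generators appearing in a monomial (and, as a secondary index, by homological shifts), and prove injectivity of $\gamma$ on associated graded pieces. On the lowest filtration level, each generator of $\NAlg(S,A)$ is an arc in $A$, and $\gamma$ sends it to the same arc in $\Alg(S,A)$; distinct arcs remain distinct in $\Alg(S,A)$ because isomorphism classes of objects form a basis for $\tDHa(D^\pi\F(S,A))$. The inductive step is to show that any relation among products of $n$ arcs in $\Alg(S,A)$ is already a consequence of the relations $\eqref{r1:itm}$--$\eqref{r3:itm}$ and the gluing relations $\eqref{g1:itm},\eqref{g3:itm}$.

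The crucial tool is the following local-to-global principle. Given any finite collection $X_1,\ldots,X_n$ of arcs of $A$ (together with the marked intervals they touch), I would use the assumption that $(S,M)$ has enough marked intervals to construct an embedded marked subsurface $\iota\co (D, M_D)\hookrightarrow (S,M)$ whose underlying surface is a disk, which contains all of $X_1,\ldots,X_n$ as arcs, and for which $\iota_*\co \pi_0(M_D)\to\pi_0(M)$ is injective. Concretely, one may take a regular neighborhood of the union $X_1\cup\cdots\cup X_n$ together with the marked intervals on which these arcs end; the enough-marked-intervals hypothesis guarantees that this neighborhood can be chosen to be a disk and that distinct marked endpoints remain distinct in $M$. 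Corollary \ref{skdiskembcor} then yields a monomorphism $\iota_*\co \Alg(D,A_D)\hookrightarrow \Alg(S,A)$, so any relation among $X_1,\ldots,X_n$ in $\Alg(S,A)$ pulls back to a relation in $\Alg(D,A_D)$. By Proposition \ref{fullarcprop}, $\Alg(D,A_D)\cong \NAlg(D,A_D)$, and the inclusion $\NAlg(D,A_D)\hookrightarrow \NAlg(S,A)$ coming from the decomposition of $A_D$ into minimal arc systems transports this relation back to a consequence of the naive relations.

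The main obstacle is the geometric step of producing the embedded disk $D$ with the required injectivity on marked intervals; this is where the hypothesis genuinely enters, and where topological complications such as handles or multiple boundary components of $S$ must be ruled out by a judicious choice of subdivision. A technical subtlety is that the tuple $X_1,\ldots,X_n$ need not all lie in a single disk of the given decomposition of $(S,A)$, so the disk $D$ must be constructed anew for each relation one wishes to verify; functoriality of $\NAlg$ under refinement (which follows from associativity of gluing, Proposition \ref{assprop}) is needed to ensure that relations derived from different choices of $D$ are compatible. Once these geometric ingredients are in place, the filtered argument reduces the whole conjecture to the disk case already established in Proposition \ref{fullarcprop}.
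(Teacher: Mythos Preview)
The paper does not prove this statement: it is explicitly left as a conjecture, with Proposition \ref{fullarcprop} establishing only the disk case. So you are attempting something the authors themselves did not claim to do, and your proposal has a genuine gap at the geometric step.

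The crucial problem is your assertion that, given any finite collection $X_1,\ldots,X_n$ of arcs, one can find an embedded disk $(D,M_D)\hookrightarrow(S,M)$ containing all of them. This is simply false once $S$ has nontrivial topology. Take the annulus example in the paper (Example after Conjecture \ref{conjconj}): the two internal arcs $E_2=F_4$ and $E_4=F_2$ run between the two boundary components, and any subsurface containing both of them must contain a core curve of the annulus, hence cannot be a disk. More generally, the arcs of a full arc system collectively carry the fundamental group of $S$, so a regular neighborhood of enough of them will itself have genus or multiple boundary components. The hypothesis of ``enough marked intervals'' (Definition \ref{def:enough}) says only that each \emph{individual} disk $D_k^2$ in the fixed decomposition embeds with $\pi_0(M_k)\to\pi_0(M)$ injective; it gives you no control over neighborhoods of arbitrary arc collections.

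Because of this, your local-to-global principle breaks down precisely where the conjecture becomes nontrivial: relations in $\Alg(S,A)$ involving arcs from several disks $D_k^2$ cannot in general be pulled back to a single disk algebra, and it is exactly these ``global'' relations that one must show are generated by the naive ones. The filtration argument also does not obviously work as stated, since Hall-algebra relations (e.g.\ the self-extension relation \eqref{r1:itm}) are not homogeneous in the number of arc-generators. Proving the conjecture would require genuinely new input---for instance, a direct analysis of extensions between twisted complexes in $D^\pi\F(S,A)$ that do not live in a common disk---which is why the authors left it open.
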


\begin{example}
When the surface $S$ is an annulus $S^1\times [0,1]$ with two marked intervals on each boundary component, there is an arc system $A$ consisting of two boundary arcs intervals $E_1$, $F_1$  and $E_3,F_3$ between each of the two marked intervals and two internal arcs $E_2=F_4$ and $E_4=F_2$. This is pictured below. 
\vspace{.1in}
\begin{center}\label{fig:annuluspresex}
\begin{overpic}[scale=1]
{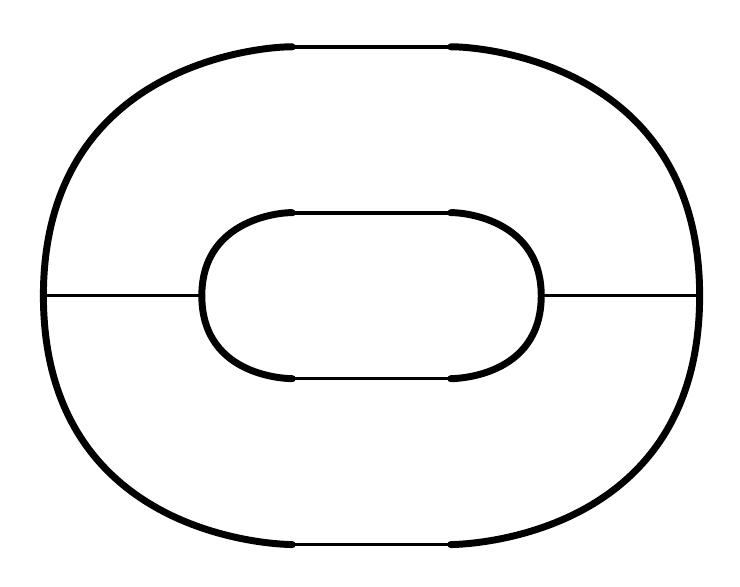}
\put(102,160){$E_1$}
\put(102,0.5){$F_1$}

\put(102,95.5){$E_3$}
\put(102,65.5){$F_3$}

\put(31,88){$E_4$}
\put(31,73){$F_2$}

\put(173,88){$E_2$}
\put(173,73){$F_4$}
\end{overpic}
\end{center}
\vspace{.1in}



Cutting the annulus along the arc system produces two disks $(D^2_1,\A_4)$ and $(D^2_2,\A'_4)$ with four marked intervals $\A_4 = \{E_1,E_2,E_3,E_4\}$ and $\A'_4 = \{F_1,F_2,F_3,F_4\}$. Since there are enough arcs, the conjecture above implies that the composition algebra $\Alg(S,A)$ 
has a presentation generated by suspensions the arcs $E_{i,n} = \s^nE_i$ and $F_{j,m}= \s^mF_j$ for $1\leq i,j\leq 4$ and $n,m\in\ZZ$ subject to the \eqref{g1:itm} and \eqref{g3:itm} relations:
\begin{align*}
E_{2,n}=F_{4,n}  \conj{ and } E_{4,n} = F_{2,n} & \textnormal{ for all } n \in \ZZ,\\
[E_{2,n},F_{2,m}]_1 = 0  \conj{ and }  [F_{2,n},E_{2,m}]_1 = 0 & \textnormal{ for all } n,m\in\ZZ,
\end{align*}
as well as the relations which hold within each disk algebra $\Alg(D^2_1,\A_4)$ and $\Alg(D^2_2,\A'_4)$, see Def. \ref{halldiskdef}.

\end{example}

\vskip .25in
\section{The HOMFLY-PT skein relation}\label{skeinrelsec}

Since the Fukaya Hall algebra $\SAlg(S,M)$ of a finitary surface combines the
composition subalgebras $\Alg(S,A)$ of many different arc systems $A$, there
is the possibility for relations between graded arcs $X$ and $Y$ which
appear in different arc systems $A$ and $A'$.  If there is an
arc system in which $X$ and $Y$ are disjoint then $X$ and $Y$ must commute
in the Fukaya Hall algebra.  When $X$ and $Y$ intersect at a point $p$, they
cannot be contained in the same arc system and the purpose of this section
is to establish Theorem \ref{skeinrelcor}, which shows that they must satisfy
a commutation relation that is determined by the Lagrangian surgery at
$p$. This relation is a graded version of the HOMFLY-PT
skein relation, which is
$$\CPPic{ncross} -  \CPPic{pcross} = (q-q^{-1}) \CPPic{orres}$$

This section consists of two parts. The first contains a direct computation in the algebra $\Alg(D^2,\A_4)$. In the second part, Proposition \ref{cor:invariantskein} shows that this computation can be written using intersection indices of graded curves.  Theorem \ref{skeinrelcor} leverages these materials together with the embedding criteria in Corollary \ref{skdiskembcor} to show that this graded skein relation holds in the Fukaya Hall algebras of arbitrary surfaces.

\subsection{A local calculation}\label{localcalcsec}
This section contains a local calculation in the disk algebra $\Alg(D^2,\A_4)$ which will be used by Theorem \ref{skeinrelcor} to prove a graded skein relation on surfaces.

\begin{lemma}\label{skeinthmthm}
In the disk algebra $\Alg(D^2,\A_4)$,
\begin{equation}\label{eq:homflyskein}
[X,\s^\ell Y]_1 = (q-q^{-1})\delta_{\ell,1} E_{2,1}E_{4,h(4)+h(1)} + (q^{-1}-q)\delta_{\ell,0}E_{1,h(1)}E_{3,1-h(2)},
\end{equation}
where $\ell\in\ZZ$, $\A_4=\{E_1,E_2,E_3,E_4\}$ are the boundary arcs,  
$$X := [E_{2,1}, E_{1,h(1)}]_q \conj{ and } Y := [E_{3,1-h(2)}, E_{2,0}]_q.$$
\end{lemma}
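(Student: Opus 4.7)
The proof is a computation within $\Alg(D^2, \A_4)$. Via the isomorphism $\varphi$ of Theorem \ref{minarcthm}, I would transport the question to the quiver algebra $\tDHa(A_3)$: a short check with the foliation data $\h$ identifies
$$X = \varphi(z_{(1,3), h(1)}), \qquad \sigma^\ell Y = \varphi(z_{(2,4), h(1)+\ell-1}).$$
A further check using the convolution relation \eqref{r2:itm} to identify $E_{4, h(4)+h(1)}$ with $z_{(1,4), h(1)}$ matches each factor on the right-hand side of \eqref{eq:homflyskein} with a product in $\tDHa(A_3)$. Setting $n = h(1)$ and $k = \ell - 1$, the lemma reduces to evaluating
$$C_k := [z_{(1,3), n}, z_{(2,4), n+k}]_1 \in \tDHa(A_3)$$
for each $k \in \ZZ$ and showing it vanishes unless $k \in \{-1, 0\}$.

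For the vanishing cases I would expand $z_{(1,3), n} = [z_{2, n}, z_{1, n}]_q$ and $z_{(2,4), n+k} = [z_{3, n+k}, z_{2, n+k}]_q$ and normal-order each monomial. When $|k|\geq 2$ every pair of generators $q$-commutes with no correction by \eqref{h3:itm}, and the total Cartan exponent $(-1)^{k+1}(a_{12}+a_{13}+a_{22}+a_{23}) = 0$ forces $C_k = 0$. For $k = 1$ the coincident pair $(z_{2, n}, z_{2, n+1})$ contributes the self-extension correction $\tfrac{q^{-1}}{q^2-1}$ from \eqref{h2:itm}; tracked through the four expanded monomials, the correction terms appear with coefficients $+1, -1, -1, +1$ and telescope to zero.

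The content of the lemma lies in $k = -1$ and $k = 0$. For $k = -1$, the same expansion produces self-extension corrections from the pair $(z_{2, n-1}, z_{2, n})$ with coefficients $-q^{-1}, q, q, -q^3$ times $(q^2-1)^{-1} z_{3, n-1} z_{1, n}$; the numerator simplifies as $-q^{-1}(q^2-1)^2$, giving $C_{-1} = (q^{-1}-q)\, z_{1, n} z_{3, n-1}$ after using \eqref{r3:itm} to commute $z_{3, n-1}$ past $z_{1, n}$, which translates under $\varphi$ to $(q^{-1}-q) E_{1, h(1)} E_{3, 1-h(2)}$. The case $k = 0$ is more delicate because the generators of $z_{(1,3), n}$ and $z_{(2,4), n}$ lie at the same grading and interact through the Serre relation \eqref{h1:itm} rather than a simple $q$-commutation. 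Here I would apply a suitable instance of the omni-Jacobi identity \eqref{oj:itm} to rewrite $C_0$ in terms of commutators whose inner brackets simplify via the finger relation \eqref{s0:itm}, the rebracketing lemma \eqref{orderprop}, and the far-commutativity identity $[z_{1, n}, z_{3, n}]_1 = 0$; the expected output $(q-q^{-1})\, z_{2, n} z_{(1,4), n}$ then becomes $(q-q^{-1}) E_{2, 1} E_{4, h(4)+h(1)}$ under $\varphi$. The main obstacle will be selecting the correct parameter triple for the omni-Jacobi identity in the $k = 0$ step and propagating grading shifts consistently between the $E$- and $z$-pictures.
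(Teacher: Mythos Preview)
Your translation via $\varphi$ to $\tDHa(A_3)$ is correct, and your treatment of the cases $|k|\geq 2$, $k=1$, and $k=-1$ by direct normal-ordering is a genuinely different and in some ways more elementary route than the paper's. The paper stays in the disk algebra throughout: it applies a single omni-Jacobi identity $\eqref{oj:itm}[a,a^{-1},qa^{-1}]$ with an $\ell$-dependent parameter $a=a(\ell)$ to split $[X,\sigma^\ell Y]_1$ into two pieces $P$ and $Q$, and then kills each piece for all but one value of $\ell$ using the adjacent-commutativity part of \eqref{r2:itm} together with the cyclic convolution relation (Lemma~\ref{cyclicprop2}), which rewrites $X=[E_{2,1},E_{1,h(1)}]_q$ as $[E_{4,\bullet},E_{3,\bullet}]_q$. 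So the paper's proof is uniform in $\ell$ and leans heavily on the disk's cyclic symmetry, whereas your argument splits into cases and avoids the convolution relation entirely for $|k|\geq 2$ and $k=\pm 1$.

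The weak point of your proposal is the case $k=0$ (i.e.\ $\ell=1$), which you leave as ``apply a suitable instance of omni-Jacobi.'' In $\tDHa(A_3)$ there is no direct analogue of the cyclic rewriting of $X$; the natural omni-Jacobi expansions of $[z_{(1,3),n},z_{(2,4),n}]_1$ produce inner brackets like $[z_{(2,4),0},z_{2,0}]_q$ or $[z_{1,0},z_{2,0}]_q$ that do not collapse via \eqref{s0:itm} or \eqref{orderprop} alone, and the Serre relation must enter somewhere. You can certainly push through by brute monomial expansion (as you did for $k=\pm1$), or by importing the convolution identity across $\varphi$---but the latter is exactly the mechanism the paper uses, just viewed through the isomorphism. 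So your approach genuinely simplifies the vanishing and $k=-1$ cases, but for the nontrivial $k=0$ term it does not yet offer a shortcut over the paper's argument.
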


\begin{proof}

In order to simplify the expression $[ X,\s^\ell Y]_1 = [X, [E_{3,1-h(2)+\ell}, E_{2,\ell}]_q]_1$, write the omni-Jacobi identity \eqref{oj:itm}$[a,a^{-1},qa^{-1}]$:
$$[x,[y,z]_{q}]_1 = -a[z,[x,y]_{qa^{-1}}]_{a^{-1}} - [y,[z,x]_{qa^{-1}}]_{a}$$
Applied to $[X,\sigma^\ell Y]$, this gives
\begin{equation}\label{eq:skeincomm1}
[X,\s^\ell Y]_1 = 
-a[E_{2,\ell}, [X, E_{3,1-h(2)+\ell}]_{qa^{-1}}]_{a^{-1}}  -[E_{3,1-h(2)+\ell},[E_{2,\ell}, X]_{qa^{-1}}]_{a}.
\end{equation}
Recall that $p(\ell) = (-1)^\ell$. In what follows, the function $a=a(\ell)$ is defined as
\begin{equation}\label{eq:aspecial}
a := \left\{ \begin{array}{cl}
q^{1+p(\ell+1)}&\textrm{if }\ell \geq 1\\
q^{1+p(\ell)}&\textrm{if }\ell < 1
\end{array}\right. 
\end{equation}
Let $P$ and $Q$ be the first and second summand of  \eqref{eq:skeincomm1} respectively.

{\it Term P:} If
$P=-a[E_{2,\ell}, P']_{a^{-1}}$ then 
the inner bracket $P'$ is
\begin{align*}
  P' &= [X, E_{3,1-h(2)+\ell}]_{qa^{-1}} \\
  &= \s^\ell [[E_{2,1-\ell},E_{1,h(1)-\ell}]_q,E_{3,1-h(2)}]_{qa^{-1}}\\
  &= \s^\ell [[E_{2,1-\ell},E_{3,1-h(2)}]_{qa^{-1}},E_{1,h(1)-\ell}]_q
  & \eqref{eq:comm1}\\
  &= -qa^{-1}\s^\ell [[E_{3,1-h(2)}, E_{2,1-\ell}]_{q^{-1}a},E_{1,h(1)-\ell}]_q\\
  &= -qa^{-1}\s^\ell [\s^{1-h(2)-\ell}[E_{3,\ell}, E_{2,h(2)}]_{q^{-1}a},E_{1,h(1)-\ell}]_q
\end{align*}

By \eqref{r2:itm} and our choice of $a$ in \eqref{eq:aspecial}, 
the inner bracket in this last expression is zero unless $\ell=1$. 
If $\ell = 1$ then $a = q^2$, and in this case
\begin{align*}
P' &= -q^{-1}\s^{1} [\s^{-h(2)}[E_{3,1}, E_{2,h(2)}]_{q},E_{1,h(1)-1}]_q\\
&= -q^{-1} \s^{h(1)} [E_{3,2-h(1)-h(2)}, E_{2,1-h(1)}, E_{1,0} ]_q& \eqref{orderprop} \\
&=-q^{-1} E_{4,h(4)+h(1)}
\end{align*}

Since $E_{2,1}$ and $E_{4, h(4)+h(1)-1}$ commute, this implies
\begin{equation}\label{eq:terma}
P = (q-q^{-1})\delta_{\ell,1} E_{2,1}E_{4,h(4)+h(1)}.
\end{equation}

{\it Term Q:} The cyclic equations in Lemma \ref{cyclicprop2} include
$$  X = [E_{2,1}, E_{1,h(1)}]_q = [E_{4,\tau^1\inp{3}}, E_{3,\tau^1\inp{2}}]_q  = [E_{4,2-h(2)-h(3)}, E_{3,1-h(2)}]_q.$$
Now if $Q=-[E_{3,1-h(2)+\ell},Q']_{a}$ then the innermost bracket $Q'$ is
\begin{align*}
  Q' &= [E_{2,\ell}, X]_{qa^{-1}} \\
&= \s^\ell [ E_{2,0}, [E_{4,2-h(2)-h(3)-\ell}, E_{3,1-h(2)-\ell}]_q]_{qa^{-1}}\\
&= \s^\ell [ E_{4,2-h(2)-h(3)-\ell}, [E_{2,0}, E_{3,1-h(2)-\ell}]_{qa^{-1}}]_q & \eqref{eq:comm2}\\
&= -qa^{-1} \s^{\ell} [ E_{4,2-h(2)-h(3)-\ell}, [E_{3,1-h(2)-\ell},E_{2,0}]_{q^{-1}a}]_q\\
&= -qa^{-1} \s^{\ell} [ E_{4,2-h(2)-h(3)-\ell}, \s^{-h(2)}[E_{3,1-\ell},E_{2,h(2)}]_{q^{-1}a}]_q
\end{align*}
Set $j=1-\ell$. If $j>1$ then $\ell < 0$ and $q^{-1}a = q^{-1}q^{1+p(\ell)} = q^{p(j+1)}$
so \eqref{r2:itm} shows that the inner bracket vanishes. Similarly, if $j < 1$ then
$\ell \geq 1$, so $q^{-1}a = q^{-1}q^{1+p(\ell+1)} = q^{p(j)}$ and \eqref{r2:itm} again
shows that the inner bracket vanishes. If $j=1$ then $\ell=0$ and
$q^{-1}a = q$, and in this case
\begin{align*}
Q' &= -q^{-1} [E_{4,2-h(2)-h(3)},\s^{-h(2)}[E_{3,1}, E_{2,h(2)}]_{q}]_{q}\\
&= -q^{-1} [E_{4,2-h(3)-h(2)}, E_{3,1-h(2)}, E_{2,0}]_q & \eqref{orderprop} \\
&= -q^{-1} E_{1,h(1)}
& \eqref{r2:itm}
\end{align*}
Lastly, since $E_{1,h(1)}$ and $E_{3,-h(2)}$ commute, 
\[
Q = q^{-1}[E_{3,1-h(2)}, Q']_{q^2} = (q^{-1}-q)\delta_{\ell,0}E_{1,h(1)}E_{3,1-h(2)}.
\]

\end{proof}

\begin{remark}\label{rmk:quiverskein}
Using the isomorphism in Theorem \ref{minarcthm}, the skein relations can be translated to the Hall algebra $\tDHa(A_{m-1})$, where they become
\begin{align*}
[z_{(a,c),0},z_{(b,d),0}]_1 &= (q-q^{-1})z_{(a,d),0}z_{(b,c),0},\\
[z_{(a,c),1},z_{(b,d),0}]_1 &= (q^{-1}-q)z_{(a,b),1}z_{(c,d),0},\\
[z_{(a,c),k},z_{(b,d),0}]_1 &= 0 & \textrm{ when } \vnp k \geq 2.
\end{align*}
\end{remark}

\subsection{The graded HOMFLY-PT skein relation}\label{grhomflyptsec}
\renewcommand{\fo}{e_0}
\newcommand{\pcap}{\pitchfork}

The purpose of this section is to translate Eqn.  \eqref{eq:homflyskein}
into a relation which depends only on the the intersection index $i_p(X,Y)$
of the two curves at the point $p$ of the intersection.

In order to write Lemma \ref{skeinthmthm} entirely in terms of intersection
indices, the lemmas below will use a number of explicit choices.  This {\em standard form} makes all of the materials very concrete and down-to-earth.

\begin{defn}\label{stdef}
  A foliation of the disk $(D^2,4)$, viewed as a subset of the plane,
  is in {\em standard form} when all of the lines are
  vertical. The illustration below depicts the basic setup.

\vspace{.15in}
\begin{center}\label{fig:ppic3}
\begin{overpic}[scale=0.6]
{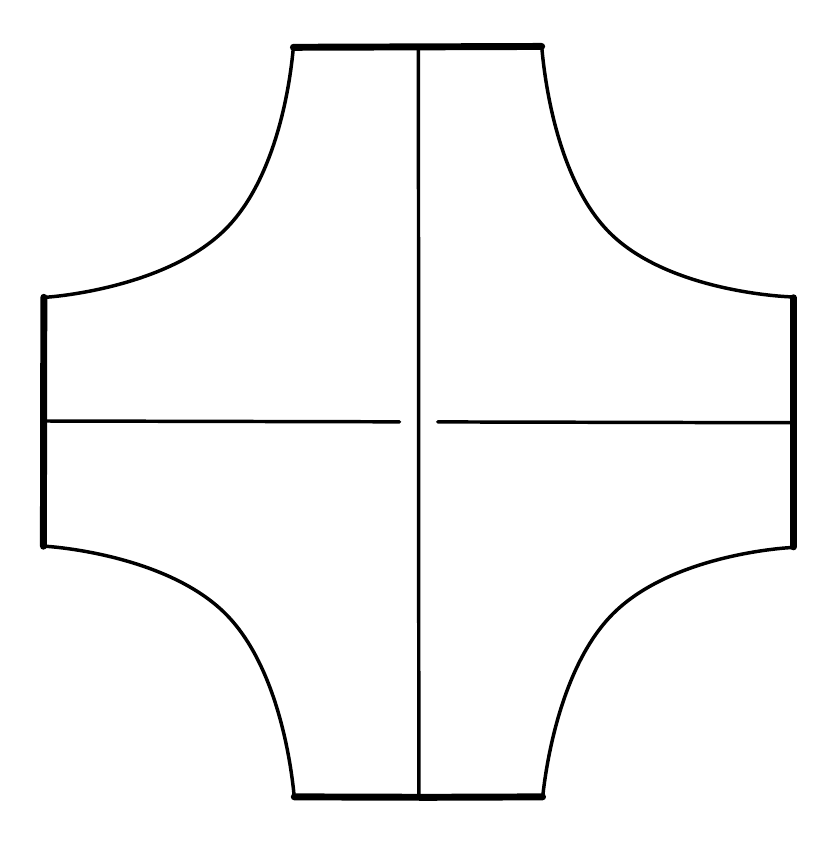}
 \put(68,142){$a_2$}
 \put(68, 0){$a_4$}
 \put(-5,70){$a_1$}
 \put(140,70){$a_3$}
 \put(105,115){$E_3$}
 \put(25,115){$E_2$}
 \put(25,20){$E_1$}
 \put(105,20){$E_4$}
 \put(77.5,120){$X'$}
 \put(17.5,60){$Y'$}
\end{overpic}
\end{center}
\vspace{.15in}

The picture features boundary arcs $E_i$, boundary paths $a_i : E_i\to E_{i+1}$ and two internal arcs: a vertical arc $X'$ and a horizontal arc $Y'$. The foliation is in standard form: north-south and parallel to the arc $X'$. 
The gradings of $X'$, $a_1$ and $a_3$ are given by the trivial paths
everywhere. The gradings of $Y'$ and the boundary arcs $E_i$ are given
pointwise by the shortest counterclockwise path from the foliation $\eta$ to
the tangent vector to the curve. The intersection indices can be computed
using Eqn. \eqref{indexeqn}:
\begin{align*}
i(E_1,a_1)&=1, & i(E_2,a_1) &= 1, &  i(E_2,a_2)&=1,& i(E_3,a_2)&=0\\
i(E_3,a_3)&=1,& i(E_4,a_3)&=1,& i(E_4,a_4)&=1,&i(E_1,a_4)&=0.
\end{align*}
The associated foliation data is as follows:\\
\begin{equation}\label{eq:efol}
\fo(1)=0,\quad \fo(2)=1,\quad \fo(3)=0\conj{and} \fo(4)=1.
\end{equation}
\end{defn}

Before proceeding it is important to see why one can assume the foliation is in standard form without loss of generality. For each $i\in\ZZ/4$, the suspension map
$$\s_i(E_j) = \left\{ \begin{array}{cl} \s E_i &\textrm{if } i = j \\ E_j &\textrm{if } i \ne j \end{array}\right.$$
induces an equivalence $\s_i : \F(D^2,\A_4,h) \xto{\sim} \F(D^2,\A_4,\s_ih)$ where $h : \A_4\to\ZZ$ is arbitrary foliation data and $\s_ih$ is the suspended foliation data
$$(\s_ih)(j) = \left\{ \begin{array}{cl} h(j) + 1 &\textrm{if } i = j + 1 \\ h(j) - 1  &\textrm{if } i = j \\ h(j) &\textrm{if } i\ne j,j+1.\end{array}\right.$$
If $h : \A_4 \to \ZZ$ is arbitrary foliation data then the equation
$$h = \s_4^{1-h(4)} \s_3^{1-h(4)-h(3)} \s_2^{h(1)}\fo,$$
together with the bijection between homotopy classes of foliations and foliation data \cite{HKK} shows that, up to isomorphism, the foliation of $(D^2,4)$ is in standard form.

\begin{notation}\label{iamthelizardkingicandoanythingnotation}
If $X$ and $Y$ are curves and there is only one point of intersection
$X\pcap Y = \{p\}$ then the point $p$ will be dropped from notation. This is
to say, $i(X,Y) = i_p(X,Y)$ and $X\#Y = X\#_pY$ below. If $X$ and $Y$ are
graded arcs whose endpoints share a marked interval then $i(X,Y) = \vnp\ga$
will be used to denote the degree of the boundary path $\ga$ between them,
see Def. \ref{fukcatdef}.
\end{notation}

With the foliation in standard form, Lagrangian surgery is recalled, 
see \cite{Abo08,HKK}.

\begin{definition}\label{def:resolutions}
Suppose that $X$ and $Y$ are graded curves which intersect transversely at a
point $p$ such that $i_p(X,Y) = 1$. Then there is a graded curve $X\#_p Y$
called the {\em Lagrangian surgery} or the {\em resolution} of $X$ and $Y$ at $p$.
\vspace{.15in}
\begin{center}\label{fig:ppic1}
\begin{overpic}[scale=0.6]
{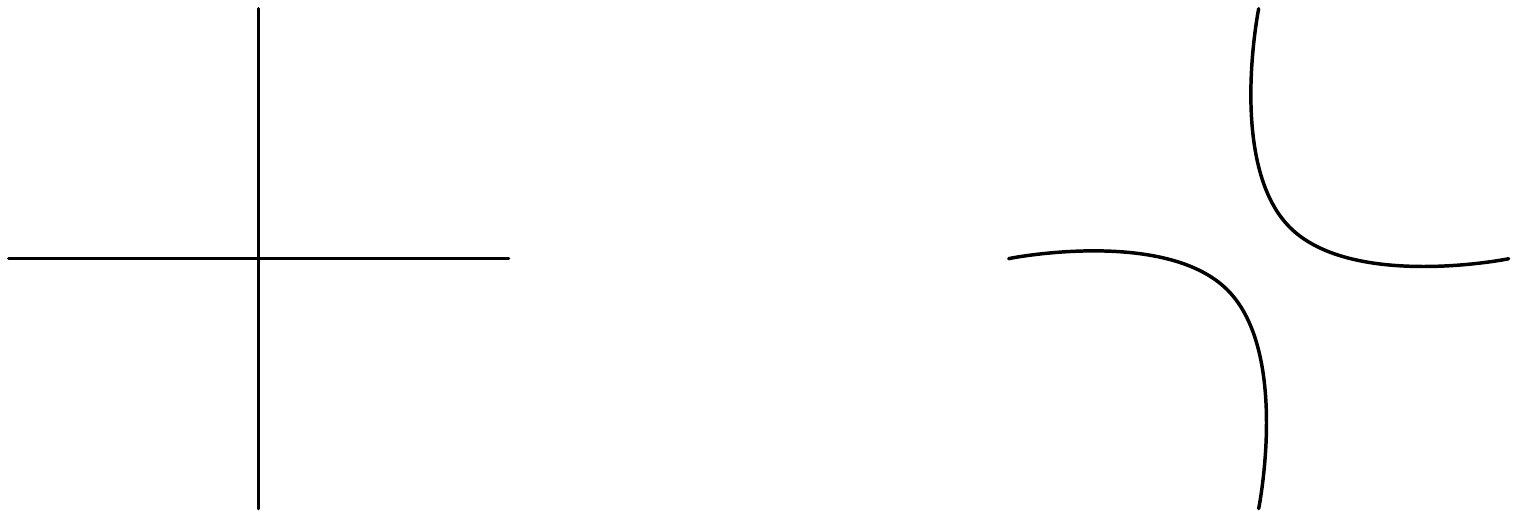}
\put(120,42.5){$\rightsquigarrow $}
\put(50,75){$Y$}
\put(3,30){$X$}
\put(225,75){$X\#_pY$}
\put(50,30){$p$}
\end{overpic}
\end{center}
\vspace{.15in}
The curve $X\#_pY$ is obtained from $X\sqcup Y$ by removing a neighborhood
of $p$ and connecting the endpoints.  The choice of resolution is determined
by the rule that if the intersection index satisfies $i_p(X,Y)=1$ then the
$X$ curve {\em turns right} as it approaches $p$ in the resolution.  This
condition on the intersection index ensures that the resolution has a
canonical grading. If $i_p(X,Y) = 0$ then $i_p(Y,X) = 1$ and $X$ {\em turns left} in the resolution.

For an explicit example, let the foliation in the above diagram be vertical, grade $Y$ trivially, and grade
$X$ with the shortest counterclockwise path\footnote{This grading is continuous because the curve $X$ is never tangent to the foliation.} from the foliation to the tangent line $\dot{X}$.  Then the grading of $X\#_p Y$ is given by a continuous
family of paths smoothly transitioning from the trivial path at the top of the pictured portion of $X\#_p Y$ to the counterclockwise rotation by $\pi/2$ on the rightmost part of the pictured portion. 

The graded resolution is also defined for curves which intersect the same marked boundary interval.
\begin{center}\label{fig:ppic2}
\begin{tikzpicture}
\node at (-2.25,0) {\CPPic{peterpic2a}};
\node at (2.25,0) {\CPPic{peterpic2b}};
\node at (.3,0) {$\rightsquigarrow$};
\node at (2.7,.825) {$X\#Y$};
\node at (-2.075,.825) {$X$};
\node at (-1,-.825) {$Y$};
\end{tikzpicture}
\end{center}
  For a boundary intersection of arcs $Y$ and $X$, with $X$ clockwise
  from $Y$, there is only one choice of resolution and it has a canonical
  grading only when $i_p(Y,X) = 1$.

\end{definition}

The next two Lemmas use the choices above to identify $X'$ both as a
resolution of $E_1$ and $E_2$ and as a commutator in the Hall algebra.

Before proceeding, recall that the shift $\sigma$ acts by $\s E_{i,k} =
E_{i,k+1}$, or more generally, on a graded curve $c$ by pointwise
composition of the path $\tilde{c}$ from $\eta$ to the tangent line $\dot c$
with \emph{clockwise} rotation by $\pi$.

\begin{lemma}\label{lemma:xpyp}
In the Hall algebra $\SAlg(D^2,4)$,
\[
X' = [E_{2,1},E_{1,0}]_q \conj{ and } Y' = [E_{3,0},E_{2,0}]_q.
\]
\end{lemma}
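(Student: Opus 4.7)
The plan is to reduce the identity to the cyclic convolution relation inside a triangle obtained by cutting $(D^2,4)$ along the arc in question, then to track the foliation data carefully enough that the grading shifts come out right.

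I would proceed as follows. First, enlarge the minimal arc system $\A_4$ by adjoining $X'$ to obtain a full arc system $A = \A_4 \cup \{X'\}$. By the sheaf property (Cor.~\ref{sheafpropcor}) and Theorem~\ref{thm:skeiniso}, the natural map $\Alg(D^2, A) \xto{\sim} \SAlg(D^2,4)$ is an isomorphism, so it suffices to check the identity in $\Alg(D^2,A)$. The arc $X'$ cuts $(D^2,4)$ into two triangles; let $T_L$ be the left one, whose minimal arc system is $\A_{T_L} = \{E_1,E_2,X'\}$, cyclically ordered in that order. The gluing theorem (Thm.~\ref{gluethm}) identifies $\Alg(D^2,A)$ with an $\aprod$-product of $\Alg(T_L,\A_{T_L})$ and $\Alg(T_R,\A_{T_R})$, so the expression for $X'$ can be extracted from the minimal arc algebra of $T_L$ alone.

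Inside $\Alg(T_L,\A_{T_L})$, the cyclic convolution relation \eqref{eq:cyc1} applied to the arc $X'$ (with cyclic index $i=3$ so that $A_{i+1}=E_1$, $A_{i+2}=E_2$) gives
\[
X'_{h_L(X')} \;=\; [E_{2,\tau^3\inp{2}},E_{1,\tau^3\inp{1}}]_q \;=\; [E_{2,\,1-h_L(E_1)},E_{1,0}]_q,
\]
where $h_L \colon \A_{T_L}\to\ZZ$ is the foliation data of $T_L$. The inherited foliation at the corner between $E_1$ and $E_2$ is unchanged from the ambient disk (this follows from Eqn.~\eqref{gluefoleq}, since at that corner neither arc meets $X'$), so $h_L(E_1) = \fo(1) = 0$ by Eqn.~\eqref{eq:efol}. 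Hence the right-hand side is exactly $[E_{2,1},E_{1,0}]_q$, and it remains only to verify that $h_L(X')=0$; this is where the explicit tangent directions from the standard form (Def.~\ref{stdef}) enter. Since $X'$ is trivially graded (parallel to $\eta$) and the tangent of $E_1$ at the corner on $a_4$ rotates counterclockwise from vertical by the same amount as $\fo(1)$ records at the corresponding corner, Eqn.~\eqref{indexeqn} yields $i_p(X',E_1)=0$, so $h_L(X')=0$. The identity $X' = [E_{2,1},E_{1,0}]_q$ then follows in $\SAlg(D^2,4)$.

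The argument for $Y'$ is symmetric: cut along $Y'$ into a top triangle $T_T$ with minimal arc system $\{Y',E_2,E_3\}$ (cyclic), and apply \eqref{eq:cyc1} with $i=3$ to $Y'$. Now the relevant foliation value is $h_T(E_2)=\fo(2)=1$, giving $Y'_{h_T(Y')} = [E_{3,\,1-h_T(E_2)},E_{2,0}]_q = [E_{3,0},E_{2,0}]_q$. Checking that $h_T(Y')=0$ uses the standard-form grading of $Y'$ (the shortest counterclockwise path from vertical to horizontal) and the tangent of $E_2$ at the corner on $a_1$.

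The main obstacle is purely bookkeeping: computing the foliation data $h_L, h_T$ of the two triangles from Def.~\ref{stdef} and confirming that the \emph{graded} arcs $X',Y'$ specified there sit in homological degree zero (i.e.\ $h_L(X')=0$ and $h_T(Y')=0$). Once this is done, the cyclic convolution relation of Def.~\ref{halldiskdef} in the triangle produces the stated commutator directly, with no further manipulation needed.
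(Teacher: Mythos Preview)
Your proposal is correct and follows essentially the same route as the paper: pass to the triangle containing $E_1,E_2,X'$ (resp.\ $E_2,E_3,Y'$), read the foliation data off from the standard form, and then apply the convolution relation \eqref{eq:cyc1} of the minimal arc algebra of that triangle. The only cosmetic difference is that the paper invokes Cor.~\ref{fdiskembcor} directly to embed the triangle algebra into $\SAlg(D^2,4)$, whereas you route the same embedding through the gluing theorem and the $\aprod$-product description of $\Alg(D^2,A)$; these give the same monomorphism, so nothing is gained or lost either way. One small slip: in your $Y'$ paragraph, the cyclic indexing and the choice ``$i=3$'' only match if you take $F_1=E_2$, $F_2=E_3$, $F_3=Y'$ (so that $Y'$ sits at position $3$); your formula $Y'_{h_T(Y')}=[E_{3,1-h_T(E_2)},E_{2,0}]_q$ is then exactly what \eqref{eq:cyc1} gives, so the computation is fine once the labeling is made consistent.
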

\begin{proof}
The identity $X' = [E_{2,1},E_{1,0}]_q$, must hold in the triangle
\begin{equation}\label{eq:defsf}
F_1 := E_1,\quad F_2 := E_2\conj{ and } F_3 := X'
\end{equation}
which is contained in the standard form of the disk $(D^2,4)$, see Def. \ref{stdef}. The foliation data is given by
\[
f(1)=0,\quad f(2)=1 \conj{ and } f(3)=0.
\]
Within this disk $(D^2,3)$, Theorem \ref{minarcthm} implies the convolution relation
\begin{equation}\label{eq:idsf}
F_{3,1} = [F_{2,2-f(3)-f(1)},F_{1,1-f(3)}]_q = [F_{2,2},F_{1,1}]_q.
\end{equation}
By Cor. \ref{fdiskembcor}, the inclusion of the triangle into the disk $(D^2,4)$ induces a monomorphism determined on generators by Eqn. \eqref{eq:defsf}. The image of the identity \eqref{eq:idsf} under this map is $X' = [E_{2,1},E_{1,0}]_q$ after shifting by $\sigma$.
 The proof that $Y' = [E_{3,0},E_{2,0}]_q$ is similar and omitted.
\end{proof}

\begin{lemma}\label{lemma:gcurves}
The graded resolutions of $X'$ and $Y'$ are as follows.
\begin{equation}
Y'\# X' = E_{3} \sqcup E_{1} \conj{ and }\sigma Y' \# X' = \sigma E_2 \sqcup \sigma E_4
\end{equation}
\end{lemma}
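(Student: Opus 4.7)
The plan is a direct geometric computation in the standard form of Def.~\ref{stdef}. First, I would locate the unique intersection point $p$ at the center of the disk and compute $i_p(Y',X') = 1$: since $\tilde{X'}(p)$ is trivial (the foliation is parallel to $X'$), $\tilde{Y'}(p)$ is a counterclockwise rotation by $\pi/2$, and $\kappa$ (the shortest counterclockwise path from $\dot{Y'}$ horizontal to $\dot{X'}$ vertical) is also a counterclockwise rotation by $\pi/2$; their composition is a counterclockwise rotation by $\pi$, representing the generator of $\pi_1(\PP(T_pS))$.

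For the first identity, by Def.~\ref{def:resolutions} and $i_p(Y',X')=1$, the curve $Y'$ turns right at $p$ in the resolution $Y'\# X'$. Making the picture explicit: the left half of $Y'$ (terminating on $a_1$) joins with a right turn at $p$ to the bottom half of $X'$ (terminating on $a_4$), producing a graded arc with endpoints on $a_1$ and $a_4$ that is isotopic to the boundary arc $E_1$; symmetrically, the right half of $Y'$ (on $a_3$) joins the top half of $X'$ (on $a_2$), producing an arc isotopic to $E_3$. The induced gradings are the canonical ones on $E_1$ and $E_3$ because the surgery smoothly concatenates $\tilde{Y'}$ and $\tilde{X'}$ through $p$, and at the endpoints these paths limit to the standard gradings of the boundary arcs.

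For the second identity, replacing $Y'$ by $\sigma Y'$ composes $\tilde{Y'}$ with a counterclockwise rotation by $\pi$, which flips the canonical turning rule at $p$: the curve $\sigma Y'$ must now turn left in order to keep the induced grading continuous. The resolution therefore connects the left half of $\sigma Y'$ (on $a_1$) to the top half of $X'$ (on $a_2$), giving an arc isotopic to $E_2$, and the right half of $\sigma Y'$ (on $a_3$) to the bottom half of $X'$ (on $a_4$), giving an arc isotopic to $E_4$. The extra full rotation carried by $\tilde{\sigma Y'}$ propagates across the smoothed surgery and persists along each resolved component, producing a $\sigma$-shift in each grading, so $\sigma Y' \# X' = \sigma E_2 \sqcup \sigma E_4$.

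The main obstacle is the grading bookkeeping in the second identity: one must verify that the $\sigma$-shift of $Y'$ distributes uniformly onto both components of the topological resolution (rather than, say, landing entirely on one). This should follow from the local model of Lagrangian surgery in \cite{HKK}, combined with the identities $i_p(c_1,c_2)+i_p(c_2,c_1)=1$ and $i_p(c_1[n],c_2[m])=i_p(c_1,c_2)+n-m$ from Eqn.~\eqref{indexpropeqn}, which together force the grading path to absorb the additional $\sigma$ contribution on each outgoing branch at $p$.
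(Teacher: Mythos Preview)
The paper states this lemma without proof, so your attempt is being compared against an implicit ``direct geometric verification in standard form,'' which is exactly the approach you take.

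Your argument for the first identity is correct: you compute $i_p(Y',X')=1$ accurately, apply the turn-right rule from Def.~\ref{def:resolutions}, and identify the two components as $E_1$ and $E_3$ with the correct induced gradings.

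The second identity has a real gap, and it is not the one you flag. You claim that applying $\sigma$ to $Y'$ ``flips the canonical turning rule'' so that $\sigma Y'$ turns left. But by Eqn.~\eqref{indexpropeqn},
\[
i_p(\sigma Y', X') \;=\; i_p(Y',X') + 1 \;=\; 2,
\]
not $0$. Definition~\ref{def:resolutions} only supplies a canonically graded resolution when the index is $1$ (in one order), equivalently $0$ (in the other); an index of $2$ lies outside its scope, so neither turning rule applies as stated. In particular, your heuristic that a single $\sigma$ ``flips'' index $1$ to index $0$ is false --- it moves it to $2$. (Also, the paper's own remark just before Lemma~\ref{lemma:xpyp} says $\sigma$ acts by \emph{clockwise} rotation by $\pi$, not counterclockwise, though this does not change the index computation via \eqref{indexpropeqn}.)

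Consequently, the expression $\sigma Y' \# X'$ is not literally an instance of Def.~\ref{def:resolutions}; the paper is using the notation in an extended sense, and the content of the second identity is precisely to pin down which graded curve it names. One way to make your argument rigorous is to avoid invoking the surgery at index $2$ altogether: observe that $i_p(\sigma X', Y') = i_p(X',Y') + 1 = 1$, so $\sigma X' \# Y'$ \emph{is} defined by Def.~\ref{def:resolutions}; carry out the turn-right computation there (this gives the $E_2 \sqcup E_4$ topological type), track the gradings, and then use the global shift $\sigma(A\# B) = (\sigma A)\#(\sigma B)$ to relate the result to the pair $(\sigma Y', X')$ as written. Your concluding paragraph correctly identifies that the $\sigma$-distribution over the two components is the crux, but the route through an index where Def.~\ref{def:resolutions} actually applies is what is missing.
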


The proposition below combines the above results with the local computation in Lemma \ref{skeinthmthm}.
\begin{prop}\label{cor:invariantskein}
In the Hall algebra  $\SAlg(D^2,4)$, the curves $X'$ and $Y'$ satisfy the relation below.
\begin{equation}\label{eq:skeinint}
[X',Y']_1 = \left\{
\begin{array}{cl}
(q-q^{-1}) X'\#Y' & \mathrm{if }\,\, i(X',Y')=1\\
-(q-q^{-1}) X'\#Y' & \mathrm{if }\,\, i(X',Y')=0\\
0&\mathrm{otherwise}
\end{array}\right.
\end{equation}
\end{prop}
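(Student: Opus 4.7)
The plan is to invoke Lemma \ref{skeinthmthm} and translate its formula from explicit products of boundary arcs to intersection-index language, using the geometric identifications provided by Lemmas \ref{lemma:xpyp} and \ref{lemma:gcurves}.

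First I would reduce to standard form (Def.~\ref{stdef}). The discussion following Def.~\ref{stdef} shows that an arbitrary foliation data $h$ on $(D^2,4)$ can be written as an iterated application of the shift automorphisms $\sigma_i$ applied to $e_0$, and each $\sigma_i$ induces an equivalence of Fukaya categories. Since the formula in the proposition is stated purely in terms of intersection indices and graded resolutions (both of which are preserved by these equivalences), it suffices to work with $h=e_0$.

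Second, I would identify the actors. Lemma \ref{lemma:xpyp} gives $X' = [E_{2,1},E_{1,0}]_q$ and $Y' = [E_{3,0},E_{2,0}]_q$, and with $h(1)=0$ and $h(2)=1$ these match $X$ and $Y$ of Lemma \ref{skeinthmthm} identically. That lemma therefore yields
\[
[X',\sigma^{\ell}Y']_1 = (q-q^{-1})\delta_{\ell,1}\, E_{2,1}E_{4,1} \;+\; (q^{-1}-q)\delta_{\ell,0}\, E_{1,0}E_{3,0},
\]
which is nonzero only for $\ell\in\{0,1\}$. By Lemma \ref{lemma:gcurves}, the two surviving monomials are, respectively, $\sigma Y'\#X'$ and $Y'\#X'$ (the constituent boundary arcs commute by the far-commutativity relation in Def.~\ref{halldiskdef}, so their ordering in the algebra is irrelevant).

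Third, I would translate $\ell$ into the intersection index. Applying Eqn.~\eqref{indexeqn} to the explicit gradings prescribed in Def.~\ref{stdef} computes $i_p(X',Y')$ directly, and then Eqn.~\eqref{indexpropeqn} propagates this value to $i_p(X',\sigma^{\ell}Y')$ for all $\ell\in\ZZ$. The two nonzero values $\ell=0,1$ will correspond precisely to the two cases $i(X',Y')\in\{0,1\}$ in the proposition, with the respective signs $-(q-q^{-1})$ and $+(q-q^{-1})$ matching up. The remaining values of $\ell$ give intersection index outside $\{0,1\}$, for which Lemma \ref{skeinthmthm} already gives zero. Finally, Def.~\ref{def:resolutions} identifies $X'\#Y'$ with $Y'\#X'$ (respectively with $\sigma Y'\#X'$) as graded curves in the two nonzero cases, which completes the match with Eqn.~\eqref{eq:skeinint}.

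The main obstacle will be the bookkeeping of grading conventions: confirming that the shift $\sigma^{\ell}$ appearing in Lemma \ref{skeinthmthm} corresponds, via Eqn.~\eqref{indexpropeqn}, to the claimed intersection index value, and verifying that the signs and the identification of $X'\#Y'$ with the appropriate graded resolution of Lemma \ref{lemma:gcurves} are compatible with Def.~\ref{def:resolutions}. This reduces to carefully computing a couple of loops in $\pi_1(\PP(T_p S))$ from the standard-form data.
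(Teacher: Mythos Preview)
Your proposal is correct and follows essentially the same approach as the paper: reduce to standard form, identify $X',Y'$ with the $X,Y$ of Lemma~\ref{skeinthmthm} via Lemma~\ref{lemma:xpyp}, read off the nonzero cases, and use Lemma~\ref{lemma:gcurves} together with Eqn.~\eqref{indexpropeqn} to translate into intersection-index language. The paper's write-up is terser (it verifies the $\ell=0$ case and dismisses the rest as ``similar after suspending $Y'$''), while you spell out the $\ell$-to-index correspondence and the $X'\#Y'$ versus $Y'\#X'$ identification more explicitly, but the logic is identical.
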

\begin{proof}
  Using the foliation data in standard form Eqn. \eqref{eq:efol}, Lemma \ref{skeinthmthm} becomes
  \begin{equation}\label{stdformeqn}
    [X,\s^\ell Y]_1 = (q-q^{-1})\d_{\ell,1} E_{2,1} E_{4,1} - (q-q^{-1})\d_{\ell,0} E_{1,0} E_{3,0}.
\end{equation}    
By Lemma \ref{lemma:xpyp}, $X' = [E_{2,1}, E_{1,0}]_q$ and $Y' = [E_{3,0}, E_{2,0}]_q$ so that $X = X'$ and $Y=Y'$ by the assumptions of Lemma \ref{skeinthmthm}. Therefore $[X',Y']_1 = [X,\s^0Y]_1 = - (q-q^{-1}) E_{1,0} E_{3,0}$ using Eqn. \eqref{stdformeqn}. On the other hand,  $- (q-q^{-1}) E_{1,0} E_{3,0} = -(q-q^{-1})Y'\# X'$ by Lemma \ref{lemma:gcurves}. Since $i(X',Y')=0$ in standard form, the second equation in \eqref{eq:skeinint} above must hold. 

The rest of the argument is similar after suspending $Y'$ and using Eqn. \eqref{indexpropeqn}.

\end{proof}

There is also boundary skein analogue of the proposition above.

\begin{corollary}\label{cor:boundaryskein}
Suppose $X$ and $Y$ are graded arcs in a disk which intersect the same marked interval and that $X$ is clockwise from $Y$. Then in the algebra $\SAlg(D^2,m)$,
\begin{equation}
XY - q^{r(Y,X)} YX = 
\left\{ \begin{array}{cl}
X\# Y & \mathrm{if }\,\, i(Y,X)=1\\
0     &  \mathrm{if }\,\, i(Y,X)\ne 1\\
\end{array}\right.
\end{equation}
where 
\begin{equation}\label{reqn}
r(Y,X) := \left\{ \begin{array}{cl} 
(-1)^{i(Y,X)}&\mathrm{if }\,\, i(Y,X) \geq 1 \\
(-1)^{1+i(Y,X)}&\mathrm{if }\,\, i(Y,X) < 1. 
\end{array}\right.
\end{equation}
\end{corollary}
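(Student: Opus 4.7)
The plan is to deduce Corollary~\ref{cor:boundaryskein} from the explicit presentation of the Hall algebra of the disk, following the same functoriality-plus-local-computation pattern that was used for Theorem~\ref{skeinrelcor}. First I would use Corollary~\ref{skdiskembcor} to enclose $X$, $Y$, and their resolution $X\#Y$ inside a small sub-disk of $(S,M)$ containing only the marked intervals on which these arcs end. Because $X$ and $Y$ share a marked interval and $X\#Y$ connects their two remaining endpoints, this sub-disk can be taken to be a triangle $(D^2,3)$ carrying $X$, $Y$, and $X\#Y$ as its three boundary arcs up to grading shifts, so by Theorem~\ref{thm:skeiniso} it suffices to prove the identity inside $\SAlg(D^2,3) \cong \Alg(D^2,\A_3)$.

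Inside the triangle the presentation from Definition~\ref{halldiskdef} simplifies drastically: the far-commutativity \eqref{r3:itm} is vacuous and the self-extensions \eqref{r1:itm} play no role once $X \ne Y$, leaving only the adjacent commutativity and cyclic convolution pieces of \eqref{r2:itm}. After a common suspension of $X$ and $Y$ to normalize one of them to the canonical form $E_{i,h(i)}$, the other becomes $E_{i+1,k}$ for some $k \in \ZZ$, and the two cases of the corollary should come from reading off the $k \ne 1$ and $k = 1$ pieces of \eqref{r2:itm}: the adjacent commutativity yields the vanishing of the $q$-commutator when $i(Y,X)\ne 1$, while the convolution identifies the third boundary arc of the triangle with $X\#Y$ when $i(Y,X)=1$.

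To finish I would compute $i(Y,X)$ as a function of $k$ and the foliation data using Equation~\eqref{indexpropeqn}, obtaining a linear relation of the form $i(Y,X) = 2h(i) - k$. This parameterization makes $k > 1$, $k = 1$, $k < 1$ correspond to three ranges of $i(Y,X)$, and a direct substitution verifies that the exponent $(-1)^{k+1}$ or $(-1)^{k}$ appearing in \eqref{r2:itm} matches $r(Y,X)$ as defined in \eqref{reqn}. The hard part is not the algebra but the careful reconciliation of conventions: the direction of ``clockwise from'' in Definition~\ref{def:resolutions}, the cyclic labelling of boundary arcs in Notation~\ref{disknotation}, and the canonical grading on the boundary resolution $X\#Y$ must all be aligned with the signs and shifts appearing in the Hall algebra relations. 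Once these are pinned down the result reduces to a short direct calculation in the spirit of Lemma~\ref{skeinthmthm}.
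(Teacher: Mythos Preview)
Your approach is correct in spirit and arguably more direct than the paper's, but the two routes are closely related rather than genuinely different. The paper stays inside the standard-form square $(D^2,4)$ of Definition~\ref{stdef} and simply reinterprets the identities already established there: it notes that the internal arcs $X'$ and $Y'$ are themselves boundary resolutions, $X' = E_1\#\sigma E_2$ and $Y' = E_3\# E_2$, so that the relation $X' = [E_{2,1},E_{1,0}]_q$ from Lemma~\ref{lemma:xpyp} (which was proved by passing to a triangle inside the square) \emph{is} the $i(Y,X)=1$ case of the boundary skein, while the vanishing cases are the adjacent-commutativity part of \eqref{r2:itm}. You instead cut out the triangle first via Corollary~\ref{skdiskembcor} and read off \eqref{r2:itm} directly in $\Alg(D^2,\A_3)$; this bypasses the square entirely and makes the reduction from a general $(D^2,m)$ explicit, which the paper's one-line proof leaves to the reader. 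Both arguments bottom out in the same \eqref{r2:itm} relations for $m=3$.

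One bookkeeping point: your formula $i(Y,X)=2h(i)-k$ is not the cleanest way to track the correspondence. If you identify $X$ and $Y$ with the boundary arcs $E_{i+1}$ and $E_i$ of the sub-triangle \emph{in their given gradings} (no extra shift), then by Notation~\ref{disknotation} the triangle's foliation datum at that vertex is exactly $h(i)=i(Y,X)$; a common suspension by $\sigma^{h(i)}$ then puts $Y$ at $E_{i,h(i)}$ and $X$ at $E_{i+1,k}$ with $k=h(i)=i(Y,X)$. With that normalisation the special value $k=1$ in \eqref{r2:itm} and in Lemma~\ref{cyclicprop2} lines up with $i(Y,X)=1$ on the nose, and the parity of the $q$-exponent matches $r(Y,X)$ after sorting out which of $E_i,E_{i+1}$ is clockwise from the other (this is the convention-chasing you already flagged).
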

\begin{proof}
  The details are the same as Proposition \ref{cor:invariantskein} above except the equations
\[
X' =  E_1 \# \sigma E_2\conj{ and } Y' = E_3\# E_2
\]
are used in place of Lemma \ref{lemma:gcurves}. To see that they hold, one checks that $\iota(E_1,\sigma E_2) = 1$ and observes that the resolution in Definition \ref{def:resolutions} is isotopic to $X'$ as graded curves. 

\end{proof}

\begin{remark}\label{rmk:skeinskein}
We expect that the relations in Proposition \ref{cor:invariantskein} and Corollary \ref{cor:boundaryskein} can be used to define a graded version of the HOMFLY-PT skein algebra, but we will leave details to later work.
\end{remark}

Having established the graded skein relation, Eqn. \eqref{eq:skeinint}, in
the disk $(D^2,4)$, the theorem below extends this result to an arbitrary
finitary surface using the embedding criteria established earlier in Section
\ref{hallsurfsec}.

\begin{thm}\label{skeinrelcor}
Suppose that $a$ and $b$ are graded arcs 
whose endpoints intersect four distinct marked intervals in a finitary marked surface $(S,M)$. 
If $a$ and $b$ intersect uniquely at a point $p$ and $i_p(a,b) = 1$ then the graded HOMFLY-PT skein
relation
$$[a,b]_1 = (q-q^{-1})a\#_p b$$
holds in the Fukaya Hall algebra $\SAlg(S,M)$. 
\end{thm}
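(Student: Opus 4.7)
The plan is to reduce the statement to the local computation already performed in Proposition \ref{cor:invariantskein} by enclosing $a$ and $b$ in a small disk neighborhood and invoking the functoriality established in Corollary \ref{skdiskembcor}. Since $a$ and $b$ meet transversely at a single interior point $p$ and have four endpoints landing on four \emph{distinct} marked intervals $m_1,m_2,m_3,m_4$ of $M$, a regular neighborhood $N$ of $a\cup b$ in $S$ is homeomorphic to a disk, and we may thicken $N$ to a closed subdisk $D\subset S$ whose boundary meets $\partial S$ in exactly four short subintervals sitting inside $m_1,\dots,m_4$. Marking these four subintervals turns $D$ into a marked disk $(D^2,4)$, and the inclusion $\iota\co (D^2,4)\to(S,M)$ is an embedding of marked surfaces.

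First I would verify the hypothesis of Proposition \ref{ffprop}: because the four marked subintervals of $(D^2,4)$ lie in four distinct components of $M$ by construction, the induced map $\iota_*\co \pi_0(M_{D^2})\to\pi_0(M)$ is injective. Corollary \ref{skdiskembcor} then produces a monomorphism
\[
\iota_*\co \SAlg(D^2,4)\longrightarrow \SAlg(S,M).
\]
Next, I would choose a full arc system on $(D^2,4)$ in which $a$ and $b$ appear as internal arcs. Up to isotopy and the suspension action recorded after Definition \ref{stdef} (which shows every foliation on $(D^2,4)$ may be put in standard form after shifts), we may assume the foliation on $D$ is in standard form and identify $\iota_*(a)=X'$, $\iota_*(b)=Y'$, where $X'$, $Y'$ are the two internal arcs of Lemma \ref{lemma:xpyp}. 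Here the condition $i_p(a,b)=1$ ensures that the geometric picture agrees with the standard form of Definition \ref{stdef} rather than with its opposite; any remaining discrepancy of gradings is absorbed by applying $\sigma$-shifts inside $\SAlg(D^2,4)$.

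The conclusion is then immediate from Proposition \ref{cor:invariantskein}: in $\SAlg(D^2,4)$ we have $[X',Y']_1=(q-q^{-1})X'\#Y'$ when $i(X',Y')=1$, and by Lemma \ref{lemma:gcurves} the resolution $X'\#Y'$ coincides, after the same grading conventions, with the Lagrangian surgery $a\#_p b$ viewed inside $D$. Pushing forward through the monomorphism $\iota_*$ gives the desired identity in $\SAlg(S,M)$.

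The main obstacles I anticipate are bookkeeping rather than conceptual. The first is arranging the neighborhood $D$ so that the foliation restricted from $(S,M)$ is homotopic to one in standard form and so that the graded structure on $a$ and $b$ matches that of $X'$ and $Y'$ on the nose; this is handled by applying the suspension automorphisms $\sigma_i$ described after Definition \ref{stdef}, which adjust the foliation data at the expense of shifts that can be absorbed into the generators. The second is making sure the Lagrangian surgery $a\#_p b$ computed inside $D$ agrees with the one computed inside $S$, which is automatic since the surgery is performed in a neighborhood of $p$ contained in $D$, and $\iota$ is an embedding.
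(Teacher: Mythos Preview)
Your proposal is correct and follows essentially the same approach as the paper: enclose $a\cup b$ in a regular neighborhood disk $(D^2,4)$, use the four-distinct-intervals hypothesis to verify the injectivity condition of Proposition \ref{ffprop}, apply Corollary \ref{skdiskembcor} to obtain the monomorphism $\iota_*\co\SAlg(D^2,4)\to\SAlg(S,M)$, and then push forward the relation from Proposition \ref{cor:invariantskein}. The paper's proof is terser, leaving the standard-form and grading-matching remarks implicit (they are handled in the material preceding Proposition \ref{cor:invariantskein}), but the logical skeleton is identical.
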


\begin{proof}
  The regular neighborhood $N(a\cup b)\subset S$ of the union of the arcs  
  $a$ and $b$ is a graded disk $(D^2,\A_4)$. The four marked intervals of
  this disk contain the endpoints of $a$ and $b$. This
  observation determines an inclusion
  $\iota : (N(a\cup b), 4, \A_4) \to (S, M, A)$ of marked surfaces where $A$
  is any arc system containing the boundary arcs of $N(a\cup b)$. Since this map
takes the four marked intervals of the disk to four distinct marked intervals of $M$, it induces an injection between the path components of the marked intervals.
Corollary \ref{skdiskembcor} implies that there is an
  associated monomorphism 
  $\iota_* : \SAlg(N(a\cup b),4) \to \SAlg(S,M)$. 
  Proposition \ref{cor:invariantskein}
  above shows that the skein relation holds in the Hall algebra
  $\SAlg(N(a\cup b),4)$ of the disk, so it must also hold in the Fukaya Hall algebra
  $\SAlg(S,M)$ of the surface $S$.
\end{proof}

\section*{Appendices}\label{appsec}
Appendix I relates the elements $z_{(a,b),n}\in \tDHa(A_{m-1})$ to Lusztig's
PBW basis. Appendix II contains an 
index of notation, a summary of
relations which hold among the elements $z_{(a,b),n}$ and some remarks about
$q$-commutators and $q$-Jacobi relations.

\subsection*{Appendix I: Relation to the PBW basis}\label{lusztigsec}
\def\U{U}
Lusztig has defined a Poincar\'e-Birkhoff-Witt (PBW) basis of the quantum
group $\U_v(\g)$ using a braid group action on $\U_v(\g)$ together with a
choice of reduced expression for the longest word in the Weyl group (see
 \cite{LQG}). In this section we recall an
alternative description Brundan gave for this basis in type $A$, for a
particular choice of reduced expression (see \cite[Sec. 3]{Bru98}). This
description shows that in the interpretation of $\U_v(\mathfrak{sl}_m)^-$ as
the (degree 0 part of the) Hall algebra of the Fukaya category of a disk,
Lusztig's PBW basis generators correspond to arcs between marked segments of
the boundary. We show that the HOMFLY skein relations in the Hall algebra of
the Fukaya category of the disk allow Lusztig's PBW generators to be
reordered, which gives a simple presentation for
$\U_v(\mathfrak{sl}_m)^-$. Some of these relations have appeared in the
literature before (see \cite{Bru98, Ros89}), but we include them since they
have a nice interpretation in terms of the Fukaya category, and since they
look particularly simple in the present notation.

Brundan uses the following notation for the quantum group $\U_v(\mathfrak{sl}_m)^-$: it is generated by elements $F_i$ for $1 \leq i \leq m$, with the relations
\begin{align*}
F_i^2F_j - (v+v^{-1})F_iF_jF_i + F_jF_i^2 = 0 &\conj{if} \lvert i-j\rvert =1,\\\quad \quad F_jF_i=F_iF_j &\conj{if} \lvert i-j\rvert \geq 2
\end{align*}
We now recall Brundan's characterization of Lusztig's PBW generators.
\begin{lemma}[{\cite[Lemma 3.5]{Bru98}}]\label{lemma:brundan}
Let $F'_{i,j}$ (for $1 \leq i < j \leq m$) be Lusztig's PBW generators, and rescale them by $F_{i,j} := v^{1-j+i}F'_{i,j}$. Then the $F_{i,j}$ are uniquely characterized by the identities
\begin{align*}
F_{i,i+1} &= F_i\\
F_{i,j} &= [F_{i+1,j},F_i]_{v^{-1}}&\mathrm{for }\,\, 1 \leq i < j-1 < m
\end{align*}
\end{lemma}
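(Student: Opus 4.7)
The plan is to unpack Lusztig's original definition of the PBW generators with respect to a carefully chosen reduced expression for the longest element of the Weyl group, then verify the claimed recursion by an inductive computation using the braid group action on $\U_v(\mathfrak{sl}_m)$. Throughout, the uniqueness claim in the lemma is the easy part: the recursion together with the base case $F_{i,i+1}=F_i$ determines all $F_{i,j}$ unambiguously, so it suffices to check that Lusztig's generators (after Brundan's rescaling) satisfy the recursion.

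First I would fix the reduced expression $w_0 = (s_{m-1})(s_{m-2}s_{m-1})(s_{m-3}s_{m-2}s_{m-1})\cdots(s_1 s_2 \cdots s_{m-1})$ of the longest element in $S_m$. For this expression, the associated enumeration of the positive roots of $\mathfrak{sl}_m$ has $\alpha_i+\alpha_{i+1}+\cdots+\alpha_{j-1}$ appearing in a predictable position, so that Lusztig's PBW generator $F'_{i,j}$ is given by applying a specific initial product of braid operators to a simple generator $F_k$. This choice makes the enumeration compatible with the recursion on $j-i$.

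Next I would invoke the explicit formulas for the action of the braid operator $T_s$ on the negative generators: $T_s(F_i) = F_i$ when $|s-i|\ge 2$, and $T_s(F_i)$ is a rescaled $v$-commutator of $F_s$ and $F_i$ when $|s-i|=1$. Iterating these formulas along the initial product determined in the previous step expresses $F'_{i,j}$ as an iterated $v$-commutator of the simple $F_k$'s, and one reads off that $F'_{i,j}$ differs from $[F'_{i+1,j},F_i]_{v^{-1}}$ only by a scalar power of $v$. Brundan's rescaling $F_{i,j} = v^{1-j+i}F'_{i,j}$ is designed exactly to cancel these scalars, so the recursion $F_{i,j} = [F_{i+1,j},F_i]_{v^{-1}}$ drops out after a short inductive bookkeeping argument on $j-i$.

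The main obstacle is the bookkeeping of scalar factors: one must track the precise $v$-shifts produced by the braid group formulas and verify that they telescope correctly under the rescaling $v^{1-j+i}$. A cleaner route, avoiding direct manipulation of $T_s$, is to take the lemma itself as the definition of the $F_{i,j}$ and instead verify a posteriori (using the quantum Serre relations) that they satisfy Levendorskii--Soibelman--type straightening identities characterizing Lusztig's basis; this is essentially the strategy in \cite{Bru98}, and it avoids any explicit reference to the reduced expression beyond the initial choice.
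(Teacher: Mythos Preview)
The paper does not prove this lemma at all: it is quoted verbatim as \cite[Lemma 3.5]{Bru98} and used as a black box in Appendix~I. There is therefore nothing in the paper to compare your argument against. Your sketch is a plausible outline of how Brundan's proof goes (fix a reduced word, unwind the braid operators, track the $v$-powers), and your closing remark that one could instead take the recursion as the definition and verify straightening relations is in fact closer to how the paper \emph{uses} the result rather than how Brundan proves it.
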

\begin{remark}
Three remarks are in order here.
\begin{enumerate}
\item For brevity we have suppressed the choice of reduced word needed in the above statement  (for details see \cite[Sec.\ 3]{Bru98}). 
\item Lusztig's braid group action does \emph{not} preserve the subalgebra $\U_v({\mathfrak{sl}_m})^- \subset \U_v(\mathfrak{sl_m})$. It is a nontrivial part of the above lemma that the elements $F_{i,j}$ actually are contained in the subalgebra $\U_v(\mathfrak{sl}_m)^-$, even though they are originally defined in terms of the braid group action. 
\item We have rescaled Lusztig's PBW generators to simplify the powers of $v$ in the relations they satisfy.
\end{enumerate}
\end{remark}

\begin{proposition}[{\cite[4.5]{Lus90}}]\label{prop:lusztigpbw}
The set 
\[
\left\{ \prod_{1\leq i < j \leq m} F_{i,j}^{N_{(i,j)}} \,\,\mid\,\, N_{(i,j)} \in \NN \right\}
\]
is a basis for $\U_v(\mathfrak{sl}_m)^-$ over $\ZZ[v,v^{-1}]$. (The order of the terms of the product is the lexicographic order: $(i,j) < (i',j')$ if $i < i'$ or if $i=i'$ and $j < j'$.)
\end{proposition}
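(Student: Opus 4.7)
The plan is to derive this proposition directly from Lusztig's theorem \cite[4.5]{Lus90}. Lusztig's construction produces, for each reduced expression of the longest word $w_0$ in the Weyl group $S_m$, an ordered PBW basis of $\U_v(\mathfrak{sl}_m)^-$ consisting of products of generators $F'_\beta$ indexed by positive roots $\beta$, raised to arbitrary non-negative integer powers. The positive roots of $\mathfrak{sl}_m$ are in bijection with pairs $(i,j)$ satisfying $1\leq i<j\leq m$, via $(i,j)\mapsto \alpha_i+\alpha_{i+1}+\cdots+\alpha_{j-1}$, so the generators $F'_{i,j}$ are indexed by such pairs.

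First I would fix the reduced expression of $w_0$ used by Brundan in \cite[\S3]{Bru98}, and check that the ordering on positive roots induced by this reduced expression (via the standard convex ordering coming from the sequence of roots $s_{i_1}\cdots s_{i_{k-1}}(\alpha_{i_k})$) coincides with the lexicographic ordering on pairs $(i,j)$. Then Lusztig's theorem immediately yields that the monomials $\prod F'^{N_{(i,j)}}_{i,j}$, taken in lexicographic order, form a $\ZZ[v,v^{-1}]$-basis of $\U_v(\mathfrak{sl}_m)^-$.

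Second, I would use Lemma \ref{lemma:brundan} to pass from Lusztig's generators $F'_{i,j}$ to the rescaled generators $F_{i,j} = v^{1-j+i} F'_{i,j}$. Since $v^{1-j+i}$ is a unit in $\ZZ[v,v^{-1}]$, replacing each $F'_{i,j}$ by $F_{i,j}$ only scales each basis element by a unit, and hence does not affect the basis property: the ordered monomials in $F_{i,j}$ also form a $\ZZ[v,v^{-1}]$-basis.

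The main obstacle is bookkeeping: verifying that Brundan's chosen reduced expression produces exactly the lexicographic ordering on pairs $(i,j)$, rather than some other convex ordering on positive roots. This is a combinatorial exercise in type $A$ root systems and does not involve any Hall algebra input. Once this ordering matches, the rest of the argument is a one-line invocation of Lusztig's theorem together with the unit-rescaling observation.
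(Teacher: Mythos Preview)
Your proposal is correct and matches the paper's approach: the paper does not give a proof of this proposition at all, but simply cites \cite[4.5]{Lus90} as its source. Your outline supplies exactly the bookkeeping one needs to extract the stated version from Lusztig's general theorem --- matching Brundan's reduced expression to the lexicographic order on $(i,j)$, and noting that the rescaling $F_{i,j}=v^{1-j+i}F'_{i,j}$ by units of $\ZZ[v,v^{-1}]$ preserves the basis property --- and there is nothing more to do.
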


We now show that the HOMFLY-PT skein relations give a PBW presentation of $\U_q(\mathfrak{sl}_m)^-$ in terms of the generators $F_{i,j}$. In other words, these relations allow arbitrary products of the $F_{i,j}$ to be reordered lexicographically in an obvious way. The first relation is the HOMFLY-PT skein relation in the interior of the disk, and the last three are the boundary skein relations.
\begin{lemma}
The algebra $\U_q(\mathfrak{sl}_m)^-$ has a presentation with generators $F_{i,j}$ for $1 \leq i < j \leq m$ and the following relations for $a < b < c < d$:
\begin{align}
[F_{a,c}, F_{b,d}]_1 &= (v^{-1}-v) F_{a,d}F_{b,c}\label{eq:Lsa}\\
[F_{a,b}, F_{c,d}]_1 &= 0\label{eq:Lsb}\\
[F_{a,d}, F_{b,c}]_1 &= 0\label{eq:Lsc}\\
[F_{b,c}, F_{a,b}]_{v^{-1}} &= F_{a,c}\label{eq:Lsdhahaha}\\
[F_{a,c},F_{b,c}]_{v^{-1}} &= 0\label{eq:Lse}\\
[F_{a,c}, F_{a,b}]_{v^{-1}} &=0\label{eq:Lsf}
\end{align}
\end{lemma}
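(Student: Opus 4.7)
The plan is to transport the problem into the Hall algebra $\tDHa(A_{m-1})$, identify Brundan's rescaled PBW generators $F_{i,j}$ with the elements $z_{(i,j),0}$ of Definition~\ref{usefulcor} under the dictionary $v=q^{-1}$, and read off the presentation from the skein-type relations already established for the $z_{(a,b),n}$'s in Section~\ref{algsec}.  Concretely, setting $v=q^{-1}$ so that $[-,-]_{v^{-1}}=[-,-]_q$, I would send the Chevalley generator $F_i$ to $z_{i,0}$ and prove by induction on $j-i$ that $F_{i,j}=z_{(i,j),0}$.  The inductive step uses Brundan's defining identity (Lemma~\ref{lemma:brundan}) together with the rebracketing Proposition~\ref{orderprop}: since $[z_{k,0},z_{i,0}]_1=0$ whenever $|k-i|\geq 2$ by (\ref{h1:itm}), the left-nested bracket coming from Brundan's recursion equals the right-nested bracket appearing in Definition~\ref{usefulcor}.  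This realises each $F_{i,j}$ as the image, via the equivalence $\phi$ of Proposition~\ref{dereqprop}, of a graded boundary arc in the Fukaya category of the disk.

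Next I would verify the six relations in $\tDHa(A_{m-1})$.  Relation~(\ref{eq:Lsa}) is the first line of Remark~\ref{rmk:quiverskein}; (\ref{eq:Lsb}) is distant commutativity~(\ref{s3:itm}) at $n=k=0$; and (\ref{eq:Lsdhahaha}) is the finger relation~(\ref{s0:itm}).  For the interwoven relation~(\ref{eq:Lsc}), the arcs $F_{a,d}$ and $F_{b,c}$ sit in the disk as disjoint nested Lagrangians whose endpoints occupy four distinct marked intervals, so no boundary paths exist between them and $\Hom^*(F_{a,d},F_{b,c})$ vanishes in both directions; inspecting the twisted Hall structure constants (Definitions~\ref{toenalgdef} and~\ref{dha2def}) then shows that such a pair commutes in $\tDHa$.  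The boundary relations~(\ref{eq:Lse}) and~(\ref{eq:Lsf}) I would deduce by expanding $F_{a,c}=[F_{b,c},F_{a,b}]_{v^{-1}}$ via Brundan's recursion and collapsing the resulting three-term expression using the quantum Serre identities on the adjacent Chevalley generators, working inductively in $c-a$; equivalently, they can be read off from the boundary skein Corollary~\ref{cor:boundaryskein} with the graded lifts determined by the identification above.

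Finally I would show that (\ref{eq:Lsa})--(\ref{eq:Lsf}) present $\U_v(\mathfrak{sl}_m)^-$ by a PBW reordering argument.  Let $R$ denote the quotient of the free $\ZZ[v,v^{-1}]$-algebra on $\{F_{i,j}\}_{1\leq i<j\leq m}$ by these six relations.  Given generators with $(i,j)>(k,l)$ in lex order, a case analysis on the relative configuration of the intervals $[i,j]$ and $[k,l]$ (disjoint, nested, crossing, sharing a left or right endpoint, or chained with $l=i$) uses precisely one of the six relations to rewrite $F_{i,j}F_{k,l}$ as a scalar multiple of $F_{k,l}F_{i,j}$ plus a sum of strictly lex-smaller degree-preserving products.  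Iterating this reduction shows that the lex-ordered PBW monomials span $R$; since Lusztig's Proposition~\ref{prop:lusztigpbw} asserts their linear independence in $\U_v(\mathfrak{sl}_m)^-$, the canonical surjection $R\twoheadrightarrow \U_v(\mathfrak{sl}_m)^-$ is an isomorphism.

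The step I expect to demand the most care is the interwoven relation~(\ref{eq:Lsc}).  The topological vanishing-of-$\Hom$ argument is conceptually clean but must be pushed cleanly through the twisted Hall structure constants and through the arc-system invariance supplied by Corollary~\ref{sheafpropcor}; an alternative algebraic derivation starting from~(\ref{s2:itm}) and using an omni-Jacobi identity to lower the internal degree of $z_{(a,d),1}$ to zero is possible but reproduces the bracket acrobatics of Section~\ref{algsec}.  Tracking signs and which argument of the $v^{-1}$-commutator sits on the left in (\ref{eq:Lse}) and~(\ref{eq:Lsf}) will also require attention in order to match the stated conventions.
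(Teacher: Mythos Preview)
Your proposal is correct and follows essentially the same approach as the paper: identify $F_{i,j}$ with $z_{(i,j),0}$ via the map $F_i\mapsto z_{i,0}$, $v\mapsto q^{-1}$, read off the six relations from the skein-type identities in Section~\ref{algsec} and the disjoint-arcs-commute principle, and then conclude by the PBW reordering argument together with Proposition~\ref{prop:lusztigpbw}. The paper's treatment of \eqref{eq:Lse} and \eqref{eq:Lsf} is terser (it just says ``straightforward modifications of arguments in Theorem~\ref{relsthm}''), and it handles \eqref{eq:Lsc} by the same disjoint-arcs argument you outline, so your more detailed discussion of these points is entirely in line with what the paper is doing.
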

\begin{proof}
We only  sketch the proof since most of the relations have appeared previously. To prove the listed relations we use the map $\varphi: \U_v(\mathfrak{sl}_m)^- \to \tDHa(A_{m})$ defined by
\[
\varphi(v) = q^{-1},\quad \quad \varphi(F_i) = z_{i,0}
\]
It is easy to see that $\varphi$ is injective,
and from Lemma \ref{lemma:brundan} and Proposition \ref{assocprop}, it follows that 
\[
\varphi(F_{i,j}) = z_{(i,j),0}
\]
Relation \eqref{eq:Lsa} is the first relation in Remark \ref{rmk:quiverskein}, and relations \eqref{eq:Lsb} and \eqref{eq:Lsc} follow from the fact that disjoint arcs in the Fukaya category commute in the Hall algebra. Relation \eqref{eq:Lsdhahaha} is Proposition \ref{assocprop} (or Lemma \ref{lemma:brundan}), and relations \eqref{eq:Lse} and \eqref{eq:Lsf} can be proved using straightforward modifications of arguments in Theorem \ref{relsthm}. 

Let $U$ be the quotient of the free algebra on the $F_{i,j}$ modulo the listed relations. So far we showed that there is an algebra map $\varphi: U \to \U_v(\mathfrak{sl}_m)$. 
The listed relations show that any product of the $F_{i,j}$ can be written as a linear combination of products of the $F_{i,j}$ in lexicographical order. Proposition \ref{prop:lusztigpbw} shows that (images under $\varphi$ of) these products form a basis of $\U_v(\mathfrak{sl}_m)$, so $\varphi$ bijects a spanning set to a basis and is therefore an isomorphism. 
\end{proof}

\begin{rmk}
Alternatively, one can use Ringel's theorem to produce an identification
between the PBW basis elements and the generators $z_{(a,b),0}$. The Hall
algebra elements $z_{(a,b),0}$ are precisely those associated to
indecomposable modules in the abelian category $Rep_{\FF_q}(A_{m-1})$.
\end{rmk}

\subsection*{Appendix II: Notation and Miscellanea}

\subsubsection*{Glossary of notation}\label{kcglossarysec}
\noindent
\begin{multicols}{2}
\begin{list}{}{
  \renewcommand{\makelabel}[1]{#1\hfil}
}
\item[$\#_p$] Def. \ref{def:resolutions}
\item[$\aprod_{i,j}$] Def. \ref{alggluedef}
\item[$\fol j k e$] Not. \ref{peternotation}
\item[$\inp{k},\,\inp{k}_h$] Eqn. \ref{knotation}
\item[$\sqcup_{i,j}$] Def. \ref{topgluedef}
\item[$\vnp{\ga}$] Def. \ref{fukcatdef}
\item[${[}a_n,\ldots, a_1{]}_q$] Def. \ref{bracketnotationdef}
\item[$a_{1,2,3}$] Prop. \ref{assprop}
\item[arc] see $(I,c,\tilde{c})$, $I=[0,1]$
\item[arc system] \S\ref{markedsec}
\item[$\a$] Thm. \ref{gluethm}
\item[$\a_{A',A}$] sheaf map, Cor. \ref{sheafpropcor}
\item[$A$] arc system, \S\ref{markedsec}
\item[$A_{m-1}$] Rmk. \ref{dhaamrmk}
\item[boundary arc] \S\ref{markedsec}
\item[boundary path] Def. \ref{fukcatdef}
\item[$\b$] Thm. \ref{gluethm}
\item[$\b_{A,A'}$] sheaf map, Cor. \ref{sheafpropcor}
\item[$\aC$] $\Ainf$-category, Def. \ref{ainfdef}
\item[disk sequence] Def. \ref{fukcatdef}
\item[$(D^2,m)$] See \S\ref{disksec}
\item[$D^\pi(\aC)$] Def. \ref{trianote}
\item[$D^\pi\F(S,A)$] Def. \ref{fukcatdef} \& Def. \ref{trianote}
\item[$\DHa$] Thm. \ref{dhadef}
\item[$\tDHa$] Def. \ref{dha2def}
\item[$\tDHa(A_{m-1})$] Rmk. \ref{dhaamrmk}
\item[$\tDHa(Q)$] Eqn. \eqref{tdhaqdef}
\item[$e$] Eqns. \eqref{gluefoleq}\,\&\,\eqref{eq:gluingfoliation}
\item[$\fo$] foliation data for standard form, \eqref{eq:efol} 
\item[$\eta$] grading, \S\ref{gradedsurfsec}
\item[$\E_i,\,\E_{i,n}$] Not. \ref{disknotation}
\item[$f$] Eqns. \eqref{gluefoleq}\,\&\,\eqref{eq:gluingfoliation}
\item[$\Alg(S,A)$] composition subalgebra, Def. \ref{compalgdef}
\item[foliation data] Not. \ref{disknotation}
\item[full arc system] \S\ref{markedsec}
\item[$\phi$] Proof of Thm. \ref{minarcthm}
\item[$\phi,\,\phi_i$] Prop. \ref{dereqprop}
\item[$F$] free algebra
\item[$F^L_{X,Y}$] Def. \ref{toenalgdef}
\item[$\SAlg(S,M)$] Fukaya Hall algebra, Def. \ref{skdef}
\item[$\F(S,A)$] Fukaya category, Def. \ref{fukcatdef}
\item[$g$] Eqns. \eqref{gluefoleq}\,\&\,\eqref{eq:gluingfoliation}
\item[grading] \S\ref{gradedsurfsec}
\item[$h : \A\to\ZZ$] Not. \ref{disknotation}
\item[$\Ho(\aC)$] Def. \ref{hodef}
\item[$\Hom(X,L)_Y$] Not. \ref{homnote}
\item[$i_p(c,d)$] intersection index, \S\ref{gradedsurfsec}
\item[internal arc] \S\ref{markedsec}
\item[$I$] 1-manifold, \S\ref{gradedsurfsec}
\item[$I$] vertices, \S\ref{latticealgsec}
\item[$(I,c,\tilde{c})$] curve, \S\ref{gradedsurfsec}
\item[$k$] field, often $k=\FF_q$
\item[$\kappa,\,\bar{\kappa}$] Def. \ref{compalgdef} \& Thm. \ref{minarcthm}
\item[$\A,\,\A_m$] Not. \ref{disknotation}
\item[$\mu_d$] Def. \ref{ainfdef}
\item[$M$] marked intervals, \S\ref{markedsec}
\item[$\M(S,M)$] arc system category, Def. \ref{modulidef}
\item[$p(k)$] $(-1)^k$
\item[$\psi$] Proof of Thm. \ref{minarcthm}
\item[$\Pi$] Prop. \ref{karoubiprop}
\item[$\Pi_1$] paths, \S\ref{surfsec}
\item[$Q$] quiver, \S\ref{latticealgsec}
\item[$r(Y,X)$] Cor. \ref{cor:boundaryskein} 
\item[$\Rep_k(Q)$] \S\ref{latticealgsec}
\item[$\s,\,\s^n$] suspension, \S\ref{gradedsurfsec}
\item[$S$] surface, \S\ref{markedsec}
\item[$\partial S$] boundary, \S\ref{markedsec}
\item[$(S,A)$] surface \& arc system, \S\ref{markedsec}
\item[$(S,M)$] surface \& marked intervals, \S\ref{markedsec}
\item[$\Si$] Def. \ref{adddef}
\item[$\tau$] \S\ref{disksec}
\item[$\aT$] triangulated category
\item[$\Tw$] Def. \ref{twdef}
\item[$\ZZ X$] orbit of $X$ under $\ZZ$ action
\item[$z_i,\,z_{i,n}$] Prop. \ref{dhaanprop}
\item[$z_{(a,b),n}$] Def. \ref{usefulcor}
\end{list}
\end{multicols}

\subsubsection*{Arc relations}\label{arcrelssec}
The table below summarizes the key results of Section \ref{algsec}. In each case there are elements $\z_{(a,b),n} \in \tDHa(A_{m-1})$, see Def. \ref{usefulcor}, and the endpoints of the elements are always in alphabetical order: $1 \leq a < b < c < d \leq m$.
\[
\begin{array}{lll}
\eqref{s0:itm} & [\z_{(b,c),n}, \z_{(a,b),n}]_q = \z_{(a,c),n} &  \textrm{Finger relation}\\
\eqref{s1:itm} &  [\z_{(a,c),n}, \z_{(b,c),n-1}]_q = \z_{(a,b),n} & \textrm{Boundary skein relation 1}\\
\eqref{eq:skeinleft} & [\z_{(a,b),n+1},\z_{(a,c),n}]_q = \z_{(b,c),n} & \textrm{Boundary skein relation 1'}\\
\eqref{s2:itm} & [\z_{(a,d),n}, \z_{(b,c),n-1}]_1 = 0  & \textrm{Interwoven commutativity}\\
\eqref{s3:itm} & [\z_{(a,b),n}, \z_{(c,d),k}]_1 = 0 & \textrm{Distant commutativity}\\
\eqref{eq:skeinselfext} & [z_{(a,b),n}, z_{(a,b),n+k}]_{q^{2(-1)^{k}}} = \delta_{k,1} q^{-1}/(q^2-1) & \textrm{Self-skein relation}\\
\end{array}  
\]

\subsubsection*{$q$-Analogues of the Lie bracket and Lemmas}
If $A$ is a $\ZZ[q]$-algebra and $x,y\in A$ then the $q$-analogue of the Lie bracket $[x,y]_q\in A$ is defined by the equation below.
\[
  [x,y]_q := xy - qyx
\]

This $q$-commutator satisfies a number of elementary algebraic identities which are used throughout the paper above. 

\begin{prop}\label{qalgprop}
  The $q$-analogue of the Lie bracket satisfies the two algebraic identities below.
  \begin{description}
\item[(AS)\namedlabel{as:itm}{\lab{AS}}] $q$-antisymmetry:
$$[x,y]_q = -q [y,x]_{q^{-1}}.$$
\item[(OJ)\namedlabel{oj:itm}{\lab{OJ}}] omni-Jacobi:
$$[x,[y,z]_{ac}]_{ab} + a[z,[x,y]_{abc}]_{a^{-1}} + ab[y,[z,x]_c]_{b^{-1}} = 0\conj{ where } a,b,c\in \QQ(q).$$
    \end{description}
\end{prop}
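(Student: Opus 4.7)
The plan is that both identities are purely formal consequences of the definition $[x,y]_q = xy - qyx$, so the proof amounts to expanding each side and checking that all monomials in the free associative algebra on $x,y,z$ cancel. No categorical or topological input is required.

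For \eqref{as:itm}, I would simply compute
\[
-q[y,x]_{q^{-1}} = -q\bigl(yx - q^{-1}xy\bigr) = xy - qyx = [x,y]_q.
\]
This is a one-line check and needs no elaboration.

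For \eqref{oj:itm}, the plan is to expand each of the three summands into a sum of six monomials in the symbols $xyz,xzy,yzx,yxz,zxy,zyx$, and then collect coefficients. Concretely, writing $P = [x,[y,z]_{ac}]_{ab}$, $Q = a[z,[x,y]_{abc}]_{a^{-1}}$, and $R = ab[y,[z,x]_c]_{b^{-1}}$, I would compute
\begin{align*}
P &= xyz - ac\,xzy - ab\,yzx + a^2bc\,zyx,\\
Q &= a\,zxy - a^2bc\,zyx - xyz + abc\,yxz,\\
R &= ab\,yzx - abc\,yxz - a\,zxy + ac\,xzy,
\end{align*}
and then observe that the coefficients of each of the six monomials sum to zero in pairs: $xyz$ from $P$ cancels $-xyz$ from $Q$; $-ac\,xzy$ from $P$ cancels $ac\,xzy$ from $R$; $-ab\,yzx$ from $P$ cancels $ab\,yzx$ from $R$; $a^2bc\,zyx$ from $P$ cancels $-a^2bc\,zyx$ from $Q$; $a\,zxy$ from $Q$ cancels $-a\,zxy$ from $R$; and $abc\,yxz$ from $Q$ cancels $-abc\,yxz$ from $R$.

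The only thing worth remarking on is that the identity as stated has no sign obstructions because of the symmetry in how the parameters $a,b,c$ appear: each non-trivial monomial is picked up exactly twice as one walks around the three cyclic factors, once with a plus sign and once with a minus sign. There is no serious obstacle; the computation is entirely mechanical and could equally well be derived by noting that the standard Jacobi identity $[x,[y,z]] + [z,[x,y]] + [y,[z,x]] = 0$ in the free associative algebra is the specialization $a=b=c=1$, and that the $q$-deformed version is determined by tracking how the scalars attached to the inner and outer brackets redistribute under the cyclic permutation $(x,y,z) \mapsto (z,x,y) \mapsto (y,z,x)$.
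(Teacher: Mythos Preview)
Your proof is correct. The paper does not actually supply a proof of this proposition; it is stated in Appendix II as an elementary algebraic fact and left to the reader, so your explicit expansion is more than what the paper provides.
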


All instances of anti-commuting brackets and Jacobi relations in this paper
can be derived from the equations above. Below we introduce a shorthand
notation for the application of the omni-Jacobi relation.

\begin{notation}
Within the body of a proof the shorthand $\eqref{oj:itm}[u,v,w]$ will be used to denote the relation obtained from the omni-Jacobi relation after performing the substitution $a=u$, $b=v$ and $c=w$. For example, $\eqref{oj:itm}[1,1,q]$ is the equation
  $$[x,[y,z]_q]_1 + [z,[x,y]_q]_1 + [y,[z,x]_q]_1 = 0.$$
\end{notation}  

The $q$-Jacobi relation will sometimes be used in the shortened form featured below.

\newcommand{\pta}{f} 
\newcommand{\ptb}{g} 
\begin{lemma}\label{lemma:commids}
If $A$ is a $\ZZ[q]$-algebra then the following identities hold:
\begin{align}
[[x,y]_\pta,z]_\ptb = [[x,z]_\ptb , y]_\pta &\conj{ if }[y,z]_1=0\label{eq:comm1}\\
[x,[y,z]_\pta]_\ptb = [y,[x,z]_\ptb]_\pta   &\conj{ if } [x,y]_1=0\label{eq:comm2}\\
[x,[y,z]_\pta]_\ptb = [[x,y]_\ptb,z]_\pta  &\conj{ if } [x,z]_1=0\label{eq:comm4usedlaterdonotdelete}
\end{align}
\end{lemma}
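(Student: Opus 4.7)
The plan is entirely computational: each identity is an equality between two four-letter polynomials in $x,y,z$ with coefficients in $\QQ(q)[\pta,\ptb]$, so I would simply expand both sides using the definition $[u,v]_q = uv - qvu$ and check that the commutativity hypothesis converts one expansion into the other. There is no cleverness required and no obstacle to speak of; the lemma is a bookkeeping statement that isolates the three most common reassociation moves used throughout Sections \ref{algsec}--\ref{mindisksec}.

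Concretely, for \eqref{eq:comm1} I would compute
\[
[[x,y]_\pta,z]_\ptb = xyz - \pta\, yxz - \ptb\, zxy + \pta\ptb\, zyx,
\]
\[
[[x,z]_\ptb,y]_\pta = xzy - \ptb\, zxy - \pta\, yxz + \pta\ptb\, yzx,
\]
and observe that the hypothesis $[y,z]_1 = 0$, i.e.\ $yz = zy$, identifies the first and fourth terms of the two expansions (since $xyz = xzy$ and $zyx = yzx$), leaving the middle two terms which already match. Identities \eqref{eq:comm2} and \eqref{eq:comm4usedlaterdonotdelete} are proved by the same one-line expansion, applying $[x,y]_1 = 0$ (so $xy=yx$) and $[x,z]_1 = 0$ (so $xz=zx$) respectively to pair up the four monomials.

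Alternatively, one can derive \eqref{eq:comm2} and \eqref{eq:comm4usedlaterdonotdelete} from \eqref{eq:comm1} using the antisymmetry relation \eqref{as:itm} from Proposition \ref{qalgprop}, which would shorten the writeup; but since the direct expansion occupies only a few lines per identity, I would just present all three in parallel. The only point worth noting is that the scalars $\pta$ and $\ptb$ play no role beyond being central elements of the coefficient ring, which is why the same proof works for arbitrary $\pta,\ptb \in \QQ(q)$.
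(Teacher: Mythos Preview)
Your proposal is correct. The direct expansion works exactly as you describe: each side is a sum of four monomials with coefficients $1,\pta,\ptb,\pta\ptb$, and the commutativity hypothesis identifies the two pairs that differ.

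The paper takes a slightly different route: rather than expanding, it specializes the omni-Jacobi identity \eqref{oj:itm} with parameters $a=\pta\ptb$, $b=\pta^{-1}$, $c=\ptb^{-1}$ to obtain a three-term relation in which one term contains the ordinary commutator $[x,y]_1$; the hypothesis kills that term, and a single application of antisymmetry \eqref{as:itm} converts what remains into the desired identity. The paper's approach reinforces its organizing principle that every bracket manipulation in the paper is an instance of \eqref{oj:itm}, while your direct expansion is shorter and self-contained, requiring neither \eqref{oj:itm} nor \eqref{as:itm}. Either is perfectly adequate for a result this elementary.
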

\begin{proof}
The arguments for these identities are essentially
  identical so we only include the proof of Equation \eqref{eq:comm2}, which follows from the omni-Jacobi relation $\eqref{oj:itm}[\pta\ptb,\pta^{-1},\ptb^{-1}]$,
\begin{align*}
  [x,[y,z]_\pta]_\ptb &= \pta\ptb [z,[x,y]_1]_{\pta^{-1}\ptb^{-1}} - \b [y,[z,x]_{\ptb^{-1}}]_\pta  \\
&= 0 + [y,[x,z]_\ptb]_\pta.
\end{align*}  
\end{proof}

The lemma below states that if the elements of two words in an algebra individually $q$-commute then the two words must $q$-commute.

\begin{lemma}\label{lemma:qcomm}
  Suppose that $A$ is a $\ZZ[q]$-algebra and two elements: $x = x_1 x_2\cdots x_m$, $y = y_1 y_2 \cdots y_n$ satisfy $[x_i,y_j]_{a_{ij}} = 0$ for $1 \leq i \leq m$ and $1 \leq j \leq n$. Then $[x,y]_{\rho} = 0$ where $\rho = \prod_{i,j} a_{ij}$.
\end{lemma}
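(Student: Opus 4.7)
The plan is to prove this by a direct computation that pushes $x$ past $y$ one factor at a time, using the hypothesis to pick up the appropriate scalars. The key observation is that the hypothesis $[x_i, y_j]_{a_{ij}} = 0$ is literally the statement that $x_i y_j = a_{ij}\, y_j x_i$, so moving an $x_i$ past a $y_j$ multiplies by $a_{ij}$.

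First I would establish the one-sided version: for each fixed $j$, compute $x\, y_j = x_1 x_2 \cdots x_m y_j$ by moving $y_j$ leftward through the product, picking up a factor of $a_{mj}$, then $a_{m-1,j}$, and so on, to obtain
\[
x\, y_j = \Bigl(\prod_{i=1}^{m} a_{ij}\Bigr)\, y_j\, x.
\]
This is a straightforward induction on $m$ (or just iterated application of the relation $x_i y_j = a_{ij} y_j x_i$). Write $b_j := \prod_i a_{ij}$, so that the relation becomes $x\, y_j = b_j\, y_j\, x$, i.e.\ $[x, y_j]_{b_j} = 0$.

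Next I would repeat the argument on the other side: compute $x\, y = x\, y_1 y_2 \cdots y_n$ by applying the previous identity to slide $x$ through each $y_j$ in turn, picking up $b_1, b_2, \ldots, b_n$. This yields
\[
x\, y = \Bigl(\prod_{j=1}^{n} b_j\Bigr)\, y\, x = \Bigl(\prod_{i,j} a_{ij}\Bigr)\, y\, x = \rho\, y\, x,
\]
which is exactly $[x,y]_\rho = 0$. There is no real obstacle here; the only thing to be careful about is keeping track of the order in which the scalars are collected, but since they are scalars in $\mathbb{Z}[q]$ (or $\QQ(\sqrt{q})$) they commute freely, so the product $\rho = \prod_{i,j} a_{ij}$ is well-defined regardless of the order of multiplication.
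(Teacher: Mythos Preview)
Your proof is correct and takes essentially the same approach as the paper: the paper's proof is simply the one-sentence observation that moving each $y_j$ past each $x_i$ costs a factor of $a_{ij}$, so moving all of $y$ past all of $x$ costs $\rho = \prod_{i,j} a_{ij}$. Your version just spells this out more carefully by first fixing $j$ and then iterating over $j$.
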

\begin{proof}
By assumption one can move each term in $y$ past each term in $x$ at the expense of multiplying by a factor of $a_{ij}$. Commuting all of the $y$ terms past all of the $x$ terms gives a product $\rho$ of all $a_{ij}$.
\end{proof}

\pagebreak
\bibliography{sillyman1}{}
\bibliographystyle{amsalpha}

\end{document}